\documentclass[final]{siamart}

\usepackage{amsfonts}
\usepackage{graphicx}
\usepackage{epstopdf}
\usepackage{amsmath}
\usepackage{amssymb}
\usepackage{enumitem}
\usepackage{empheq}
\usepackage{stmaryrd}        
\usepackage{tikz}
\usepackage{multicol}
\usepackage{ulem}         
\usetikzlibrary{decorations.pathmorphing,matrix,fit,backgrounds}
\usepackage{todonotes}
\usepackage{hyperref}
\usepackage{dsfont}
\ifpdf
  \DeclareGraphicsExtensions{.eps,.pdf,.png,.jpg}
\else
  \DeclareGraphicsExtensions{.eps}
\fi

\usepackage{xstring}
\usepackage{catchfile}
\usepackage{shellesc}
\usepackage{dutchcal}
\usepackage{pgfplots}
\usepackage{cite}
\usepackage[left = 4cm, right = 4cm]{geometry}

\immediate\write18{git describe --always --dirty=-modifie > revision}

\theoremstyle{plain}
\newtheorem{thm}{theorem}[section]
\newtheorem{prop}[thm]{proposition}
\newtheorem{cor}[thm]{Corollary}
\newtheorem{lem}[thm]{Lemma}

\newtheorem{rem}{Remark}[section]

\newtheorem{ddef}{Definition}[section]

\newcommand{\R}{\mathbb{R}}

\newcommand{\N}{\mathbb{N}}
\newcommand{\indicatrice}[1]{\mathds{1}_{#1}}
\newcommand{\mc}[1]{\mathcal{#1}}
\newcommand{\scalar}[3]{\left\langle #1,#2 \right\rangle_{#3}}

\newcommand{\norm}[2]{\left\Vert#1\right\Vert_{#2}}

\newcommand{\cost}{K}
\newcommand{\grush}{G_\mu}

\newcommand{\sphere}{\mathbb{S}^{d\scalebox{0.8}{-}1}}
\renewcommand{\epsilon}{\varepsilon}
\newcommand{\Hess}{\mathsf{Hess}}
\newcommand{\s}{\mathcal{s}}
\newcommand{\cc}{\mathcal{c}}

\newlength\marklength
\numberwithin{equation}{section}

\newcommand{\TheTitle}{On the critical time of observability of the multi-dimensional Baouendi-Grushin equation}
\newcommand{\TheAuthors}{M.T}

\title{{\TheTitle}}

\author{\vspace{0.5cm} Jérémi DARD\'E\footnote{\href{mailto:jeremi.darde@math.univ-toulouse.fr}{jeremi.darde@math.univ-toulouse.fr}} \and Mathilda TRABUT\footnote{\href{mailto:mathilda.trabut@math.univ-toulouse.fr}{mathilda.trabut@math.univ-toulouse.fr}} \\ \vspace{0.2cm}
{\scriptsize Univ Toulouse, INUC, UT2J, INSA Toulouse, TSE, CNRS, IMT, Toulouse, France.}}

\ifpdf
  \hypersetup{
    pdftitle={\TheTitle},
    pdfauthor={\TheAuthors}
  }
\fi

\begin{document}

\maketitle
\begin{abstract}
We investigate the observability properties of the Baouendi–Grushin equation on a tensorized domain $\Omega := \mc{B}_R \times \tilde \Omega$, where $\mc{B}_R$ is the open ball of radius $R$ in dimension $d \ge 2$, and $\tilde \Omega$ is a smooth, bounded, open set of arbitrary dimension. 
Our main result is a precise calculation of the minimal observability time $T^*$, for tensorized observation sets of the form $\omega \times \tilde \Omega$, with $\omega \subset \mc{B}_R$ (internal observation), and $\Gamma \times \tilde \Omega$, with $\Gamma \subset \partial \mc{B}_R$ (boundary observation). \newline
The main novelty regards the sufficient condition, that is observability of the system when $T>T^*$. This is established by combining refined observability inequalities on the annulus—or the entire boundary—using Carleman estimates, together with a Lebeau–Robbiano strategy to localize the observation sets.
\end{abstract}

\begin{keywords}
Observability, controllability, degenerate parabolic equations, Carleman estimates, Lebeau-Robbiano strategy.
\end{keywords}

\begin{AMS}
 35K65, 93B05, 93B07.  
\end{AMS}

\paragraph{Ackowledgements} Our heartfelt thanks go to Paul Alphonse, Franck Boyer, Sylvain Ervedoza, Morgan Morancey, and Roman Vanlaere. 
{Their enriching suggestions have been essential to the development of this work.}

\medskip

This work was supported by the  ANR  LabEx  CIMI  (under  grant  ANR-11-LABX-0040) within the French State Programme “Investissements d’Avenir,
by the University Research School EUR-MINT (ANR-18-EURE-0023) within the French State Programme “Investissements d’Avenir,
and by the ANR-Project TRECOS, ANR-20-CE40-0009.

\section{Introduction}
The goal of this article is to prove observability inequalities for the Baouendi-Grushin equation in a multi-dimensional setting, through tensorized observation sets.
\subsection{Baouendi-Grushin equation: statement, well-posedness and observability results}
Let $d \ge 2$ and $\tilde d \ge 1$, $\mc{B}_R \subset \R^d$ be the open ball centered at zero and of radius $R>0$, and $\tilde \Omega \subset \R^{\tilde d}$ be a smooth open bounded subset. In this study, we consider the Baouendi-Grushin equation on the domain $\Omega := \mc{B}_R \times \tilde \Omega$
  \begin{equation} \label{grushin}
   \left\{
   \begin{aligned}
 \partial_t y -\Delta_x y - \|x\|^2\Delta_{\tilde x} y &= 0 &&\text{in }  (0,T)\times \mathcal{B}_R \times \tilde \Omega, \\
 y &= 0 &&\text{in } (0,T) \times \partial \mathcal{B}_R \times \tilde \Omega, \\
 y(0,\cdot) &= y_0 &&\text{in } \mathcal{B}_R \times \tilde \Omega .
\end{aligned}\right.
\end{equation} 
where $(t,x,\tilde x) \in (0,T)\times \mathcal{B}_R \times \tilde \Omega$. \newline
{The Baouendi-Grushin system} is well-posed: for any $T>0$ and any initial data $y_0 \in L^2(\Omega)$, {equation \eqref{grushin} admits} a unique solution $y \in C^0([0,T],L^2(\Omega)) $. Furthermore, if the initial data $y_0$ belongs to $H^1_0(\Omega)$, the solution $y$ {gains smoothness as it lies in} $C^0([0,T],H^1_0(\Omega))$, with a normal derivative that verifies
$\frac{\partial y}{\partial n} \in L^2((0,T),L^2(\partial \mathcal{B}_R \times \tilde \Omega))$ -- see \cref{solutiongrushin} for further details. \newline
{In this study, we are interested in the observability properties of the Baouendi-Grushin system from tensorized observation set:}
\begin{ddef}[Internal observability]
\label{defobsinterne}
Let $T>0$, $\omega \subset \mc B_R$. We say that the Baouendi-Grushin equation is observable at time $T$, from $\omega \times \tilde \Omega$, with cost $ K_T > 0$, if for any $y_0 \in L^2(\Omega)$, the solution of \eqref{grushin} satisfies
\begin{equation} \label{obsinternegrushin}
\int_{\mc{B}_R \times \tilde \Omega} \vert y(T)\vert^2 \le K_T^2 \int_0^T \int_{\omega \times \tilde \Omega} \vert y \vert^2.
\end{equation}
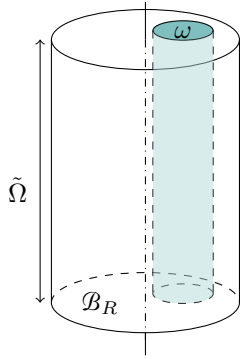
\begin{figure}[ht]
\centering
\begin{tikzpicture}
\draw (0,0) ellipse (1.25 and 0.4);
\draw (-1.25,0) -- (-1.25,-3.5);
\draw (-1.25,-3.5) arc (180:360:1.25 and 0.4);
\draw [dash dot] (0,0) -- (0,-4.);
\draw (0,0) -- (0,0.5);
\draw (0,-3.9) -- (0,-4.2);
\draw [dashed] (-1.25,-3.5) arc (180:360:1.25 and -0.4);
\draw (1.25,-3.5) -- (1.25,0);  
\draw [fill=teal!50] (0.5,0.12) ellipse (0.4 and 0.13);
\draw [dashed] (0.1,0.12) -- (0.1,-3.38);
\draw [dashed] (0.9,0.12) -- (0.9,-3.38);
\draw [dashed] (0.5,-3.38) ellipse (0.4 and 0.13);
\fill [teal!30,opacity=0.5]  (0.1,0.12) -- (0.1,-3.38) arc (180:360:0.4 and 0.13) -- (0.9,0.12) arc(0:180:0.4 and -0.13);
\node at (-0.6,-3.5) {$\mathcal{B}_R$};
\node at (0.5,0.1) {$\omega$};
\draw [<->] (-1.4,0) -- (-1.4,-3.5);
\node at (-1.7,-2) {$\tilde \Omega$};
\end{tikzpicture}
 \caption{Domain $\Omega$, observation set $\omega \times \tilde\Omega$}
  \label{cylobsinterne}
\end{figure}
\end{ddef}
\begin{ddef}[Boundary observability]
\label{defobsbord}
Let $T>0$, $\Gamma_{\!R} \subset \partial \mc{B}_R$. We say that the Baouendi-Grushin equation is observable at time $T$, from $\Gamma_{\!R} \times \tilde \Omega$, with cost $ K_T > 0$, if for any $y_0 \in H^1_0(\Omega)$, the solution of \eqref{grushin} satisfies
\begin{equation} \label{obsbordgrushin}
\int_{\mc{B}_R \times \tilde \Omega} \vert \nabla y(T) \vert^2 \le K_T^2 \int_0^T \int_{\Gamma_{\!R} \times \tilde \Omega} \left\vert \frac{\partial y}{\partial n} \right\vert^2.
\end{equation}
\end{ddef}
\begin{figure}[ht]
\centering
\scalebox{0.7}{
\begin{tikzpicture}
   \draw [dashed] (0,0) ellipse (1.5cm and 0.75cm);

   \draw (-1.5,0) -- (-1.5,4);
   \draw (1.5,0) -- (1.5,4);

   \draw [fill=teal!50] (0.5,0) rectangle (1,4);
   \draw [fill=white] (0,4) ellipse (1.5cm and 0.75cm);

   \begin{scope}
     \clip (-2,0) rectangle (2,-1);
     \draw (0,0) ellipse (1.5cm and 0.75cm);
   \end{scope}

   \begin{scope}
     \clip (0,0) ellipse (1.5cm and 0.75cm);
    \draw [fill=teal!50] (0.5,-2) rectangle (1,4);
  \end{scope}

  \begin{scope}
    \clip (0.5,-1) rectangle (1,4);
    \draw [very thick] (0,4) ellipse (1.5cm and 0.75cm);

  \end{scope}

  \node at (-0.75,4) {$\mc{B}_R$};
  \node at (0.75,3.6) {$\Gamma_{\!R}$};
  \node at (-1.75,2) {$\tilde \Omega$};

  \draw [dash dot] (0,0) -- (0,4);
\end{tikzpicture}}
 \caption{Domain $\Omega$, observation set $\Gamma_{\!R} \times \tilde\Omega$}
  \label{cyltoutomegay}
  \end{figure}
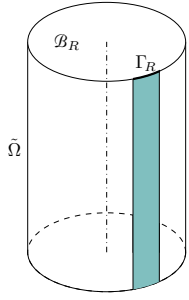
  
{With these definitions,} we restrict ourselves to tensorized observation sets where, in the $\tilde x$ variable, we observe on the whole domain. {This restriction is primarily for technical reasons, as our approach relies on a Fourier decomposition in the $\tilde{x}$ variable.}
However, this choice {aligns  with established results} in the one-dimensional setting $d= \tilde d = 1$. 
In this setting, {there exist known configurations in which the observation set does not cover the entire $\tilde{x}$ domain, and the corresponding system fails to be observable for any positive time $T$.
On the other hand, when $d = \tilde d = 1$ and the observation is tensorized as defined above, the system is known to be observable only for sufficiently large $T$, and moreover, the minimal observability time is now fully characterized.}
{For an overview of these one-dimensional results, we refer the reader to \cref{section_scientific_context}.}
 \newline
Unlike the case $d=1$, the case $d \ge 2$ is still largely unexplored. The first result is given in \cite[Theorem 4]{BCY} for the internal observation, {where it} is proved that if $\omega$ does not contain the origin, there exists a positive minimal time for observability -- {though no estimate of this time is provided}. In \cite{Beauchard2020}, the authors prove the boundary observability inequality \eqref{obsbordgrushin} in the case $\Gamma_{\!R} = \partial \mc B_R$, {and explicitly determine the minimal time. Specifically, their theorem—reformulated in our context—reads as follows:}
\begin{thm}
 \label{BDE}
Let $R>0$, and define $T^* = \frac{R^2}{2d}.$
If $T>T^*$, the Baouendi-Grushin equation \eqref{grushin} is observable from the boundary $\partial \mc B_R \times \tilde \Omega$ and if $T<T^*$ the Baouendi-Grushin equation \eqref{grushin} is not observable from the boundary $\partial \mc B_R \times \tilde \Omega$.
\end{thm}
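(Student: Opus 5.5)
Since $\tilde\Omega$ is bounded and smooth, I would diagonalize $-\Delta_{\tilde x}$ with Dirichlet conditions (the problem is posed on $\mc B_R\times\tilde\Omega$, so I take the natural Dirichlet condition in $\tilde x$ as well): eigenpairs $(\lambda_n,\phi_n)$ with $\lambda_n\to+\infty$. Writing $y=\sum_n y_n(t,x)\phi_n(\tilde x)$, each mode solves the Schrödinger-type heat equation
\[
\partial_t y_n-\Delta_x y_n+\lambda_n\|x\|^2 y_n=0 \text{ in } (0,T)\times\mc B_R,\qquad y_n=0 \text{ on } \partial\mc B_R,
\]
and by Parseval both sides of \eqref{obsbordgrushin} split as sums over $n$. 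Observability for \eqref{grushin} is therefore equivalent to boundary observability of each harmonic-oscillator heat equation with a cost $K_{T,n}$ that is \emph{uniform in $n$}. Only the regime $\lambda_n\to\infty$ matters: for bounded $\lambda$, classical Carleman/multiplier estimates on the ball give observability at every $T>0$ with locally bounded cost.

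\textbf{Lower bound (non-observability for $T<T^*$).} I would build, for large $\lambda$, the ground state of $-\Delta_x+\lambda\|x\|^2$ on $\R^d$: $\psi_\lambda(x)=e^{-\sqrt\lambda\|x\|^2/2}$ with eigenvalue $d\sqrt\lambda$. The Dirichlet ground state on $\mc B_R$ has eigenvalue $\mu_\lambda=d\sqrt\lambda+O(e^{-c\sqrt\lambda})$ and eigenfunction exponentially close to $\psi_\lambda$, with normal derivative on $\partial\mc B_R$ of order $\sqrt\lambda R\,e^{-\sqrt\lambda R^2/2}$ (Agmon estimates, or the explicit radial reduction to a Kummer/Laguerre function). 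Taking $y_0=\phi_n\,v_{\lambda_n}$ with $v_\lambda$ this eigenfunction, the solution is $e^{-\mu_\lambda t}y_0$. The left side of \eqref{obsbordgrushin} is then $\sim e^{-2dT\sqrt\lambda}\|\nabla v_\lambda\|^2$ with $\|\nabla v_\lambda\|^2\sim\lambda^{a}$ for some $a$, while the right side is $\lesssim \lambda^{b}e^{-\sqrt\lambda R^2}$. The inequality forces
\[
e^{-2dT\sqrt\lambda}\le C\lambda^{c}e^{-R^2\sqrt\lambda},
\]
which fails as $\lambda\to\infty$ whenever $2dT<R^2$, i.e. $T<T^*=R^2/(2d)$.

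\textbf{Upper bound (observability for $T>T^*$).} I would prove, for each $\lambda$ large, a Carleman estimate for $\partial_t-\Delta_x+\lambda\|x\|^2$ with a radial weight of the form $e^{-s\sqrt\lambda\,\varphi(x)/\theta(t)}$, where $\varphi$ is adapted to $\|x\|^2$ and the parameters are tuned so that the boundary term controls the energy up to a factor $e^{C\sqrt\lambda}$ with $C$ arbitrarily close to $R^2/2$. I would combine this with dissipation: the spectrum of the mode operator lies above $d\sqrt\lambda$, so the energy decays like $e^{-2d\sqrt\lambda t}$. The standard argument — observe on $(T-\tau,T)$ and use decay on $(0,T-\tau)$, or an explicit multiplier identity $x\cdot\nabla y$ exploiting the radial symmetry of $\partial\mc B_R$ — then gives a uniform cost as soon as
\[
2dT\sqrt\lambda>(R^2+\epsilon)\sqrt\lambda .
\]

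\textbf{Main obstacle.} The sharp constant in the upper bound is the hard part: a naive Carleman estimate loses a multiplicative constant in the exponent. To get the exact $R^2/2$ I would rely on the radial structure and a sharp weight. One option is the explicit Gaussian weight $e^{\sqrt\lambda\|x\|^2/2}$ (a conjugation turning the operator into a drift operator). Another is the spectral decomposition in spherical harmonics, reducing to 1D radial problems in which Beauchard–Dardé–Ervedoza type estimates of the boundary derivative of the eigenfunctions apply.
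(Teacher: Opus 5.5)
Your Fourier reduction in $\tilde x$ (with $\lambda_n=\mu^2$) is sound. Your non-observability argument via the Dirichlet ground state is also sound, and it follows the same line as the paper's necessity proof (Proposition~\ref{prop_necessary_condition}). The flux bound $\int_{\partial\mc B_R}|\partial_n v_\lambda|^2\lesssim \mu^c e^{-\mu R^2}$ still has to be proved. It follows from the exponentially small eigenvalue error and the $L^2$-closeness to the truncated Gaussian obtained in Lemmas~\ref{lemma_bound_firsteigen}--\ref{lemma_bound_eigenfun}, together with $H^2$ elliptic regularity. Also, the order in your final step is reversed: you must observe on $(0,\tau)$ and let the $e^{-2d\mu t}$ decay act on $(\tau,T)$.

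\textbf{The gap: sufficiency.} The sufficiency part is the actual content of the theorem, and it is missing. You need a boundary observability estimate for $\partial_t-\Delta+\mu^2\|x\|^2$ whose cost is $e^{\mu R^2}$ up to factors subexponential in $\mu$. You only assert that a ``tuned'' Carleman weight gives this. Standard Carleman estimates do not: absorbing the potential forces $s\ge s_0\|V\|_{W^{1,\infty}}^{1/2}\sim s_0\mu$ with a non-explicit $s_0$ (Proposition~\ref{prop_generic_Carleman_V}). That yields a cost $e^{c\mu}$ with uncontrolled $c$, hence observability only for $T$ large. Your Option~1, the static conjugation $y=e^{-\mu\|x\|^2/2}w$, is only the $t\to\infty$ limit of what is needed. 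It does not make the weighted function vanish at $t=0$, so a separate time weight is still required, and the sharp constant is lost there. Option~2 would need moment or spectral bounds uniform in both the spherical-harmonic index and $\mu$, which you do not supply.

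\textbf{The missing idea.} It comes from Beauchard--Dard\'e--Ervedoza and is used in Proposition~\ref{prop_BDE20}: couple space and time in an exact Gaussian weight,
\[
w=y\exp\Bigl(-\tfrac{\mu}{2}\coth(2\mu t)\bigl(R^2-\|x\|^2\bigr)\Bigr).
\]
This weight behaves like the heat-kernel weight $e^{-(R^2-\|x\|^2)/(4t)}$ near $t=0$ and like the ground-state Gaussian for large $t$.
\begin{itemize}
\item With $\theta=\mu\coth(2\mu t)$, the conjugated equation is $\partial_t w+Sw+Aw=0$, with $S=-\Delta+\theta'\frac{R^2}{2}$ symmetric and $A=\theta(2x\cdot\nabla+d)$ antisymmetric.
\item $\theta$ solves the Riccati equation $\theta'=-2(\theta^2-\mu^2)$, so $\theta''=-4\theta\theta'$.
\item Consequently the energy $D(t)=\int_{\mc B_R}(\|\nabla w\|^2+\theta'\frac{R^2}{2}|w|^2)$ satisfies exactly $D'+4\theta D\le 2\theta R\int_{\partial\mc B_R}|\partial_n w|^2$, with no lost constants.
\item Integrate, use Poincar\'e once $\sinh(2\mu T)\gtrsim\mu$, and undo the weight with $\mu\coth(2\mu T)\le\mu+\frac{1}{2T}$. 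This gives a cost $(1+\mu+\frac1T)^3e^{\mu R^2}e^{R^2/(2T)}$ (Proposition~\ref{prop_observability_from_boundary}).
\item Bounded $\mu$ is handled by a standard Carleman estimate, and dissipation then gives uniformity for $T>R^2/(2d)$.
\end{itemize}
Without this exact computation, or an equivalent one, your upper bound is not established.
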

Finally, the internal observation when $\omega$ is an annulus is obtained from the observation on the boundary, using cut-off arguments. It is done in \cite[Theorem 3.1.2.]{vanla2025}, which deals with a larger class of equations, including the Baouendi-Grushin equation \eqref{grushin}:
\begin{thm}
 \label{Roman}
Let $0<R_1<R_2$, and define  $\omega = \{x \in \mc B_R ~;~ R_1<\norm{x}{}<R_2\}$. {Set $T^* = \frac{R_1^2}{2d}$.}
If $T>T^*$, the Baouendi-Grushin equation \eqref{grushin} is observable from $\omega \times \tilde \Omega$ and if $T<T^*$ the Baouendi-Grushin equation \eqref{grushin} is not observable from $\omega \times \tilde \Omega$.
\end{thm}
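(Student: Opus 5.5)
Both directions reduce, after a Fourier decomposition in $\tilde x$, to a family of radial problems on the ball; the positive part then follows from \cref{BDE} by a cut-off argument, and the negative part from concentrating low spherical harmonics near the sphere $\norm{x}{}=R_1$.

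\textbf{Reduction.} Let $(\lambda_n,\varphi_n)$ be the Dirichlet eigenpairs of $-\Delta_{\tilde x}$ on $\tilde \Omega$. Writing $y=\sum_n y_n(t,x)\varphi_n(\tilde x)$, each $y_n$ solves $\partial_t y_n-\Delta_x y_n+\lambda_n\norm{x}{}^2 y_n=0$ on $\mc B_R$ with Dirichlet data. Since the observation sets are tensorized, \eqref{obsinternegrushin} and \eqref{obsbordgrushin} hold if and only if they hold for each harmonic-oscillator equation, uniformly in $n$. I can therefore work mode by mode and only need to track the uniformity in $\lambda_n$.

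\textbf{Observability for $T>R_1^2/(2d)$.} Fix $R_1<r_1<r_2<R_2$ with $r_1^2/(2d)<T$, and a radial cut-off $\chi\in C^\infty$ with $\chi=1$ for $\norm{x}{}\le r_1$ and $\chi=0$ for $\norm{x}{}\ge r_2$. Then $z=\chi y$ solves the Grushin equation on $\mc B_{r_1'}\times\tilde\Omega$ for some $r_1'\in(r_1,r_2)$, with source $f=-2\nabla\chi\cdot\nabla y-(\Delta\chi)y$ supported in the annulus $r_1<\norm{x}{}<r_2\subset\omega$. The proof then runs in three steps.
\begin{enumerate}
\item Apply \cref{BDE} on the smaller ball, or rather its proof via a Carleman estimate with a source term, whose minimal time is $r_1'^2/(2d)<T$ for suitable choices. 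This bounds $z(T)$ by the boundary normal derivative on $\partial\mc B_{r_1'}$ plus $\norm{f}{L^2}$. The normal derivative there vanishes, since we may take $r_1'$ where $\chi$ is still $0$; equivalently, one applies \cref{BDE} to $z$ on a ball of radius $r_2$ with boundary term zero, up to the source.
\item Bound $\norm{f}{}$ by the gradient of $y$ on the annulus. Caccioppoli/energy estimates on a slightly larger annulus bound this by $\int_0^T\int_{\omega\times\tilde\Omega}|y|^2$. The factor $\norm{x}{}^2$ is bounded away from $0$ there, so the $\tilde x$-derivatives are also controlled.
\item The exterior part $(1-\chi)y$ lives in an annulus where the equation is uniformly parabolic. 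A standard Carleman estimate, together with dissipativity and the time-shift, controls $\norm{(1-\chi)y(T)}{}$ by the observation.
\end{enumerate}
The required tool is thus a version of the boundary observability of \cref{BDE} with a source term and with the critical time $r^2/(2d)$, for every $r>R_1$.

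\textbf{Non-observability for $T<R_1^2/(2d)$.} Take the first mode of the radial oscillator in large frequency $\lambda=\lambda_n\to\infty$. The ground state of $-\Delta_x+\lambda\norm{x}{}^2$ has energy $\sim d\sqrt{\lambda}$ and Gaussian profile $e^{-\sqrt\lambda\norm{x}{}^2/2}$. Its restriction to $\norm{x}{}>R_1$ has mass about $e^{-\sqrt\lambda R_1^2}$, while its decay in time is $e^{-d\sqrt\lambda T}$. I will build quasimodes, i.e.\ exact eigenfunctions on $\mc B_R$ close to the Gaussian, and compare: the left-hand side is $\approx e^{-2d\sqrt\lambda T}$, the right-hand side is $\lesssim e^{-\sqrt\lambda R_1^2}$. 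The ratio blows up when $2dT<R_1^2$, which contradicts any uniform constant $K_T$. Controlling the Dirichlet eigenfunction against the Gaussian up to exponentially small errors is routine by Agmon estimates.

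\textbf{Main obstacle.} The delicate step is the first one of the positive direction: keeping the sharp time $r^2/(2d)$ when a source term and the cut-off are present. This requires redoing the Carleman/spectral argument of \cite{Beauchard2020} with nonhomogeneous terms, uniformly in $\lambda_n$.
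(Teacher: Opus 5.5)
Your plan is essentially the paper's own route (Fourier reduction in $\tilde x$, a radial cut-off with the interior ball handled by a BDE-type estimate with a source term, Caccioppoli for the source, a $\mu$-uniform Carleman estimate outside, and truncated-Gaussian ground states for $T<T^*$). The step you flag as the main obstacle is exactly \cref{prop_BDE20}: conjugating by $e^{-\frac{\mu}{2}\coth(2\mu t)(r^2-\Vert x\Vert^2)}$ gives, for $T\mu$ large, an estimate at any time with cost $e^{\mu\coth(2\mu T)r^2}$ and no loss on the source (the weight is $\le 1$ on $\mathcal{B}_r$); the sharp time only appears afterwards, from the dissipation $e^{-2d\mu(T-\delta)}$ applied to the homogeneous solution $y$ (see \cref{prop_observability_from_annulus_interior_subcase} and \cref{prop_uniform_observa_wrt_mu_intern}). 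Two slips need fixing: the radius that must satisfy $\rho^2<2dT$ is that of a ball containing the support of $\chi$ (so $r_2$, not $r_1$); and your Caccioppoli step is false with an unweighted $\Vert f\Vert_{L^2((0,T))}$, since it would require $y(0)$ on the annulus, so you need a time weight vanishing at $t=0$ (the paper's $\sinh(2\mu t)^2/\sinh(2\mu T)^2$, or a time shift).
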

Our paper improves those results. For the internal observability, we have the optimal minimal time for any configuration corresponding to \cref{defobsinterne}. {Additionally, we provide estimates of the observability cost for specific observation sets.} 
{Our results are sum up in the following Theorem:}
\begin{thm} \label{maininterne}
Let $\omega$ be a nonempty open subset of $\mathcal{B}_R$, and define $$T^* = \frac{\inf_{x\in \omega} \Vert x \Vert^2}{2d}.$$
If $T>T^*$, the Bouendi-Grushin equation \eqref{grushin} is observable from $\omega \times \tilde \Omega$.
Moreover if $\omega$ is a section of annulus, i.e.,
 \begin{equation} \label{sectionanneau} \omega = \left\{ x \in \R^d;~x= r\theta, ~R_1 <r< R_2,  ~\theta \in \Gamma \right\},\end{equation} with $0<R_1<R_2<R$ and $\Gamma \subset \sphere$ an open nonempty subset, then for any $\beta >1$ there exists $c_\beta > 0$ such that $$K_T \le \exp\left( \frac{c_\beta}{(T-T^*)^{2\beta}}\right).$$
If $T<T^*$ the Baouendi-Grushin equation \eqref{grushin} is not observable from $\omega \times \tilde \Omega$.
\end{thm}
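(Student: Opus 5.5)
Three things have to be proved: observability for $T>T^*$ for an arbitrary open $\omega$, the cost bound when $\omega$ is a section of annulus, and non-observability for $T<T^*$. In every case I would expand in the eigenfunctions $(\phi_k,\lambda_k)$ of $-\Delta_{\tilde x}$ on $\tilde\Omega$ with Dirichlet conditions. Writing $y=\sum_k y_k(t,x)\phi_k(\tilde x)$ reduces \eqref{grushin} to the family of Schr\"odinger-type heat equations
$$\partial_t y_k-\Delta_x y_k+\lambda_k\|x\|^2y_k=0 \quad\text{in } (0,T)\times\mc B_R .$$
Since the observation set is tensorized, observability from $\omega\times\tilde\Omega$ is equivalent to observability from $\omega$ for every $k$, uniformly in $k$.

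\textbf{Reduction to a section of annulus.} A nonempty open $\omega$ contains a section of annulus $\omega'$ as in \eqref{sectionanneau}, with $R_1$ arbitrarily close to $\inf_{\omega}\|x\|$. Concretely, take a point $x_0\in\omega$ with $\|x_0\|$ close to the infimum, a small ball around it, and a polar box inside that ball. The $2d$ in the denominator of $T^*$ lets this approximation absorb any $\epsilon$. So it is enough to treat sections of annulus. There I would argue in two steps.

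\emph{Step 1: full annulus with explicit cost.} Use \cref{Roman}, or redo its proof with a Carleman estimate that tracks constants, to obtain for each mode $k$ an observability inequality from the annulus $\{R_1<\|x\|<R_2\}$. The aim is an explicit bound of the form $e^{C/(T-T^*)^{\beta'}}$, uniform in $k$. The mechanism is that the harmonic-oscillator ground state decays like $e^{-\sqrt{\lambda_k}\|x\|^2/2}$, with energy about $d\sqrt{\lambda_k}$. This competes with dissipation $e^{-d\sqrt{\lambda_k}T}$, and the balance gives the threshold $R_1^2/(2d)$.

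\emph{Step 2: localization in angle by Lebeau--Robbiano.} On $\mc B_R$, for fixed $k$, the operator $-\Delta_x+\lambda_k\|x\|^2$ is rotation invariant, so I would decompose further in spherical harmonics. The high spherical-harmonic frequencies decay fast, because the angular energy $\ell(\ell+d-2)/r^2$ adds dissipation. For the low ones I would need a spectral inequality on $\Gamma\subset\sphere$, namely
$$\|\textstyle\sum_{\ell\le L} a_\ell Y_\ell\|_{L^2(\sphere)}\le e^{C L}\,\|\cdot\|_{L^2(\Gamma)},$$
the classical Jerison--Lebeau estimate on the sphere. Applied radially inside the annulus, it lets me pass from the annulus to the section at cost $e^{CL}$. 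Then I would run the Lebeau--Robbiano time-splitting iteration: dyadic intervals $T_j$ with $\sum_j T_j< T-T^*$, alternating control of low angular modes using Step 1 combined with the spectral inequality, and free decay of high angular modes. Choosing $T_j\sim 2^{-j\rho}$ and cutoffs $L_j\sim 2^{j}$ so that the costs sum should give $K_T\le\exp\left(c_\beta/(T-T^*)^{2\beta}\right)$; the exponent comes from the polynomial power in the Step 1 cost combined with the dyadic summation. \textbf{This is the main obstacle.} The Step 1 cost must blow up only polynomially in $1/(T-T^*)$ in the exponent, uniformly in both $k$ and the angular frequency. The extra dissipation from angular modes must also be tracked so that it does not worsen the minimal time; it only helps, but this has to be checked in the Carleman weights.

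\textbf{Negative part ($T<T^*$).} I would test the inequality on $y_0=v_k(x)\phi_k(\tilde x)$, where $v_k$ is the ground state of $-\Delta_x+\lambda_k\|x\|^2$ on $\mc B_R$, so that $v_k\approx e^{-\sqrt{\lambda_k}\|x\|^2/2}$ with eigenvalue $\mu_k\approx d\sqrt{\lambda_k}$. The left side is then of order $e^{-2\mu_k T}$. The right side is at most $T\int_\omega v_k^2\lesssim e^{-\sqrt{\lambda_k}\inf_\omega\|x\|^2}$. Comparing exponents as $k\to\infty$ contradicts observability exactly when $2dT<\inf_\omega\|x\|^2$. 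Alternatively this follows directly from the necessary part of \cref{Roman}, since $\omega$ is contained in an annulus with inner radius $\inf_\omega\|x\|$.
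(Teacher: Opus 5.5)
Your Fourier reduction in $\tilde x$, the reduction of a general $\omega$ to a section of annulus, and the negative part via ground states of $-\Delta_x+\mu^2\Vert x\Vert^2$ are sound and match the paper. The gap is in how Steps~1 and~2 fit together.

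\textbf{Why the scheme does not close.} Step~1, as you formulate it, gives observability from the full annulus uniformly in $k$, but only for $T>T^*$, with cost $e^{C/(T-T^*)^{\beta'}}$. A Lebeau--Robbiano iteration (or Miller's telescoping lemma, \cref{lemmeLR}) needs an observability estimate for the low angular modes on each of the short intervals $T_j\to 0$. On intervals shorter than $T^*=R_1^2/(2d)$ there is no uniform-in-$k$ observability from the annulus at all: this is exactly the non-observability of \eqref{grushin} from $\mc A\times\tilde\Omega$ below $R_1^2/(2d)$. Imposing $\sum_j T_j<T-T^*$ does not fix this. Every observation phase would have to last longer than $T^*$ on its own, and infinitely many such phases cannot fit in finite time. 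So the argument as written does not close, and the exponent $2\beta$ has no derivation. The real obstacle is not a polynomial blow-up in $1/(T-T^*)$.

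\textbf{The missing idea.} Reverse the order. Localize in angle at fixed $\mu$, where the harmonic heat equation \eqref{harmonicheat} is observable in arbitrarily short time, and recover uniformity in $\mu$ from dissipation only at the very end.

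This requires an annulus estimate valid for all $T>0$ in which the two dependences are separated: $\frac{C}{\varepsilon^6}(1+\mu)e^{\mu(R_1+\varepsilon)^2}e^{2c_\beta/T^\beta}$, with $c_\beta$ independent of $\mu$. The paper obtains it in \cref{prop_observability_from_annulus} by splitting into two regimes: a standard Carleman estimate when $T\mu\le a\mu^\alpha$, and the $\coth(2\mu t)$-weighted multiplier estimate of \cref{prop_BDE20} when $T\mu\ge a\mu^\alpha$. Using \cref{Roman} as a black box gives nothing of this form.

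The Lebeau--Robbiano step must then be run so that the coefficient of $\mu$ in the exponent deteriorates only by a factor $1+\gamma$. Any loss by a fixed factor $K>1$, i.e. a cost $e^{K\mu(R_1+\varepsilon)^2}$, would only give observability for $T>KT^*$. The difficulty is that \cref{LR} applies only for $T\le T'(\mu)$, with $T'(\mu)\to 0$ as $\mu\to\infty$. The paper handles $T>T'$ by choosing the splitting ratio $\kappa$ so that $e^{2/(sT')^\beta}=C_2^{\tilde\gamma}\lesssim e^{\gamma\mu(R_1+\varepsilon)^2}$, at the price $s\sim\gamma$.

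Finally, the dissipation $e^{-2d\mu(T-\delta)}$ absorbs $e^{(1+\gamma)\mu(R_1+\varepsilon)^2}$ when $T>T^*$. Taking $\gamma,\delta,\varepsilon$ of order $T-T^*$ yields $e^{2/(s\delta)^\beta}\le e^{c_\beta/(T-T^*)^{2\beta}}$, which is where the exponent $2\beta$ actually comes from.
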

For the boundary observability, we have the optimal minimal time for any subset of the boundary, with an estimate of the cost.
 \begin{thm}
 \label{mainbord}
Let $R>0$, $\Gamma_{\!R}$ be a nonempty open subset of $\partial \mc B_R$, and define $$T^* = \frac{R^2}{2d}.$$
If $T>T^*$, the Baouendi-Grushin equation \eqref{grushin} is observable from $\Gamma_{\!R} \times \tilde \Omega$. Moreover for any $\beta >1$ there exists $c_\beta > 0$ such that $$K_T \le \exp\left( \frac{c_\beta}{(T-T^*)^{2\beta}}\right).$$
If $T<T^*$ the Baouendi-Grushin equation \eqref{grushin} is not observable from $\Gamma_{\!R} \times \tilde \Omega$. 
\end{thm}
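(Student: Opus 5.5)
Our plan for \cref{mainbord} follows the strategy announced in the abstract: Fourier decomposition in $\tilde x$, a refined full-boundary estimate with explicit dependence on the frequency, and a Lebeau--Robbiano argument to localize the observation on the sphere.

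\emph{Fourier reduction.} Let $(\phi_n,\lambda_n)$ be the Dirichlet eigenpairs of $-\Delta_{\tilde x}$ on $\tilde\Omega$. Writing $y=\sum_n y_n(t,x)\phi_n(\tilde x)$ turns \eqref{grushin} into the decoupled family
\[
\partial_t y_n-\Delta_x y_n+\lambda_n\|x\|^2 y_n=0 \quad \text{in } (0,T)\times\mc B_R ,
\]
with Dirichlet data on $\partial\mc B_R$. Since $\Gamma_{\!R}\times\tilde\Omega$ is tensorized, \eqref{obsbordgrushin} is equivalent to the uniform-in-$n$ inequality
\[
\|\nabla y_n(T)\|^2_{L^2(\mc B_R)}+\lambda_n\|\,\|x\|\,y_n(T)\|^2_{L^2(\mc B_R)}\le K_T^2\int_0^T\!\!\int_{\Gamma_{\!R}}|\partial_n y_n|^2 .
\]
Write $\lambda=\lambda_n$. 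The harmonic-oscillator potential $\lambda\|x\|^2$ has ground state $\approx e^{-\sqrt\lambda\|x\|^2/2}$ and eigenvalue $\approx d\sqrt\lambda$. This Gaussian profile is what creates the minimal time $R^2/(2d)$.

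\emph{Negative part ($T<T^*$).} We test the inequality on $y_0=\phi_n\,v_\lambda$, where $v_\lambda$ is the first Dirichlet eigenfunction of $-\Delta_x+\lambda\|x\|^2$ on $\mc B_R$. Its eigenvalue is $\mu_\lambda=d\sqrt\lambda+O(e^{-c\sqrt\lambda})$, and its normal derivative on $\partial\mc B_R$ is of size $e^{-\sqrt\lambda R^2/2}$ up to powers of $\lambda$. We normalize $v_\lambda$ by $\|\nabla v_\lambda\|^2+\lambda\|\,\|x\|v_\lambda\|^2\approx 1$. Then the left-hand side is about $e^{-2\mu_\lambda T}$. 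The right-hand side is about $\lambda^{C}e^{-\sqrt\lambda R^2}|\Gamma_{\!R}|$. The ratio blows up as $\lambda\to\infty$ exactly when $2dT<R^2$, so the inequality fails for $T<T^*$. This is the argument of \cref{BDE}, and it also covers subsets $\Gamma_{\!R}$.

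\emph{Positive part, step 1: quantitative full-boundary estimate.} We revisit \cref{BDE} with a Carleman estimate for the 1D-in-$\lambda$ operator, using a weight adapted to the Gaussian decay, for instance $\sqrt\lambda\,\|x\|^2/2$ combined with a time weight. The goal is a bound for every $T>T^*$ of the form
\[
\|y_n(T)\|^2_{\lambda}\le C\,e^{c/(T-T^*)^{\beta'}}\int_{0}^{T}\!\!\int_{\partial\mc B_R}|\partial_n y_n|^2 ,
\]
uniform in $\lambda$, where $\|\cdot\|_\lambda$ denotes the left-hand side energy. Uniformity in $\lambda$ comes from absorbing the excess decay $e^{-2(T-T^*)d\sqrt\lambda}$, exactly as in the one-dimensional critical-time arguments.

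\emph{Positive part, step 2: localization.} We decompose $L^2(\mc B_R)$ along spherical harmonics on $\sphere$ of degree $\le k$. With $-\Delta_x=-\partial_r^2-\frac{d-1}{r}\partial_r+\frac{-\Delta_{\sphere}}{r^2}$, the operator commutes with this decomposition. A spectral inequality for spherical harmonics on $\Gamma$ of Jerison--Lebeau type then gives
\[
\sum_{\ell\le k}|a_\ell|^2\le e^{C\sqrt{\Lambda_k}}\int_\Gamma\Big|\sum_{\ell\le k}a_\ell Y_\ell\Big|^2 .
\]
Applied to $\partial_n y_n$ on $\partial\mc B_R$, this converts the full-boundary estimate of step 1 into a $\Gamma_{\!R}$-estimate on low angular modes. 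High angular modes decay at rate $\gtrsim \Lambda_k/R^2$ on top of the $\lambda$-dependent rate, because the angular eigenvalue commutes with the rest of the operator. We then run the Lebeau--Robbiano iteration on time intervals $\tau_j\sim (T-T^*)\,j^{-\beta}$. The costs $e^{C\sqrt{\Lambda_k}}$ and $e^{c/\tau_j^{\beta'}}$ are balanced against the high-mode dissipation $e^{-\Lambda_k\tau_j}$; this balancing produces the announced cost $\exp(c_\beta/(T-T^*)^{2\beta})$ with $\beta>1$. Crucially, the iteration must be run with observation windows whose total length is arbitrarily close to $T^*$ plus the sum of the $\tau_j$.

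\emph{Main obstacle.} The hardest point is step 1 combined with the iteration. The time $T^*$ is not small, so the standard Lebeau--Robbiano scheme, which needs only a tiny-time observability, does not apply directly. We must use the full-boundary estimate on each block over a time slightly exceeding $T^*$, restricted to angular frequencies $\le k$, with constants uniform in both $\lambda$ and $k$. Uniformity in $k$ is delicate because the angular term $\Lambda_k/r^2$ modifies the effective potential. We would first try to show that the angular term only helps, by treating it as a nonnegative potential in the Carleman estimate, or via a comparison/monotonicity argument in $r$.
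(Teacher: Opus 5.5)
\textbf{Verdict: genuine gap in the positive part.} Your Fourier reduction, the spherical-harmonic spectral inequality and the high-mode dissipation are the right ingredients. The negative part is fine; it is in fact immediate from \cref{BDE}, since observing from $\Gamma_{\!R}\times\tilde\Omega$ is weaker than observing from $\partial\mc B_R\times\tilde\Omega$.

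\emph{A minor slip.} With the full gradient in \eqref{obsbordgrushin}, mode $n$ contributes $\|\nabla_x y_n\|^2+\lambda_n\|y_n\|^2$, not $\lambda_n\|\,\|x\|\,y_n\|^2$. The two are not uniformly equivalent: on the ground state they differ by a factor $\sim\sqrt{\lambda_n}$. This polynomial loss could be absorbed, so it is not the real problem.

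\emph{Why your plan cannot close.} The gap sits exactly at the point you call the main obstacle and leave open. Your Step 1 gives a full-boundary inequality only on windows longer than $T^*$, uniformly in $\lambda$. A Lebeau--Robbiano argument cannot run on such an input, for the following reasons.
\begin{itemize}
\item In the iteration (equivalently, Miller's telescoping, \cref{lemmeLR}), the low modes must be observed on a window placed after the high modes have decayed. One needs an approximate estimate $f(t)\|e^{-tA}y\|^2\le f(qt)\|y\|^2+\int_0^t\|Be^{-sA}y\|^2$ for arbitrarily small $t$.
\item Infinitely many windows, each longer than $T^*$, do not fit in a finite horizon.
\item Letting the windows overlap, so that their union has length close to $T^*+\sum_j\tau_j$, destroys the decay-before-observation structure. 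That structure is what controls $\int\|B\,\Pi_k^\perp y\|^2$.
\item With finitely many steps you only get $\|y(T)\|^2\le A(\varepsilon)\int_0^T\!\int_{\Gamma_{\!R}}|\partial_n y|^2+\varepsilon\|y(0)\|^2$. This holds for every $T>0$ by unique continuation mode by mode, including $T<T^*$ where exact observability fails. So it cannot yield the exact inequality.
\end{itemize}
Your suggested remedies target uniformity in $k$, which is a non-issue: the full-boundary estimate holds for every solution, in particular for data in the semigroup-invariant low-mode spaces.

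\emph{The missing idea: localize first at fixed frequency, uniformize last.} The paper fixes $\mu=\sqrt\lambda$ and proceeds in three steps.
\begin{itemize}
\item \cref{prop_observability_from_boundary} gives a full-boundary estimate valid for \emph{all} $T>0$, with factorized cost $C(1+\mu)^3e^{\mu R^2}e^{2c_\beta/T^\beta}$. The exponent $\mu R^2$ is sharp and independent of $T$; the time singularity is independent of $\mu$.
\begin{itemize}
\item When $\mu T\ge a\mu^{1-1/\beta}$, it comes from the BDE multiplier identity with the $\coth(2\mu t)$ weight. This condition makes $\sinh(2\mu T)\gtrsim\mu$, and $\coth x\le 1+1/x$ splits $e^{\mu\coth(2\mu T)R^2}\le e^{\mu R^2}e^{R^2/(2T)}$.
\item In the complementary regime it comes from a classical Carleman estimate with cost $e^{C(1+1/T+\mu)}$, where $\mu\le(a/T)^\beta$.
\end{itemize}
\item The Lebeau--Robbiano scheme (\cref{LR}) is then run at fixed $\mu$, treating $C(1+\mu)^3e^{\mu R^2}$ as the constant $C_{obs}$. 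The admissible time $T'$ shrinks as $\mu\to\infty$, so $\kappa$ is tuned so that $e^{2/(sT')^\beta}=C_2^{\tilde\gamma}$. This costs only an extra $e^{\gamma\mu R^2}$ (\cref{obsharmonicheatsubsetboundary}).
\item Only at the end is $T^*$ created. Observe on $[0,\delta_\gamma]$ with cost $e^{(1+\gamma)\mu R^2}e^{2/(s\delta_\gamma)^\beta}$, then let the bottom of the spectrum, which is $\ge d\mu$, dissipate on $[\delta_\gamma,T]$. This beats $e^{(1+\gamma)\mu R^2}$ uniformly in $\mu$ once $T(1-2\gamma)=T^*(1+\gamma)$.
\end{itemize}
The exponent $2\beta$ comes from $s\sim\gamma$ and $\delta_\gamma\sim\gamma\sim T-T^*$ (\cref{prop_uniform_observa_wrt_mu_boundary}). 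It does not come from an iteration with windows $\tau_j\sim(T-T^*)j^{-\beta}$.
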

Let us mention that the necessary conditions (which prove the optimality of the minimal time) are direct consequences of \cref{BDE} and \cref{Roman}. 
{Nevertheless, we provide in \cref{optimality} a proof for the sake of completeness, and to deal with the case $T = T^*$ when $d = 2$ or 3.}
Indeed we have 
\begin{prop} \label{prop_necessary_condition_dim2and3}
Under the same assumptions as in \cref{maininterne}, if $d=2$ or $3$, the Baouendi-Grushin equation \eqref{grushin} is not observable from $\omega \times \tilde \Omega$ at $T=T^*$.
\end{prop}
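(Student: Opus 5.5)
We disprove observability at $T=T^*$ by building a sequence of solutions of \eqref{grushin} that concentrate near the sphere $\{\Vert x\Vert = r_0\}$, with $r_0 := \inf_{x\in\omega}\Vert x\Vert$, and for which the ratio in \eqref{obsinternegrushin} blows up. Since $\omega$ is open and nonempty, $r_0\in[0,R)$. If $r_0=0$ then $T^*=0$ and there is nothing to prove. So we take $r_0>0$. Then $\omega\subset\{r_0\le \Vert x\Vert<R\}$, and it suffices to show non-observability from the larger set $\{r_0<\Vert x\Vert<R\}\times\tilde\Omega$.

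\textbf{Reduction to radial one-dimensional problems.} Let $(\lambda_n,\phi_n)$ be the Dirichlet eigenpairs of $-\Delta_{\tilde x}$ on $\tilde\Omega$. We take initial data of the form $y_0=\sum_n a_n v_n(\Vert x\Vert)\phi_n(\tilde x)$, with $v_n$ radial. Each mode then solves $\partial_t u - u_{rr}-\frac{d-1}{r}u_r + \lambda_n r^2 u=0$ on $(0,R)$. The conjugation $u=r^{-(d-1)/2}w$ turns this into a one-dimensional harmonic-oscillator-type operator,
\[
-w''+\Bigl(\lambda_n r^2+\tfrac{(d-1)(d-3)}{4r^2}\Bigr)w .
\]
Here the choice $d=2$ or $3$ matters: the inverse-square coefficient $\frac{(d-1)(d-3)}{4}$ equals $-\tfrac14$ for $d=2$ and $0$ for $d=3$. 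For $d=3$ we get exactly the half-line Hermite problem with Dirichlet condition at $0$. For $d=2$ we get the Hardy-critical case, whose ground states are explicitly $e^{-\sqrt{\lambda_n} r^2/2}$ in the original $u$ variable. In both cases the lowest eigenfunction on $(0,R)$ is, up to errors of size $O(e^{-c\sqrt{\lambda_n}R^2})$, equal to $\psi_n(r)=e^{-\sqrt{\lambda_n} r^2/2}$. Its eigenvalue is $\mu_n = d\sqrt{\lambda_n}+O(e^{-c\sqrt{\lambda_n}})$, and this is explicit, since $e^{-\sqrt\lambda\Vert x\Vert^2/2}$ is the Gaussian ground state of $-\Delta_x+\lambda\Vert x\Vert^2$ in $\R^d$ with eigenvalue $d\sqrt\lambda$. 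This is where the exponent $2d$ in $T^*$ comes from: $e^{-\mu_n T}$ versus $\int_{r>r_0}\psi_n^2\approx e^{-\sqrt{\lambda_n}r_0^2}$ balance exactly at $T=r_0^2/(2d)$.

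\textbf{Using the critical case.} A single mode only gives a ratio that stays bounded at $T=T^*$. Up to polynomial factors, both sides of \eqref{obsinternegrushin} behave like $e^{-2d\sqrt{\lambda}T}\lambda^{-d/4}$ and $\frac{1}{\sqrt\lambda}e^{-\sqrt\lambda r_0^2}\cdot\text{poly}(\lambda)$. So we have to track the polynomial prefactors exactly. This is the main obstacle, and it is where dimension enters. We compute:
\begin{itemize}
\item the left side, $\int_0^R r^{d-1}e^{-2d\sqrt\lambda T}e^{-\sqrt\lambda r^2}\,dr \sim c\,\lambda^{-d/4}e^{-2d\sqrt\lambda T}$;
\item the right side, $\int_0^T\!\int_{r_0}^R r^{d-1}e^{-2d\sqrt\lambda t}e^{-\sqrt\lambda r^2}\,dr\,dt \sim \frac{C}{\lambda}e^{-\sqrt\lambda r_0^2}$.
\end{itemize}
At $T=T^*$ the ratio is then of order $\lambda^{1-d/4}$, which tends to $\infty$ exactly when $d<4$, that is, for $d=2,3$. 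This is the proposition.

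\textbf{Controlling the errors.} It remains to justify replacing the true Dirichlet ground state on $\mc B_R$ by the Gaussian. We would do this in two steps:
\begin{itemize}
\item Write $\psi = \chi(\Vert x\Vert)e^{-\sqrt\lambda\Vert x\Vert^2/2}$, with $\chi$ a cutoff equal to $1$ near $[0,r_0+\delta]$. The commutator error is then $O(e^{-\sqrt\lambda (r_0+\delta)^2/2})$.
\item Use the explicit semigroup (or Duhamel's formula) to show the true solution differs from $e^{-d\sqrt\lambda t}\psi$ by an amount negligible against both sides. Here $\delta$ is chosen so that these errors are exponentially smaller than $\lambda^{-d/4}e^{-2d\sqrt\lambda T^*}= \lambda^{-d/4}e^{-\sqrt\lambda r_0^2}$.
\end{itemize}
A cleaner alternative is to use the exact first Dirichlet eigenfunction and show, by an Agmon estimate, that it is exponentially close to the Gaussian in $L^2$ with the weight $e^{\sqrt\lambda r^2/2}$. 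We would try the first route first.

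Letting $\lambda=\lambda_n\to\infty$ then shows that no constant $K_{T^*}$ can exist.
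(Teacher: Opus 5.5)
Your proof is correct. Its skeleton matches the paper's: a single $\tilde x$-Fourier mode, a radial profile that is essentially the Gaussian $e^{-\mu\Vert x\Vert^2/2}$ (with $\mu=\sqrt{\lambda_n}$ in your notation), and exact tracking of the polynomial prefactors. This gives the ratio $\mu^{2-d/2}=\lambda^{1-d/4}$ at $T=T^*$, which is unbounded exactly when $d<4$. Where you genuinely differ is in how the Gaussian approximation is justified.

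\textbf{The paper's route.} It works with exact separated solutions $e^{-\nu_\mu t}\varpi_\mu(x)\phi_p(\tilde x)$, where $\varpi_\mu$ is the first Dirichlet eigenfunction of $-\Delta+\mu^2\Vert x\Vert^2$ on $\mathcal B_R$. This costs three ingredients:
\begin{itemize}
\item the two-sided bound $d\mu\le\nu_\mu\le d\mu+\kappa(\mu)$;
\item the spectral gap $\nu_{\mu,2}\ge(d+2)\mu$;
\item a positivity argument giving $\Vert\varpi_\mu-g_\mu\Vert_{L^2}^2\lesssim\mu^{2+d/2}e^{-\mu R^2}$ for a corrected Gaussian $g_\mu$.
\end{itemize}
In exchange, no remainder has to be propagated in time.

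\textbf{Your first route.} You take $\psi=\chi e^{-\mu r^2/2}$ itself as the initial datum and write $u(t)=e^{-d\mu t}\psi+v(t)$. Here $v(0)=0$ and $\partial_t v+G_\mu v=-e^{-d\mu t}h$, with $h=(2\mu r\chi'-\Delta\chi)e^{-\mu r^2/2}$ supported in $\{r\ge r_0+\delta\}$. Duhamel's formula plus the mere contraction of the semigroup gives $\Vert v(t)\Vert\le\Vert h\Vert/(d\mu)\lesssim e^{-\mu(r_0+\delta)^2/2}$. This remainder is exponentially negligible against both main terms:
\begin{itemize}
\item against $\Vert u(T^*)\Vert^2\approx\mu^{-d/2}e^{-\mu r_0^2}$ on the left;
\item against $\int_0^{T^*}\!\int_{\{r>r_0\}}e^{-2d\mu t}\psi^2\approx\mu^{-2}e^{-\mu r_0^2}$ on the right.
\end{itemize}
Any $\delta>0$ with $r_0+\delta<R$ works, and the bound $G_\mu\ge d\mu$ is not even needed. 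Your route is therefore more elementary: it needs no spectral gap, no upper eigenvalue bound, and no comparison of eigenfunctions. You should write out the remainder estimate for the right-hand side explicitly, since you only mention the comparison with the left-hand side.

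\textbf{One misleading remark.} Your claim that the inverse-square coefficient $(d-1)(d-3)/4$ is where $d=2,3$ matters is a red herring. The radial Gaussian is the exact ground state of $-\Delta+\mu^2\Vert x\Vert^2$ on $\mathbb{R}^d$ with eigenvalue $d\mu$ in every dimension. The restriction to $d=2,3$ comes only from the prefactor count $\mu^{2-d/2}$, as you correctly identify afterwards. You can drop the conjugation step and the half-line and Hardy discussion entirely.
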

The rest of the paper focuses only on the sufficient condition, i.e. proving the observability when $T > T^*$.

\begin{rem}
We provide an estimate for the blow-up in observability cost as $T$ approaches $T^*$, for $\omega$ defined by \eqref{sectionanneau} for the internal observability and any $\Gamma_{\!R} \subset \partial \mathcal{B}_R$ for the boundary observability. Indeed, we prove that in both cases,
$$
K_T \leq e^{\frac{c_\beta}{(T-T^*)^{2\beta}}}, \quad \beta > 1.
$$
This estimate is likely suboptimal. For instance, if $\omega$ is a complete annulus -- \textit{i.e.} $\omega = \left\lbrace x \in \R^d, \ R_1 < \Vert x \Vert < R_2\right\rbrace$ -- we obtain a sharper estimate:
$$
K_T \leq e^{\frac{c_\beta}{(T-T^*)^{\beta}}}, \quad \beta > 1.
$$
The optimality of these estimates is further discussed in \cref{subsection_blowup}
\end{rem}

\subsection{Related null-controllability results}
{Classicaly,} our observability results translate into null-controllability results, and the estimate of the observability cost gives an estimate on the control cost (see, among others, \cite{zabczyk20,Dolecki,Coron2007,Tucsnak2009}). 
\newline
{More precisely}, for the internal case, the controlled system is
  \begin{equation} \label{grushincontrolinterne}
   \left\{
   \begin{aligned}
 \partial_t y -\Delta_x y - \|x\|^2\Delta_{\tilde x} y &= \indicatrice{\omega \times \tilde \Omega}v &&\text{in }  (0,T)\times \mathcal{B}_R \times \tilde \Omega, \\
 y &= 0 &&\text{in } (0,T) \times \partial \mathcal{B}_R \times \tilde \Omega, \\
 y(0,\cdot) &= y_0 &&\text{in } \mathcal{B}_R \times \tilde \Omega .
\end{aligned}\right.
\end{equation} 
This system is well-posed: for any initial data in $L^2(\Omega)$ {and any $v$ in $L^2((0,T)\times \omega \times \tilde \Omega)$}, there exists a unique solution continuous in time with values in $L^2(\Omega)$ -- see \cref{solutiongrushin} for additional details. {Furthermore, it is} null-controllable if $T$ is larger than the critical time, in the following sense:
\begin{cor}
Let $\omega$ be a nonempty open subset of $\mathcal{B}_R$, and define $$T^* = \frac{\inf_{x\in \omega} \Vert x \Vert^2}{2d}.$$
If $T>T^*$, for any $y_0 \in L^2(\Omega)$, there exists $v \in L^2((0,T)\times \omega \times \tilde \Omega)$ such that the solution of \eqref{grushincontrolinterne} satisfies $y(T)=0$. 
Moreover if $\omega$ is a section of annulus, and $v$ is the null-control of minimal $L^2$-norm, we have that for any $\beta >1$, there exists $c_\beta >0$ such that
$$\norm{v}{L^2((0,T)\times \omega \times \tilde \Omega)} \le \exp\left( \frac{c_\beta}{(T-T^*)^{2\beta}}\right) \norm{y_0}{L^2(\Omega)}.$$
If $T<T^*$ there exists $y_0 \in L^2(\Omega)$, such that for any $v \in L^2((0,T)\times \omega \times \tilde \Omega)$ the solution of \eqref{grushincontrolinterne} does not identically vanish at time $T$. 
\end{cor}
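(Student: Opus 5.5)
The corollary follows from \cref{maininterne} through the standard duality between null-controllability and observability of the adjoint system, in the Hilbert Uniqueness Method form of Dolecki--Russell.

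\emph{The adjoint system.} The operator $A = \Delta_x + \|x\|^2\Delta_{\tilde x}$ with Dirichlet conditions on $\partial\mathcal{B}_R \times \tilde\Omega$ is self-adjoint and nonpositive on $L^2(\Omega)$, since it is the operator associated with the closed symmetric form
\[
\int_\Omega |\nabla_x y|^2 + \|x\|^2|\nabla_{\tilde x} y|^2 .
\]
Hence the adjoint backward system, reversed in time, is again \eqref{grushin}.

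\emph{The duality identity.} Let $y$ solve \eqref{grushincontrolinterne} and let $\varphi$ solve the backward equation $-\partial_t\varphi - A\varphi = 0$ with $\varphi(T) = \varphi_T$. Then
\[
\langle y(T), \varphi_T\rangle - \langle y_0, \varphi(0)\rangle = \int_0^T\!\!\int_{\omega\times\tilde\Omega} v\,\varphi .
\]
I will justify this identity first for smooth data, using the well-posedness recalled in \cref{solutiongrushin}, and then extend it by density.

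\emph{Existence of a control for $T>T^*$.} Apply \eqref{obsinternegrushin} to $\psi(t) = \varphi(T-t)$, which solves \eqref{grushin} with initial data $\varphi_T$. This gives
\[
\|\varphi(0)\|^2 \le K_T^2 \int_0^T\!\!\int_{\omega\times\tilde\Omega}|\varphi|^2 .
\]
Now minimize the strictly convex, continuous functional
\[
J(\varphi_T) = \tfrac12\int_0^T\!\!\int_{\omega\times\tilde\Omega}|\varphi|^2 + \langle y_0, \varphi(0)\rangle .
\]
By the observability inequality, $J$ is coercive on the completion $H$ of $L^2(\Omega)$ for the seminorm $\|\varphi\|_{L^2((0,T)\times\omega\times\tilde\Omega)}$, since $|\langle y_0,\varphi(0)\rangle| \le K_T\|y_0\|\,\|\varphi\|_{L^2(\omega)}$. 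Working on $H$ avoids having to prove unique continuation separately. The Euler--Lagrange equation at the minimizer $\hat\varphi$, combined with the duality identity, shows that the control $v = \hat\varphi\,\indicatrice{\omega\times\tilde\Omega}$ yields $y(T) = 0$. This $v$ is also the control of minimal $L^2$ norm, because it lies in the closure of the range of the adjoint of the control-to-state map.

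\emph{Cost estimate.} Comparing $J(\hat\varphi) \le J(0) = 0$ with the bound above gives $\tfrac12\|v\|^2 \le K_T\|y_0\|\,\|v\|$. Hence $\|v\| \le 2K_T\|y_0\|$, and the factor $2$ can be absorbed by enlarging $c_\beta$. Alternatively, the sharp Dolecki--Russell equivalence gives exactly $\|v\| \le K_T\|y_0\|$. When $\omega$ is a section of annulus, the bound of \cref{maininterne} on $K_T$ then yields the stated estimate.

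\emph{Negative part for $T<T^*$.} Suppose every $y_0$ could be steered to zero. The map $y_0 \mapsto$ (minimal-norm control) is then linear, and it is bounded by the closed graph theorem. Reversing the duality argument, the uniform bound $\|v\| \le C\|y_0\|$ combined with the identity gives
\[
\langle y_0,\varphi(0)\rangle = -\int_0^T\!\!\int_{\omega\times\tilde\Omega} v\varphi ,
\]
so $\|\varphi(0)\| \le C\|\varphi\|_{L^2(\omega)}$. This is observability at time $T$, which contradicts the non-observability part of \cref{maininterne}.

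The only delicate step is the closed graph argument: one must check that the map from $y_0$ to the minimal-norm control is well defined and linear. It is, because the set of admissible controls for a given $y_0$ is a closed affine subspace, so its element of minimal norm is unique and depends linearly on $y_0$. Everything else is routine.
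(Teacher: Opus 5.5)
Your proposal is correct and follows the route the paper intends: the paper gives no separate proof of this corollary and only invokes the classical HUM/Dolecki--Russell duality between observability and null-controllability, which you carry out in detail, including the closed-graph argument for $T<T^*$. One small correction on the cost constant: the factor $2$ cannot be absorbed into $c_\beta$ uniformly as $T\to\infty$, because $e^{c_\beta/(T-T^*)^{2\beta}}\to 1$; instead rely on your sharp alternative $\|v\|\le K_T\|y_0\|$, which follows directly by testing the Euler--Lagrange equation with $\hat\varphi$ itself.
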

For the boundary case the controlled system is 
  \begin{equation} \label{grushincontrolbord}
   \left\{
   \begin{aligned}
 \partial_t y -\Delta_x y - \|x\|^2\Delta_{\tilde x} y &= 0 &&\text{in }  (0,T)\times \mathcal{B}_R \times \tilde \Omega, \\
 y &= \indicatrice{\Gamma_{\!R} \times \tilde \Omega}v &&\text{in } (0,T) \times \partial \mathcal{B}_R \times \tilde \Omega, \\
 y(0,\cdot) &= y_0 &&\text{in } \mathcal{B}_R \times \tilde \Omega .
\end{aligned}\right.
\end{equation} 
To consider boundary control, the natural duality is between $H^1_0(\Omega)$ and $H^{-1}(\Omega)$, therefore we need this system to be well-posed for any initial data in $H^{-1}(\Omega)$. This is true when the solutions are defined in the weak sense (see \cref{solfaible}). Then, similarly to the internal controllability result, we have that \eqref{grushincontrolbord} is null-controllable if $T$ is larger than the critical time.
\begin{cor}
Let $R>0$, $\Gamma_{\!R}$ be a nonempty open subset of $\partial \mc{B}_R$, and define $$T^* = \frac{R^2}{2d}.$$
If $T>T^*$, for any $y_0 \in H^{-1}(\Omega)$, there exists $v \in L^2((0,T)\times \Gamma_{\!R} \times \tilde \Omega)$ such that the solution of \eqref{grushincontrolinterne} satisfies $y(T)=0$.
Moreover, if $v$ is the null-control of minimal $L^2$-norm, we have that for any $\beta >1$ there exists $c_\beta >0$ such that
$$\norm{v}{L^2((0,T)\times \Gamma_{\!R} \times \tilde \Omega)} \le \exp\left( \frac{c_\beta}{(T-T^*)^{2\beta}}\right) \norm{y_0}{H^{-1}(\Omega)}.$$
If $T<T^*$ there exists $y_0 \in H^{-1}(\Omega)$, such that for any $v \in L^2((0,T)\times \Gamma_{\!R} \times \tilde \Omega)$ the solution of \eqref{grushincontrolinterne} does not identically vanish at time $T$. 
\end{cor}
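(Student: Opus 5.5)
The plan is to derive this corollary from \cref{mainbord} through the classical duality between null-controllability and observability (Hilbert Uniqueness Method). The adjoint system is again the Baouendi-Grushin equation, since the operator $A = -\Delta_x - \Vert x\Vert^2\Delta_{\tilde x}$ with Dirichlet conditions on $\partial\mc B_R\times\tilde\Omega$ is self-adjoint. The main steps are:
\begin{enumerate}
\item Fix the notion of solution of \eqref{grushincontrolbord} for $y_0\in H^{-1}(\Omega)$ and $v\in L^2((0,T)\times\Gamma_{\!R}\times\tilde\Omega)$, namely the weak (transposition) sense recalled in \cref{solfaible}.
\item Prove the duality identity.
\item Translate observability into control with the same cost.
\item Derive the negative statement for $T<T^*$.
\end{enumerate}

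\textbf{Duality identity.} Let $z$ be the solution of the backward adjoint problem
\[
-\partial_t z - \Delta_x z - \Vert x\Vert^2\Delta_{\tilde x} z = 0, \qquad z=0 \text{ on } \partial\mc B_R\times\tilde\Omega, \qquad z(T)=z_T\in H^1_0(\Omega).
\]
For smooth data, multiply the equation of $y$ by $z$ and integrate by parts. The boundary term in $x$ produces $-\int_0^T\int_{\Gamma_{\!R}\times\tilde\Omega} v\,\partial_n z$. The terms in $\tilde x$ carry no boundary contribution on $\partial\mc B_R$, because the weight $\Vert x\Vert^2$ does not affect the $x$-integration by parts. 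One obtains
\[
\langle y(T), z_T\rangle_{H^{-1},H^1_0} = \langle y_0, z(0)\rangle_{H^{-1},H^1_0} - \int_0^T\int_{\Gamma_{\!R}\times\tilde\Omega} v\,\frac{\partial z}{\partial n}.
\]
This identity is in fact the definition of the weak solution in \cref{solfaible}, and it extends by density thanks to the hidden regularity $\partial_n z\in L^2$ stated after \eqref{grushin}. After the time reversal $t\mapsto T-t$, the function $z$ solves \eqref{grushin} with initial datum $z_T$.

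\textbf{Observability implies control.} By \cref{mainbord}, for $T>T^*$,
\[
\Vert \nabla z(0)\Vert_{L^2}\le K_T\,\Vert\partial_n z\Vert_{L^2((0,T)\times\Gamma_{\!R}\times\tilde\Omega)}.
\]
Equip $H^1_0$ with the norm $\Vert\nabla\cdot\Vert$ and $H^{-1}$ with its dual norm. Minimize over $z_T\in H^1_0(\Omega)$ the functional
\[
J(z_T)=\tfrac12\Vert\partial_n z\Vert_{L^2((0,T)\times\Gamma_{\!R}\times\tilde\Omega)}^2+\langle y_0, z(0)\rangle .
\]
The functional $J$ is convex and continuous. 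The observability inequality makes it coercive on the completion $H$ of $H^1_0$ for the seminorm $\Vert\partial_n z\Vert$. This seminorm is a norm by backward uniqueness combined with observability, and the map $z_T\mapsto z(0)$ extends continuously to $H$, again by observability.

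Let $\hat z$ be the minimizer and set $v=\partial_n\hat z$. The Euler--Lagrange equation, combined with the duality identity, gives $\langle y(T),z_T\rangle=0$ for all $z_T$, hence $y(T)=0$. Testing the Euler--Lagrange equation with $\hat z_T$ gives
\[
\Vert v\Vert^2=-\langle y_0,\hat z(0)\rangle\le \Vert y_0\Vert_{H^{-1}}K_T\Vert v\Vert .
\]
Hence $\Vert v\Vert\le K_T\Vert y_0\Vert_{H^{-1}}$, and this control is the one of minimal norm by the standard HUM argument. The cost bound then follows from the estimate of $K_T$ in \cref{mainbord}.

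\textbf{Case $T<T^*$.} Conversely, null-controllability with some bounded cost $C$ implies observability with constant $C$, by the same identity together with the uniform boundedness principle. Suppose every $y_0$ were controllable. The map $y_0\mapsto$ (minimal-norm control) has a closed graph, hence is bounded, and we would get observability at time $T$, contradicting the negative part of \cref{mainbord}. Thus some $y_0$ cannot be steered to zero.

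The main obstacle is the careful justification of the weak-solution framework in $H^{-1}$: the duality pairing must be rigorously defined and the density argument must be valid for the degenerate operator. Here I would rely directly on \cref{solfaible} and on the hidden regularity of $\partial_n z$. The remaining steps are functional-analytic routine.
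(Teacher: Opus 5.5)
Your proposal is correct and follows the same route as the paper, which simply invokes the classical observability/controllability duality (HUM, Dolecki--Russell) applied to \cref{mainbord}, including the converse direction for $T<T^*$. There is one sign slip: with $J(z_T)=\frac12\Vert\partial_n z\Vert^2+\langle y_0,z(0)\rangle$, the Euler--Lagrange equation gives $\int v\,\partial_n z=-\langle y_0,z(0)\rangle$ for $v=\partial_n\hat z$, so your duality identity yields $\langle y(T),z_T\rangle=2\langle y_0,z(0)\rangle$ rather than $0$. You should take $v=-\partial_n\hat z$ on $\Gamma_{\!R}\times\tilde\Omega$; this does not affect the bound $\Vert v\Vert\le K_T\Vert y_0\Vert_{H^{-1}}$.
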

 
\subsection{Strategy of proof}
{We now} explain the main steps leading to \cref{maininterne} and \cref{mainbord}. Since {the strategy is} similar for the internal and the boundary case, we focus solely on the internal case, that is \cref{maininterne}.
\subsubsection{From harmonic heat to Baouendi-Grushin} \label{section_strategy_HH_Grushin}
Let us consider the harmonic heat equation with parameter $\mu >0$,
\begin{equation}
\label{harmonicheat}
\left\{
   \begin{aligned}
 \partial_t y_\mu -\Delta_x y_\mu + \mu^2\|x\|^2 y_\mu &= 0 &&\text{in }  (0,T)\times \mathcal{B}_R\\
 y_\mu &= 0 &&\text{in } (0,T) \times \partial \mathcal{B}_R, \\
 y_\mu(0,\cdot) &= y_{0,\mu} &&\text{in } \mathcal{B}_R.
\end{aligned}\right.
\end{equation}
For any initial data $y_{0,\mu} \in L^2(\mc{B}_R)$, this equation has a unique solution \linebreak $y_\mu \in C^0([0,T];L^2(\mc{B}_R))$ (see \cref{prop_wellposedness_harmonicheatequation}).
We say that the harmonic heat equation \eqref{harmonicheat} is observable from $\omega$, uniformly in $\mu>0$, if there exists a constant $K_T>0$ such that for any $\mu>0$, for any solution $y_\mu$ of \eqref{harmonicheat}, we have,
\begin{equation} \label{obsunifhh}
\int_{\mc{B}_R}|y_\mu(T,x)|^2   \le K_T^2 \int_0^T\int_{\omega}|y_\mu(t,x)|^2.
\end{equation} 
{We claim that if \eqref{obsunifhh} holds true, then}
we can deduce the observability of the Baouendi-Grushin equation \eqref{grushin} from $\omega \times \tilde \Omega$ with cost $K_T$, using the same strategy as \cite[Sections 2.2 and 2.3]{Beauchard2013}.
Indeed if we denote $(\mu_p^2)_{p\in\N} $ the eigenvalues of the Dirichlet laplacian on $\tilde \Omega$, and decompose the solution $y$ of \eqref{grushin} in an associated orthonormal basis of eigenfunctions $(\phi_p)_{p\in\N}$,
$$y(t,x,\tilde x) = \sum_{p = 0 }^\infty y_{\mu_p}(t,x)\phi_p(\tilde x),$$ one can see that $ y_{\mu_p}$ satisfies \eqref{harmonicheat} with $\mu = \mu_p$.
Then we can sum the observability inequalities \eqref{obsunifhh} using Parseval's identity and obtain the observability inequality \eqref{obsinternegrushin} for $y$.
\subsubsection{Uniform observability of the harmonic heat equation \eqref{harmonicheat}}
\label{ideeuniforme}
{
We now aim to establish the uniform observability inequality \eqref{obsunifhh}.
This uniform estimate, achievable for large enough $T$, results from the interplay between the system's natural dissipation and an observability estimate.
More precisely, we first note that, since we are dealing with a heat equation with potential, it is already known that \eqref{harmonicheat} is observable, although the cost $K_T$ depends on $\mu$.
Let us assume that this cost is bounded by $C_T^2 \exp (\mc{c} \mu)$, where the constant $\mc{c}$ does not depend on $T$ -- in other words, that for any $\mu>0$, any $y_\mu$ solution of \eqref{harmonicheat} satisfies.
}
$$\int_{\mc{B}_R}|y_\mu(T,x)|^2   \le C_T^2\exp(\mc{c}\mu) \int_0^T\int_{\omega}|y_\mu(t,x)|^2  .$$
{Using} the known dissipation estimate (see \cite[section 4.1]{Beauchard2020}) 
$$\int_{\mc{B}_R}|y_\mu(t,x)|^2   \le \exp(-2d\mu(t-s)) \int_{\mc{B}_R}|y_\mu(s,x)|^2  ,$$
{we claim that} the uniform estimate holds true for $T > \mc{c}/2d$. Indeed, {choosing} $\delta\in (0,T)$ such that $T-\delta> \mc{c}/2d$, {and combining  the dissipation estimate on  $[\delta,T]$ and the observability inequality on  $[0,\delta]$, easily leads to}
$$\int_{\mc{B}_R}|y_\mu(T,x)|^2   \le C_\delta^2 \exp\left(-\mu(2d(T-\delta)-\mc{c})\right) \int_0^\delta\int_{\omega}|y_\mu(t,x)|^2 .$$
Since $T-\delta> \mc{c}/2d$, we obtain the desired uniform estimate. \newline
As one can see, the crucial work to get the sharp minimal time, is to determine precisely the dependance in $\mu$ in the observability estimate (i.e. of the constant $\mc{c}$).\newline
This precise estimate is obtained in two steps: we start by proving an observability inequality with a precise dependance in $T$ and $\mu$ when the observation set is an annulus. The second step is to restrict the observation set to a section of annulus using a Lebeau-Robbiano strategy \cite{Lebeau1995}. The case of a generic $\omega$ is then easily deduced (see \cref{generalopensubset}).

\subsubsection{Observability from the whole annulus}
Let us give more details about the observability estimate on the annulus
\begin{equation} \label{annulus}
\mathcal A = \left\lbrace x \in \R^d, \ R_1 < \Vert x \Vert < R_2 \right\rbrace \subset \mathcal B_R.
\end{equation}
The observability result reads:

\begin{prop} \label{prop_observability_from_annulus_intro}
There exists a positive constant $\cc$, and for all $\beta > 1$, 
a positive constant $c_\beta$ verifying $c_\beta \xrightarrow{\beta\rightarrow 1} \infty$,  such that for all $T>0$,  $\mu>0$,  $\varepsilon\in (0,1]$, and all solution $y_\mu$ of \eqref{harmonicheat}, the following estimate holds:
$$
\int_{\mathcal{B}_R} \vert y_\mu(T) \vert^2 \leq \frac{\cc}{\varepsilon^6} (1+\mu) e^{\mu (R_1+\varepsilon)^2} e^{\frac{2 c_\beta}{T^\beta}}\int_0^T \int_{\mathcal A } \vert y_\mu \vert^2.
$$
\end{prop}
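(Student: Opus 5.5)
Writing $T^*=\frac{R^2}{2d}$ for the ball of radius $R$, the dependence $e^{\mu (R_1+\varepsilon)^2}$ is exactly $e^{2d\mu T^*(R_1+\varepsilon)}$ with $T^*(r)=r^2/2d$. This suggests the plan: prove a boundary observability estimate on a smaller ball with explicit cost $e^{\mu r^2}$, then transfer it to the annulus by a cut-off argument, as in the proof of \cref{Roman} from \cref{BDE}.

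\textbf{Step 1 (boundary observability on the whole sphere, explicit in $\mu$ and $T$).} Fix $r\in(R_1,R_1+\varepsilon)$, say $r=R_1+\varepsilon/2$. We prove that for every solution $z$ of the harmonic heat equation on $\mc B_r$ with a source term $f$ and Dirichlet condition,
$$\int_{\mc B_r}|z(T)|^2\le C(1+\mu)\,e^{\mu r^2}e^{\frac{c_\beta}{T^\beta}}\Big(\int_0^T\!\!\int_{\partial\mc B_r}|\partial_n z|^2+\int_0^T\!\!\int_{\mc B_r}|f|^2\Big).$$
The tool is a Carleman estimate with a weight adapted to the potential. First conjugate by the Gaussian $e^{-\mu\|x\|^2/2}$, i.e.\ set $z=e^{-\mu\|x\|^2/2}w$. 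This removes the potential, up to the constant $d\mu$ and a drift term $2\mu x\cdot\nabla$. Then apply a Fursikov–Imanuvilov type weight $\theta(t)\,\varphi(x)$ with $\varphi$ radial and maximal on the boundary. We take $\theta(t)=(t(T-t))^{-\beta}$-like; this choice is why the cost is $e^{c_\beta/T^\beta}$ and why $c_\beta\to\infty$ as $\beta\to1$.

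Because the radial weight $\|x\|^2$ is pseudoconvex without interior critical points only away from the origin, the hard part is the region near $0$. There one must exploit the Gaussian conjugation: the ground state $e^{-\mu\|x\|^2/2}$ is what produces exactly the factor $e^{\mu r^2}$, through $\sup/\inf$ of the Gaussian between the boundary and the center. Alternatively, one can take Step 1 directly from \cite{Beauchard2020}, provided their proof tracks constants; in that case the factor $e^{\mu r^2}$ comes from the minimal time $r^2/2d$ combined with the dissipation $e^{-2d\mu t}$.

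\textbf{Step 2 (cut-off to the annulus).} Let $\chi$ be radial, $\chi=1$ on $\mc B_{r}$ and $\chi=0$ outside $\mc B_{R_1+\varepsilon}$, so that $\nabla\chi$ is supported in $\{r<\|x\|<R_1+\varepsilon\}\subset\mc A$, with $|\nabla\chi|\lesssim\varepsilon^{-1}$ and $|\Delta\chi|\lesssim\varepsilon^{-2}$. Set $z=\chi y_\mu$ on the slightly larger ball $\mc B_{R_1+\varepsilon}$. Then $z$ solves the harmonic heat equation with source $f=-2\nabla\chi\cdot\nabla y_\mu-\Delta\chi\, y_\mu$ and has zero boundary trace, with $\partial_n z=0$ on $\partial \mc B_{R_1+\varepsilon}$. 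Applying Step 1 on $\mc B_{R_1+\varepsilon}$ would give a cost $e^{\mu(R_1+\varepsilon)^2}$ with no boundary term, and leaves
$$\int_0^T\!\!\int_{\mc A'}\big(\varepsilon^{-2}|\nabla y_\mu|^2+\varepsilon^{-4}|y_\mu|^2\big).$$
A Caccioppoli (local energy) estimate on a slightly larger annulus and a slightly larger time interval bounds $\int|\nabla y_\mu|^2$ by $\varepsilon^{-2}(1+\mu)\int_0^T\int_{\mc A}|y_\mu|^2$. The $\mu$ comes from the potential term $\mu^2\|x\|^2$, which gives a factor $(1+\mu)$ after choosing the cut-off width carefully. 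The time cut-off is absorbed by slightly shrinking the time interval, using $e^{c_\beta/T^\beta}$ with a modified constant. This produces the power $\varepsilon^{-6}$.

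\textbf{Step 3 (exterior part).} The region $\|x\|>R_1+\varepsilon$ still has to be controlled. Using $1-\chi$, the outer annulus part $\{R_1+\varepsilon<\|x\|<R\}$ with $\mc A$ inside is handled similarly. It is simplest to instead use the energy decay to observe $\int_{\mc B_R}|y_\mu(T)|^2$ through a standard heat observability estimate on the outer shell near $\partial\mc B_R$. There the cost is uniform in $\mu$ up to the constant, since the Gaussian weight favors the outer region.

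\textbf{Main obstacle.} The main obstacle is Step 1: obtaining a Carleman estimate in which the $\mu$-dependence is sharp, namely exactly $e^{\mu r^2}$ rather than $e^{C\mu}$, uniformly with the time blow-up $e^{c_\beta/T^\beta}$. I would try first the Gaussian conjugation combined with a weight of the form $\theta(t)\big(e^{\lambda\|x\|^2}-e^{2\lambda r^2}\big)$, treating the drift term as a lower-order perturbation absorbed for large $s\theta\ge C\mu$.
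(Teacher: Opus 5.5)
Your overall architecture matches the paper's. You split $\mc B_R$ into an inner ball and an outer region using cut-offs whose gradients live in $\mc A$. You work on $\mc B_{R_1+\varepsilon}$ so that the boundary term disappears, and you remove $\nabla y_\mu$ by a Caccioppoli inequality, which is where $\varepsilon^{-6}$ comes from. But the two estimates that carry the whole difficulty are not proved, and the routes you sketch would not give the stated constants.

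\textbf{Step 1.} The content of the statement is the exact exponent $\mu(R_1+\varepsilon)^2$ together with a $\mu$-independent time factor. A Carleman estimate after the static conjugation $z=e^{-\mu\|x\|^2/2}w$ cannot give this.
\begin{itemize}
\item The drift $2\mu x\cdot\nabla w$ is a first-order term of size $\mu$. Absorbing it forces $s$ to grow with $\mu$: comparing $s\varpi\eta\|\nabla w\|^2$ with $\mu^2\|\nabla w\|^2$ requires $s\gtrsim\mu^2$, not $s\theta\ge C\mu$.
\item Passing from weighted to unweighted norms then costs $e^{Cs}$. That is $e^{C\mu}$ at best, with $C$ set by the oscillation of the weight rather than equal to $r^2$.
\item The static Gaussian is the right weight only when $\mu t\gg 1$; as $t\to0$ the correct weight behaves like $e^{-(r^2-\|x\|^2)/(4t)}$.
\end{itemize}

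The paper does something else, in two regimes.
\begin{itemize}
\item For large $T\mu$, it uses the time-dependent conjugation $w=\exp\bigl(-\frac{\mu}{2}\coth(2\mu t)(r^2-\|x\|^2)\bigr)y$ and the exact multiplier identity of \cref{prop_BDE20}. There the drift is handled exactly: it produces $-4\theta D$ via $\theta''=-4\theta\theta'$, plus the boundary observation. So the only loss is $e^{\mu\coth(2\mu T)r^2}\le e^{\mu r^2}e^{r^2/(2T)}$.
\item This identity needs the Poincar\'e absorption of $-\mu^2r^2|w(T)|^2$, i.e. $\sinh(2\mu T)\gtrsim\mu$. That fails for large $\mu$ and $T\lesssim\ln\mu/\mu$, so citing \cite{Beauchard2020} does not suffice.
\item In the complementary regime $T\mu\le a\mu^{1-1/\beta}$, i.e. $\mu\le(a/T)^\beta$, the paper uses a crude Carleman estimate with cost $e^{c(1+1/T+\mu)}\le e^{c(1+1/T+a^\beta/T^\beta)}$.
\end{itemize}
This dichotomy is the real source of $\beta>1$ and of $c_\beta\to\infty$. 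The constant $a$ comes from $\sinh(2ax^\alpha)\ge\cc x$ and blows up as $\alpha=1-1/\beta\to0$ (\cref{lemma_existence_a}). It has nothing to do with choosing $\theta(t)=(t(T-t))^{-\beta}$.

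\textbf{Step 3.} The outer region contains $\{R_2\le\|x\|<R\}\not\subset\mc A$, and its cost must not exceed $e^{\mu(R_1+\varepsilon)^2}$. None of your three suggestions achieves this.
\begin{itemize}
\item A standard observability estimate for a heat equation with a potential of size $\mu^2$ grows like $e^{C\mu}$ or worse, with $C$ unrelated to $(R_1+\varepsilon)^2$.
\item Energy decay does not localize anything.
\item Handling it ``similarly'' fails: on the shell the multiplier identity produces the term $2\theta R\int_{\partial\mc B_R}|\partial_n w|^2$ on the outer sphere, where you have no observation.
\end{itemize}
The missing idea is that away from the origin you can take $\psi=-\frac12\|x\|^2$, which has no critical point there. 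The potential then contributes $\mu^2(\nabla V\cdot\nabla\phi-V\Delta\phi)\ge c\mu^2\lambda^2\varpi\eta>0$ to the Carleman computation (\cref{prop_Carleman_V_positive}). So $s$ can be chosen independently of $\mu$, and the cost is $e^{c/T}$.

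A minor point: in the Caccioppoli step the potential term has a good sign. The factor $1+\mu$ comes instead from the time weight $\sinh(2\mu t)^2/\sinh(2\mu T)^2$, whose derivative is $O(\mu+1/T)$.
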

This is obtained by a combination of two Carleman-type estimate. The first one follows the standard strategy for the heat equation with potential, like the one in \cite{zhu2024}, whereas the second one is very specific to the problem of interest, and is a refinement of a strategy found in \cite{Beauchard2020}. These are stated and proven in \cref{carlemaneries}. Then we combine them, using {the first one} when $T\mu$ is small, and the other when $T\mu$ is large -- this is done in \cref{obsfromannulus}. This strategy allows us to be sharp in the dependance in $\mu$ with a cost in $ \exp(\mu (R_1+\varepsilon)^2)$ and to have a dependance in time of the form $\exp(c_\beta/T^\beta)$ which is needed for the restriction result.

\subsubsection{From observability through the annulus to a section of annulus} 
{The final step of the proof consists in deducing}
observability through a section of annulus $\omega$ as defined in \eqref{sectionanneau}, from observability through the whole annulus. This is done using a Lebeau-Robbiano type strategy on observability inequalities. For this, we need three assumptions: \medskip \newline
\noindent $\bullet$ Observation on the whole annulus, with a cost in $\exp(b/T^\beta)$. This is given by the previous step, which gives, for any $y_\mu$ solution of \eqref{harmonicheat},
\begin{equation}
\label{obsintro}
\int_{\mathcal{B}_R} \vert y_\mu(T) \vert^2 \leq C_{obs}(\mu) e^{\frac{2 c_\beta}{T^\beta}}\int_0^T \int_{\mathcal A } \vert y_\mu \vert^2, \quad  T \in (0,1].
\end{equation}
\noindent $\bullet$ A family of nondecreasing subspaces of $L^2(\mc{B}_R)$, $(\mc{F}_\sigma)_{\sigma>0}$, such that for any $z \in \mc{F}_\sigma$ we have an inequality relating the energy on the annulus and on $\omega$,
	\begin{equation}
	\label{specintro}
	\int_{\mc A}|z|^2 \le C_{rel} e^{2a\sqrt\sigma} \int_{\omega}|z|^2\quad \sigma >0, 	\end{equation} 
and for any initial data $y_{0,\mu} \in \mc{F}_\sigma^\perp$, the solution $y_\mu$ of \eqref{harmonicheat} satisfies the dissipation estimate
\begin{equation}
	\label{dissipintro}
	\int_{\mc B_R}|y_\mu(t)|^2 \le C_{dissip} e^{-2c\sigma t}\int_{\mc B_R}|y_{0,\mu}|^2 , \quad \sigma >0  \quad  t \in (0,1].
	\end{equation} 
{These properties are proven in \cref{applicationrestriction}, with the subspaces $\mc{F}_\sigma$ constructed via spherical harmonics. Consequently, the restriction argument requires the domain to be a ball.} \newline
One can see that combining \eqref{obsintro} and \eqref{specintro}, we obtain observability through $\omega$ for initial data in $\mc{F}_\sigma$, and thanks to the dissipation estimate, the part on $\mc{F}_\sigma^\perp$ is small.
{These form the core ingredients of} the so-called Lebeau-Robbiano strategy.
This strategy has been used in the controllability framework \cite{Benabdallah2007,Lebeau1995,Benabdallah2014,Allonsius2020}, and it is given in the observability framework in \cite{Seidman2008} and \cite[Theorem 2.2]{Miller2010}. {Actually}, applying \cite[Theorem 2.2]{Miller2010} {gives} that the harmonic heat equation \eqref{harmonicheat} is observable through $\omega$ with cost $K_T = \exp\left(\frac{C}{T^\beta}\right)$, but {the dependancy of $C$ on $\mu$ is not explicit}.
\newline
{We therefore adapt the stategy developed in \cite{Miller2010}, leading to \cref{LR}.}
{In the end}, we obtain a cost  
of the form $\exp(c_\beta/T^\beta) \times \exp(\mu (1+\gamma )(R_1+\epsilon)^2) $ for any $\epsilon >0$ and $\gamma >0$, with $c_\beta>0$ independant of $\mu$. Note that $\epsilon$ comes from the observation on the whole annulus, and $\gamma$ appears because of the restriction argument. 
{ With this precise estimate, we conclude following the strategy briefly explained in \cref{ideeuniforme}}.

\subsection{Scientific context} \label{section_scientific_context}

To appreciate how unusually the Baouendi-Grushin equation behaves, from an observability perspective,  as a parabolic equation, it is helpful to first recall what is known about the most typical parabolic equation: the heat equation.

Let $\Omega$ be a smooth bounded domain of $\R^d$, $\omega \subset \Omega$ a non-empty open subset, and $T>0$. We say that the heat equation is observable 
at time $T>0$ through $\omega$ if there exists a constant $\cc$ such that, for any function $y$ in $C^0([0,T];L^2(\Omega))\cap L^2((0,T);H^1_0(\Omega))$
satisfying $\partial_t y - \Delta y = 0$ in $(0,T) \times \Omega$, the following inequality holds:
$$
\int_\Omega \vert y(T) \vert^2 \leq \cc \int_0^T \int_\omega \vert y \vert^2.
$$
It is now well-established  that the heat equation is observable trough any non-empty open set and for any $T>0$ \cite{Fattorini1971,Lebeau1995,fursikovimanuvilov96}. In particular, no geometric or time-horizon conditions are required for the observability of the heat equation to hold. 

The parabolic Baouendi-Grushin equation with quadratic degeneracy, named after Mohamed Salah Baouendi \cite{baouendi67} and 
Victor Vasilievich Grushin \cite{grushin71}, behaves quite differently. This was first observed in a two-dimensional setting in the pionnering work \cite{Beauchard2013}: let $\Omega$ be the rectangle $(-1,1)\times(0,1)$, and $\omega$ the vertical strip $(a,b) \times (0,1)$, with $0<a<b<1$, see \cref{config1}.  

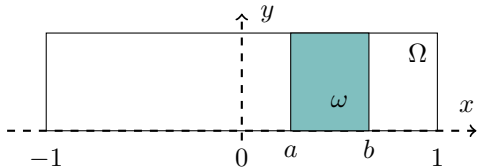
\begin{figure}[h!]
\label{config1}
\begin{center}
	\begin{tikzpicture}[scale=1.3]
	
	\node[] (Omega) at (1.8,0.8) {$\Omega$};
	\node[label={below:$-1$}] (lmoins) at (-2,0) {};
	\node[label={below:$1$}] (lplus) at (2,0) {};
	\draw[->,dashed,thick](-2.4,0) -- (2.4,0);
	\draw[](-2,0)rectangle(2,1);
	\node[label={above:$x$}] (x) at (2.3,0) {};
	\node[label={right:$y$}] (y) at (0,1.2) {};
	\node[label={below:$0$}] (origin) at (0,0) {};
	\draw[fill=teal!50] (0.5,0) rectangle (1.3,1);
	\node[] (omega) at (1,0.3) {$\omega$};
	\node[label={below:$a$}] (a) at (0.5,0.04) {};
	\node[label={below:$b$}] (b) at (1.3,0.1) {};
	\draw[->,dashed,thick](0,-0.1)  -- (0,1.2);
	\end{tikzpicture}
	\caption{Geometrical configuration studied in \cite{Beauchard2013}.}
\end{center}
\end{figure}

In this situation, the authors prove that there exists a positive critical time $T^*\geq \frac{a^2}{2}$, such that for any $T>T^*$,
there exists a constant $\cc$ such that for any function $y \in C^0([0,T]; L^2(\Omega))\cap L^2((0,T);H^1_0(\Omega))$ satisfying
\begin{equation} \label{grush1D} \partial_t y - \partial_{xx} y - x^2 \partial_{\tilde x\tilde x} y =0 \text{ in } (0,T) \times \Omega, \end{equation} the following observability inequality holds:
$$
\int_{\Omega} \vert y(T) \vert^2 \leq \cc \int_0^T \int_\omega \vert y \vert^2.
$$
Moreover, such inequality does not hold for any time horizon $T< T^*$. In other words, the two-dimensional Baouendi-Grushin equation \eqref{grush1D} is observable through $\omega$ for $T>T^*$, and is not for $T<T^*$.

A few years later, in the same rectangular setting, it was shown in \cite{Koenig2017} that the value of the critical time $T^*$ is not only related
to the distance of the observation set $\omega$ to the $\tilde x$-axis -- where the equation degenerates -- but also depends strongly on the geometry of $\omega$. Specifically, the author shows that if $\omega = (-1,1) \times ((0,1)\setminus [\tilde{x}_1,\tilde{x}_2])$, for some $0\leq \tilde{x}_1 <\tilde{x}_2 \leq 1$,
then the Baouendi-Grushin equation \eqref{grush1D} is not observable trough $\omega$ for any $T>0$.  

\begin{figure}[h!]
\begin{center}
\begin{tikzpicture}[scale = 1.5]
\draw (-2,0) rectangle (2,1);
\draw[fill=teal!50] (-2,0) -- (-2,0.15) -- (2,0.15) -- (2,0);
\draw[fill=teal!50] (-2,0.4) -- (-2,1) -- (2,1) -- (2,0.4) -- (-2,0.4);
\node (a) at (0.5,0.7) {$\omega$};
\draw[->] (-2.2,0) -- (2.2,0) node[above]{$x$};
\draw[->] (0,-0.2) -- (0,1.2) node[right]{$\tilde{x}$};
\end{tikzpicture}
\caption{Geometrical configuration studied in \cite{Koenig2017} -- In this situation, the Baouendi-Grushin equation is not observable for any $T>0$.}
\end{center}
\end{figure}
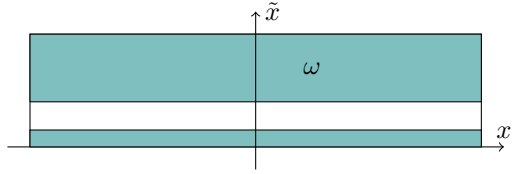

Since these pioneering works, significant effort has been devoted to understanding the observability properties of the Baouendi-Grushin equation. The two-dimensional rectangular setting has been the most extensively studied  \cite{Allonsius2021,duprezkoenig20,Beauchard2020,dardekoenigroyer23,laurentleautaud23,beauchardmillermorancey15}, and thus remains the best-understood case. In particular, the most general configuration for which the critical time is known reads as follows \cite[Theorem 1.6]{dardekoenigroyer23}: set $q \in C^3([-1,1];\R)$ such that $q(0) = 0$ and $\min_{[-1,1]} q' > 0$. Let $\gamma_1$, $\gamma_2$
be two continuous functions from $[0,1]$ to $[-1,1]$, such that for all $\tilde x \in [0,1]$, $\gamma_1(\tilde{x}) < \gamma_2(\tilde{x})$.
Define 
$$
\omega := \left\lbrace (x,\tilde{x}) \in [-1,1] \times [0,1], \ \gamma_1(\tilde{x}) < x < \gamma_2(\tilde{x}) \right\rbrace.
$$
Set 
$$
\beta = \max_{\tilde{x}\in[0,1]}\gamma_1(\tilde{x}), \quad \alpha = \min_{\tilde{x}\in [0,1]} \gamma_2(\tilde{x}),
$$
and define
$$
T^* = \frac{1}{q'(0)} \max \left( \int_0^{\min(\alpha,0)} q(s) ds, \int_0^{\max(\beta,0)} q(s) ds \right).
$$
Then the generalized Baouendi-Grushin equation $\partial_t y - \partial_{xx}y - q(x)^2 \partial_{\tilde{x}\tilde{x}}y=0$ is observable through $\omega$
for $T>T^*$, and is not for $T<T^*$.

\begin{figure}[h!]
\begin{center}
\begin{tikzpicture}[scale=1.8]
 \draw (-2,0) rectangle (2,1);
 \draw[fill=teal!50] (-1,1) .. controls (-1.9,0.7) and (-1.4,0.6) .. (-1.1,0.5) node (g1) {} .. controls (-0.8,0.4) and (0.5,0.3) .. (0.5,0.2) node (p) {} .. controls (0.5,0.1) and (-0.9,0.1) .. (-1,0) .. controls (-1,0) and (1.2,0) .. (1.2,0) .. controls (1.2,0.2) and (0.9,0.4) .. (0.6,0.5) node (g2) {} .. controls (0.3,0.6) and (-0.6,0.7) .. (-0.6,0.8) node(m){} .. controls (-0.6,0.9) and (1.1,0.9) .. (1.1,1) .. controls (1.1,1) and (-1,1) .. (-1,1);

 \node[] at (-0.9352,0.7553) {$\omega$};
 
 \draw[->] (-1.6,0.5)node[left]{$\gamma_1$} -- (g1.center);
 \draw[->] (1.3,0.6) node[right]{$\gamma_2$} -- (g2.center);

 \draw[dashed] (m.center) {} |- (0,0) node[pos=0.5,below left]{$\alpha$};
 \draw[dashed] (p.center) {} |- (0,0) node[pos=0.5,below right]{$\beta$};

 \draw[->] (-2.2,0) -- (2.2,0) node[above]{$x$};
 \draw[->] (0,-0.2) -- (0,1.2) node[right]{$\tilde{x}$};
\end{tikzpicture}
\caption{Geometrical configuration studied in \cite{dardekoenigroyer23}, in which the critical time of observability is precisely determined.}
\end{center}
\end{figure}
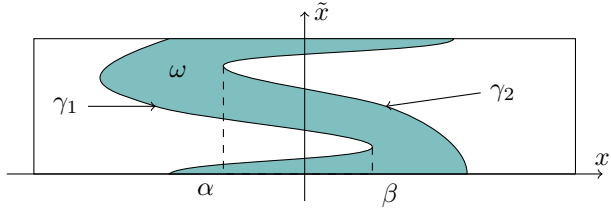

Note that, in the specific case where the observation set  $\omega$ is a vertical strip $(a,b) \times (0,1)$ with $a<b$, corresponding to 
$\gamma_1(\tilde{x}) = a$ and $\gamma_2(\tilde{x}) = b$, and $q(x) = x$ -- which was the situation studied \cite{Beauchard2013} -- the critical time is given by 
$$
T^* = \frac{\max(a,0)^2 + \min(b,0)^2}{2}.
$$ 
The non-rectangular two-dimensional case has also been studied, notably in \cite{tamekue22} on the sphere $\mathbb{S}^2$ and in \cite{vanlaere26} on arbitrary two-dimensional manifolds.

\medskip

The observability properties of the Baouendi-Grushin parabolic equation in higher-dimensional configurations have been far less studied \cite{BCY,Beauchard2020,vanla2025}. Since we {previously} discussed these results, we will not elaborate on them here.

\medskip

To conclude this section, we first emphasize that the unique continuation properties of the Baouendi-Grushin equation are now well understood \cite{laurentleautaud20,morancey15}.
 We also note that other parabolic equations exhibiting positive critical times of observability have been studied, including  the Kolmogorov$-v^2$  equation \cite{beauchard14,beauchardhelfferetal15,darderoyer21} and the three-dimensional Heisenberg-heat equation \cite{Beauchard2020}.  Additionally, positive critical observability times arise in certain systems of parabolic equations -- see \cite{assiafranckmorgan20} and the references therein. Finally, the observability properties of the Baouendi-Grushin wave equation and Baouendi-Grushin
Schr\"odinger equation have also been explored \cite{letrouit21,burqsun22,letrouitsun23}, and the observability and unique continuation properties of degenerate parabolic equations of Baouendi-Grushin type posed in the whole space are an active area of reseach \cite{lissy25,alphonseseelmann24,alphonseseelmann25,jamingwang25}.

\subsection{Outline}

{
The article is structured as follows: in \cref{sec_Carleman_and_consequences}, we first derive Carleman estimates for heat equations with
potential, and then apply them to obtain quantitative observability estimate for the harmonic heat equation observed from an annulus or the boundary of the ball. Then,  \cref{sec_restriction_arguments} is dedicated to the restriction argument, which is first derived in an abstract 
framework, and then applied to obtain the observability of the Baouendi-Grushin system in times greater than $T^*$. Finally, \cref{optimality} addresses optimality questions in the internal observation case, focusing first on the optimality of $T^*$, then on the blow-up rate of the observability cost.

The article includes four appendices.
In \cref{appendix_func_setting}, we discuss the well-posedness of the evolution equations considered in this study.
\Cref{proofdissipationgrushin} presents proofs of various useful energy-type estimates, while \cref{prooflemH10M} focuses on the decomposition of solutions to the harmonic heat equation in terms of spherical harmonics.
Finally, \cref{appendix_technical_results} contains proofs of several technical results.
}

\section{Observability inequalities for the harmonic heat equation from annuli and boundaries of balls}
\label{sec_Carleman_and_consequences}

\subsection{Three Carleman-type estimates}
\label{carlemaneries}

Let $\Omega$ be a smooth bounded domain of $\R^d$, and $V : \overline{\Omega} \mapsto \R^d$ be smooth. 
For $T > 0$, we set $Q = (0,T) \times \Omega$.

\begin{prop} \label{prop_Carleman_basis_comp}
Let  $s \geq 1$,  $\phi$ and $w$ two smooth functions defined on $Q$, satisfying the following properties:
\begin{enumerate}
\item $\displaystyle \partial_t w + s \partial_t \phi w - \Delta w - 2 s \nabla \phi \cdot \nabla w 
- s \Delta \phi w - s^2 \Vert \nabla \phi \Vert^2 w + V \, w  = F \in L^2(Q)$,
\item $w = 0$ on $(0,T) \times \partial \Omega$,
\item 
$
\displaystyle \Vert w(t,.) \Vert_{L^2(\Omega)} + \Vert \nabla w(t,.) \Vert_{L^2(\Omega)} + \Vert \partial_{t} \phi(t,.) \,  w (t,.)^2
\Vert_{L^1(\Omega)}  \xrightarrow{t\rightarrow 0, T} 0.
$
\end{enumerate}

\smallskip 

The following inequality holds:
\begin{multline} \label{eq_Carleman_basis_ineq}
s \int_0^T\int_{\partial \Omega} \frac{\partial \phi}{\partial n}  \left\vert \frac{\partial w}{\partial n} \right\vert^2
+ s \int_Q G_1 w^2 + s^3 \int_Q G_2 w^2 + s \int_Q H(\nabla w) \\ \leq \int_Q \vert F \vert^2 
+ s^2  \int_Q \vert \Delta \phi \, w \vert^2,
\end{multline}
where
$$
G_1  = \nabla V \cdot \nabla \phi - V \Delta \phi,
$$
\begin{multline*}
G_2  = - 2 \Hess(\phi)(\nabla \phi,\nabla \phi) + \Vert \nabla\phi \Vert^2 \Delta \phi
\\+ \frac{1}{s} \left( \nabla\cdot (\partial_t \phi \nabla \phi) -2 \partial_t \phi \Delta \phi + \partial_t \left(\Vert\nabla \phi\Vert^2 \right) \right)
- \frac{1}{2 s^2} \left(  \partial_{tt} \phi -2  \Delta^2 \phi  \right)  ,
\end{multline*}
and
$$
H(\xi) = -2 \Hess(\phi)(\xi,\xi) - \Delta \phi \Vert \xi \Vert^2, \ \forall \xi \in \R^d.
$$
\end{prop}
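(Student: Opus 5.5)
The plan is the standard Fursikov--Imanuvilov splitting of the conjugated operator into a symmetric and an antisymmetric part, followed by an expansion of the cross term. I will write the left-hand side of item 1 as $P_s w + A_s w + V w$. The symmetric part is
$$P_s w := -\Delta w - s^2 \Vert \nabla \phi \Vert^2 w + s\partial_t \phi\, w .$$
The antisymmetric part, up to a zero-order correction, is
$$A_s w := \partial_t w - 2 s \nabla \phi \cdot \nabla w - s \Delta \phi\, w .$$
This $A_s$ is formally skew-adjoint in $L^2(Q)$, since $-2s\nabla\phi\cdot\nabla - s\Delta\phi$ is skew. Moving the potential term and the part of $s\Delta\phi\, w$ needed to balance the splitting to the right-hand side, I will write
$$P_s w + A_s w = F - V w .$$
Rather than squaring immediately (which would create an $\int V^2 w^2$ term that does not appear in \eqref{eq_Carleman_basis_ineq}), I will keep $Vw$ on the left and only square the remaining $s\Delta\phi\,w$ defect. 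This is the source of the term $s^2\int_Q |\Delta\phi\, w|^2$ on the right. Concretely, I will write $\mathcal{S}w + \mathcal{A}w = F + s\Delta\phi\, w$ with $\mathcal{S}w = -\Delta w - s^2\|\nabla\phi\|^2 w + s\partial_t\phi\, w + Vw$ and $\mathcal{A}w = \partial_t w - 2s\nabla\phi\cdot\nabla w - 2s\Delta\phi\, w$ (or a similar bookkeeping). From $\|\mathcal S w\|^2 + \|\mathcal A w\|^2 + 2\langle \mathcal S w,\mathcal A w\rangle = \|F + s\Delta\phi\,w\|^2$ I will derive
$$2\langle \mathcal S w,\mathcal A w\rangle \le 2\|F\|^2 + 2 s^2\|\Delta\phi\, w\|^2 .$$
I will then divide by $2$, after checking that the normalisation of the constants in the statement matches; otherwise I adjust the split so that the factor works out exactly.

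The core of the proof is computing $\langle \mathcal S w,\mathcal A w\rangle$ term by term through integrations by parts in $x$ and $t$. The terms are organised as follows.
\begin{itemize}
\item The pairing $\langle -\Delta w, -2s\nabla\phi\cdot\nabla w\rangle$ produces the gradient quadratic form $-2s\Hess(\phi)(\nabla w,\nabla w)$, together with a $\Delta\phi\,|\nabla w|^2$ contribution. It also produces the boundary term: since $w=0$ on the boundary, $\nabla w = \frac{\partial w}{\partial n} n$ there, which yields $s\int_0^T\int_{\partial\Omega}\frac{\partial\phi}{\partial n}\bigl|\frac{\partial w}{\partial n}\bigr|^2$.
\item Pairing with the $-2s\Delta\phi\, w$ piece gives a further $s\Delta\phi\,|\nabla w|^2$ term and a $\frac{s}{2}\Delta^2\phi\, w^2$ term after a second integration by parts. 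Combined with the previous item, these assemble into $sH(\nabla w)$ and the $\Delta^2\phi$ contribution in $G_2$.
\item The pairing $\langle -s^2\|\nabla\phi\|^2 w, -2s\nabla\phi\cdot\nabla w\rangle$ gives $s^3\bigl(2\Hess\phi(\nabla\phi,\nabla\phi)+\|\nabla\phi\|^2\Delta\phi\bigr)w^2$ with the appropriate signs; the $\Delta\phi$-weighted piece is adjusted by the $-2s\Delta\phi\,w$ term. This is the leading part of $G_2$.
\item The time-derivative pairings, against $\partial_t w$ and against $s\partial_t\phi\, w$, produce $\partial_t\|\nabla\phi\|^2$, $\nabla\cdot(\partial_t\phi\nabla\phi)$, $\partial_t\phi\,\Delta\phi$ and $\partial_{tt}\phi$ terms, which after division by $s^3$ give the $\frac1s$ and $\frac1{s^2}$ corrections in $G_2$.
\item The pairings of $Vw$ with $-2s\nabla\phi\cdot\nabla w - 2s\Delta\phi\, w$ give $s\int(\nabla V\cdot\nabla\phi - V\Delta\phi)w^2 = s\int G_1 w^2$, while $\langle Vw,\partial_t w\rangle$ is a pure time boundary term.
\end{itemize}

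All temporal boundary terms, such as $\frac12\int_\Omega |\nabla w|^2$, $\int_\Omega w^2$ and $\int_\Omega\partial_t\phi\, w^2$ evaluated at $t=0,T$, vanish by assumption 3. This is precisely why that hypothesis is phrased with the $L^1$ norm of $\partial_t\phi\, w^2$; I will integrate on $(\epsilon, T-\epsilon)$ and let $\epsilon\to0$.

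The main obstacle is purely the bookkeeping: choosing exactly how to distribute the $s\Delta\phi\, w$ term between $\mathcal S$ and $\mathcal A$ so that the coefficients come out precisely as $G_1$, $G_2$ and $H$ with the right-hand side $\|F\|^2 + s^2\|\Delta\phi\,w\|^2$. I will first carry out the computation with a generic split $-\lambda s\Delta\phi\, w$ in $\mathcal A$ and then fix $\lambda$ by matching the coefficient of $\Vert\nabla\phi\Vert^2\Delta\phi$ in $G_2$. No positivity or smallness arguments are needed: the statement is an identity-turned-inequality, obtained simply by dropping $\|\mathcal S w\|^2+\|\mathcal A w\|^2\ge0$.
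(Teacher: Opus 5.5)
Your $\mathcal S$ and $\mathcal A$ (with $-2s\Delta\phi\,w$ placed in $\mathcal A$, i.e.\ $\lambda=2$) are exactly the paper's $P_1$ and $P_2$, and the argument is the same. The paper expands $\Vert P_1w\Vert^2+2(P_1w,P_2w)+\Vert P_2w\Vert^2$, bounds it by $2\Vert F\Vert^2+2s^2\Vert\Delta\phi\,w\Vert^2$, and evaluates the twelve cross terms by integration by parts, with the temporal boundary terms removed by hypothesis~3.

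Two slips to watch in the bookkeeping, neither affecting the method. First, the residual is $F-s\Delta\phi\,w$, not $F+s\Delta\phi\,w$. Second, pairing $-s^2\Vert\nabla\phi\Vert^2w$ with $\partial_t w$ gives $\frac{s^2}{2}\int_Q\partial_t(\Vert\nabla\phi\Vert^2)w^2$. So the computed coefficient of $\partial_t(\Vert\nabla\phi\Vert^2)$ in $G_2$ is $\frac12$; the statement's coefficient $1$ is a typo, harmless since only $\vert G_{22}\vert$ is used later. Do not try to repair it by changing the split.
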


\begin{proof}
We set
$$
P_1 w = s\partial_t \phi w -\Delta w - s^2 \Vert \nabla \phi \Vert^2 w + V w,
\quad P_2 w = \partial_t w - 2 s \nabla \phi \cdot \nabla w -2 s \Delta \phi w.
$$
We have
$$
\Vert P_1 w \Vert_{L^2(Q)}^2 + 2 \underbrace{\left( P_1 w , P_2 w \right)_{L^2(Q)}}_{:= I}
+ \Vert P_2 w \Vert_{L^2(Q)}^2 
\leq 2 \Vert F \Vert_{L^2(Q)}^2 + 2 s^2  \Vert \Delta \phi \, w \Vert^2_{L^2(Q)}.
$$
We write $\displaystyle I = \sum_{p=1}^4\sum_{q = 1}^3 I_{pq}$, where each term $I_{pq}$ corresponds to the scalar product between the $p$-th term of $P_1 w$
and the $q$-th term of $P_2w$.

\medskip 

\noindent $\bullet$ Computation of $I_{11}$ -- We have $$
I_{11} =s  \int_Q \partial_t \phi w \partial_t w = \frac{s}{2} \int_Q \partial_t \phi \partial_t w^2 = - \frac{s}{2} \int_Q \partial_{tt}\phi w^2. 
$$

\noindent $\bullet$ Computation of $I_{12}$ -- We have
$$
I_{12} = - 2 s^2 \int_Q \partial_t \phi w \nabla \phi \cdot \nabla w = - s^2 \int_Q \partial_t \phi \nabla \phi \cdot \nabla w^2
= s^2 \int_Q \nabla \cdot (\partial_t \phi \nabla \phi) w^2.
$$

\noindent $\bullet$ Computation of $I_{13}$ -- We have $\displaystyle I_{13} =  -2 s^2 \int_\Omega \partial_t \phi \Delta \phi w^2$.

\noindent $\bullet$ Computation of $I_{21}$ -- We have
$$
I_{21} = -\int_Q \Delta w \partial_t w =  \int_Q \nabla w \nabla \partial_t w =  \frac{1}{2} \int_Q \partial_t \Vert \nabla w \Vert^2 = 0.
$$

\noindent $\bullet$ Computation of $I_{22}$ -- We have
$$
I_{22} = 2 s \int_Q \Delta w \nabla \phi \cdot \nabla w = 2 s \int_0^T \int_{\partial \Omega}  \frac{\partial w}{\partial n} \nabla \phi \cdot \nabla w 
- 2 s \int_Q \nabla w \cdot \nabla (\nabla \phi \cdot \nabla w).
$$
Here, we note that, as $w = 0$ on $\partial \Omega$, we have $\frac{\partial w}{\partial n}  \nabla \phi \cdot \nabla w = \frac{\partial \phi}{\partial n} \left\vert \frac{\partial w}{\partial n} \right\vert^2$. Furthermore, we have the identity
$$
\nabla w \cdot \nabla (\nabla \phi \cdot \nabla w)  = \Hess(\phi)(\nabla w, \nabla w) + \frac{1}{2} \nabla \phi \cdot \nabla \Vert \nabla w \Vert^2.
$$
Consequently, we obtain
\begin{multline*}
I_{22} = 2 s \int_0^T\int_{\partial \Omega} \frac{\partial \phi}{\partial n} \left\vert \frac{\partial w}{\partial n} \right\vert^2 -2 s \int_Q \Hess(\phi)(\nabla w, \nabla w)
- s \int_Q \nabla \phi \cdot \nabla \Vert \nabla w \Vert^2 \\
 =  s \int_0^T\int_{\partial \Omega} \frac{\partial \phi}{\partial n} \left\vert \frac{\partial w}{\partial n} \right\vert^2 -2 s \int_Q \Hess(\phi)(\nabla w, \nabla w)
 + s \int_Q \Delta \phi \Vert \nabla w \Vert^2.
\end{multline*}

\noindent $\bullet$ Computation of $I_{23}$ -- We have
\begin{multline*}
I_{23} = 2 s \int_Q \Delta w \Delta \phi w = -2s \int_Q \Delta \phi \Vert\nabla w \Vert^2 -2 s \int_Q \nabla (\Delta \phi) \cdot \nabla w w
\\ = -2s \int_Q \Delta \phi \Vert\nabla w \Vert^2 - s  \int_Q \nabla (\Delta \phi ) \cdot \nabla w^2
 = -2s \int_Q \Delta \phi \Vert\nabla w \Vert^2 + s \int_Q \Delta^2 \phi\,    w^2.
\end{multline*}

\noindent $\bullet$ Computation of $I_{31}$ -- We have
$$
I_{31} = -s^2 \int_Q \Vert \nabla \phi \Vert^2 w \partial_t w  = - \frac{s^2}{2} \int_Q \Vert \nabla \phi \Vert^2 \partial_t w^2
= \frac{s^2}{2} \int_Q \partial_t \left(\Vert \nabla \phi \Vert^2\right) w^2.
$$

\noindent $\bullet$ Computation of $I_{32}$ -- We have
$$
I_{32} = 2 s^3 \int_Q \Vert\nabla \phi \Vert^2 w \nabla \phi \cdot \nabla w = s^3 \int_Q \Vert \nabla \phi \Vert^2 \nabla \phi \cdot \nabla w^2
 = - s^3 \int_Q \nabla \cdot(\Vert \nabla \phi \Vert^2 \nabla \phi) w^2.
$$

\noindent $\bullet$ Computation of $I_{33}$ -- We have
$
\displaystyle I_{33} = 2 s^3 \int_Q \Vert \nabla \phi \Vert^2 \Delta \phi w^2.
$

\noindent $\bullet$ Computation of $I_{41}$ -- We have
$
\displaystyle I_{41} =  \int_Q V w \partial_t w = \frac{1}{2} \int_Q V \partial_t w^2 = 0.
$

\noindent $\bullet$ Computation of $I_{42}$ -- We have
$$
I_{42} = -2 s  \int_Q V w \nabla \phi \cdot \nabla w = - s \int_Q V \nabla \phi \cdot \nabla w^2
 = s  \int_Q \nabla \cdot (V \nabla \phi)  w^2.
$$

\noindent $\bullet$ Computation of $I_{43}$ -- We have
$
\displaystyle I_{43} = -2 s  \int_Q V \Delta \phi w^2.
$

\medskip  The result follows immediately.
\end{proof}

\medskip

We now define the function \( \vartheta \) as
$
\vartheta : s \in (0,1) \mapsto (4s(1-s))^{-1} \in [1, \infty).
$
This function satisfies the following bounds for all \( s \in (0,1) \):
\[
|\vartheta'(s)| \leq 4\,\vartheta(s)^2, \quad |\vartheta''(s)| \leq 32\,\vartheta(s)^3.
\]
For any \( T > 0 \), we introduce the rescaled function
\begin{equation} \label{eq_def_varpi}
\displaystyle \varpi : t \in (0,T) \mapsto \vartheta\left(\frac{t}{T}\right).
\end{equation}
Next, we introduce a smooth $\psi : \overline{\Omega} \mapsto [0,\infty[$ such that $\Vert \nabla \psi \Vert \geq c_\psi >0$ in $\overline{\Omega}$.
We define \begin{equation} \label{omegaplus} \partial \Omega_{+} = \left\lbrace x \in \partial \Omega, \ \nabla \psi \cdot n > 0 \right\rbrace. \end{equation}
For any $\lambda \geq 1$, we set 
\begin{equation} \label{eq_def_varphi}
\varphi = \exp(4\lambda \Vert \psi \Vert_\infty) - \exp(\lambda (\psi + 2 \Vert \psi \Vert_\infty)), 
\quad \eta = \exp(\lambda (\psi+ 2\Vert\psi \Vert_\infty)).
\end{equation}
By definition, $\varphi >0$ and $\eta \geq 1$ on $\overline{\Omega}$, and the following relations hold:
\begin{align*}
\nabla \varphi &= - \nabla \eta = - \lambda \nabla \psi \eta,\\
 \Delta \varphi &= - \lambda^2 \Vert \nabla \psi \Vert^2 \eta
 - \lambda\Delta \psi \eta, 
 \\  \Hess(\varphi) &= - \lambda^2 \nabla \psi^T \nabla \psi \eta - \lambda \Hess(\psi) \eta.
\end{align*}
Finally, we define the function $\phi : (t,x) \in (0,T) \times \overline{\Omega} \mapsto \varpi(t) \varphi(x)$.

\begin{lem} \label{lemma_Carleman_G2H}
There exist constants \(\lambda_0 \geq 1\), $\s_0 \geq 1$ and \(c > 0\)---each depending only on \(\psi\)---such that  for any  $\lambda \geq \lambda_0$,  for any \(s \geq \left(1 + \frac{1}{T}\right) \s_0\) and for all \(\xi \in \mathbb{R}^d\), the following inequalities hold:
\[
G_2 \geq c \lambda^4 \varpi^3 \eta^3, \quad H(\xi) \geq c \lambda^2 \varpi \eta \Vert \xi \Vert^2,
\]
where \(G_2\) and \(H\) are defined in \cref{prop_Carleman_basis_comp}.

\end{lem}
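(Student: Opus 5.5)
The plan is a direct pointwise computation. I expand $G_2$ and $H$ using $\phi=\varpi\varphi$ and the formulas for $\nabla\varphi$, $\Delta\varphi$, $\Hess(\varphi)$. I then show that the leading term of each is a positive multiple of $\lambda^4\varpi^3\eta^3$ (resp. $\lambda^2\varpi\eta\Vert\xi\Vert^2$), and that every other term is either of lower order in $\lambda$ (absorbed by taking $\lambda\ge\lambda_0$) or carries a factor $1/s$ or $1/s^2$ (absorbed by $s\ge(1+1/T)\s_0$). The one elementary observation that makes the time-derivative terms harmless uniformly in $\lambda$ is
$$0<\varphi\le e^{4\lambda\Vert\psi\Vert_\infty}\le \eta^2 \quad\text{on } \overline\Omega,$$
since $\eta\ge e^{2\lambda\Vert\psi\Vert_\infty}$. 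Because of this, $\varphi$ never costs more than $\eta^2$, and no exponential-in-$\lambda$ constraint on $s$ appears.

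\textbf{Step 1: $H$.} We have $\Hess(\phi)=\varpi\Hess(\varphi)$ and $\Delta\phi=\varpi\Delta\varphi$. Hence
$$H(\xi)=\varpi\eta\left(2\lambda^2(\nabla\psi\cdot\xi)^2+2\lambda\Hess(\psi)(\xi,\xi)+\lambda^2\Vert\nabla\psi\Vert^2\Vert\xi\Vert^2+\lambda\Delta\psi\Vert\xi\Vert^2\right).$$
This is at least $\varpi\eta\Vert\xi\Vert^2(\lambda^2c_\psi^2-C_\psi\lambda)$, which is $\ge \tfrac12 c_\psi^2\lambda^2\varpi\eta\Vert\xi\Vert^2$ for $\lambda\ge\lambda_0(\psi)$. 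No condition on $s$ is needed here.

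\textbf{Step 2: principal part of $G_2$.} With $\nabla\phi=-\lambda\varpi\eta\nabla\psi$, the spatial part is
$$-2\Hess(\phi)(\nabla\phi,\nabla\phi)+\Vert\nabla\phi\Vert^2\Delta\phi=\varpi^3\eta^3\left(\lambda^4\Vert\nabla\psi\Vert^4+2\lambda^3\Hess(\psi)(\nabla\psi,\nabla\psi)-\lambda^3\Vert\nabla\psi\Vert^2\Delta\psi\right).$$
This is $\ge\varpi^3\eta^3(c_\psi^4\lambda^4-C_\psi\lambda^3)\ge\tfrac12c_\psi^4\lambda^4\varpi^3\eta^3$ for $\lambda$ large.

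\textbf{Step 3: remainder terms of $G_2$.} For these I use $\vert\partial_t\varpi\vert\le 4\varpi^2/T$, $\vert\partial_{tt}\varpi\vert\le32\varpi^3/T^2$, $\varpi\ge1$, $\eta\ge1$ and $\varphi\le\eta^2$.
\begin{itemize}
\item $\nabla\cdot(\partial_t\phi\nabla\phi)=\varpi'\varpi(\Vert\nabla\varphi\Vert^2+\varphi\Delta\varphi)$, and $\vert\varphi\Delta\varphi\vert\le C\lambda^2\eta^3$. So this term is bounded by $C\lambda^2\varpi^3\eta^3/T$.
\item The same bound holds for $\partial_t\phi\,\Delta\phi$ and for $\partial_t(\Vert\nabla\phi\Vert^2)=2\varpi\varpi'\lambda^2\Vert\nabla\psi\Vert^2\eta^2$. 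After dividing by $s$, all three are $\le C\lambda^2\varpi^3\eta^3\,(sT)^{-1}$, which is $\le C\lambda^2\varpi^3\eta^3/\s_0$ and is absorbed for $\lambda\ge\lambda_0$.
\item $\vert\partial_{tt}\phi\vert/s^2\le 32\varpi^3\eta^2/(sT)^2\le C\varpi^3\eta^3$, also absorbed for $\lambda$ large.
\end{itemize}

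\textbf{Step 4: the term $\Delta^2\phi/s^2$ (the main obstacle).} This is the only remainder with the same power $\lambda^4$ as the leading term. Indeed $\Delta^2\varphi=-\Delta^2\eta$, and $\vert\Delta^2\eta\vert\le C_\psi\lambda^4\eta$ for $\lambda\ge1$. This cannot be beaten by enlarging $\lambda$. Instead I use $s^2$: since $\varpi\le\varpi^3$ and $\eta\le\eta^3$,
$$\frac{1}{s^2}\vert\Delta^2\phi\vert\le\frac{C_\psi}{\s_0^2}\lambda^4\varpi^3\eta^3.$$
Choosing $\s_0(\psi)$ with $C_\psi/\s_0^2\le c_\psi^4/8$ absorbs it, and this choice is independent of $\lambda$.

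Collecting Steps 2--4 gives $G_2\ge c\lambda^4\varpi^3\eta^3$ with $c,\lambda_0,\s_0$ depending only on $\psi$. The only delicate point is to check that each $1/T$ factor from differentiating $\varpi$ is paired with a $1/s$, which is exactly what the hypothesis $s\ge\s_0/T$ provides.
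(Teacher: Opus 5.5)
Your proposal is correct and follows essentially the same route as the paper. Both arguments expand $H$ and the principal part of $G_2$ and absorb the $O(\lambda)$ and $O(\lambda^3)$ corrections by taking $\lambda$ large. Both then control the time-derivative and $\Delta^2\phi$ remainders using $\varphi\le\eta^2$ together with $s\ge(1+\frac1T)\s_0$. The only minor difference is that the paper bounds the whole remainder crudely by a $\psi$-dependent constant times $\bigl(\frac{1}{sT}+\frac{1+T^{-2}}{s^2}\bigr)\lambda^4\varpi^3\eta^3$ and absorbs it purely through $\s_0$, whereas you absorb the lower-order terms via $\lambda$ and reserve $\s_0$ for the $\Delta^2\phi$ term.
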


\begin{proof}
For all $\xi$ in $\R^d$, we have
\begin{multline*}
H(\xi) = \lambda^2 \left( 2 \vert \nabla \xi \cdot \nabla \eta \vert^2 + \frac{2}{\lambda} \Hess(\psi)(\xi,\xi) + \Vert \nabla \psi\Vert^2 \Vert \xi \Vert^2 +\frac{1}{\lambda} \Delta \psi \Vert \xi \Vert^2 \right)  \eta \varpi
\\
\geq \lambda^2  \left( c_\psi - \frac{1}{\lambda} \left( 2 \frac{\Hess(\psi)(\xi,\xi)}{\Vert \xi \Vert^2} +  \vert \Delta \psi \vert  \right)  \right) \Vert \xi \Vert^2  \varpi\eta .
\end{multline*}
As a consequence, there exists $\lambda_1$, depending only on $\psi$, such that for all $\lambda \geq \lambda_1$, 
$$
H(\xi) \geq \frac{c_\psi}{2}  \lambda^2 \Vert \xi \Vert^2 \varpi \eta.
$$
We decompose $G_2$ as  $G_{21} + G_{22}$, where 
$$
G_{21} = - 2 \Hess(\phi)(\nabla \phi,\nabla \phi) + \Vert \nabla\phi \Vert^2 \Delta \phi,
$$
and 
$$
G_{22} = \frac{1}{s} \left( \nabla\cdot (\partial_t \phi \nabla \phi) - 2\partial_t \phi \Delta \phi + \partial_t \left(\Vert\nabla \phi\Vert^2 \right) \right)
- \frac{1}{2 s^2} \left(  \partial_{tt} \phi - 2  \Delta^2 \phi  \right).
$$
We have
$$
G_{21} =  \lambda^4 \Vert \nabla \psi \Vert^4 \varpi^3 \eta^3 + 2 \lambda^3 \Hess( \psi)(\nabla \psi,\nabla \psi) \varpi^3 \eta^3  -\lambda^3 \Vert \nabla \psi \Vert^2 \Delta \psi \varpi^3 \eta^3.
$$
Consequently, we obtain
$$
G_{21} \geq   \lambda^4 \varpi^3 \eta^3 \left(c_\psi^4 -\frac{1}{\lambda} \left( 2 \vert \Hess(\psi)(\nabla \psi,\nabla \psi)\vert +  \Vert \nabla \psi \Vert^2 \vert \Delta \psi \vert \right) \right),
$$
which implies the existence of $\lambda_2$, depending only on $\psi$, such that for all $\lambda\geq \lambda_2$, 
$$
G_{21} \geq \frac{c_\psi^4}{2} \lambda^4 \varpi^3 \eta^3 c_\psi^4.
$$
We now fix $\lambda \geq  \max(\lambda_1,\lambda_2)$, and turn our attention to $G_{22}$. 
As by construction, we have $\varphi \leq \eta^2$, it is readily seen that there exists
there exists a constant $\mathcal{c}$, depending only on $\psi$, such that 
$$
\vert G_{22}\vert \leq \mathcal{c} \left(\frac{1}{sT} + \frac{\left(1+ \frac{1}{T^2}\right)}{s^2} \right) \lambda^4 \varpi^3 \eta^3.
$$
As for all $s \geq \left(\frac{1}{T}+1\right)\s_0$, we have
$$
\left(\frac{1}{sT} + \frac{\left(1+ \frac{1}{T^2}\right)}{s^2} \right) \leq \frac{1}{\s_0} + \frac{1}{\s_0^2},
$$
we deduce the existence of $\s_0 \geq 1$, depending on $\psi$, such that
for all $s \geq   \left(\frac{1}{T}+1\right)s_0$ gives
$$
G_{2} \geq G_{21} - \vert G_{22} \vert \geq  \frac{c_\psi^4}{4} \lambda^4 \omega^3 \eta^3  s^3.
$$
The result follows.
\end{proof}

\begin{prop} \label{prop_generic_Carleman_V}
There exits three constants $c$, $\lambda_0$ and $s_0$, greater than one, and depending only on $\psi$, such that for all $\lambda\geq \lambda_0$, 
for all $s \geq \left(1+ \frac{1}{T} + \Vert V \Vert_{W^{1,\infty}}^{\frac{1}{2}} \right)s_0$, for all $y \in C^0([0,T];H^1_0(\Omega))\cap L^2((0,T),H^2(\Omega))\cap H^1((0,T),L^2(\Omega))$, we have
\begin{multline*}
s^3 \lambda^4 \int_Q \varpi^3 \eta^3 \vert y \vert^2 e^{-2 s \phi} + s \lambda^2 \int_Q \varpi \eta \Vert \nabla y \Vert^2 e^{-2s\phi}
\leq c \bigg( \int_Q \vert \partial_t y - \Delta y + V y \vert^2 e^{-2 s \phi} 
\\ + s \lambda \int_0^T \int_{\partial \Omega_+} \varpi \eta \left\vert \frac{\partial \psi}{\partial n} \right\vert \left\vert \frac{\partial y}{\partial n} \right\vert^2  e^{-2 s \phi} \bigg),
\end{multline*}
where $\partial \Omega_+$ is defined in \eqref{omegaplus}.
\end{prop}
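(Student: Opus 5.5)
The plan is to apply \cref{prop_Carleman_basis_comp} to the conjugated function $w = e^{-s\phi} y$, with $\phi = \varpi \varphi$, and then absorb the lower-order terms using \cref{lemma_Carleman_G2H}. By density, it suffices to treat smooth $y$ with $y=0$ on $(0,T)\times\partial\Omega$.

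\textbf{Conjugation and hypotheses.} Since $\partial_t w = e^{-s\phi}(\partial_t y - s\partial_t\phi\, y)$, a direct computation shows that $w$ satisfies item 1 of \cref{prop_Carleman_basis_comp} with $F = e^{-s\phi}(\partial_t y - \Delta y + Vy)$; the signs are fixed by writing $y = e^{s\phi}w$. Item 2 is immediate from $y=0$ on the boundary. For item 3, note that $\varphi \ge c>0$ on $\overline\Omega$ (indeed $\varphi \ge e^{4\lambda\|\psi\|_\infty}-e^{3\lambda\|\psi\|_\infty}>0$) and $\varpi\to\infty$ at $t=0,T$. Hence $e^{-s\phi}$ and all its derivatives multiplied by powers of $\varpi$ decay like $e^{-cs\varpi}$, which kills $\|w\|_{L^2}$, $\|\nabla w\|_{L^2}$ and $\|\partial_t\phi\, w^2\|_{L^1}$, since $y\in C^0([0,T];H^1_0)$.

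\textbf{Lower bounds.} Take $\lambda\ge\lambda_0$ and $s\ge(1+1/T)\s_0$. By \cref{lemma_Carleman_G2H} we have $s^3\int G_2w^2 \ge c s^3\lambda^4\int\varpi^3\eta^3w^2$ and $s\int H(\nabla w)\ge cs\lambda^2\int\varpi\eta\|\nabla w\|^2$. Three error terms remain.
\begin{enumerate}
\item $s\int G_1 w^2$: here $|G_1|\le C_\psi\|V\|_{W^{1,\infty}}\lambda^2\varpi\eta$, so this term is absorbed into the $s^3\lambda^4$ term as soon as $s^2\gtrsim \|V\|_{W^{1,\infty}}$, using $\varpi,\eta\ge1$. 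This is the source of the $\|V\|^{1/2}_{W^{1,\infty}}$ in the threshold on $s$.
\item $s^2\int|\Delta\phi\, w|^2 \le C s^2\lambda^4\int\varpi^2\eta^2w^2$: this is absorbed for $s\ge C$, again because $\varpi,\eta\ge1$.
\item The boundary term: $\partial_n\phi = -\lambda\varpi\eta\,\partial_n\psi$. On $\partial\Omega\setminus\partial\Omega_+$ this is $\ge0$ and can be dropped. On $\partial\Omega_+$ it is moved to the right-hand side as $s\lambda\int_{\partial\Omega_+}\varpi\eta|\partial_n\psi||\partial_n w|^2$, and there $\partial_n w = e^{-s\phi}\partial_n y$ because $y=0$.
\end{enumerate}

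\textbf{Returning to $y$.} We have $w^2 = y^2e^{-2s\phi}$, and $\nabla y\, e^{-s\phi} = \nabla w + s\lambda\varpi\eta\nabla\psi\, w$. Therefore
\[
s\lambda^2\int\varpi\eta\|\nabla y\|^2e^{-2s\phi} \le 2s\lambda^2\int\varpi\eta\|\nabla w\|^2 + Cs^3\lambda^4\int\varpi^3\eta^3w^2,
\]
and both terms on the right are already controlled. This yields the claimed inequality.

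\textbf{Main obstacle.} The delicate part is item 3 of \cref{prop_Carleman_basis_comp}, i.e.\ justifying the vanishing at $t=0,T$ and the integrations by parts for the stated regularity class. I would handle it by density and the exponential decay of $e^{-s\phi}$. The absorption steps are routine once the constants are tracked in terms of $\psi$ only.
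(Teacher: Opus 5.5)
Your proposal is correct and follows essentially the same route as the paper: conjugate by $e^{-s\phi}$, apply \cref{prop_Carleman_basis_comp}, use \cref{lemma_Carleman_G2H} for $G_2$ and $H$, absorb $G_1$ once $s^2 \gtrsim \Vert V \Vert_{W^{1,\infty}}$, and return to $y$ through the gradient identity. You also spell out the absorption of $s^2\int_Q \vert \Delta\phi\, w\vert^2$, the sign analysis of the boundary term and the verification of hypothesis 3, which the paper leaves implicit; one harmless slip is that $\nabla\phi = -\lambda\varpi\eta\nabla\psi$, so the identity reads $\nabla y\, e^{-s\phi} = \nabla w - s\lambda\varpi\eta\nabla\psi\, w$, which does not affect your estimate.
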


\begin{proof}
We define \( f = \partial_t y - \Delta y + V y \in L^2(Q) \). For \( s \geq 1 \), let the conjugated functions be \( w = y e^{-s\phi} \) and \( F = f e^{-s\phi} \). It follows immediately that \( w \) satisfies all the assumptions of \cref{prop_Carleman_basis_comp}, so equation~\eqref{eq_Carleman_basis_ineq} holds.

Our focus now turns to the term \( G_1 \) in~\eqref{eq_Carleman_basis_ineq}. A direct computation yields a constant \(\kappa\), depending only on \(\psi\), such that
\[
|G_1| \leq \kappa \|V\|_{W^{1,\infty}} \lambda^4 \varpi^3 \eta^3.
\]
Together with \cref{lemma_Carleman_G2H}, this implies the existence of \(\lambda_0 \geq 1\), \(\s_0 \geq 1\), and \(\mathcal{c} > 0\), all depending only on \(\psi\), such that for all \(\lambda \geq \lambda_0\) and all \(s \geq \left(1 + \frac{1}{T}\right)\s_0\),
\[
G_2 - |G_1| \geq \mathcal{c} \lambda^4 \varpi^3 \eta^3 \left(1 - \frac{\|V\|_{W^{1,\infty}}}{s^2}\right).
\]
It follows immediately that there exists \(s_0 \geq 1\) such that, for all \(\lambda \geq \lambda_0\) and all \(s \geq \left(1 + \frac{1}{T} + \|V\|_{1,\infty}^{\frac{1}{2}}\right)s_0\),
\[
G_2 - |G_1| \geq \frac{\mathcal{c}}{2} \lambda^4 \varpi^3 \eta^3.
\]
By combining this with \cref{prop_Carleman_basis_comp} and \cref{lemma_Carleman_G2H}, and noting that by definition
\[
\nabla y\, e^{-s \phi} = \nabla w - s \lambda \nabla \psi \varpi \eta w,
\]
the proof is complete.
\end{proof}

\begin{prop} \label{prop_Carleman_V_positive}
Assume $0 < c_- \leq V$ on $\overline{\Omega}$, and $\Vert V \Vert_{W^{1,\infty}} \leq c_+$, for some positive constants $c_-$, $c_+$.
There exists two constants $\lambda_0 \geq 1$ and $c>0$, depending only on $c_-$, $c_+$ and $\psi$, and a constant $s_0$, depending only on $\psi$, such that for all $\lambda\geq \lambda_0$, $s \geq \left(1 + \frac{1}{T} \right) s_0$, for all $\mu \geq 0$ and all $y \in C^0([0,T];H^1_0(\Omega))\cap L^2((0,T),H^2(\Omega))\cap H^1((0,T),L^2(\Omega))$
\begin{multline*}
s^3 \lambda^4 \int_Q \varpi^3 \eta^3 \vert y \vert^2 e^{-2 s \phi} +  s\lambda^2 \mu^2 \int_Q \varpi \eta \vert y \vert^2 e^{-2 s \phi} + s \lambda^2 \int_Q \varpi \eta \Vert \nabla y \Vert^2 e^{-2s\phi}
\\ \leq c \bigg( \int_Q \vert \partial_t y - \Delta y + \mu^2 V  y \vert^2 e^{-2 s \phi} 
+ s \lambda \int_0^T \int_{\partial \Omega_+} \varpi \eta \left\vert \frac{\partial \psi}{\partial n} \right\vert \left\vert \frac{\partial y}{\partial n} \right\vert^2  e^{-2 s \phi} \bigg),
\end{multline*}
where $\partial \Omega_+$ is defined in \eqref{omegaplus}.
\end{prop}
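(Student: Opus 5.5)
Apply \cref{prop_Carleman_basis_comp} with the potential $\mu^2 V$ in place of $V$. The point is that we cannot simply invoke \cref{prop_generic_Carleman_V}: it would require $s \gtrsim \mu \Vert V\Vert_{W^{1,\infty}}^{1/2}$, which destroys the uniformity in $\mu$. Instead we keep the sign of the potential term and use it to gain the extra term $s\lambda^2\mu^2\int_Q \varpi\eta |y|^2 e^{-2s\phi}$.

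Set $w = y e^{-s\phi}$ and $F = (\partial_t y - \Delta y + \mu^2 V y)e^{-s\phi}$. Exactly as in the proof of \cref{prop_generic_Carleman_V}, the hypotheses of \cref{prop_Carleman_basis_comp} hold, since $\varpi$ blows up at $t=0,T$. The term to analyze is
$$G_1 = \mu^2\left(\nabla V\cdot\nabla\phi - V\Delta\phi\right).$$
Using $\nabla\phi = -\lambda\varpi\eta\nabla\psi$ and $-\Delta\phi = \varpi(\lambda^2\Vert\nabla\psi\Vert^2\eta + \lambda\Delta\psi\,\eta)$, we get
$$G_1 = \mu^2\lambda\varpi\eta\left(\lambda V\Vert\nabla\psi\Vert^2 + V\Delta\psi - \nabla V\cdot\nabla\psi\right) \geq \mu^2\lambda^2\varpi\eta\left(c_- c_\psi^2 - \tfrac{1}{\lambda}\,C(c_+,\psi)\right).$$
Choosing $\lambda_0$ large, depending on $c_-,c_+,\psi$, gives $G_1 \geq \tfrac{c_-c_\psi^2}{2}\mu^2\lambda^2\varpi\eta$, with no condition on $s$ relative to $\mu$. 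By \cref{lemma_Carleman_G2H}, for $s\geq(1+1/T)\s_0$ we also have $G_2 \geq c\lambda^4\varpi^3\eta^3$ and $H(\xi)\geq c\lambda^2\varpi\eta\Vert\xi\Vert^2$. Hence the left-hand side of \eqref{eq_Carleman_basis_ineq} dominates
$$c\Big(s^3\lambda^4\!\int_Q\varpi^3\eta^3 w^2 + s\mu^2\lambda^2\!\int_Q\varpi\eta w^2 + s\lambda^2\!\int_Q\varpi\eta\Vert\nabla w\Vert^2\Big)$$
plus the boundary term.

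Two terms remain to handle.

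\textbf{The $s^2\int_Q|\Delta\phi\,w|^2$ term on the right.} Since $|\Delta\phi|\leq C\lambda^2\varpi\eta$ and $\varpi,\eta\geq1$, it is bounded by $Cs^2\lambda^4\int\varpi^3\eta^3w^2$. This is absorbed by the $s^3$ term once $s\geq s_0$ is large, depending only on $\psi$.

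\textbf{The boundary term.} We have $\partial\phi/\partial n = -\lambda\varpi\eta\,\partial\psi/\partial n$, and on $\partial\Omega$ we have $\partial w/\partial n = e^{-s\phi}\partial y/\partial n$. On $\partial\Omega\setminus\partial\Omega_+$ the integrand is nonnegative and can be dropped. The part on $\partial\Omega_+$ is moved to the right-hand side, producing exactly the stated boundary integral.

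Finally, we return from $w$ to $y$ through $\nabla y\,e^{-s\phi} = \nabla w - s\lambda\nabla\psi\,\varpi\eta\,w$. This gives
$$s\lambda^2\varpi\eta\Vert\nabla y\Vert^2e^{-2s\phi} \leq 2s\lambda^2\varpi\eta\Vert\nabla w\Vert^2 + Cs^3\lambda^4\varpi^3\eta^3w^2,$$
and both pieces are already controlled.

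The main obstacle is the sign analysis of $G_1$. One must check that the $\lambda^2$ contribution $V\Vert\nabla\psi\Vert^2$ dominates the $\lambda^1$ terms uniformly. This is why $\lambda_0$ depends on $c_\pm$ while $s_0$ depends only on $\psi$. One must also make sure that no step reintroduces a lower bound on $s$ depending on $\mu$.
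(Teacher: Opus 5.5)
Your proposal is correct and takes the same route as the paper. You apply the basic Carleman inequality with potential $\mu^2 V$, use $V \geq c_- > 0$ to get $G_1 \geq c\,\mu^2\lambda^2\varpi\eta$ for $\lambda$ large (depending on $c_\pm$ and $\psi$) with no $\mu$-dependent lower bound on $s$, and conclude as in the generic potential case; you also spell out the absorption of $s^2\int_Q |\Delta\phi\, w|^2$, the sign handling of the boundary term, and the passage from $w$ back to $y$, which the paper leaves as ``concluded as before''.
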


\begin{proof}
As before, we define \( f = \partial_t y - \Delta y + \mu^2 V y \in L^2(Q) \). For \( s \geq 1 \), introduce the conjugated functions \( w = y e^{-s\phi} \) and \( F = f e^{-s\phi} \). It follows that \( w \) satisfies all the assumptions of \cref{prop_Carleman_basis_comp} with \( V  \) replaced by $\mu^2 V$, ensuring that equation~\eqref{eq_Carleman_basis_ineq} holds.

Turning our attention again to the term \( G_1 \) in~\eqref{eq_Carleman_basis_ineq}, we note that, in the present case, it can be expressed as
\[
G_1 = \mu^2 (\nabla V \cdot \nabla \phi - V \Delta \phi) = \mu^2 \tilde{G}_1.
\]
The key insight is that \(\tilde{G}_1\) can be made positive by choosing \(\lambda\) sufficiently large, since $V$ is bounded away from zero in \(\overline \Omega\). Indeed, we have
$$
\tilde G_1 = \lambda^2 \varpi \eta \Vert \nabla \psi \Vert^2 V + \lambda \varpi \eta \Delta \psi V - \lambda \varpi \eta \nabla \psi \nabla V,
$$
which immediately implies
$$
\tilde G_1 \geq \lambda^2 \varpi \eta c_\psi^2 c_- \left( 1 - \frac{1}{\lambda} \frac{\Vert \psi \Vert_{W^{2,\infty}}c_+}{c_\psi^2\, c_-} \right).
$$
Consequently, there exist \(\lambda_1 \geq 1\) and \(\mathcal{c} > 0\), depending only on \(c_-\), \(c_+\), and \(\psi\), such that for all \(\lambda \geq \lambda_1\),
\[
\tilde{G}_1 \geq \mathcal{c} \lambda^2 \varpi \eta.
\]
The proof is then concluded as before.
\end{proof}

\begin{prop} \label{prop_BDE20}
Let $\mu > 0$. Consider a function $w : (0,\infty) \times \mathcal{B}_r\mapsto \R$ such that $w \in C^0([0,T];H^1_0(\mathcal{B}_r))\cap L^2((0,T);H^2(\mathcal{B}_r))\cap H^1((0,T);L^2(\mathcal{B}_r))$ for all $T>0$. Assume the following properties hold:
\begin{enumerate}
\item $\displaystyle \partial_t w - \Delta w + \mu \coth\left(2 \mu t\right) \left(2 x \cdot \nabla w + d\, w\right) - \frac{r^2 \mu^2}{\sinh\left(2\mu t\right)^2}w = F \in L^2((0,T)\times \mathcal{B}_r)$, $\forall T >0$.
\item $\Vert w(t,.) \Vert_{L^2(\mathcal{B}_r)} + t \Vert \nabla w (t,. ) \Vert_{L^2(\mathcal{B}_r)} \xrightarrow{t\rightarrow 0}0$.
\end{enumerate}

\medskip 
Then for all $T>0$,
\begin{multline*}
\int_{\mathcal{B}_r} \left( \sinh(2 \mu T)^2 \Vert \nabla w(T) \Vert^2 - \mu^2 r^2 \vert w(T) \vert^2 \right)
\leq \int_0^T \sinh(2\mu t)^2 \int_{\mathcal{B}_r} \vert F \vert^2 
\\ + \mu r  \int_0^T \sinh(4\mu t) \int_{\partial \mathcal{B}_r}  \left\vert \frac{\partial w}{\partial n} \right\vert^2.
\end{multline*}
\end{prop}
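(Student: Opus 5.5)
The plan is a multiplier (energy) argument. I will show that the quantity on the left-hand side,
$$
E(t) := \int_{\mathcal B_r} \left( a(t)^2 \Vert \nabla w(t)\Vert^2 - \mu^2 r^2 |w(t)|^2\right), \qquad a(t) := \sinh(2\mu t),
$$
satisfies the differential inequality
$$
E'(t) \le a(t)^2 \int_{\mathcal B_r} |F|^2 + \mu r \sinh(4\mu t) \int_{\partial \mathcal B_r} \left|\frac{\partial w}{\partial n}\right|^2 .
$$
Integrating this on $(\delta,T)$ and letting $\delta\to 0$ then gives the claim. Indeed $a(t)\sim 2\mu t$ near $0$, so assumption 2 gives $E(\delta)\to 0$. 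The computations are first done for smooth $w$; the stated regularity ($L^2H^2\cap H^1L^2$, with $w(t)\in H^1_0$) justifies the integrations by parts on $[\delta,T]$, where the coefficients $\coth(2\mu t)$ and $a^{-2}$ are bounded.

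\textbf{Step 1: differentiate $E$.} Since $\frac{d}{dt}a^2 = 2\mu\sinh(4\mu t)$, and $\partial_t w = 0$ on $\partial\mathcal B_r$ kills the boundary term when integrating $\nabla w\cdot\nabla \partial_t w$ by parts, I expect
$$
E' = 2\mu \sinh(4\mu t) \int \Vert\nabla w\Vert^2 - 2\int Z\,\partial_t w, \qquad Z := a^2 \Delta w + \mu^2 r^2 w .
$$
The equation rewrites as
$$
a^2 \partial_t w = Z + a^2 F - \mu\, a\cosh(2\mu t)\,(2x\cdot\nabla w + d\,w),
$$
because $a^2\coth(2\mu t)=a\cosh(2\mu t)$ and the potential term $a^2\cdot r^2\mu^2/a^2\, w$ is exactly the $\mu^2r^2w$ in $Z$. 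Substituting,
$$
-2\int Z\,\partial_t w = -\frac{2}{a^2}\int Z^2 - 2\int ZF + \frac{2\mu \cosh(2\mu t)}{a}\int Z(2x\cdot\nabla w + d\,w).
$$
By Young's inequality, $-2\int ZF \le a^{-2}\int Z^2 + a^2\int F^2$. The leftover $-a^{-2}\int Z^2$ is nonpositive and is discarded.

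\textbf{Step 2: the Rellich-type computation for the cross term.} This is where the structure of the weight $\coth(2\mu t)$ and the potential $r^2\mu^2/a^2$ must pay off. Using $w=0$ on $\partial\mathcal B_r$, hence $\nabla w = \frac{\partial w}{\partial n} n$ and $x\cdot n = r$ there, I compute
$$
2\int \Delta w\,(x\cdot\nabla w) = r\int_{\partial\mathcal B_r}\left|\frac{\partial w}{\partial n}\right|^2 + (d-2)\int \Vert\nabla w\Vert^2,
\qquad
\int \Delta w\, d\,w = -d\int\Vert\nabla w\Vert^2 .
$$
Adding these, $\int\Delta w\,(2x\cdot\nabla w + dw) = r\int_{\partial\mathcal B_r}|\partial_n w|^2 - 2\int\Vert\nabla w\Vert^2$. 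Moreover,
$$
\int w\,(2x\cdot\nabla w + d\,w) = \int \left(x\cdot\nabla (w^2) + d\,w^2\right) = 0,
$$
so the $\mu^2r^2 w$ part of $Z$ contributes nothing; this is the reason for the exact form of the potential. Hence, using $2a\cosh(2\mu t)=\sinh(4\mu t)$, the cross term equals
$$
\mu r \sinh(4\mu t)\int_{\partial \mathcal B_r}\left|\frac{\partial w}{\partial n}\right|^2 - 2\mu\sinh(4\mu t)\int\Vert\nabla w\Vert^2 .
$$
The second piece cancels exactly the first term of $E'$, which yields the differential inequality above.

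\textbf{Main obstacle.} The delicate point is not the algebra but the sign and bookkeeping: the cancellation between $\frac{d}{dt}a^2$ and the Rellich term, and the vanishing of the zero-order cross term, must come out exactly. The other point needing care is the limit $t\to 0$, where $\coth$ and $a^{-2}$ blow up. I would handle it by working on $[\delta,T]$ and using assumption 2, namely $a(\delta)\Vert\nabla w(\delta)\Vert \lesssim \delta\Vert\nabla w(\delta)\Vert\to 0$ and $\Vert w(\delta)\Vert\to0$, to send $E(\delta)\to 0$. The right-hand side integrals over $(\delta,T)$ increase to those over $(0,T)$.
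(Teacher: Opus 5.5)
Your proof is correct and is essentially the paper's argument. The paper works with $D(t)=\int_{\mathcal{B}_r}\bigl(\Vert\nabla w\Vert^2-\frac{\mu^2r^2}{\sinh(2\mu t)^2}|w|^2\bigr)$ and uses the same Rellich identity and Young inequality to get $D'+4\mu\coth(2\mu t)D\le\int_{\mathcal{B}_r}|F|^2+2\mu r\coth(2\mu t)\int_{\partial\mathcal{B}_r}|\partial_n w|^2$; it then integrates with the factor $\sinh(2\mu t)^2/\sinh(2\mu T)^2$. Your $E=\sinh(2\mu t)^2D$ builds that integrating factor in from the start, so the cancellation you find is exactly the paper's identity $\theta''=-4\theta\theta'$ for $\theta=\mu\coth(2\mu t)$.
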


\begin{proof}
The proof follows the same reasoning as in \cite[Proposition 2.1]{Beauchard2020}, with a minor adjustment to account for the source term. As such, we only outline the essential steps here.

We set $\theta(t) = \mu \coth(2\mu t)$, and for $\mathcal{w} \in H^2 \cap H^1_0(\mathcal{B}_r)$,
$$
S \mathcal{w} = - \Delta \mathcal{w} + \theta'(t) \frac{r^2}{2} \mathcal{w}, \quad
A \mathcal{w} = \theta(t) \left(2 x \cdot \nabla  \mathcal{w} + d\,  \mathcal{w}\right).
$$
By assumption, we have $\partial_t w + S w + A w = F$ on $(0,\infty) \times \mathcal{B}_r$. For $t>0$, we define
$$
D(t) = \int_{\mathcal{B}_r} Sw \, w = \int_{\mathcal{B}_r} \left(\Vert \nabla w \Vert^2 + \theta'(t) \frac{r^2}{2} \vert w \vert^2\right).
$$
A direct computation gives
$$
D'(t) = \theta''(t) \frac{r^2}{2} \int_{\mathcal{B}_r} \vert w \vert^2 -  2\int_{\mathcal{B}_r} \vert Sw \vert^2  
- 2 \int_{\mathcal{B}_r} Sw \, Aw + 2 \int_{\mathcal{B}_r} Sw \, F,
$$
which immediately implies
$$
D'(t) + \int_{\mathcal{B}_r} \vert S w \vert^2 \leq \theta''(t) \frac{r^2}{2} \int_{\mathcal{B}_r} \vert w \vert^2
- 2 \int_{\mathcal{B}_r} Sw \, Aw + \int_{\mathcal{B}_r} \vert F \vert^2.
$$
As 
\begin{multline*}
-2 \int_{\mathcal{B}_r} Sw\, Aw = 2 \theta(t) \int_{\mathcal{B}_r} \Delta  w (2 x \cdot \nabla w + d w)
 \\ = - 4 \theta(t) \int_{\mathcal{B}_r} \Vert \nabla w \Vert^2 + 2 \theta(t) r \int_{\partial \mathcal{B}_r}
  \left\vert \frac{\partial w}{\partial n} \right\vert^2,
\end{multline*}
using that $\theta''(t) = - 4 \theta(t) \theta'(t)$, we obtain
$$
D'(t) + 4 \theta(t) D(t) + \int_{\mathcal{B}_r} \vert Sw \vert^2 \leq \int_{\mathcal{B}_r} \vert F \vert^2 + 2 \theta(t) r \int_{\partial \mathcal{B}_r} \left\vert \frac{\partial w}{\partial n} \right\vert^2.
$$
We deduce that 
$$
(D(t) e^{\Theta(t)})' \leq e^{\Theta(t)} \int_{\mathcal{B}_r} \vert F \vert^2
+ 2 \theta(t) r e^{\Theta(t)} \int_{\partial \mathcal{B}_r}
 \left\vert \frac{\partial w}{\partial n} \right\vert^2,
$$
where we have set
$$
\Theta(t) = -4 \int_t^T \theta(s)  = \ln \left( \frac{\sinh(2\mu t)^2}{\sinh(2\mu T)^2} \right).
$$
The result follows from a straightforward integration over the interval \([0, T]\), using the behavior of \(w\) at \(0\).
\end{proof}

\begin{cor} \label{coro_BDE20}
Let \(\beta > 1\), \(\alpha = 1 - \frac{1}{\beta} \in (0,1)\), and let $R$ be a positive constant.
There exist a positive constant \(c\), depending only on $R$, and a positive constant $a$, depending on $\alpha$ and $R$, such that, for all $r$, $0 < r \leq R$, and for all \(T\) and \(\mu\) such that \(T\mu \geq a\mu^\alpha\),
any function \(w\) satisfying the assumptions of \cref{prop_BDE20} fulfills the following estimates
:
$$
\int_{\mathcal{B}_r} \vert w(T) \vert^2 \leq c \bigg( \int_0^T \frac{\sinh(2\mu t)^2}{\sinh(2\mu T)^2} \int_{\mathcal{B}_R} \vert F \vert^2 
 + \mu   \int_0^T \frac{\sinh(4\mu t)}{\sinh(2\mu T)^2} \int_{\partial \mathcal{B}_R} \left\vert \frac{\partial w}{\partial n} \right\vert^2 \bigg), 
$$
and 
\begin{multline*}
\int_{\mathcal{B}_r} \Vert \nabla w (T) \Vert^2 \leq c \left(1 + \frac{1}{T^2}\right)
\bigg( \int_0^T \frac{\sinh(2\mu t)^2}{\sinh(2\mu T)^2} \int_{\mathcal{B}_R} \vert F \vert^2 
 \\ + \mu   \int_0^T \frac{\sinh(4\mu t)}{\sinh(2\mu T)^2} \int_{\partial \mathcal{B}_R} \left\vert \frac{\partial w}{\partial n} \right\vert^2 \bigg).
\end{multline*}
\end{cor}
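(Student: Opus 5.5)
The plan is to divide the estimate of \cref{prop_BDE20} by $\sinh(2\mu T)^2$ and absorb the negative zero-order term on the left-hand side into the gradient term. The absorption uses the Poincaré inequality on $\mathcal{B}_r$, and it is the hypothesis $T\mu \geq a\mu^\alpha$ that makes it possible. Throughout, I read the domain integrals of $F$ and of $\partial w/\partial n$ in the statement as integrals over $\mathcal{B}_r$ and $\partial\mathcal{B}_r$, the sets on which $w$ is defined.

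\textbf{Step 1 (Poincaré inequality).} Let $j>0$ be such that $j^2$ is the first Dirichlet eigenvalue of $-\Delta$ on the unit ball of $\R^d$. By scaling, every $\mathcal{w}\in H^1_0(\mathcal{B}_r)$ satisfies
$$\int_{\mathcal{B}_r}|\mathcal{w}|^2\le \frac{r^2}{j^2}\int_{\mathcal{B}_r}\Vert\nabla \mathcal{w}\Vert^2\le \frac{R^2}{j^2}\int_{\mathcal{B}_r}\Vert\nabla \mathcal{w}\Vert^2 .$$
Applying this to $w(T)$, the left-hand side of \cref{prop_BDE20} is bounded below by
$$\Big(\sinh(2\mu T)^2-\frac{\mu^2R^4}{j^2}\Big)\int_{\mathcal{B}_r}\Vert\nabla w(T)\Vert^2 .$$

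\textbf{Step 2 (choice of $a$; the main point).} I will choose $a=a(\alpha,R)$ so that $\mu R^2\le \tfrac{j}{\sqrt2}\sinh(2a\mu^\alpha)$ for every $\mu>0$. Once this holds, the hypothesis $T\mu\geq a\mu^\alpha$ and the monotonicity of $\sinh$ give $\mu^2R^4/j^2\le \tfrac12\sinh(2\mu T)^2$, so the coefficient in Step 1 is at least $\tfrac12\sinh(2\mu T)^2$. To secure the inequality I treat two ranges of $\mu$.
\begin{itemize}
\item For $\mu\le1$: using $\sinh x\ge x$, it suffices that $R^2\mu^{1-\alpha}\le \sqrt2\, j a$. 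Since $\mu^{1-\alpha}\le 1$, this holds as soon as $a\ge R^2/(\sqrt2 j)$.
\item For $\mu\ge1$: using $\sinh x\ge e^{x}/4$ for $x\ge1$ (which is ensured by $a\ge 1$), it suffices that $4\mu R^2 e^{-2a\mu^\alpha}\le j/\sqrt2$. This holds for $a$ large depending on $\alpha$ and $R$, because $\sup_{\mu\ge1}\mu e^{-2a\mu^\alpha}\to0$ as $a\to\infty$.
\end{itemize}
This is the only non-routine step, and it is exactly where the dependence of $a$ on $\alpha$ enters: exponential growth in $\mu^\alpha$ beats the linear factor $\mu$ for every $\alpha>0$.

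\textbf{Step 3 (conclusion).} With the choice of Step 2 we obtain
$$\int_{\mathcal{B}_r}\Vert\nabla w(T)\Vert^2\le 2\bigg(\int_0^T\frac{\sinh(2\mu t)^2}{\sinh(2\mu T)^2}\int|F|^2+\mu R\int_0^T\frac{\sinh(4\mu t)}{\sinh(2\mu T)^2}\int_{\partial}\Big|\frac{\partial w}{\partial n}\Big|^2\bigg),$$
where we used $\mu r\le\mu R$. Applying the Poincaré inequality of Step 1 once more gives the $L^2$ estimate, with $c=2\max(1,R)\max(1,R^2/j^2)$, which depends only on $R$. The gradient estimate follows as well, since $1+1/T^2\ge1$, so it holds even without the extra factor.
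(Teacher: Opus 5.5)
Your proof is correct and follows essentially the paper's route. You divide the estimate of \cref{prop_BDE20} by $\sinh(2\mu T)^2$, use the Poincar\'e inequality on $\mathcal{B}_r$ (uniform in $r\le R$) to absorb one term into the other, and choose $a$ so that $\sinh(2a\mu^\alpha)\gtrsim\mu$ for all $\mu>0$. The differences are minor. The paper absorbs the gradient into the $L^2$ term: it gets the $L^2$ bound first via $K(\mu,T)\ge\frac34$ and then reinserts it for the gradient bound. It also obtains $a$ from \cref{lemma_existence_a} rather than from your two-range argument. Your reading of $\mathcal{B}_R,\partial\mathcal{B}_R$ as $\mathcal{B}_r,\partial\mathcal{B}_r$ matches how the corollary is used later, and you are right that the factor $1+\frac{1}{T^2}$ is superfluous.
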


\begin{proof}
For any $\rho > 0$, we denote 
$$
c_\rho = \inf_{\mathcal{w} \in H^1_0(\mathcal{B}_\rho)} \frac{\int_{\mathcal{B}_\rho} \Vert \nabla \mathcal{w}\Vert^2}{\int_{\mathcal{B}_\rho} \vert \mathcal{w}\vert^2}.
$$
Then $c_\rho>0$ is the square of the reciprocal of the optimal Poincar\'e constant, and for all $\mathcal{w} \in H^1_0(\mathcal{B}_\rho)$, 
$
\displaystyle c_\rho \int_{\mathcal{B}_\rho}\vert \mathcal{w} \vert^2 \leq \int_{\mathcal{B}_\rho} \Vert \nabla \mathcal{w}\Vert^2.
$
It is readily seen that, for all $0< \rho_1 \leq \rho_2$, $c_{\rho_2} \leq c_{\rho_1} $.

\medskip

Now, let $\mu> 0$, $T>0$, and $w$ be a function satisfying the assumptions of  \cref{prop_BDE20}. We have
\begin{multline*}
\int_{\mathcal{B}_R} \left( \Vert \nabla w(T) \Vert^2 - \frac{\mu^2 r^2}{\sinh(2 \mu T)^2} \vert w(T) \vert^2 \right)
\leq \int_0^T \frac{\sinh(2\mu t)^2}{\sinh(2\mu T)^2} \int_{\mathcal{B}_R} \vert F \vert^2 
\\ + \mu r  \int_0^T \frac{\sinh(4\mu t)}{\sinh(2\mu T)^2} \int_{\partial \mathcal{B}_R} \left\vert \frac{\partial w}{\partial n} \right\vert^2.
\end{multline*}
As $w(T,.)$ belongs to $H^1_0(\mathcal{B}_r)$, we have
\begin{multline*}
\int_{\mathcal{B}_R} \left( \Vert \nabla w(T) \Vert^2 - \frac{\mu^2 r^2}{\sinh(2 \mu T)^2} \vert w(T) \vert^2 \right)
\geq \int_{\mathcal{B}_R} \left(c_r - \frac{\mu^2 r^2}{\sinh(2 \mu T)^2}  \right) \vert w(T) \vert^2
\\ \geq \left(c_{R} - \frac{\mu^2 R^2}{\sinh(2 \mu T)^2} \right) \int_{\mathcal{B}_r} \vert w(T) \vert^2.
\end{multline*}
We set 
$$
K(\mu,T) = 1- \frac{\mu^2 R^2}{c_R \sinh(2 \mu T)^2}.
$$
Our goal is to show that there exists a sufficiently large constant \(a > 0\) such that, for all \(T\) and \(\mu\) satisfying \(T\mu \geq a\mu^\alpha\),
$K(\mu,T) \geq \frac{3}{4}$, or equivalently $\sinh(2\mu T) \geq 2 R c_R^{-\frac{1}{2}}\mu$. This follows directly from Lemma \ref{lemma_existence_a}, with $\mathcal c = 2 R c_R^{-\frac{1}{2}}$, which implies that $a$ depends solely on $\alpha$ and $R$.

For such a constant $a$, and for any $T$ and $\mu$ satisfying $T\mu \geq a \mu^\alpha$, we obtain the following estimate:
\begin{multline*}
\int_{\mathcal{B}_r} \vert w(T) \vert^2 \leq \frac{4}{3\,c_R}  
\bigg( \int_0^T \frac{\sinh(2\mu t)^2}{\sinh(2\mu T)^2} \int_{\mathcal{B}_R} \vert F \vert^2 
\\ + \mu R  \int_0^T \frac{\sinh(4\mu t)}{\sinh(2\mu T)^2} \int_{\partial \mathcal{B}_R} \left\vert \frac{\partial w}{\partial n} \right\vert^2\bigg).
\end{multline*}
Additionally, we have
\begin{multline*}
\int_{\mathcal{B}_r} \Vert \nabla w(T) \Vert^2 \leq \frac{\mu^2 r^2}{\sinh(2 \mu T)^2} \int_{\mathcal{B}_r} \vert w(T) \vert^2
+ \int_0^T \frac{\sinh(2\mu t)^2}{\sinh(2\mu T)^2} \int_{\mathcal{B}_R} \vert F \vert^2 
\\ + \mu r  \int_0^T \frac{\sinh(4\mu t)}{\sinh(2\mu T)^2} \int_{\partial \mathcal{B}_R} \left\vert \frac{\partial w}{\partial n} \right\vert^2.
\end{multline*}
The result follows immediately.
\end{proof}

\subsection{Observation from an annulus}
\label{obsfromannulus}
We define three positive constants, $R_1$, $R_2$, and $R$, such that $R_1 < R_2 \leq R$, and set 
$$
\mathcal A = \left\lbrace x \in \R^d, \ R_1 < \Vert x \Vert < R_2 \right\rbrace \subset \mathcal B_R.
$$
Our main result reads:

\begin{prop} \label{prop_observability_from_annulus}
There exists a positive constant $\cc$, and for all $\beta > 1$, 
a positive constant $c_\beta$ verifying $c_\beta \xrightarrow{\beta\rightarrow 1} \infty$,  such that for all $T>0$,  $\mu>0$,  $\varepsilon\in (0,1]$, and all $y$ in $C^0([0,T];L^2(\Omega))\cap L^2((0,T);H^1_0(\Omega))$ satisfying $\partial_t y - \Delta y + \mu^2 \Vert x \Vert^2 y = 0$ in $(0,T) \times \mathcal B_R$, the following estimate holds:
$$
\int_{\mathcal{B}_R} \vert y(T) \vert^2 \leq \frac{\cc}{\varepsilon^6} (1+\mu) e^{\mu (R_1+\varepsilon)^2} e^{\frac{2 c_\beta}{T^\beta}}\int_0^T \int_{\mathcal A } \vert y \vert^2.
$$
\end{prop}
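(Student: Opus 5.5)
The plan is to split according to the size of $T\mu$ relative to $\mu^\alpha$, with $\alpha = 1-\frac1\beta$ and the threshold $a=a(\alpha,R)$ given by \cref{coro_BDE20}. In both regimes the energy at time $T$ is split into an inner part, on $\mathcal B_{R_1+\varepsilon}$, and an outer part, on $\mathcal B_R\setminus \mathcal B_{R_1+\varepsilon/2}$. These are estimated by different tools, and then added. Throughout, $\chi$ denotes a radial cutoff with $\chi\equiv1$ on $\mathcal B_{R_1+\varepsilon/2}$ and $\operatorname{supp}\chi\subset \mathcal B_{R_1+\varepsilon}$, so that $|\nabla\chi|\lesssim \varepsilon^{-1}$ and $|\Delta\chi|\lesssim\varepsilon^{-2}$. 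The commutator $[\partial_t-\Delta,\chi]y$ is supported in $\{R_1+\varepsilon/2<\|x\|<R_1+\varepsilon\}\subset\mathcal A$, assuming without loss of generality that $\varepsilon<R_2-R_1$. Its $L^2$ norm involves $\nabla y$ there. I will bound that gradient by $y$ on the whole of $\mathcal A$ through a Caccioppoli estimate for $\partial_t-\Delta+\mu^2\|x\|^2$, using a second cutoff. This costs an additional factor $\varepsilon^{-2}(1+\mu)$ and a loss in $t$ near $0$, which is absorbed by the time weights. Together with the $\varepsilon^{-2}$ from $\Delta\chi$, this is where the $\varepsilon^{-6}(1+\mu)$ prefactor should come from.

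\emph{Outer part, uniform in $\mu$.} On $\Omega_{\rm out}=\{R_1+\varepsilon/4<\|x\|<R\}$ the potential satisfies $\|x\|^2\ge R_1^2>0$. I will apply \cref{prop_Carleman_V_positive} with $V=\|x\|^2$ to $(1-\tilde\chi)y$, where $\tilde\chi$ is a cutoff adapted to the inner boundary. The weight is $\psi$ radial and strictly decreasing in $\|x\|$ on $\Omega_{\rm out}$. Then $\nabla\psi\cdot n<0$ on the outer sphere, so $\partial\Omega_+$ reduces to the inner sphere, where the cut-off function vanishes. The only right-hand side is therefore the commutator term, supported in $\mathcal A$. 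Since the constants of \cref{prop_Carleman_V_positive} do not depend on $\mu$, the usual argument gives $\int_{\Omega_{\rm out}}|y(T)|^2\le e^{C(1+1/T)}\int_0^T\!\!\int_{\mathcal A}(|y|^2+|\nabla y|^2)$: one compares the weights on $(T/4,3T/4)$ and uses the dissipation of $y$ on $[T/2,T]$, which holds because the potential is nonnegative. After Caccioppoli, this is dominated by the claimed right-hand side, since $e^{C/T}\le C_\beta e^{2c_\beta/T^\beta}$.

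\emph{Inner part.} When $T\mu\le a\mu^\alpha$, that is $\mu\le (a/T)^\beta$, I will use \cref{prop_generic_Carleman_V} on $\mathcal B_{R_1+\varepsilon}$ for $\chi y$, with $\psi$ radially increasing. Then $\partial\Omega_+$ is the sphere $\{\|x\|=R_1+\varepsilon\}$, where the normal derivative of $\chi y$ vanishes. To guarantee $\|\nabla\psi\|\ge c_\psi$ in $\overline\Omega$, I would use $\psi$ smooth, for instance $\|x-x_0\|^2$ with $x_0$ placed so that its critical point lies inside the annulus region. Here $\|V\|_{W^{1,\infty}}\sim\mu^2$, so we need $s\gtrsim 1+1/T+\mu$. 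The cost is then $e^{C(1+1/T+\mu)}\le e^{C(1+1/T+a^\beta T^{-\beta})}$, which is of the form $e^{2c_\beta/T^\beta}$ for $T\le1$. For $T>1$, I reduce to $T=1$ using the dissipation. When $T\mu\ge a\mu^\alpha$, I will conjugate: $w=\chi y\,\exp\!\big(\tfrac{\mu}{2}\coth(2\mu t)\|x\|^2\big)$ up to a scalar factor in $t$, in the spirit of \cite{Beauchard2020}, so that $w$ solves the equation of \cref{prop_BDE20} with $r=R_1+\varepsilon$ and a source $F$ supported in $\mathcal A$. I will then apply the first estimate of \cref{coro_BDE20}. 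The boundary term vanishes, and the condition at $t=0$ holds because of the Gaussian decay of the factor $e^{-\frac{\mu}{2}\coth(2\mu t)\|x\|^2}$.

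Undoing the conjugation at time $T$ costs at most $e^{\mu\coth(2\mu T) r^2}$. I expect the main obstacle to be showing that the product of $\frac{\sinh(2\mu t)^2}{\sinh(2\mu T)^2}$, the factor $e^{\mu\coth(2\mu t)\|x\|^2}$ on $\operatorname{supp}F$, and $e^{-\mu\coth(2\mu T)\|x\|^2}$ at time $T$ stays bounded by $e^{\mu(R_1+\varepsilon)^2}$ times harmless factors, uniformly in $t\in(0,T)$. For small $t$ the exponent $\mu\coth(2\mu t)\|x\|^2$ is huge. I would try to show that this is exactly compensated, because $F$ carries the original factor $e^{-\frac\mu2\coth(2\mu t)\|x\|^2}$ squared. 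I would also check that $\coth(2\mu T)\le 1+e^{-c\mu^{\alpha}}$-type bounds in this regime turn the residual into $e^{\mu(R_1+\varepsilon)^2}$ times a constant. If that bookkeeping fails, I would fall back to only conjugating on $[T/2,T]$ and starting the estimate at $T/2$, where $\coth$ is bounded.
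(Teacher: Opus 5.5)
Your plan is the paper's proof. It uses the same inner/outer splitting. The outer part is handled uniformly in $\mu$ by \cref{prop_Carleman_V_positive} with $V=\Vert x\Vert^2$ and a radially decreasing weight, so $\partial\Omega_+$ is the inner sphere, where the truncated function vanishes. The inner part is split at $T\mu\gtrless a\mu^\alpha$ between \cref{prop_generic_Carleman_V} and the conjugation of \cref{prop_BDE20} and \cref{coro_BDE20}. Finally, \cref{lemma_obsnablau->u} trades $\nabla y$ for $y$ on $\mathcal A$.

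The step you single out as the main obstacle is not one; your worry comes from dropping the scalar factor on the source side. Matching the zeroth-order coefficient $-r^2\mu^2/\sinh(2\mu t)^2$ of \cref{prop_BDE20} forces
\[
w=\exp\Big(-\tfrac{\mu}{2}\coth(2\mu t)\big(r^2-\Vert x\Vert^2\big)\Big)\,\chi y,\qquad r=R_1+\varepsilon .
\]
So $F$ is $[\partial_t-\Delta,\chi]y$ multiplied by this same factor, which is $\le 1$ on $\mathcal B_r$. Nothing has to be compensated.

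Since $\mu\coth(2\mu t)\ge 1/(2t)$ and $\operatorname{supp}\chi$ stays away from $\partial\mathcal B_r$, the factor is even $\le e^{-c/t}$ on $\operatorname{supp}\chi$. This is what yields condition 2 at $t=0$, not Gaussian decay of $e^{-\frac{\mu}{2}\coth(2\mu t)\Vert x\Vert^2}$ (your conjugation contains the reciprocal of that factor).

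At time $T$ you lose $e^{\mu\coth(2\mu T)r^2}\le e^{\mu r^2}e^{r^2/(2T)}$ by \cref{lemma_boundonhyperbfunc}. The factor $e^{r^2/(2T)}$ is absorbed into $e^{2c_\beta/T^\beta}$, so no sharper bound on $\coth$ is needed.

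The leftover weight $\vartheta=\sinh(2\mu t)^2/\sinh(2\mu T)^2$ vanishes at $t=0$ and satisfies $\Vert\vartheta\Vert_{W^{1,\infty}}\le 1+4\mu+2/T$. This is the origin of the factor $1+\mu$, not \cref{lemma_obsnablau->u} itself, whose constant is independent of $\mu$.

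Your fallback would not help. \cref{prop_BDE20} uses the singularity of $\coth(2\mu t)$ at the initial time, through $\sinh(2\mu t)^2/\sinh(2\mu T)^2$ and condition 2, to make the initial energy disappear. Starting at $T/2$ with $\coth(2\mu t)$ bounded leaves the uncontrolled term $\sinh(\mu T)^2\sinh(2\mu T)^{-2}\int_{\mathcal B_r}\Vert\nabla w(T/2)\Vert^2$. Shifting time instead just reproduces the same argument on a shorter interval.

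Two smaller corrections.

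First, in the small-$T\mu$ case, \cref{prop_generic_Carleman_V} needs $\Vert\nabla\psi\Vert\ge c_\psi$ on all of $\overline{\mathcal B_{R_1+\varepsilon}}$. So the critical point of $\psi$ must lie outside that closed ball, not in the part of the annulus inside it. It should also sit at a distance independent of $\varepsilon$ if the constants are to be uniform. Since $\chi y$ vanishes near $\partial\mathcal B_{R_1+\varepsilon}$, no monotonicity of $\psi$ is required; the paper simply takes $\psi=\frac12\Vert x-2Re\Vert^2$ with $e\in\mathbb{S}^{d-1}$.

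Second, in the outer part, going from $(T/4,3T/4)$ to time $T$ on $\Omega_{\mathrm{out}}$ is not done by the global dissipation of $y$ on $\mathcal B_R$, which does not localize to $\Omega_{\mathrm{out}}$. It requires the energy estimate with source for the truncated function (\cref{lemma_standard_energ_est_source}, valid because the potential is nonnegative). This produces a time weight vanishing at $t=0$ before \cref{lemma_obsnablau->u} is applied.
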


It is enough to prove \cref{prop_observability_from_annulus} for $\varepsilon \in (0,1]$ such that $R_1 + \varepsilon < R_2$,
and, by density arguments, for $y \in C^0([0,T],H^1_0(\Omega))\cap L^2((0,T);H^2(\Omega)) \cap H^1((0,T);L^2(\Omega))$.
Throughout, we assume $\varepsilon$ is chosen in this way. We also fix $T$, $\mu$ positive, and consider $y$ in $C^0([0,T],H^1_0(\Omega))\cap L^2((0,T);H^2(\Omega)) \cap H^1((0,T);L^2(\Omega))$
satisfying $\partial_t y - \Delta y + \mu^2 \Vert x \Vert^2 y = 0$ in $(0,T) \times \mathcal B_R$. 
\cref{prop_observability_from_annulus} is a direct consequence of the following \cref{prop_observability_from_annulus_interior_subcase} and \cref{prop_observability_from_annulus_exterior_subcase}, as 
$$
\int_{\mathcal{B}_R} \vert y(T) \vert^2 
\leq \int_{\mathcal{B}_{R_1 + \frac{\varepsilon}{3}}} \vert y(T) \vert^2 +
\int_{\mathcal{B}_R \setminus \mathcal{B}_{R_1+\frac{\varepsilon}{6}}} \vert y(T) \vert^2.
$$

\begin{prop} \label{prop_observability_from_annulus_interior_subcase}
There exists a constant $c$, depending only on $R$, and a constant $\mathcal{c}_\beta$, depending on $\beta$ and $R$, and verifying $c_\beta \xrightarrow{\beta\rightarrow 1} \infty$, such that 
$$
\int_{\mathcal{B}_{R_1+\frac{\varepsilon}{3}}} \vert y(t) \vert^2
\leq \frac{c}{\varepsilon^6} (1+\mu) e^{\mu (R_1+\varepsilon)^2} e^{\frac{2c_\beta}{T^\beta}} \int_{\mathcal{A}} \vert y\vert^2.
$$
\end{prop}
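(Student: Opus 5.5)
The estimate is read at $t=T$, with the right-hand side $\int_0^T\int_{\mathcal A}|y|^2$. The plan is to use the transformation of \cite{Beauchard2020} that turns the harmonic heat equation into the equation of \cref{prop_BDE20}, then apply \cref{coro_BDE20} on the small ball $\mathcal B_r$ with $r=R_1+\varepsilon/2$ (or $R_1+2\varepsilon/3$) after a spatial cut-off. This gives the sharp factor $e^{\mu(R_1+\varepsilon)^2}$ when $T\mu\ge a\mu^\alpha$. In the complementary regime $T\mu< a\mu^\alpha$, i.e. $\mu< (a/T)^{\beta}$ with $\alpha=1-1/\beta$, I would instead use the Carleman estimate of \cref{prop_generic_Carleman_V} (or \cref{prop_Carleman_V_positive}) with $V=\mu^2\|x\|^2$. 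This costs roughly $e^{C(1+1/T+\mu)}$, and $e^{C\mu}\le e^{C a^\beta/T^\beta}$ is absorbed into $e^{2c_\beta/T^\beta}$. The blow-up $c_\beta\to\infty$ as $\beta\to1$ comes from $a=a(\alpha)$ of \cref{coro_BDE20}.

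\textbf{Regime $T\mu\ge a\mu^\alpha$.} Take a radial cut-off $\chi\in C^\infty_c(\mathcal B_{R_1+2\varepsilon/3})$ with $\chi=1$ on $\mathcal B_{R_1+\varepsilon/2}$ and $|\nabla\chi|\lesssim\varepsilon^{-1}$, $|\Delta\chi|\lesssim\varepsilon^{-2}$. Set $z=\chi y$, which solves the harmonic heat equation on $\mathcal B_r$ with source $-2\nabla\chi\cdot\nabla y-\Delta\chi\, y$ supported in $\mathcal A$ and with zero Dirichlet data on $\partial\mathcal B_r$. Apply the Gaussian conjugation $w=e^{\frac{\mu}{2}\coth(2\mu t)\|x\|^2}\times(\text{explicit factor})\,z$, chosen as in \cite{Beauchard2020} so that $w$ satisfies item 1 of \cref{prop_BDE20} with $r=R_1+2\varepsilon/3$. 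I expect this choice to produce exactly the $\coth$ and $\sinh^{-2}$ terms; I have not checked it in detail. Since $\coth(2\mu t)\to\infty$ as $t\to0$, the weight is $e^{-\|x\|^2/(4t)}$-like near $t=0$, so condition 2 holds for smooth $y$. Since $w=0$ on $\partial\mathcal B_r$, the boundary term vanishes. \cref{coro_BDE20} then yields
$\int_{\mathcal B_r}|w(T)|^2\le c\int_0^T\frac{\sinh(2\mu t)^2}{\sinh(2\mu T)^2}\int|F|^2$.
Going back to $y$, the weight at time $T$ satisfies $e^{\mu\coth(2\mu T)\|x\|^2}\ge 1$ on $\mathcal B_{R_1+\varepsilon/3}$. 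On $\mathrm{supp}F\subset\mathcal A\cap\mathcal B_{R_1+2\varepsilon/3}$ the weight is at most $e^{\mu\coth(2\mu t)(R_1+2\varepsilon/3)^2}$. Combined with the factor $\sinh(2\mu t)^2$ and the sinh/cosh prefactors of the transformation, I want this bounded by $C(1+\mu)e^{\mu(R_1+\varepsilon)^2}$ for $t$ bounded away from $0$. For $t$ near $0$, the product $\sinh^2(2\mu t)e^{\mu\coth(2\mu t)\rho^2}$ blows up like $e^{\rho^2/(2t)}$. This is the main obstacle.

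To handle it, I would not start the time integral at $0$. Apply the argument on $[T/2,T]$ (shifted so that it starts at $0$), restricted to times where $\mu\coth(2\mu t)\le\mu(1+\delta)$; alternatively, split off the small-$t$ part by a time cut-off. The loss is then controlled by $e^{C/T}$ factors, fine since $1/T\le 1/T^\beta$ up to constants for $T\le1$ (the case $T>1$ is reduced to $T=1$ by dissipation). The gradient term $\nabla\chi\cdot\nabla y$ in $F$ is converted into $\int_0^T\int_{\mathcal A}|y|^2$ by a Caccioppoli estimate on a slightly larger annulus. Its coefficient $\mu^2\|x\|^2\lesssim\mu^2$ is harmless once the time cut-off costs $\varepsilon^{-2}$ and $T^{-1}$. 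Together with the $\varepsilon^{-2}$ from $\chi$ and the shrinking of margins ($\varepsilon/3,\varepsilon/2,2\varepsilon/3$), this explains the $\varepsilon^{-6}$.

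\textbf{Regime $T\mu<a\mu^\alpha$.} Use a standard observability estimate with weight $\psi$ built from $\|x\|$ on $\mathcal B_{R_1+\varepsilon/3}$ after the same cut-off. This gives cost $e^{C(1+1/T+\mu\,\cdot\,\text{const})}$, polynomial in $\varepsilon^{-1}$ after fixing $\lambda$; then bound $\mu< (a/T)^\beta$ as above.
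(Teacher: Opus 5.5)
There is a genuine gap in your large-$T\mu$ regime. The ``main obstacle'' you identify is an artefact of not computing the conjugation, and the remedy you propose would not remove it.

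The time-only factor is forced. To produce the term $-\frac{r^2\mu^2}{\sinh(2\mu t)^2}w$ of \cref{prop_BDE20} it must equal $e^{-\frac{\mu}{2}\coth(2\mu t)r^2}$; there are no $\sinh$/$\cosh$ prefactors. So $w=e^{-\frac{\mu}{2}\coth(2\mu t)(r^2-\Vert x\Vert^2)}\chi y$, which is the paper's choice with $r=R_1+\varepsilon$. This changes the bookkeeping in three ways:
\begin{itemize}
\item Since $\chi$ is supported inside $\mathcal{B}_r$, the exponential weight on $|F|^2$ is $e^{-\mu\coth(2\mu t)(r^2-\Vert x\Vert^2)}\le 1$ for every $t$. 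Nothing blows up as $t\to0$.
\item The whole cost sits on the left-hand side at time $T$. There the weight is at least $e^{-\mu\coth(2\mu T)r^2}\ge e^{-\mu r^2-r^2/(2T)}$ by \cref{lemma_boundonhyperbfunc}. This is exactly where $e^{\mu(R_1+\varepsilon)^2}e^{c/T}$ comes from, not from the source at times away from $0$.
\item The remaining time weight $\sinh(2\mu t)^2/\sinh(2\mu T)^2$ vanishes at $t=0$ and has $W^{1,\infty}$ norm $\lesssim \mu+1/T$. That is precisely what \cref{lemma_obsnablau->u} needs to replace $\nabla y$ by $y$ on $\mathcal A$, and it yields the factor $(1+\mu)$.
\end{itemize}
The same factor is also what makes condition 2 of \cref{prop_BDE20} hold. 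Your claim of an $e^{-\Vert x\Vert^2/(4t)}$-like weight contradicts your own formula with $e^{+\frac{\mu}{2}\coth(2\mu t)\Vert x\Vert^2}$. The actual behaviour is $e^{-(r^2-\Vert x\Vert^2)/(4t)}$, which tends to $0$ on the support of $\chi$.

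Your fallback would not close the argument either. Multiplying $\chi y$ by a time cut-off $\eta(t)$ creates an extra source $\eta'(t)\chi y$. It is supported on the whole ball $\mathcal B_r$, not in $\mathcal A$, so it is not controlled by $\int_0^T\int_{\mathcal A}|y|^2$; bounding it is essentially the statement itself. Shifting to $[T/2,T]$ only moves the same behaviour to the new initial time, and \cref{prop_BDE20} cannot be restricted to a subset of times. So, as written, the main regime is not proved.

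Your small-$T\mu$ regime is close to the paper's, with one correction. A weight built from $\Vert x\Vert$ has a critical point at the origin, where you have no observation. You need $\psi$ with $\Vert\nabla\psi\Vert\ge c_\psi>0$ on $\overline{\mathcal B}_{R_1+\varepsilon}$; the paper uses $\psi=\frac12\Vert x-2Re\Vert^2$. Then \cref{prop_generic_Carleman_V}, applied to the compactly supported $\chi y$, has only the source on its right-hand side. \cref{lemma_standard_energ_est_source} and \cref{lemma_obsnablau->u} then take you to time $T$ and remove $\nabla y$.
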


\begin{proof} In what follows, \( \cc \) denotes a constant that depend only on \( R \), and whose value may vary from line to line.

Set $\alpha = 1 - \frac{1}{\beta}$. 
 Let $\chi_1 : \R^d \mapsto \R$ be smooth, and such that $\chi_1 = 1$ in $\mathcal{B}_{R_1+\frac{\varepsilon}{3}}$, and
$\chi_1 = 0$ in $\R^d \setminus \mathcal{B}_{R_1+\frac{2\varepsilon}{3}}$.
In the following, we denote 
$$
\mathcal{A}_\varepsilon = \left\lbrace x \in \R^d, \ R_1+\frac{\varepsilon}{3}< \Vert x \Vert < R_1 +\frac{2\varepsilon}{3} \right\rbrace
\subset \mathcal A.
$$
The definition of $\chi_1$ implies
$$
\Vert \nabla \chi_1 \Vert_{L^\infty} \leq \frac{\cc}{\varepsilon} \indicatrice{\mathcal{A}_\varepsilon}, \quad \Vert \Delta \chi_1 \Vert_{L^\infty} \leq \frac{\cc}{\varepsilon^2} \indicatrice{\mathcal{A}_\varepsilon}.
$$
Set $y_1 = \chi_1 y$. Then $y_1$ belongs to $C^0([0,T];H^1_0(\mathcal{B}_{R_1+\varepsilon}))\cap L^2((0,T);H^2(\mathcal{B}_{R_1+\varepsilon}))\cap H^1((0,T);L^2(\mathcal{B}_{R_1+\varepsilon}))$, satisfies $\nabla y_1 \cdot \nu = 0$
on $\partial \mathcal{B}_{R_1+\varepsilon}$ and 
$$
\partial_t y_1 - \Delta y_1 + \mu^2 \Vert x \Vert^2 y_1 = -2\nabla \chi_1 \cdot \nabla y - \Delta \chi_1 y \text{ in } (0,T) \times \mathcal{B}_{R_1+\varepsilon}.
$$
From this point onward, we will employ two distinct Carleman-type strategies, depending on the regime of \( T \) and \( \mu \).

\textbf{Case $T\mu$ large --}
Set $w_1 = \exp\left(- \frac{\mu}{2} \coth\left(2\mu t\right)((R_1+\varepsilon)^2 - \Vert x\Vert^2 ) \right) \, y_1$. Then $w_1$ satisfies the assumptions of \cref{coro_BDE20}, from which we deduce the existence of $a$, depending only on $R$ and $\alpha$ such that, if $T$, $\mu$ are such that $T \mu \geq a \mu^\alpha$,
\begin{multline*}
\int_{\mathcal{B}_{R_1+\varepsilon}} \vert y_1(T) \vert^2 \exp\left(- \mu \coth\left(2\mu T\right)((R_1+\varepsilon)^2 - \Vert x\Vert^2 )\right)
 \\ \leq \cc \int_0^T \frac{\sinh(2\mu t)^2}{\sinh(2\mu T)^2} \int_{\mathcal{B}_{R_1+\varepsilon}} \vert 
 2\nabla \chi_1 \cdot \nabla y - \Delta \chi_1 y \vert^2 \exp\left(- \mu \coth\left(2\mu t\right)((R_1+\varepsilon)^2 - \Vert x\Vert^2) \right).
\end{multline*}
As an immediate consequence, we obtain
$$
\int_{\mathcal{B}_{R_1+\varepsilon}} \vert y_1(T) \vert^2 
\leq  \frac{\cc}{\varepsilon^4} e^{\mu \coth(2\mu T) (R_1+\varepsilon)^2} \int_0^T \int_{\mathcal{A}_\varepsilon} \frac{\sinh(2\mu t)^2}{\sinh(2\mu T)^2} \left( \vert y \vert^2
 + \Vert \nabla y \Vert^2\right).
$$
By applying \cref{lemma_obsnablau->u} with $\mathcal{O}= \mathcal{A}_\varepsilon$, $\omega = \mathcal{A}$, and
$\vartheta = \frac{\sinh(2\mu t)^2}{\sinh(2\mu T)^2}$, and combining it with \cref{lemma_boundonhyperbfunc}, we obtain, for all $T$, $\mu$ such that $T\mu \geq a \mu^\alpha$,
\begin{multline} \label{eq_proof_obs_annulus_eq1}
\int_{\mathcal{B}_{R_1 +\frac{\varepsilon}{3}}} \vert y(T) \vert^2 \leq \int_{\mathcal{B}_{R_1+\varepsilon}} \vert y_1(T) \vert^2 \leq \frac{\mathcal c}{\varepsilon^6} \left(\mu + \frac{1}{T}\right) e^{\mu (R_1+\varepsilon)^2}e^{\frac{(R_1+\varepsilon)^2}{2T}} \int_0^T \int_{\mathcal{A}} \vert y \vert^2
\\ \leq \frac{\mathcal c}{\varepsilon^6} (1+\mu) e^{\mu (R_1+\varepsilon)^2} e^{\frac{\cc}{T}} \int_0^T \int_{\mathcal{A}} \vert y \vert^2.
\end{multline}

\textbf{Case $T\mu$ small --}
Now, we apply \cref{prop_generic_Carleman_V} with $Q = (0,T) \times \mathcal{B}_{R_1+\varepsilon}$, $V = \mu^2 \Vert x \Vert^2$, $\psi = \frac{1}{2} \Vert x - 2 R e \Vert^2$
with $e\in \mathbb{S}^{d-1}$, so that $\Vert \nabla \psi \Vert \geq R $ on $\mathcal{B}_R$, $\lambda = \lambda_0$
and $s = \left( 1+\frac{1}{T} + \Vert V \Vert_{W^{1,\infty}}^{\frac{1}{2}}\right) s_0$. This gives
\begin{multline} \label{eq_proof_obs_annulus_eq2}
\int_0^T \int_{\mathcal{B}_{R_1+\varepsilon}}  \vert y_1 \vert^2e^{-2\left(1+ \frac{1}{T} + \mu \right)s_0\phi}
\\ \leq \cc \int_0^T \int_{\mathcal{B}_{R_1+\varepsilon}}  \vert 2 \nabla \chi_1 \cdot \nabla y + \Delta \chi_1 \, y \vert^2 
e^{-2\left(1+ \frac{1}{T} + \mu \right)s_0\phi},
\end{multline}
where we recall that $\phi = \varpi \varphi$ with $\varpi$  defined by \eqref{eq_def_varpi} and $\varphi$ by \eqref{eq_def_varphi}.
Using that $\varpi \leq \frac{9}{8}$ on $\left[ \frac{T}{3}, \frac{2 T}{3} \right]$, and that $\varphi$ is bounded from below
by a positive constant depending only on $R$, we obtain
\begin{multline*}
\int_0^T \int_{\mathcal{B}_{R_1+\varepsilon}}  \vert y_1 \vert^2e^{-2\left(1+ \frac{1}{T} + \mu \right)s_0\phi}
\geq \int_{\frac{T}{3}}^{\frac{2T}{3}} \int_{\mathcal{B}_{R_1+\varepsilon}}  \vert y_1 \vert^2e^{-2\left(1+ \frac{1}{T} + \mu \right)s_0\phi}
\\ \geq e^{-\mathcal c \left(1+\frac{1}{T}+\mu\right) }\int_{\frac{T}{3}}^{\frac{2T}{3}} \int_{\mathcal{B}_{R_1+\varepsilon}}
\vert y_1\vert^2.
\end{multline*}
Equation \eqref{eq_proof_obs_annulus_eq2} then gives
\begin{multline} \label{eq_proof_obs_annulus_eq3}
e^{-\mathcal c \left(1+\frac{1}{T}+\mu\right) } \int_{\frac{T}{3}}^{\frac{2T}{3}} \int_{\mathcal{B}_{R_1+\varepsilon}}
\vert y_1\vert^2
\\ \leq \cc
 \int_0^T \int_{\mathcal{B}_{R_1+\varepsilon}}  \vert 2 \nabla \chi_1 \cdot \nabla y + \Delta \chi_1 \, y \vert^2 
e^{-2\left(1+ \frac{1}{T} + \mu \right)s_0\phi}.
\end{multline}
Using that 
that for all $\kappa \geq 1$,  all $c>0$ and $x \geq 0$, $x e^{-\kappa c x} \leq c^{-1} e^{-1}$, we obtain
\begin{multline} \label{eq_proof_obs_annulus_eq4}
 \int_0^T \int_{\mathcal{B}_{R_1+\varepsilon}}  \vert 2 \nabla \chi_1 \cdot \nabla y + \Delta \chi_1 \, y \vert^2 
e^{-2\left(1+ \frac{1}{T} + \mu \right)s_0\phi} \\ \leq  \cc\int_0^T \int_{\mathcal{B}_{R_1+\varepsilon}} \varpi^{-1} 
\vert 2 \nabla \chi_1 \cdot \nabla y + \Delta \chi_1 \, y \vert^2  .
\end{multline}
We choose $\theta : [0,1] \mapsto [0,\infty[$ such that $\theta(0) = 0$ and $\theta = 1$ on $\left[\frac{1}{3},1\right]$. Combining Lemma \ref{lemma_standard_energ_est_source}
with $\Theta(t) = \theta\left(\frac{t}{T}\right)$, \eqref{eq_proof_obs_annulus_eq3} and \eqref{eq_proof_obs_annulus_eq4} gives
$$
\int_{\mathcal{B}_{R_1+\varepsilon}} \vert y_1(T)\vert^2 
\leq \mathcal{c}\left(1+\frac{1}{T} \right) e^{\mathcal{c}\left(1+\frac{1}{T}+\mu\right)} 
\int_0^T \int_{\mathcal{B}_{R_1+\varepsilon}} \left( \Theta + \varpi^{-1}  \right)\vert 2 \nabla \chi_1 \cdot \nabla y + \Delta \chi_1 \, y \vert^2.
$$
The definition of $\chi_1$ and \cref{lemma_obsnablau->u} imply
\begin{multline*}
\int_{\mathcal{B}_{R_1+\frac{\varepsilon}{3}}} \vert y(T) \vert^2 \leq \int_{\mathcal{B}_{R_1+\varepsilon}} \vert y_1(T)\vert^2 \\ \leq \frac{\cc}{\varepsilon^4} \left(1 + \frac{1}{T}\right) e^{\mathcal{c}\left(1+\frac{1}{T}+\mu\right)} 
\int_0^T \int_{\mathcal{A}_\varepsilon} \left(\Theta + \varpi^{-1}\right) \left( \Vert \nabla y \Vert^2 + \vert y \vert^2 \right)\\
\leq \frac{\cc}{\varepsilon^6} \left(1 + \frac{1}{T^2}\right) e^{\mathcal{c}\left(1+\frac{1}{T}+\mu\right)}  \int_{\mathcal{A}} \vert y \vert^2.
\end{multline*}
As a consequence, if $T$, $\mu$ are such that $T \mu \leq a \mu^\alpha$, we obtain that
\begin{equation} \label{eq_proof_obs_annulus_eq5}
\int_{\mathcal{B}_{R_1+\frac{\varepsilon}{3}}} \vert y(T) \vert^2  \leq 
\frac{\cc}{\varepsilon^6} \left(1 + \frac{1}{T^2}\right)  e^{\mathcal{c}\left(1+\frac{1}{T}+\frac{a^\beta}{T^\beta}\right)}  \int_{\mathcal{A}} \vert y \vert^2\leq  \frac{\cc}{\varepsilon^6} e^{\cc \frac{a^\beta}{T^\beta}} \int_{\mathcal{A}} \vert y \vert^2.
\end{equation}

\textbf{Combining the estimates --} Combining \eqref{eq_proof_obs_annulus_eq1} and \eqref{eq_proof_obs_annulus_eq5} gives the desired inequality, with a constant $c_\beta = \cc a^\beta$ which tends to infinity as $\beta$
goes to 1 according to \cref{lemma_existence_a}.
\end{proof}

\begin{prop} \label{prop_observability_from_annulus_exterior_subcase}
There exists a positive constant $c$, depending only on $R_1$ and $R$, such that 
$$
\int_{\mathcal{B}_R \setminus \mathcal{B}_{R_1+\frac{\varepsilon}{6}}} \vert y(T) \vert^2
\leq \frac{c}{\varepsilon^6} e^{\frac{c}{T}} \int_0^T \int_{\mathcal{A}} \vert y \vert^2.
$$
\end{prop}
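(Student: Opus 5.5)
Our plan is a cutoff argument: the exterior region is localized away from the origin and then handled by the Carleman estimate with positive potential, \cref{prop_Carleman_V_positive}. The reason to use that estimate is that it absorbs the potential $\mu^2\Vert x\Vert^2$ with constants independent of $\mu$. That is why the cost in the statement contains no factor $e^{\mu(\cdot)}$.

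\textbf{Localization.} Let $\chi_2$ be smooth with $\chi_2=1$ on $\R^d\setminus\mathcal{B}_{R_1+\frac{\varepsilon}{6}}$ and $\chi_2=0$ on $\mathcal{B}_{R_1}$. Its derivatives are supported in the thin annulus $\mathcal{A}'_\varepsilon=\{R_1<\Vert x\Vert<R_1+\frac{\varepsilon}{6}\}\subset\mathcal{A}$ (recall $R_1+\varepsilon<R_2$), with $\Vert\nabla\chi_2\Vert\le \cc/\varepsilon$ and $\vert\Delta\chi_2\vert\le \cc/\varepsilon^2$. Set $y_2=\chi_2 y$ on $Q=(0,T)\times\Omega_1$, where $\Omega_1=\mathcal{B}_R\setminus\overline{\mathcal{B}_{R_1}}$. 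Then $y_2$ vanishes on both components of $\partial\Omega_1$, its normal derivative vanishes on the inner sphere, and
$$\partial_t y_2-\Delta y_2+\mu^2\Vert x\Vert^2y_2=-2\nabla\chi_2\cdot\nabla y-\Delta\chi_2\,y .$$

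\textbf{Carleman step.} On $\overline{\Omega_1}$ the potential $V=\Vert x\Vert^2$ satisfies $V\ge R_1^2>0$ and $\Vert V\Vert_{W^{1,\infty}}\le c(R)$, so \cref{prop_Carleman_V_positive} applies with constants depending only on $R_1$ and $R$. The weight $\psi$ must satisfy $\Vert\nabla\psi\Vert\ge c_\psi>0$ on $\overline{\Omega_1}$, and the boundary term must drop out. For the latter we want $\partial\Omega_+$ to be contained in the inner sphere, where $\partial_n y_2=0$. The natural choice is the radial weight $\psi=(R^2-\Vert x\Vert^2)/2$. It has $\Vert\nabla\psi\Vert=\Vert x\Vert\ge R_1$ on $\Omega_1$, and $\nabla\psi=-x$ points into $\Omega_1$ on the outer sphere, so there $\nabla\psi\cdot n<0$. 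Hence the boundary term vanishes entirely, and
$$\int_0^T\!\!\int_{\Omega_1}\vert y_2\vert^2e^{-2s\phi}\le \cc\int_0^T\!\!\int_{\mathcal{A}'_\varepsilon}\vert 2\nabla\chi_2\cdot\nabla y+\Delta\chi_2\,y\vert^2e^{-2s\phi},$$
with $s=(1+1/T)s_0$, independent of $\mu$.

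\textbf{Final-time estimate.} Restricting the left-hand side to $[T/3,2T/3]$, where $\varpi\le 9/8$ and $\varphi$ is bounded, gives a lower bound $e^{-\cc(1+1/T)}\int_{T/3}^{2T/3}\!\int\vert y_2\vert^2$. On the right-hand side, $e^{-2s\phi}\le \cc\varpi^{-1}$. We then apply \cref{lemma_standard_energ_est_source} with a time cutoff. This is legitimate because the potential is nonnegative, so $L^2$ norms are dissipated regardless of $\mu$. We obtain $\int\vert y_2(T)\vert^2\le \cc(1+\frac1T)e^{\cc(1+\frac1T)}\int_0^T\!\!\int_{\mathcal{A}'_\varepsilon}(\Theta+\varpi^{-1})(\Vert\nabla y\Vert^2+\vert y\vert^2)/\varepsilon^4$. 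The gradient term is then traded for an $L^2$ term on the larger annulus $\mathcal{A}$ via \cref{lemma_obsnablau->u}, which costs a further $\varepsilon^{-2}$. This step is a Caccioppoli-type estimate, and we expect it to be the main obstacle. It must hold with constants independent of $\mu$, and we expect this because the potential term has a good sign. Since $\int_{\mathcal{B}_R\setminus\mathcal{B}_{R_1+\varepsilon/6}}\vert y(T)\vert^2\le\int\vert y_2(T)\vert^2$, absorbing the polynomial factors in $1/T$ into $e^{c/T}$ yields the claimed bound $\frac{c}{\varepsilon^6}e^{c/T}\int_0^T\!\int_{\mathcal A}\vert y\vert^2$.
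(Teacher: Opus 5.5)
Your plan follows the paper's proof step for step: cutoff, \cref{prop_Carleman_V_positive} with a radial weight whose $\partial\Omega_+$ is the inner sphere, restriction to $[T/3,2T/3]$, \cref{lemma_standard_energ_est_source}, then \cref{lemma_obsnablau->u}. Your $\psi=(R^2-\Vert x\Vert^2)/2$ differs from the paper's $-\frac12\Vert x\Vert^2$ only by a constant, and it is slightly cleaner because it is nonnegative, as the construction of $\varphi$ and $\eta$ assumes. However, the way you localize breaks the last step.

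Your $\chi_2$ vanishes only on $\mathcal{B}_{R_1}$. So $\nabla\chi_2$ and $\Delta\chi_2$ live in $\mathcal{A}'_\varepsilon=\{R_1<\Vert x\Vert<R_1+\frac{\varepsilon}{6}\}$, a layer that touches the inner boundary $\{\Vert x\Vert=R_1\}$ of $\mathcal{A}$. After bounding the source by $c\,\varepsilon^{-4}(\Vert\nabla y\Vert^2+\vert y\vert^2)$ on $\mathcal{A}'_\varepsilon$, you need $\int_0^T\int_{\mathcal{A}'_\varepsilon}\vartheta\Vert\nabla y\Vert^2\lesssim\int_0^T\int_{\mathcal{A}}\vert y\vert^2$. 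But \cref{lemma_obsnablau->u} requires $\overline{\mathcal{O}}\subset\omega$ and a cutoff $\chi\in C^\infty_c(\mathcal{A})$ with $\chi\equiv1$ on $\mathcal{O}=\mathcal{A}'_\varepsilon$. No such $\chi$ exists here. The constant $\Vert\chi\Vert_{W^{2,\infty}}$ scales like $\mathrm{dist}(\mathcal{O},\partial\mathcal{A})^{-2}$, which is infinite in your configuration. Concretely, a spatial weight equal to $1$ up to $\Vert x\Vert=R_1$ leaves the uncontrolled boundary term $\int_0^T\int_{\{\Vert x\Vert=R_1\}}\vartheta\,y\,\partial_n y$ after integration by parts. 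So the claimed extra cost $\varepsilon^{-2}$ is not justified. The difficulty is geometric, not the $\mu$-dependence you were worried about: \cref{lemma_obsnablau->u} is already uniform in $\mu\ge0$, because the potential term has a good sign.

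The repair is what the paper does. Take $\chi_2=0$ on $\mathcal{B}_{R_1+\frac{\varepsilon}{12}}$, so the transition layer $\{R_1+\frac{\varepsilon}{12}<\Vert x\Vert<R_1+\frac{\varepsilon}{6}\}$ sits at distance $\frac{\varepsilon}{12}$ from $\partial\mathcal{A}$, and the Caccioppoli cutoff then costs $\varepsilon^{-2}$. Run the Carleman step on $\mathcal{B}_R\setminus\overline{\mathcal{B}_{R_1+\varepsilon/12}}$; there $\Vert\nabla\psi\Vert\ge R_1$ still holds, and $\partial\Omega_+$ is again the inner sphere, where $\partial_n y_2=0$.

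Alternatively, you can keep your $\chi_2$ if you do not discard the structure of the source. Repeat the proof of \cref{lemma_obsnablau->u} with the spatial weight $\Vert\nabla\chi_2\Vert^2$. This weight vanishes at $\Vert x\Vert=R_1$, and its square root $\Vert\nabla\chi_2\Vert$ is Lipschitz with constant $\lesssim\varepsilon^{-2}$. The result is $\int_0^T\int\vartheta\Vert\nabla\chi_2\Vert^2\Vert\nabla y\Vert^2\lesssim(1+T^{-1})\varepsilon^{-4}\int_0^T\int_{\mathcal{A}}\vert y\vert^2$, which is all you need.
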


\begin{proof} In what follows, \( \cc \) denotes a constant that may depend only on $R_1$ and \( R \), and whose value may vary from line to line.

Let $\chi_2 : \R^d \mapsto \R$ be smooth, and such that $\chi_2 = 0$ in $\mathcal{B}_{R_1+\frac{\varepsilon}{12}}$, and
$\chi_2 = 1$ in $\R^d \setminus \mathcal{B}_{R_1+\frac{\varepsilon}{6}}$.
In the following, we denote 
$$
\mathcal{A}_\varepsilon = \left\lbrace x \in \R^d, \ R_1+\frac{\varepsilon}{12}< \Vert x \Vert < R_1 +\frac{\varepsilon}{6} \right\rbrace
\subset \mathcal A.
$$
The definition of $\chi_2$ implies
$$
\Vert \nabla \chi_2 \Vert_{L^\infty} \leq \frac{\cc}{\varepsilon} \indicatrice{\mathcal{A}_\varepsilon}, \quad \Vert \Delta \chi_2 \Vert_{L^\infty} \leq \frac{\cc}{\varepsilon^2} \indicatrice{\mathcal{A}_\varepsilon}.
$$
Set $y_2 = \chi_2 y$. Then  $y_2\in C^0([0,T],H^1_0(\mathcal{A}_\varepsilon))\cap L^2((0,T);H^2(\mathcal{A}_\varepsilon)) \cap H^1((0,T); L^2(\mathcal{A}_\varepsilon))$, verifies $\nabla y_2 \cdot x = 0$ on $\partial \mathcal{B}_{R_1+\frac{\varepsilon}{12}}$, and satisfies
$$
\partial_t y_2 - \Delta y_2 + \mu^2 \Vert x \Vert^2 y_2 = - 2 \nabla \chi_2 \cdot \nabla y - \Delta \chi_2\, y \text{ in } (0,T) \times
\Omega_\varepsilon,
$$
where we have set $\Omega_\varepsilon = \mathcal{B}_R \setminus \overline{\mathcal{B}_{R_1+\frac{\varepsilon}{12}}}$.  

We set $\psi(x) = - \frac{1}{2}\Vert x \Vert^2$, which satisfies $\Vert \nabla \psi \Vert \geq R_1$ in $\Omega_\varepsilon$, and for which
$$
\partial {\Omega_\varepsilon}_+ = \left\lbrace x \in \partial \Omega_\varepsilon, \ \nabla \psi \cdot n > 0 \right\rbrace = \partial \mathcal{B}_{R_1+\frac{\varepsilon}{12}}.
$$
Applying \cref{prop_Carleman_V_positive}, with $V = \Vert x \Vert^2$, $c_- = R_1$, $c_+ = R^2 + 2R$, $\lambda = \lambda_0$
and $s = \left(1+\frac{1}{T}\right) s_0$ gives
$$
\int_0^T \int_{\Omega_\varepsilon} \vert y_2 \vert^2 e^{-2s  \phi}\leq \cc \int_0^T\int_{\Omega_\varepsilon}
\vert 2 \nabla \chi_2 \cdot \nabla y + \Delta \chi_2 y \vert^2 e^{-2 s \phi},
$$
with $\phi = \varpi \varphi$, where $\varpi$ is defined by \eqref{eq_def_varpi} and $\varphi$ by \eqref{eq_def_varphi}.

{From this point onward, the proof follows closely the argument used for the previous result.} Using the properties of both $\varpi$ et $\varphi$, we first
obtain  
$$
\int_0^T \int_{\Omega_\varepsilon} \vert y_2 \vert^2 e^{-2s  \phi} \geq e^{-\cc\left(1+\frac{1}{T}\right)} \int_0^T \int_{\Omega_\varepsilon} \vert y_2 \vert^2,
$$
and
$$
\int_0^T\int_{\Omega_\varepsilon} \vert 2 \nabla \chi_2 \cdot \nabla y + \Delta \chi_2 y \vert^2 e^{-2 s \phi} 
\leq \cc \int_0^T \int_{\Omega_\varepsilon} \varpi^{-1} \vert 2 \nabla \chi_2 \cdot \nabla y + \Delta \chi_2 y \vert^2,
$$
which combined with the energy estimate given by \cref{lemma_standard_energ_est_source} gives
$$
\int_{\Omega_\varepsilon} \vert y_2(T) \vert^2 \leq \cc\left(1+\frac{1}{T}\right) e^{\cc \left(1+\frac{1}{T}\right)}
\int_0^T \int_{\Omega_\varepsilon} \left(\Theta + \varpi^{-1}\right) \vert 2 \nabla \chi_2 \cdot \nabla y + \Delta \chi_2 y \vert^2,
$$
for some smooth and nonnegative $\Theta$ such that $\Vert \Theta\Vert_{W^{1,\infty}} \leq \cc (1+T^{-1})$. Finally, the definition of $\chi_2$
and \cref{lemma_obsnablau->u} gives
$$
\int_{\mathcal{B}_R \setminus \mathcal{B}_{R_1+\frac{\varepsilon}{6}}} \vert y(T) \vert^2 \leq \int_{\Omega_\varepsilon} \vert y_2(T) \vert^2 \leq \frac{\cc}{\varepsilon^6} \left(1+\frac{1}{T}\right)^2 e^{\cc \left(1+\frac{1}{T}\right)}
\int_{\mathcal{A}} \vert y \vert^2.
$$
The result follows.
\end{proof}

\subsection{Observation from the boundary}

Finally, we obtain the equivalent of \cref{prop_observability_from_annulus} for boundary observation.

\begin{prop}
\label{prop_observability_from_boundary}
There exists a positive constant $\cc$, and for all $\beta> 1$,  a positive constant $c_\beta$ verifying $c_\beta \xrightarrow{\beta\rightarrow 1} \infty$, such that for all positive $T$ and $\mu$, for all $y$ in $C^0([0,T];H^1_0(\mathcal{B}_R))\cap L^2((0,T);H^2(\mathcal{B}_R)) \cap
H^1((0,T);L^2(\mathcal{B}_R))$
satisfying $\partial_t y - \Delta y + \mu^2 \Vert x \Vert^2 y = 0$ in $(0,T) \times \mathcal{B}_R$, the following estimate holds:
$$
\mu^2 \int_{\mathcal{B}_R} \vert y(T) \vert^2 + 
\int_{\mathcal{B}_R}    \Vert \nabla y (T) \Vert^2  \leq \cc (1+\mu)^3 e^{\mu R^2} e^{\frac{2 c_\beta}{T^\beta}} \int_0^T \int_{\partial \mathcal{B}_R}
\left\vert \frac{\partial y}{\partial n}\right\vert^2 .
$$
\end{prop}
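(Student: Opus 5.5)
We mimic the proof of \cref{prop_observability_from_annulus_interior_subcase}, with no cut-off: $y$ itself vanishes on $\partial\mathcal{B}_R$, so the source term is zero and the boundary flux on the whole sphere plays the role of the observation. Set $\alpha = 1-\frac1\beta$. We split into two regimes according to whether $T\mu \geq a\mu^\alpha$ or $T\mu\le a\mu^\alpha$, where $a$ is given by \cref{coro_BDE20}. Throughout, $\cc$ denotes a constant depending only on $R$.

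\textbf{Case $T\mu$ large.} We set $w = \exp\left(-\frac{\mu}{2}\coth(2\mu t)(R^2-\Vert x\Vert^2)\right) y$. The conjugation is designed so that $w$ solves the equation of \cref{prop_BDE20} with $r=R$ and $F=0$. The condition on $w$ as $t\to0$ holds because the weight $\exp(-\frac{1}{4t}(R^2-\Vert x\Vert^2))$ kills everything except near the boundary. This is the delicate point, and we may need to argue by density or use a time-translation to ensure it.

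On $\partial\mathcal{B}_R$ the weight equals $1$, so $\partial_n w=\partial_n y$ there. Moreover $\Vert w(T)\Vert^2\ge e^{-\mu\coth(2\mu T)R^2}\vert y(T)\vert^2$ pointwise. Applying \cref{coro_BDE20} with $r=R$, together with \cref{lemma_boundonhyperbfunc} (the bound $\mu\coth(2\mu T)\le \mu+\frac{1}{2T}$, and $\mu\sinh(4\mu t)/\sinh(2\mu T)^2$ bounded by $\cc(\mu+1/T)$), we get
\[
\int_{\mathcal{B}_R}\vert y(T)\vert^2\le \cc(1+\mu+\tfrac1T)e^{\mu R^2}e^{\frac{R^2}{2T}}\int_0^T\int_{\partial\mathcal{B}_R}\vert\partial_n y\vert^2 .
\]

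For the gradient we write $\nabla y = e^{\frac{\mu}{2}\coth(\cdot)(R^2-\Vert x\Vert^2)}\left(\nabla w - \mu\coth(2\mu T)\,x\,w\right)$. The second estimate of \cref{coro_BDE20} controls $\nabla w(T)$ with a factor $(1+T^{-2})$, and $\mu\coth(2\mu T)\le \mu+\frac1{2T}$. Hence $\int\Vert\nabla y(T)\Vert^2$ is bounded by $\cc(1+\mu)^2(1+T^{-2})$ times the same quantity as above. Multiplying the $L^2$ bound by $\mu^2$ gives the $(1+\mu)^3$ factor. The powers of $1/T$ are absorbed into $e^{\cc/T}\le e^{c_\beta/T^\beta}$ by enlarging constants (for $T\le1$; for $T$ large the left side is handled by dissipation or trivially).

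\textbf{Case $T\mu$ small.} We apply \cref{prop_generic_Carleman_V} on $Q=(0,T)\times\mathcal{B}_R$ with $V=\mu^2\Vert x\Vert^2$, $\psi=\frac12\Vert x-2Re\Vert^2$, $\lambda=\lambda_0$ and $s=(1+\frac1T+\mu)s_0$. The source term vanishes, and the boundary term is bounded by $\cc\, s\, e^{0}$ times $\int_0^T\int_{\partial\mathcal{B}_R}\vert\partial_n y\vert^2$, because $\varpi\eta e^{-2s\phi}$ is bounded using $x e^{-cx}\le 1/(ce)$. The left side on $[T/3,2T/3]$ controls $e^{-\cc(1+\frac1T+\mu)}\int_{T/3}^{2T/3}\int(\vert y\vert^2+\Vert\nabla y\Vert^2)$.

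Standard energy estimates then propagate this to time $T$: $\int\vert y(t)\vert^2$ and $\int(\Vert\nabla y\Vert^2+\mu^2\Vert x\Vert^2 y^2)$ are nonincreasing in $t$. This yields $\mu^2\int\vert y(T)\vert^2+\int\Vert\nabla y(T)\Vert^2\le \frac{\cc}{T}(1+\mu^2)e^{\cc(1+\frac1T+\mu)}\int_0^T\int_{\partial\mathcal{B}_R}\vert\partial_n y\vert^2$. Since $\mu\le (a/T)^\beta$, this is at most $\cc(1+\mu)^2e^{\cc a^\beta/T^\beta}$ times the observation.

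\textbf{Conclusion.} Combining both cases gives the result with $c_\beta=\cc a^\beta$, which blows up as $\beta\to1$ by \cref{lemma_existence_a}. We expect the main obstacle to be the large-$T\mu$ case: verifying the hypotheses of \cref{prop_BDE20} at $t=0$ and tracking the gradient conversion between $w$ and $y$.
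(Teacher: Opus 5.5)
Your proposal is correct and follows the paper's proof essentially step by step. In the regime $T\mu\ge a\mu^\alpha$ you use the same conjugation $w=e^{-\frac{\mu}{2}\coth(2\mu t)(R^2-\Vert x\Vert^2)}y$ fed into \cref{coro_BDE20}, and in the regime $T\mu\le a\mu^\alpha$ you use \cref{prop_generic_Carleman_V} with $\psi=\frac12\Vert x-2Re\Vert^2$ followed by energy estimates; your monotonicity of $\int(\Vert\nabla y\Vert^2+\mu^2\Vert x\Vert^2y^2)$ is a harmless substitute for \cref{lemma_estimate_nabla_T}, and the limit $t\to0$ you flag as delicate is immediate: the weight is bounded by $1$ and tends to $0$ pointwise in $\mathcal{B}_R$, and $t\mu\coth(2\mu t)\to\frac12$, so dominated convergence and $y\in C^0([0,T];H^1_0(\mathcal{B}_R))$ give $\Vert w(t)\Vert_{L^2}+t\Vert\nabla w(t)\Vert_{L^2}\to0$ without any density or time-translation argument.
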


\begin{proof}
The proof of this Proposition closely mirrors that of \cref{prop_observability_from_annulus_interior_subcase}.
For brevity, we omit some of the finer details. We set $\alpha = 1 - \frac{1}{\beta}$.
In what follows, \( \cc \) denotes a constant that may depend only on \( R \), and whose value may vary from line to line.

We fix $T>0$, $\mu>0$, and consider $y$ in $C^0([0,T];H^1_0(\mathcal{B}_R))\cap L^2((0,T);H^2(\mathcal{B}_R)) \cap
H^1((0,T);L^2(\mathcal{B}_R))$ such that $\partial_t - \Delta y + \mu^2 \Vert x \Vert^2 y = 0$ in $(0,T) \times \mathcal{B}_R$.

\textbf{Case $T\mu$ large --} Define $w = y \exp\left(-\frac{\mu}{2}\coth(2\mu t)(R^2 - \Vert x \Vert^2) \right)$. Then $w$ satisfies the assumptions of \cref{coro_BDE20}.
Combined with \cref{lemma_boundonhyperbfunc}, we obtain $a>0$, depending on $R$ and $\alpha$, such that, if $T$, $\mu$ verify $T \mu \geq a \mu^\alpha$, 
\begin{align*}
\int_{\mathcal{B}_R} \vert w(T) \vert^2 &\leq \cc \left(\mu + \frac{1}{T}\right) \int_0^T \int_{\partial \mathcal{B}_R}
\left\vert \frac{\partial w}{\partial n}\right\vert^2, \\
\int_{\mathcal{B}_R} \vert \nabla w (T) \vert^2 &\leq \cc \left(1+\mu + \frac{1}{T}\right)^3 \int_0^T \int_{\partial \mathcal{B}_R}
\left\vert \frac{\partial w}{\partial n}\right\vert^2.
\end{align*}
This implies
$$
\int_{\mathcal{B}_R} \vert y(T) \vert^2 e^{-\mu \coth(2\mu T) (R^2 - x^2)}\leq
\cc \left(\mu + \frac{1}{T}\right) \int_0^T \int_{\partial \mathcal{B}_R} \left\vert \frac{\partial y}{\partial n}\right\vert^2,
$$
and 
\begin{align*}
\int_{\mathcal{B}_R} \Vert \nabla y(T) \Vert^2 e^{-\mu \coth(2\mu T) (R^2 - x^2)}
&\leq\cc \left( \int_{\mathcal{B}_R} \Vert \nabla w(T) \Vert^2 + \left(\mu + \frac{1}{T}\right)^2 \int_{\mathcal{B}_R }\vert w(T) \vert^2 \right) \\
&\leq \cc \left( 1+ \mu + \frac{1}{T}\right)^3 \int_0^T \int_{\partial \mathcal{B}_R} \left\vert \frac{\partial y}{\partial n}\right\vert^2.
\end{align*}
\cref{lemma_boundonhyperbfunc} finally gives
\begin{equation} \label{eq_proof_obs_whole_bnd}
\mu^2  \int_{\mathcal{B}_R} \vert y(T) \vert^2 + \int_{\mathcal{B}_R} \Vert \nabla y(T) \Vert^2  \leq \cc \left(1+\mu + \frac{1}{T}\right)^3 e^{\mu R^2} e^{\frac{R^2}{2T}}
\int_0^T \int_{\partial \mathcal{B}_R} \left\vert \frac{\partial y}{\partial n}\right\vert^2.
\end{equation}

\textbf{Case $T\mu$ small --} We now fix $e \in \mathbb{S}^{d-1}$ and define the function $\psi : x\in \R^d \mapsto \frac{1}{2}\Vert x - 2 R e \Vert^2$, which verifies 
$\Vert \nabla \psi\Vert \geq R  $ on $\mathcal{B}_R$. Applying \cref{prop_generic_Carleman_V} with $V = \mu^2 \Vert x \Vert^2$, $\lambda = \lambda_0$
and $s = \left( 1+ \frac{1}{T} + \Vert V \Vert_{1,\infty}^{\frac{1}{2}}\right) s_0$, gives
$$
\int_0^T \int_{\mathcal{B}_R} \left(s^3\vert y\vert^2 + \Vert \nabla y \Vert^2\right) e^{-2 s \phi}
\leq \cc s \int_0^T \int_{\partial \mathcal{B}_R} \varpi \left\vert \frac{\partial y}{\partial n}\right\vert^2 e^{-2s \phi},
$$
where $\phi = \varpi \varphi$ with $\varpi$ defined by \eqref{eq_def_varpi} and $\varphi$ by \eqref{eq_def_varphi}. On one side, we have
\begin{multline*}
\int_0^T \int_{\mathcal{B}_R} \left(s^3\vert y\vert^2 + \Vert \nabla y \Vert^2\right) e^{-2 s \phi}
\\ \geq\cc  \int_{\frac{T}{3}}^{\frac{2T}{3}} \int_{\mathcal{B}_R} \left((1+\mu^3)\vert y\vert^2 + \Vert \nabla y \Vert^2\right) e^{-2 s \phi}
\\ \ge\cc e^{-\cc \left(1+\frac{1}{T} +\mu\right)}\int_{\frac{T}{3}}^{\frac{2T}{3}} \int_{\mathcal{B}_R} \left(\mu^2 \vert y\vert^2 + \Vert \nabla y \Vert^2\right).
\end{multline*}
On the other side, as for all $c>0$, all $s\geq 1$ and all $x\geq 0$, we have $s\, x e^{-scx} \leq (c e)^{-1}$, we obtain
$$
s \int_0^T \int_{\partial \mathcal{B}_R} \varpi \left\vert \frac{\partial y}{\partial n}\right\vert^2 e^{-2s \phi} \leq
\cc \int_0^T \int_{\partial \mathcal{B}_R}  \left\vert \frac{\partial y}{\partial n}\right\vert^2.
$$
We deduce that, if $T$, $\mu$ verifies $T\mu \leq a \mu^\alpha$, 
$$
\int_{\frac{T}{3}}^{\frac{2T}{3}} \int_{\mathcal{B}_R} \left(\mu^2 \vert y\vert^2 + \Vert \nabla y \Vert^2\right)
\leq \cc e^{\cc \left(1+\frac{1}{T} +\frac{a^\beta}{T^\beta}\right)} \int_0^T \int_{\partial \mathcal{B}_R} \left\vert \frac{\partial y}{\partial n}\right\vert^2.
$$
Then \cref{lemma_standard_energ_est_source} and  \cref{lemma_estimate_nabla_T} gives
$$
\mu^2 \int_{\mathcal{B}_R} \vert y(T) \vert^2 + \int_{\mathcal{B}_R} \Vert \nabla y(T) \Vert^2 \leq \cc \left(\frac{1}{T} +\mu^2\right) e^{\cc \left(1+\frac{1}{T} +\frac{a^\beta}{T^\beta}\right)} \int_0^T \int_{\partial \mathcal{B}_R}  \left\vert \frac{\partial y}{\partial n}\right\vert^2,
$$
which combined with \eqref{eq_proof_obs_whole_bnd} and the behaviour of $a$ as $\beta$ goes to 1 given by \cref{lemma_existence_a}, gives the result.
\end{proof}

\section{A precised Lebeau-Robbiano strategy for observability, and application}
\label{sec_restriction_arguments}

\subsection{Lebeau-Robbiano strategy in semigroup framework}
\label{preuvemiller}
{In this section, we adapt the strategy of \cite{Miller2010} to our main objective. To stay close to the original framework of \cite{Miller2010}, we adopt a semigroup point of view.}
Let $\mc{E}$, $\mc{U}$ be two Hilbert spaces, and $\mc{F}$ be a third Hilbert space continuously and densely embedded in $\mc{E}$.
\begin{rem}
In \cite{Miller2010}, there is no subspace $\mc{F}$, everything is expressed in the norm $\norm{\cdot}{\mc{E}}$. Working with the subspace $\mc{F}$ does not change anything in the proof (except for the norms), but it is the good framework when we are looking at boundary observability problems.
\end{rem}
We consider an abstract evolution equation 
	\begin{equation} 
	\label{abstractequation} 
	\partial_t y + Ay = 0, \quad y(0) = y_0 \in \mc{F}, \quad t \ge 0,
	\end{equation} 
where $-A~:~\mc{D}(A) \subset \mc{E} \to \mc{E}$ is a generator of a strongly continuous semigroup on  $\mc{E}$. We also consider $B \in \mc{L}(\mc{D}(A),\mc{U})$ admissible with respect to $\mc{F}$ for this semigroup, i.e. $B$ is continuous from $\mc{D}(A)$ with the graph norm to $\mc{U}$ and satisfies 
	\begin{equation} 
	\label{adm}
	\int_0^T \norm{Be^{-tA}y}{\mc{U}}^2dt \le Adm_T\norm{y}{\mc{F}}^2, \quad y \in \mc{D}(A), \quad T>0.
	\end{equation}
Note that the equation under study \eqref{harmonicheat} fits in this general framework with \begin{equation} \label{notationharmonicheat} \mc{E} = L^2(\mathcal{B}_R), \quad {A =\, }
\grush = -\Delta +\mu^2 \|x\|^2 ,\quad \mathcal{D}(\grush) = H^2(\mathcal{B}_R)\cap H^1_0(\mathcal{B}_R),\end{equation}
and, for the internal observability, $\mc F = \mc E$ and $B = \indicatrice{\omega}$. \newline

In this abstract framework, observability reads:
\begin{ddef}[Observability]
Let $T>0$. We say that $(A,B) $ is observable at time $T$, with cost $ \sqrt{\cost(A,B,T)} \ge 0$, if the following inequality holds:
\begin{equation}\label{generalobs}
\norm{e^{-TA}y}{\mc{F}}^2 \le \cost(A,B,T) \int_0^T \norm{Be^{-tA}y}{\mc{U}}^2 dt, \quad \forall y \in \mc{D}(A).
\end{equation}
\end{ddef}

{Our goal is to deduce an observability inequality for the observation operator $B$, from an observability inequality valid for another related admissible observation operator $B_0 \in \mc{L}(\mc{D}(A),\mc{U})$ -- for the internal observability, $B_0$ is simply $\indicatrice{\mc A}$, where $\mc{A}$ is the whole annulus as defined in \eqref{annulus}.}\medskip

{We now} state the assumptions needed for the abstract restriction result. {First}, we assume that there exists {$(\mc{F}_\sigma)_{\sigma >0}$},  a nondecreasing family of semigroup invariant closed subspaces of $\mc{F}$ {-- that is satisfying $e^{-tA}\mc{F}_\sigma \subset \mc{F}_\sigma \subset \mc{F}_{\sigma'} \subset \mc{F}$ if $0\!<\!\sigma\!<\!\sigma'$ --} such that
	\begin{equation}
	\label{dissip}
	\norm{e^{-tA}y}{\mc{F}}^2 \le C_{dissip} e^{-2c\sigma t}\norm{y}				{\mc{F}}^2, \quad \sigma >0, \quad y \in \mc{F}_\sigma^\perp, \quad t \in 			(0,T_{max}],
	\end{equation} 
for some $C_{dissip}$, $c$, $T_{max} >0$, where the orthogonal {complement} is taken with respect to the scalar product on $\mc{F}$. \newline
We also assume that {$B_0$ and $B$ satisfy the relative bound}
	\begin{equation}
	\label{spec}
	\norm{B_0 y}{\mc{U}}^2 \le C_{rel} e^{2a\sigma^\delta}\norm{B y}{\mc{U}}^2, \quad \sigma >0, \quad y \in \mc{F}_\sigma,
	\end{equation} 
for some $C_{rel}$, $a > 0$, $\delta \in (0,1)$. \newline
Finally we assume that the operator $B_0$ satisfies the observability estimate
	\begin{equation}
	\label{obs}
	\norm{e^{-TA}y}{\mc{F}}^2 \le C_{obs}e^\frac{2b}{T^\beta} \int_0^T \norm{B_0 e^{-tA}y}{\mc{U}}^2 dt,  \quad y \in \mc{D}(A), \quad T \in (0,T_{max}],
	\end{equation}
for some $C_{obs}$, $b$ and $\beta>0$, independant of $\sigma$ and $T$.\newline

{
\begin{rem}
Note that the constants $\delta$ in \eqref{spec} or $\beta$ in \eqref{obs} may be increased to satisfy
 \begin{equation} \label{rkbetadelta}\beta = \frac{\delta}{1-\delta}.\end{equation}
For \eqref{spec} to remain true if $\delta$ is increased, we have to replace $C_{rel}$ by $C_{rel}e^{2a}$, whereas for \eqref{obs} to remain true if $\beta$ is increased, we have to replace $C_{obs}$ by $C_{obs}e^{2b}$.

In our application, we have $\delta = \frac12$ and $\beta >1$. Therefore $\delta$ is increased and $C_{obs}$ remains unchanged.
\end{rem}
}

\begin{thm}
\label{LR}
Under the assumptions \eqref{dissip}, \eqref{spec} and \eqref{obs}, there exists $s>0$ and $T'>0$ such that for any $T \in (0,T']$, $(A,B)$ is observable with
$$\cost(A,B,T) \le 4 C_{rel}C_{obs}e^\frac{2}{(sT)^\beta}.$$
Moreover there exists $0<\kappa_0<1$, depending on $a$, $b$, $c$, and $\beta$, such that for any $\kappa \in (0,\kappa_0)$, we can write explicitely $s$ and $T'$ as follows:
	$$s = s_2\left(1-\left(\frac{s_1}{s_2}\right)^\frac{\beta}{\beta +1}\right), \quad T' = \left\{ \begin{aligned} \left(\frac{C_1}{\ln(C_2)}\right)^\frac{1}{\beta} &\text{ if}& C_2 > 1 \\ T_{max} &\text{ if}& C_2 \le 1 \end{aligned} \right.$$
where 
	$$s_2 = \frac{\kappa}{(a+b)^\frac{1}{\beta}}, \quad s_1 = \kappa^\frac{\beta+1}{\beta} \left( \frac{2}{c(1-\kappa)}\right)^\frac1\beta,~ C_1 = \frac{2}{s_1^\beta}\left(1-\left(\frac{s_1}{s_2}\right)^\frac{\beta}{\beta +1}\right),$$
	$$ C_2 = 4C_{dissip}Adm_{T_{max}}C_{rel}C_{obs}+ 2C_{dissip}\exp\left(-\frac{2}{(s_2T_{max})^\beta}\right).$$
\end{thm}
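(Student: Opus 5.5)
Our plan is to adapt Miller's telescoping argument from \cite{Miller2010}. We first use the preceding remark to assume $\beta = \frac{\delta}{1-\delta}$, so that $\delta = \frac{\beta}{\beta+1}$.

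\textbf{Step 1: a one-step estimate.} Fix $\sigma>0$ and $\tau\in(0,T_{max}]$. Let $y\in\mc{D}(A)$ and let $\Pi_\sigma$ be the $\mc F$-orthogonal projection onto $\mc F_\sigma$. Write $y=\Pi_\sigma y+(I-\Pi_\sigma)y$. Since $\mc F_\sigma$ is invariant, $e^{-tA}\Pi_\sigma y\in\mc F_\sigma$ for all $t$. Applying \eqref{obs} on an interval of length $\tau$ to $\Pi_\sigma y$ and then \eqref{spec} pointwise in time gives
\[
\norm{e^{-\tau A}\Pi_\sigma y}{\mc F}^2\le C_{obs}C_{rel}e^{\frac{2b}{\tau^\beta}+2a\sigma^\delta}\int_0^\tau\norm{Be^{-tA}\Pi_\sigma y}{\mc U}^2.
\]
We then replace $B\Pi_\sigma$ by $B$ minus the high-frequency part. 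By admissibility \eqref{adm} and \eqref{dissip}, $\int_0^\tau\norm{Be^{-tA}(I-\Pi_\sigma)y}{\mc U}^2\le Adm_{T_{max}}\norm{(I-\Pi_\sigma)y}{\mc F}^2$. Combined with \eqref{dissip} for $\norm{e^{-\tau A}(I-\Pi_\sigma)y}{\mc F}^2$, this yields the key interpolation inequality
\[
\norm{e^{-\tau A}y}{\mc F}^2\le 4C_{rel}C_{obs}e^{\frac{2b}{\tau^\beta}+2a\sigma^\delta}\int_0^\tau\norm{Be^{-tA}y}{\mc U}^2+\Big(4C_{rel}C_{obs}Adm\,e^{\frac{2b}{\tau^\beta}+2a\sigma^\delta}+2C_{dissip}e^{-2c\sigma\tau}\Big)\norm{y}{\mc F}^2 .
\]
To make the dissipation win, we split the interval of length $\tau$ into a first observed piece of length $\kappa$-fraction and a dissipation piece. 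This is where the factor $(1-\kappa)$ in $s_1$ comes from.

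\textbf{Step 2: the telescoping sequence.} Take $T_k=T\,2^{-k}$-like geometric steps. More precisely, set $\tau_k=(1-\rho)\rho^kT$ with $\rho$ tied to $\kappa$, choose $\sigma_k$ so that $\sigma_k^\delta\asymp\tau_k^{-\beta}$ (using $\delta=\beta/(\beta+1)$, this means $\sigma_k\tau_k\asymp\tau_k^{-\beta}$), and apply Step 1 on $[T-\sum_{j\le k}\tau_j,\,T-\sum_{j<k}\tau_j]$ with $y$ replaced by the state at the left endpoint. Using $\norm{e^{-tA}y}{\mc F}$-contractivity up to $C_{dissip}$, this produces
\[
f_k e^{-\frac{2}{(s\tau_k)^\beta}}\norm{z_k}{\mc F}^2-f_{k+1}e^{-\frac{2}{(s\tau_{k+1})^\beta}}\norm{z_{k+1}}{\mc F}^2\le 4C_{rel}C_{obs}\int_{I_k}\norm{Be^{-tA}y}{\mc U}^2 .
\]
Here the exponent $e^{-2c\sigma\tau}$ must beat $e^{2(a+b)\tau^{-\beta}}$ times the weight ratio. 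This is exactly the condition that produces $s_2=\kappa(a+b)^{-1/\beta}$, $s_1$, and the optimized $s=s_2(1-(s_1/s_2)^{\beta/(\beta+1)})$, which comes from a one-variable minimization in $\sigma$. The constant $C_2$ absorbs the polynomial prefactors $C_{dissip},Adm,C_{rel},C_{obs}$. These are dominated by the exponential gain $e^{-C_1/T^\beta}$ provided $T\le T'=(C_1/\ln C_2)^{1/\beta}$, or for all $T\le T_{max}$ when $C_2\le1$.

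\textbf{Step 3: summation.} We sum the telescoping inequalities over $k\ge0$. The weights $e^{-2/(s\tau_k)^\beta}$ tend to $0$ while $\norm{z_k}{\mc F}$ stays bounded, so the tail vanishes. The intervals $I_k$ are disjoint inside $(0,T)$. This gives $\norm{e^{-TA}y}{\mc F}^2\le 4C_{rel}C_{obs}e^{2/(sT)^\beta}\int_0^T\norm{Be^{-tA}y}{\mc U}^2$.

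\textbf{Main obstacle.} The main obstacle is the bookkeeping in Step 2: choosing $\sigma_k$ and the geometric ratio so that a single inequality $c\sigma\tau(1-\kappa)\ge(a+b)\tau^{-\beta}+\text{weight shift}$ holds uniformly in $k$, with the explicit constants stated. We would first try $\sigma_k=(s_1\tau_k)^{-(\beta+1)}$ up to constants and check the exponents using $\delta(\beta+1)=\beta$.
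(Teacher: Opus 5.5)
Your overall architecture (a spectral cut-off with $\sigma^\delta\sim\tau^{-\beta}$, followed by Miller's geometric telescoping) is the paper's, but Step 1 has a genuine gap.

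\textbf{The error term in Step 1 never decays.} In your ``key interpolation inequality'' the coefficient of $\norm{y}{\mc F}^2$ contains $4C_{rel}C_{obs}Adm\,e^{2b/\tau^\beta+2a\sigma^\delta}$ with no decaying factor. You bound $\int_0^\tau\norm{Be^{-tA}(I-\Pi_\sigma)y}{\mc U}^2$ by $Adm\norm{(I-\Pi_\sigma)y}{\mc F}^2$ on the very window where the high frequencies start, so \eqref{dissip} is never used on that term. This coefficient blows up as $\tau\to0$. The telescoping, however, needs it below $f(q\tau)/f(\tau)<1$, so the inequality is useless.

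The splitting you describe afterwards (an observed piece of length $\kappa\tau$ first, then a dissipation piece) does not help. Dissipating after the observation window damps only $e^{-\tau A}(I-\Pi_\sigma)y$, not that observation integral. The order must be reversed, as in the paper. Let the semigroup run unobserved for time $(1-\kappa)T$. Then, on $[(1-\kappa)T,T]$ (of length $\kappa T$), apply \eqref{obs}--\eqref{spec} to $e^{-(1-\kappa)TA}y_\sigma$, and apply \eqref{adm} to $e^{-(1-\kappa)TA}y_\sigma^\perp$, whose norm is now controlled by \eqref{dissip}. With $\sigma^\delta=(\kappa T)^{-\beta}$ this gives
\[
f(T)\norm{e^{-TA}y}{\mc F}^2\le\int_0^T\norm{Be^{-tA}y}{\mc U}^2+C_{dissip}\bigl(Adm_{T_{max}}+2f(T_{max})\bigr)e^{-2c(1-\kappa)\sigma T}\norm{y}{\mc F}^2,
\]
where $f(T)=\frac{1}{4C_{rel}C_{obs}}e^{-2(a+b)/(\kappa T)^\beta}$. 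The error term is now genuinely small.

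\textbf{Step 2 does not derive the explicit constants.}
\begin{itemize}
\item $\sigma$ is not obtained by a minimization. It is fixed by $\sigma=(\kappa T)^{-(\beta+1)}$. This puts the estimate above in the form $f=f_0e^{-2/(s_2T)^\beta}$, $g=g_0e^{-2/(s_1T)^\beta}$, with $s_1^\beta=\kappa^{\beta+1}/(c(1-\kappa))$ and $g_0/f_0=C_2$. (The statement's $s_1$, which carries an extra factor $2$, is larger and hence also admissible.)
\item The formula for $s$ comes from the geometric ratio of the telescoping, $q=(s_1/s_2)^{\beta/(\beta+1)}$, chosen so that $q/s_1^\beta=(qs_2)^{-\beta}$. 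With this choice, $g(T)\le f(qT)$ is exactly $T^\beta\le C_1/\ln C_2$, which holds automatically if $C_2\le 1$.
\item \cref{lemmeLR}, with steps $(1-q)q^kT$, then gives the cost $1/f((1-q)T)=4C_{rel}C_{obs}e^{2/(sT)^\beta}$ with $s=s_2(1-q)$. The role of $\kappa_0$ is simply to ensure $s_1<s_2$.
\end{itemize}
Your sketch leaves the ratio $\rho$ undetermined, so $s$ and $T'$ are asserted rather than derived. Also, with your weights $e^{-2/(s\tau_k)^\beta}$ and $\tau_0=(1-\rho)T$, the final cost would be $e^{2/(s(1-\rho)T)^\beta}$. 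The weights must carry $s_2$, and the factor $1-q$ is what converts $s_2$ into $s$.
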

{Though similar to \cite{Miller2010}, we present the full proof to explicitly derive the required expressions.}
We start by giving some preliminary results. The first one is taken from \cite[Lemma 2.1]{Miller2010} which gives a bound on the observability constant assuming an approximate observability estimate. 
\begin{lem}
\label{lemmeLR}
If the approximate observability estimate 
$$ f(t)\norm{e^{-tA}y}{\mc{F}}^2 \le  f(qt)\norm{y}{\mc{F}}^2 + \int_0^t \norm{B e^{-\tau A }y}{\mc{U}}^2 d\tau , \quad y \in \mc{D}(A), \quad t \in(0,T'],$$ holds with $f(t) \to 0$ as $t \to 0^+$ , $q \in (0,1)$ and $T' > 0$, then the following observability inequality holds
$$\norm{e^{-TA}y}{\mc{F}}^2 \le \frac{1}{f((1-q)T)} \int_0^T \norm{B e^{-tA}y}{\mc{U}}^2 dt, \quad y \in \mc{D}(A) \quad T \in (0,T']. $$
\end{lem}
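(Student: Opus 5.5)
The plan is to iterate the approximate observability estimate along a geometric sequence of times and sum the resulting telescoping inequalities, in the spirit of \cite[Lemma 2.1]{Miller2010}.

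Fix $T \in (0,T']$ and $y \in \mc{D}(A)$. Define the times $T_0 = T$ and $T_{k+1} = qT_k$, so that $T_k = q^kT$, together with the partial sums $\tau_k = T - T_k$. Then $\tau_0 = 0$ and $\tau_k$ increases to $T$. Set $t_k = T_k - T_{k+1} = (1-q)T_k$, so that $\tau_{k+1} = \tau_k + t_k$.

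We apply the hypothesis on the interval of length $t_k \le T'$ to the datum $e^{-\tau_k A}y$, which lies in $\mc{D}(A)$ by semigroup invariance of the domain. This gives
$$ f(t_k)\norm{e^{-\tau_{k+1}A}y}{\mc{F}}^2 \le f(qt_k)\norm{e^{-\tau_k A}y}{\mc{F}}^2 + \int_{\tau_k}^{\tau_{k+1}} \norm{Be^{-\tau A}y}{\mc{U}}^2 d\tau. $$
Since $qt_k = (1-q)T_{k+1} = t_{k+1}$, the coefficient on the right is $f(t_{k+1})$.

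It is more convenient to shift indices so that the sum telescopes. With $g_k = f(t_{k-1})\norm{e^{-\tau_k A}y}{\mc F}^2$ for $k\ge 1$, the inequality reads
$$ g_{k+1} \le f(t_{k+1})\norm{e^{-\tau_kA}y}{\mc F}^2 + \int_{\tau_k}^{\tau_{k+1}}\norm{Be^{-\tau A}y}{\mc U}^2 . $$
This does not telescope directly, since $g_k$ carries $f(t_{k-1})$ while the right side carries $f(t_{k+1})$. The cleaner bookkeeping is to index the pieces from the end, which is Miller's ordering. Take $\tau$-intervals whose lengths decrease towards time $T$: $s_k = T - T_k$ with $T_k = q^k T$, the interval $[s_k, s_{k+1}]$ having length $(1-q)q^kT$.

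Apply the hypothesis with $t = (1-q)T_k$ to $e^{-s_kA}y$. Writing $h_k = f((1-q)T_k)\norm{e^{-s_{k}A}y}{\mc F}^2$, this gives
$$ f((1-q)T_k)\norm{e^{-s_{k+1}A}y}{\mc F}^2 \le f((1-q)T_{k+1})\norm{e^{-s_kA}y}{\mc F}^2 + \int_{s_k}^{s_{k+1}}\norm{Be^{-\tau A}y}{\mc U}^2 . $$
The coefficients still do not telescope. The fix is to run the geometric sequence the other way: take the first interval of length $(1-q)T$ starting at $0$, so that the \emph{final} state is weighted by $f((1-q)T)$. Concretely, use the decreasing-from-the-start ordering, namely intervals $[\tau_k,\tau_{k+1}]$ with lengths $t_k=(1-q)q^kT$ placed starting at time $0$. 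Then use the reverse estimate via contraction. Denote by $M$ the growth bound, so that $\norm{e^{-tA}}{}\le 1$, which holds in the application. Then $f(t_k)\norm{e^{-TA}y}{\mc F}^2 \le f(t_k)\norm{e^{-\tau_{k+1}A}y}{\mc F}^2$.

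With this, the cleanest route is to set $a_k = f(t_k)\norm{e^{-\tau_{k+1}A}y}{\mc F}^2$. Using the inequality together with the contraction $\norm{e^{-\tau_{k+1}A}y}{} \le \norm{e^{-\tau_kA}y}{}$ gives $a_k - a_{k-1}\le \int_{\tau_k}^{\tau_{k+1}}\norm{Be^{-\tau A}y}{}^2$, because $f(qt_k)=f(t_{k+1})$. We then sum over $k\ge0$ and let $k\to\infty$, using $f(t_k)\to0$ to kill the boundary term. The final step bounds $f((1-q)T)\norm{e^{-TA}y}{\mc F}^2$ by $a_0$ via monotonicity of the norm along the semigroup.

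The main obstacle is precisely this index bookkeeping: arranging the geometric intervals so that $f(qt)$ at one step matches the left coefficient $f(t)$ of the adjacent step. The lemma should be checked against Miller's original, where the contraction property $\norm{e^{-tA}}{\mc L(\mc F)}\le1$ is used implicitly.
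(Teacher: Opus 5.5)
There is a genuine gap, and it sits exactly in the index bookkeeping you flagged. All three of your constructions use the same partition. The intervals have length $t_k=(1-q)q^kT$, start at time $0$ with the longest one, $[0,(1-q)T]$, and accumulate at $T$; the relabelling $s_k=T-T_k$ changes nothing. Write $X_j=\Vert e^{-\tau_jA}y\Vert_{\mc F}^2$ and $I_k=\int_{\tau_k}^{\tau_{k+1}}\Vert Be^{-\tau A}y\Vert_{\mc U}^2\,d\tau$. Then step $k$ reads $f(t_k)X_{k+1}\le f(t_{k+1})X_k+I_k$. Your claimed inequality $a_k-a_{k-1}\le I_k$, with $a_k=f(t_k)X_{k+1}$, would therefore need $f(t_{k+1})\le f(t_{k-1})$. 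That is a monotonicity property of $f$, which is not assumed, and the contraction $X_{k+1}\le X_k$ (itself not a hypothesis of the lemma) plays no role there.

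More seriously, even granting it, summing gives $\lim_K a_K-a_0\le\int_0^T\Vert Be^{-\tau A}y\Vert_{\mc U}^2\,d\tau$. This is only a \emph{lower} bound on $a_0$, which is useless. In this partition the vanishing weights $f(t_k)\to0$ are attached to states near time $T$, which is the quantity you want to control. Meanwhile the initial-datum term is never eliminated: step $0$ only gives $a_0\le f(t_1)\Vert y\Vert_{\mc F}^2+I_0$. So no observability inequality can come out of this construction.

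The missing idea is to reflect the partition. Let the intervals accumulate at $t=0$, with the longest one adjacent to $T$: take $[q^{k+1}T,q^kT]$, of length $t_k=(1-q)q^kT\le T'$. Apply the hypothesis with $t=t_k$ to $e^{-q^{k+1}TA}y\in\mc D(A)$, and use $q^{k+1}T+t_k=q^kT$ and $qt_k=t_{k+1}$. This gives
\[
f(t_k)\Vert e^{-q^{k}TA}y\Vert_{\mc F}^2-f(t_{k+1})\Vert e^{-q^{k+1}TA}y\Vert_{\mc F}^2\le\int_{q^{k+1}T}^{q^kT}\Vert Be^{-\tau A}y\Vert_{\mc U}^2\,d\tau .
\]
The state at time $q^kT$ now carries the same weight $f(t_k)$ on the left of step $k$ and on the right of step $k-1$. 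Summing over $k=0,\dots,K$ therefore telescopes exactly to
\[
f((1-q)T)\Vert e^{-TA}y\Vert_{\mc F}^2\le f(t_{K+1})\Vert e^{-q^{K+1}TA}y\Vert_{\mc F}^2+\int_0^T\Vert Be^{-\tau A}y\Vert_{\mc U}^2\,d\tau .
\]
The first term on the right tends to $0$ as $K\to\infty$, because $f(t_{K+1})\to0$ while $\Vert e^{-\tau A}y\Vert_{\mc F}$ stays bounded as $\tau\to0^+$. (For $y\in\mc D(A)$, $e^{-\tau A}y\to y$ in the graph norm, which controls the $\mc F$-norm in both settings of the paper.) This is the argument of \cite[Lemma 2.1]{Miller2010}, to which the paper defers. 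It uses neither contractivity of the semigroup nor monotonicity of $f$.
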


Using this lemma, we can prove the following, which is an adaptation of \cite[Lemma 2.3]{Miller2010}.
 \begin{lem}
 \label{lemmeapprox}
 If the approximate observability estimate 
 $$ f(T)\norm{e^{-TA}y}{\mc{F}}^2  \le g(T)\norm{y}{\mc{F}}^2 +\int_0^T \norm{B e^{-t A }y}{\mc{U}}^2 dt, \quad y \in \mc{D}(A), \quad T \in (0,T_{max}],$$ holds with $f(T) = f_0\exp(-2/(s_2T)^\beta)$ and $g(T) = g_0\exp(-2/(s_1T)^\beta)$ where \linebreak $f_0$, $g_0 >0$, $s_2>s_1>0$, then setting
 $$s = s_2\left(1-\left(\frac{s_1}{s_2}\right)^\frac{\beta}{\beta +1}\right) \quad \text{and} \quad T' = \begin{cases} T_{max} &\text{ if } g_0 \le f_0, \\
 \left(\frac{2\left(1-\left(\frac{s_1}{s_2}\right)^\frac{\beta}{\beta +1}\right)}{s_1^\beta\log\left(\frac{g_0}{f_0}\right)}\right)^\frac{1}{\beta} &\text{ if } g_0 > f_0,
 \end{cases}$$ the following observability inequality holds
 $$\norm{e^{-TA}y}{\mc{F}}^2 \le \frac{1}{f_0}\exp\left(\frac{2}{(sT)^\beta}\right)\int_0^T \norm{Be^{-tA}y}{\mc{U}}^2 dt, \quad y \in \mc{D}(A), \quad T \in (0,T'].$$
 \end{lem}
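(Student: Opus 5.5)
The plan is to reduce the statement to \cref{lemmeLR} by turning the given estimate into one of the form $\tilde f(t)\norm{e^{-tA}y}{\mc F}^2 \le \tilde f(qt)\norm{y}{\mc F}^2 + \int_0^t \norm{Be^{-\tau A}y}{\mc U}^2$. Fix $q\in(0,1)$, to be chosen. For $T\in(0,T']$ I would multiply the hypothesis by a constant $\lambda>0$ and compare $g(T)$ with $f(qT)$. Concretely, I look for a function of the form $\tilde f(T) = f_0 \exp(-2/(sT)^\beta)$ with a smaller rate $s<s_2$. Then $\tilde f \le f$, and I want
$$\tilde f(T)\le f(T),\qquad g(T)\le \tilde f(qT).$$
Replacing $f$ by $\tilde f$ on the left-hand side keeps the approximate inequality valid, since $\tilde f\le f$. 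Replacing $g$ by $\tilde f(q\cdot)$ on the right only enlarges it. \cref{lemmeLR} then gives observability with constant $1/\tilde f((1-q)T)$.

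I would rather arrange that constant to come out as $f_0^{-1}e^{2/(sT)^\beta}$ with the announced $s$. To do this, I set $\tilde f(T)=f_0\exp(-2/(\sigma T)^\beta)$ with $\sigma$ chosen so that $\sigma(1-q)=s$. The conclusion of \cref{lemmeLR} is then $f_0^{-1}\exp(2/(sT)^\beta)$ exactly.

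The two conditions become the following.
\begin{itemize}
\item[(i)] $\sigma\le s_2$, so that $\tilde f\le f$.
\item[(ii)] $g_0 e^{-2/(s_1T)^\beta}\le f_0e^{-2/(\sigma qT)^\beta}$, i.e. $\log(g_0/f_0)\le \frac{2}{T^\beta}\left(s_1^{-\beta}-(\sigma q)^{-\beta}\right)$.
\end{itemize}
Condition (ii) requires $\sigma q> s_1$ strictly, with some margin. The margin absorbs the constant $\log(g_0/f_0)$ for small $T$, or for all $T$ when $g_0\le f_0$.

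The optimization is to choose $\sigma=s_2$, the largest allowed, and then choose $q$. To maximize $s=s_2(1-q)$ while keeping $s_2 q>s_1$, one would naively take $q\downarrow s_1/s_2$. This leaves no margin, however. The stated formula corresponds instead to $q=(s_1/s_2)^{\beta/(\beta+1)}$, which lies in $(s_1/s_2,1)$. With this choice $(\sigma q)^{-\beta}=s_2^{-\beta}(s_2/s_1)^{\beta^2/(\beta+1)}$.

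One checks that $s_1^{-\beta}-(s_2q)^{-\beta} = s_1^{-\beta}\bigl(1-(s_1/s_2)^{\beta/(\beta+1)}\bigr)$. Indeed, $s_1^{\beta}(s_2q)^{-\beta}=(s_1/s_2)^{\beta}(s_2/s_1)^{\beta^2/(\beta+1)}=(s_1/s_2)^{\beta/(\beta+1)}$. Condition (ii) therefore reads
$$T^\beta\le \frac{2\bigl(1-(s_1/s_2)^{\beta/(\beta+1)}\bigr)}{s_1^\beta\log(g_0/f_0)}$$
when $g_0>f_0$, which is exactly the stated $T'$. When $g_0\le f_0$, the left side of (ii) is nonpositive, so (ii) holds for every $T\le T_{max}$.

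Finally, the hypothesis of \cref{lemmeLR} needs to hold for $t\in(0,T']$. In the approximate estimate this is satisfied, since $T'\le T_{max}$ is implicit: in the $g_0>f_0$ case, take $\min(T',T_{max})$ if needed. Also $\tilde f(t)\to0$ as $t\to0$. Applying \cref{lemmeLR} with this $q$ and $\tilde f$ yields the claim.

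The only delicate step is the exponent bookkeeping in (ii), namely verifying that the particular $q$ produces the clean factor $1-(s_1/s_2)^{\beta/(\beta+1)}$. It is a direct computation.
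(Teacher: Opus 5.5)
Your argument is correct. Once you set $\sigma = s_2$ (so $\tilde f = f$ and the multiplier $\lambda$ plays no role), it is exactly the paper's proof: with $q=(s_1/s_2)^{\beta/(\beta+1)}$ one has $q\,s_1^{-\beta}=(qs_2)^{-\beta}$, so $g(T)\le f(qT)$ is equivalent to $\log(g_0/f_0)\le 2(1-q)/(s_1T)^\beta$, which holds for $T\le T'$, and \cref{lemmeLR} then yields the constant $1/f((1-q)T)=f_0^{-1}e^{2/(sT)^\beta}$. Your caveat about possibly replacing $T'$ by $\min(T',T_{max})$ when $g_0>f_0$ is a legitimate point that the paper's proof passes over silently, since the hypothesis is only assumed on $(0,T_{max}]$.
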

 
 \begin{proof}
 We check that for $q = \left(\frac{s_1}{s_2}\right)^\frac{\beta}{\beta +1}$ and $T \in (0,T']$ with $T'$ as defined above, we have that $g(T) \le f(qT)$. Indeed if $g_0 \le f_0$, since $s_1 \le qs_2$ we have $g(T) \le f(qT)$. And if $g_0 > f_0$, we have 
 $$ T^\beta \le  \frac{2\left(1-\left(\frac{s_1}{s_2}\right)^\frac{\beta}{\beta +1}\right)}{s_1^\beta\log\left(\frac{g_0}{f_0}\right)},$$
 which implies first
 $$\frac{-2}{s_1^\beta T^\beta} \le \frac{-\ln\left(\frac{g_0}{f_0}\right)}{1-q},$$
 and then
 $$\frac{-2}{s_1^\beta T^\beta}+\frac{2q}{s_1^\beta T^\beta} \le \ln\left(\frac{f_0}{g_0}\right). $$
 This in turn implies 
 $$g_0\exp\left(\frac{-2}{s_1^\beta T^\beta}\right) \le f_0 \exp\left(\frac{-2q}{s_1^\beta T^\beta}\right), $$
which finally gives $g(T) \le f(qT), \text{ since } (q/s_1^\beta) = (1/(qs_2)^\beta).$
We conclude by applying \cref{lemmeLR}.
 \end{proof}
 {We may now proceed with the proof of \cref{LR}.}
We follow step by step the proof given in \cite[section 2.4]{Miller2010}, tracking the different parameters.
 \begin{proof}[of \cref{LR}]
Using the assumptions \eqref{adm}, \eqref{dissip}, \eqref{spec} and \eqref{obs} we prove the approximate observability estimate of \cref{lemmeapprox} and identify $f_0, g_0, s_1$ and $s_2$.
Let $\kappa \in (0,1)$, $T \in (0,T_{max}]$ and $\tau = \kappa T$.  We define the spectral threshold $\sigma$ {by the relation} $\sigma^\delta = \frac{1}{\tau^\beta}$, where $\delta$ and $\beta$ come from the assumptions \eqref{spec} and \eqref{obs}, and satisfy \cref{rkbetadelta}. For $y \in \mc{D}(A)$, we denote $y_\sigma$ the orthogonal projection of $y$ on $\mc{F}_\sigma$ and $y_\sigma^\perp = y - y_\sigma $. \newline
Combining \eqref{spec} and \eqref{obs} {applied} at time $\tau$ {gives}
$$\norm{e^{-\tau A}\tilde{y}_\sigma}{\mc{F}}^2 \le C_{obs}C_{rel}e^{\frac{2b}{\tau^\beta}+2a\sigma^\delta} \int_0^\tau \norm{Be^{-tA}\tilde{y}_\sigma}{\mc{U}}^2 dt,  \quad \tilde{y}_\sigma \in \mc{F}_\sigma. $$
{Since $\tau = \kappa T$}, applying this inequality to $\tilde{y}_\sigma = e^{-(1-\kappa)TA}y_\sigma$ leads to
$$ \norm{e^{-TA}y_\sigma}{\mc{F}}^2 \le \frac{1}{4f(T)}\int_{(1-\kappa)T}^T\norm{Be^{-tA}y_\sigma}{\mc{U}}^2dt,$$ with 
$$f(T) = \frac{1}{4C_{rel}C_{obs}} \exp\left(-\frac{2(a+b)}{(\kappa T)^\beta}\right).$$
Then we use the inequality $\norm{x+y}{}^2 \le 2(\norm{x}{}^2 + \norm{y}{}^2)$, the admissibility \eqref{adm}, and the fact that $y = y_\sigma + y_\sigma^\perp$, and we get
\begin{align*}f(T)\!\norm{e^{-TA}y}{\mc{F}}^2 &\le 2f(T)\norm{e^{-TA}y_\sigma}{\mc{F}}^2 + 2f(T)\norm{e^{-TA}y_\sigma^\perp}{\mc{F}}^2 \\
&\le \frac12 \int_{(1-\kappa)T}^T\!\!\!\!\!\norm{Be^{-tA}y_\sigma}{\mc{U}}^2dt + 2f(T)\norm{e^{-TA}y_\sigma^\perp}{\mc{F}}^2 \\
&\le \int_{(1-\kappa)T}^T\!\!\!\!\!\!\!\!\!\norm{Be^{-tA}y}{\mc{U}}^2dt + \int_{(1-\kappa)T}^T\!\!\!\!\!\norm{Be^{-tA}y_\sigma^\perp}{\mc{U}}^2dt + 2f(T)\norm{e^{-TA}y_\sigma^\perp}{\mc{F}}^2 \\
&\le \int_{(1-\kappa)T}^T\!\!\!\!\!\!\!\!\!\!\!\norm{Be^{-tA}y}{\mc{U}}^2dt + Adm_{\kappa T}\! \norm{e^{-(1-\kappa)TA}y_\sigma^\perp}{\mc{F}}^2 +2f(T)\!\norm{e^{-TA}y_\sigma^\perp}{\mc{F}}^2.\end{align*}
We apply the dissipation property \eqref{dissip} in the last two terms to obtain
\begin{multline*}f(T)\norm{e^{-TA}y}{\mc{F}}^2 \le  \int_0^T\norm{Be^{-tA}y}{\mc{U}}^2dt \\
+ C_{dissip}\left(Adm_{\kappa T}e^{-(1-\kappa)2c\sigma T} + 2f(T)e^{-2c \sigma T}\right)\norm{y_\sigma^\perp}{\mc{F}}^2.\end{multline*}
Finally, we use that $\norm{y_\sigma^\perp}{\mc{F}} \le \norm{y}{\mc{F}}$, $Adm_{\kappa T} \le Adm_{T_{max}}$ (since $Adm_T$ is a nondecreasing function of $T$), and $f(T) \le f(T_{max})$ {to conclude that}
$$f(T)\!\norm{e^{-TA}y}{\mc{F}}^2 \le  \int_0^T\!\!\!\!\norm{Be^{-tA}y}{\mc{U}}^2dt   + C_{dissip}\!\left(Adm_{T_{max}} + 2f(T_{max})\right)\!e^{-(1-\kappa)2c\sigma T}\norm{y}{\mc{F}}^2 .$$
This is the expected approximate observability estimate with 
$$f_0 = \frac{1}{4C_{rel}C_{obs}}, \quad g_0= C_{dissip}(Adm_{T_{max}} + 2f(T_{max})),$$
$$s_2 = \frac{\kappa}{(a+b)^\frac{1}{\beta}}, \quad s_1 = \kappa^\frac{\beta+1}{\beta} \left( \frac{1}{c(1-\kappa)}\right)^\frac1\beta.$$
There exists $0<\kappa_0<1$ depending on $a$, $b$, $c$, and $\beta$ such that for any $\kappa \in (0,\kappa_0)$ we have $s_2>s_1>0$. In this case we can apply \cref{lemmeapprox}, which ends the proof of the theorem. 
\end{proof}

\subsection{Proof of \cref{maininterne} -- internal observation}
\subsubsection{{Observability of the harmonic heat equation from a subset of annulus}}
\label{applicationrestriction}
For any open subset $\Gamma \subset \sphere$, we define the subset of the annulus
	\begin{equation}
	\label{defdesectionanneau}
	\omega_\Gamma := \left\{ x \in \R^d;~x= r\theta, ~R_1 <r< R_2,  ~\theta \in \Gamma \right\}.
	\end{equation}
	The goal of this section is {to give a precise estimate of the cost of observability of the harmonic heat equation from $\omega_\Gamma$. Our result reads:}
\begin{thm} 
\label{obsharmonicheatsectionannulus}
Let $B_{\omega_\Gamma} = \indicatrice{\omega_\Gamma}$, for any $T_{max}>0$, $\beta >1$ and $0<\gamma<1/2$, there exists $C,s >0$ such that for any $\epsilon>0$, $\mu>0$ and $0<T<T_{max}$, $(\grush,B_{\omega_\Gamma})$ is observable at time $T$ with
\begin{equation}
\label{observabilityharmonicheatsectionannulus}
\cost(\grush,B_{\omega_\Gamma},T) \le \frac{C}{\epsilon^6} (1+\mu) e^{\mu (1+\gamma )(R_1+\epsilon)^2}e^\frac{2}{(sT)^\beta}.
\end{equation}
Moreover, \begin{equation} \label{comportementdes} s \underset{\gamma \to 0}{\sim} \frac{\gamma}{C_1(\beta)} \quad \text{with} \quad C_1(\beta) \underset{\beta \to 1}{\longrightarrow} \infty \end{equation}
\end{thm}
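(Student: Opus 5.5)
The plan is to apply \cref{LR} with $\mc E=\mc F=L^2(\mc B_R)$, $A=\grush$, $B_0=\indicatrice{\mc A}$ and $B=B_{\omega_\Gamma}$. Admissibility holds trivially with $Adm_T\le T$, since $\grush$ is nonnegative and $B$ is bounded. The observability assumption \eqref{obs} is \cref{prop_observability_from_annulus}, with
$$C_{obs}=\frac{\cc}{\epsilon^6}(1+\mu)e^{\mu(R_1+\epsilon)^2} \quad\text{and}\quad b=c_\beta .$$
The remaining work is to construct the family $\mc F_\sigma$ and to check \eqref{dissip} and \eqref{spec} with constants $a,c,C_{dissip},C_{rel}$ that are independent of $\mu$, except for a controlled factor $e^{\gamma\mu(R_1+\epsilon)^2}$.

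\textbf{Construction via spherical harmonics.} Decompose $L^2(\mc B_R)=\bigoplus_{\ell\ge0}L^2_\ell$, where $L^2_\ell$ consists of functions $f(r)Y(\theta)$ with $Y$ a spherical harmonic of degree $\ell$ (\cref{prooflemH10M}). Since $\grush$ is rotation invariant, each $L^2_\ell$ is invariant under the semigroup. On $L^2_\ell$ the operator acts as the radial operator
$$-\partial_r^2-\frac{d-1}{r}\partial_r+\frac{\ell(\ell+d-2)}{r^2}+\mu^2r^2,$$
whose form is bounded below by $\ell(\ell+d-2)/R^2+\mu^2 r^2\ge \ell^2/R^2$. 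Set
$$\mc F_\sigma=\bigoplus_{\ell(\ell+d-2)\le\sigma}L^2_\ell .$$
Then for $y\in\mc F_\sigma^\perp$ the energy identity gives $\|e^{-t\grush}y\|\le e^{-\sigma t/R^2}\|y\|$. This is \eqref{dissip} with $c=1/R^2$ and $C_{dissip}=1$, uniformly in $\mu$.

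\textbf{Relative bound \eqref{spec}.} For $z\in\mc F_\sigma$ write $z(r\theta)=\sum_{\ell\le\sqrt\sigma}\sum_m f_{\ell m}(r)Y_{\ell m}(\theta)$. For each fixed $r$, the function $\theta\mapsto z(r\theta)$ is a spherical harmonic polynomial of degree $\le\sqrt\sigma$. I would invoke the known spectral (Lebeau--Robbiano/Jerison--Lebeau type) inequality for eigenfunctions of the Laplace--Beltrami operator on $\sphere$:
$$\int_{\sphere}|Y|^2\le Ce^{C\sqrt\sigma}\int_\Gamma|Y|^2 .$$
Applying it at each $r$ and integrating in $r^{d-1}dr$ over $(R_1,R_2)$ yields \eqref{spec} with $\delta=1/2$, $a=C/2$, and constants independent of $\mu$. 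This step is the cleanest, since the tensor structure avoids any $\mu$-dependence.

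\textbf{Applying \cref{LR} and tracking $\gamma$.} Following the remark, raise $\delta$ to $\beta/(\beta+1)$ so that \eqref{rkbetadelta} holds. \cref{LR} then gives, for $T\le T'$,
$$\cost\le 4C_{rel}C_{obs}e^{2/(sT)^\beta}.$$
The main obstacle is that $T'$ depends on $C_2$, and $C_2$ contains $C_{obs}$, which grows like $e^{\mu(R_1+\epsilon)^2}$. As a result $T'$ shrinks like $\mu^{-1/\beta}$, while the statement requires all $T<T_{max}$ with $C,s$ independent of $\mu$.

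I would handle this with the trade-off behind $\gamma$. Since
$$\ln C_2\approx \mu(R_1+\epsilon)^2+\ln\frac{C}{\epsilon^6},$$
for $T\ge T'$ one has $e^{\mu(R_1+\epsilon)^2}\lesssim e^{C_1/T^\beta}$. Instead of this rough comparison, I would apply \cref{lemmeapprox} directly, choosing $f_0$ with the exponential factor $e^{-\mu(1+\gamma)(R_1+\epsilon)^2}$ absorbed partly into $g$. Concretely, I would choose $\kappa\sim\gamma$ small, so that $s_1\ll s_2$ and $q$ is small. The condition $g(T)\le f(qT)$ then only needs to hold up to a factor $e^{\gamma\mu(R_1+\epsilon)^2}$. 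The inequality
$$e^{\mu(R_1+\epsilon)^2}\le e^{\gamma\mu(R_1+\epsilon)^2}\,e^{C(1-q)/(s_1T)^\beta}$$
fails only when $\mu T^\beta$ is large. In that regime I would bypass \cref{LR} entirely: use the dissipation of the whole semigroup, $e^{-2d\mu t}$, together with the small-time estimate on $[0,T/2]$, in the spirit of \cref{ideeuniforme}. I expect this to produce $s\sim\gamma/C_1(\beta)$, with $C_1(\beta)\to\infty$ inherited from $c_\beta$ via $s_2=\kappa/(a+b)^{1/\beta}$. Finally, for $T\in[T',T_{max}]$ the bound extends by monotonicity of the cost in $T$.
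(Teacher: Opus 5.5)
\textbf{Verdict: genuine gap in the regime $T>T'$.} Your verification of the hypotheses of Theorem~\ref{LR} matches the paper's: admissibility with $Adm_T=T$, dissipation on $\mc F_\sigma^\perp$ with $C_{dissip}=1$ and $c=1/R^2$, the relative bound from the spectral inequality on $\sphere$ with $\mu$-independent $C_{rel}$, $a$ and $\delta=1/2$, and $C_{obs}$, $b=c_\beta$ from the annulus estimate. You also correctly locate the obstacle, $T'\sim\mu^{-1/\beta}$. The gap is in how you handle $T>T'$, which is the actual content of the theorem beyond Theorem~\ref{LR}, and neither of your mechanisms works as stated.

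\textbf{Why your two mechanisms fail.} Relaxing $g(T)\le f(qT)$ by a factor $e^{\gamma\mu(R_1+\epsilon)^2}$ changes the admissible range to
\[
\ln(g_0/f_0)-\gamma\mu(R_1+\epsilon)^2\le \frac{2(1-q)}{(s_1T)^\beta}.
\]
This still forces $T\lesssim\mu^{-1/\beta}$. The bypass through the decay $e^{-2d\mu t}$ fails too. Starting from $T'$ you carry the factor $e^{2/(sT')^\beta}=C_2^{\rho}$ with $\rho=2/(s^\beta C_1)$, i.e.\ a loss of order $e^{\rho\mu(R_1+\epsilon)^2}$. Take $T=\lambda T'$ with $1\ll\lambda\ll 1/T'$, which lies in $(0,T_{max})$ for $\mu$ large. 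There the decay $2d\mu(T-T')$ and the allowed factor $2/(sT)^\beta$ are both $o(\mu)$, so nothing absorbs the loss unless $\rho\le\gamma$. The $\mu$-dissipation trick is the mechanism of Proposition~\ref{prop_uniform_observa_wrt_mu_intern}, and it needs $T$ bounded below ($T>T^*$); here $T$ is arbitrary in $(0,T_{max})$. Also, your remark that $e^{\mu(R_1+\epsilon)^2}\lesssim e^{C_1/T^\beta}$ for $T\ge T'$ is reversed: it holds for $T\le T'$.

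\textbf{The missing idea.} The exponent $\rho$ does not depend on $\mu$ and can be tuned through $\kappa$. Write $x=s_1/s_2$; it is an explicit function of $\kappa,a,b,c,\beta$, independent of $\mu$, tending to $0$ with $\kappa$. The formulas of Theorem~\ref{LR} give
\[
\frac{2}{(sT')^\beta}=\frac{2\ln C_2}{s^\beta C_1}=\frac{x^\beta}{\bigl(1-x^{\frac{\beta}{\beta+1}}\bigr)^{\beta+1}}\,\ln C_2 .
\]
The paper chooses $\kappa$ so that this prefactor equals $\tilde\gamma$, where $\tilde\gamma+\tilde\gamma^2=\gamma$. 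Then $e^{2/(sT')^\beta}=C_2^{\tilde\gamma}$. Since $C_2\le \frac{M}{\epsilon^6}T_{max}e^{(1+\tilde\gamma)\mu(R_1+\epsilon)^2}$, with the factor $(1+\mu)$ absorbed at the price $M\lesssim 1/\tilde\gamma$, this is a $\mu$-independent constant times $e^{\gamma\mu(R_1+\epsilon)^2}$.

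For $T>T'$ the paper then only uses that $t\mapsto\norm{e^{-t\grush}y}{L^2(\mc B_R)}$ is nonincreasing, which gives $\cost(\grush,B_{\omega_\Gamma},T)\le\cost(\grush,B_{\omega_\Gamma},T')$. No $\mu$-dependent decay is needed. Your closing sentence about monotonicity is therefore the right step. Your choice $\kappa\sim\gamma$ is also consistent with the paper, which gets $s\sim\gamma c/(a+c_\beta)^{1+1/\beta}$. But that step is valid only because of the identity above, which your proposal does not contain. The role of small $\kappa$ is to make $2/(s^\beta C_1)$ small, not to relax $g(T)\le f(qT)$.
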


To prove this result, we apply the {abstract restriction result given in \cref{preuvemiller}}, to the observability on the whole annulus given by \cref{prop_observability_from_annulus}, which rewrites in the semigroup framework \eqref{notationharmonicheat} as follows.
\begin{cor} 
\label{obsharmonicheatwholeannulus}
Let $B_{\mathcal A} = \indicatrice{\mathcal A}$. There exists $C>0$ and, for any $\beta >1$, a positive constant $c_\beta$ verifying $c_\beta \xrightarrow{\beta \rightarrow 1} \infty$, such that for any $\epsilon\in (0,1)$, $\mu>0$ and $T>0$, $(\grush,B_{\mc A})$ is observable at time $T$ with
the following cost estimate:
\begin{equation}
\label{observabilityharmonicheatwholeannulus}
\cost(\grush,B_{\mc A},T) \le \frac{C}{\epsilon^6}(1+\mu) e^{\mu (R_1+\epsilon)^2}e^\frac{2 c_\beta}{T^\beta}.
\end{equation}
\end{cor}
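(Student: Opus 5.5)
This corollary is a direct rewriting of \cref{prop_observability_from_annulus} in the semigroup notation \eqref{notationharmonicheat}. The plan has three steps: identify the objects, check that the constants match, and handle the density and range-of-$\epsilon$ issues.

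\textbf{Identifying the objects.} For $y_{0}\in\mc{D}(\grush)=H^2(\mc B_R)\cap H^1_0(\mc B_R)$, set $y(t)=e^{-t\grush}y_0$. By standard parabolic regularity (see \cref{prop_wellposedness_harmonicheatequation}), this trajectory belongs to $C^0([0,T];H^1_0(\mc B_R))\cap L^2((0,T);H^2(\mc B_R))\cap H^1((0,T);L^2(\mc B_R))$. It solves $\partial_t y-\Delta y+\mu^2\Vert x\Vert^2y=0$ in $(0,T)\times\mc B_R$ with homogeneous Dirichlet data. Since $\mc F=\mc E=L^2(\mc B_R)$ and $B_{\mc A}=\indicatrice{\mc A}$, we have
$$\int_0^T\norm{B_{\mc A}e^{-t\grush}y_0}{L^2}^2\,dt=\int_0^T\!\!\int_{\mc A}|y|^2,\qquad \norm{e^{-T\grush}y_0}{\mc F}^2=\int_{\mc B_R}|y(T)|^2 .$$
So inequality \eqref{generalobs} for $(\grush,B_{\mc A})$ is exactly the estimate of \cref{prop_observability_from_annulus}, written for such $y$.

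\textbf{Matching the constants.} The statement of \cref{prop_observability_from_annulus} holds for all $T>0$, $\mu>0$ and $\varepsilon\in(0,1]$. It uses a constant $\cc$ independent of $\varepsilon,\mu,T$, and constants $c_\beta$ depending only on $\beta$ (and on $R_1,R_2,R$), with $c_\beta\to\infty$ as $\beta\to1$. Taking $C=\cc$ and the same $c_\beta$ gives \eqref{observabilityharmonicheatwholeannulus}.

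\textbf{Density and the range of $\epsilon$.} The proof of \cref{prop_observability_from_annulus} reduces to $\varepsilon$ with $R_1+\varepsilon<R_2$ and to smooth solutions via density. I must make sure this does not affect the claimed range $\epsilon\in(0,1)$. If $R_1+\epsilon\ge R_2$, I would use monotonicity of the right-hand side in $\epsilon$. The factor $e^{\mu(R_1+\epsilon)^2}$ increases with $\epsilon$, but the factor $\epsilon^{-6}$ decreases, so monotonicity alone does not close the argument. Instead I would apply the estimate with a smaller admissible $\epsilon'$, for instance $\epsilon'=(R_2-R_1)/2$. Then $\epsilon'^{-6}\le c\,\epsilon^{-6}$, with $c$ depending only on $R_1,R_2$ since $\epsilon<1$, and $e^{\mu(R_1+\epsilon')^2}\le e^{\mu(R_1+\epsilon)^2}$. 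Enlarging $C$ accordingly gives the bound for all $\epsilon\in(0,1)$.

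Since the proposition is stated for all $\varepsilon\in(0,1]$, this point was already handled in its proof, and I expect essentially no obstacle. The only thing to verify is that $\mc{D}(\grush)$-trajectories meet the regularity hypothesis.
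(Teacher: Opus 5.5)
Your proposal is correct and follows the paper's route. The paper gives no separate argument: it says \cref{prop_observability_from_annulus} ``rewrites in the semigroup framework'' \eqref{notationharmonicheat} with $\mc F=\mc E=L^2(\mc B_R)$, so $\cost(\grush,B_{\mc A},T)$ is read off directly from that estimate. Your extra bookkeeping on $\epsilon$ with $R_1+\epsilon\ge R_2$ (using $\epsilon'=(R_2-R_1)/2$ and enlarging $C$) is sound, and it makes explicit the reduction the paper only asserts before proving the proposition.
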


\begin{proof}[of \cref{obsharmonicheatsectionannulus}] We want to apply \cref{LR}, to deduce the observability for the operator $B_{\omega_\Gamma}$ from the observability for $B_\mc{A}$ which is given by \cref{obsharmonicheatwholeannulus}. We start by checking the assumptions, with $\mc{E} = \mc{F} = L^2(\mc{B}_R)$ endowed with the usual scalar product for the state space, and $U =  L^2(\mc{B}_R)$ for the observation space. We define for any $\sigma > 0$,
\begin{equation} \label{esigma} \mc{F}_\sigma = \left\{ \sum_{\underset{k \in \llbracket1, \beta(m,d)\rrbracket}{m<\sqrt{\sigma}}}\scalar{y}{Y_{mk}}{L^2(\sphere)}Y_{mk} \quad \Big{|} \quad y \in L^2(\mc{B}_R) \right\} \subset \mc{F},\end{equation}
\newline
\noindent $\bullet$ Admissibility: for any $y \in \mathcal{D}(\grush)$ and $T>0$ 
\begin{equation}
\label{adminterne}
\int_0^T\int_{\omega_\Gamma}|e^{-t\grush}y|^2 \le T \int_{\mathcal{B}_R}|y|^2
\end{equation}
which gives the admissibility inequality with $Adm_T = T$. \newline \medskip 
\noindent $\bullet$ Dissipation: it involves the spaces $\mc{F}_\sigma$ defined in \eqref{esigma}.
We prove that the eigenvalues of $G_\mu|_{\mc{F}_\sigma^\perp}$ are larger than $\frac{\sigma}{R^2}$ and then apply \cref{dissipationgrushin}. \newline
Let $y \in \mc{F}_\sigma^\perp $, $y \ne 0$ and $\lambda$ such that $\grush y = \lambda y$. Then, since by \cref{lemme H10M} $$y = \sum_{\underset{k \in \llbracket1, \beta(m,d)\rrbracket}{m \ge \sqrt{\sigma}}}y_{mk}Y_{mk} \quad \text{and} \quad y \ne 0, $$ there exists $m,k$ such that $y_{mk} \ne 0$. Computing $\grush y $ we obtain
$$-\frac{1}{r^{d-1}}(\partial_r(r^{d-1}\partial_r y_{mk})) + m(m+d-2)\frac{y_{mk}}{r^2} + \mu^2r^2y_{mk}= \lambda y_{mk}.$$
We multiply this equality by $r^{d-1}y_{mk}$, integrate over $(0,R)$ and do an integration by part to obtain
$$ \int_0^R r^{d-1}(y_{mk}'(r))^2 dr = \int_0^R (\lambda r^{d-1} - m(m+d-2)r^{d-3}- \mu^2 r^{d+1})y_{mk}(r)^2 dr.$$
As $y_{mk} \ne 0$ and cannot be constant since $y_{mk}(R) =0$, the left handside is positive and it implies that there exists $r_0 \in (0,R)$ such that 
$$ \lambda \ge \frac{m(m+d-2)}{r_0^2} + \mu^2r_0^2.$$
Since $m \ge \sqrt{\sigma}$ we finally have 
$$ \lambda \ge \frac{\sigma}{R^2}.$$
Let  $ y_0 \in \mc{F}_\sigma^\perp$, $t>0$, we denote $y_p(t) = e^{-t\grush}y_0$ the solution of \eqref{harmonicheat}. 
Since the eigenvalues of $\grush$ are greater than $\sigma/R^2$ we have by \cref{dissipationgrushin}
\begin{equation}
\label{dissipinterne}
\norm{y_\mu(t)}{L^2(\mathcal{B}_R)}^2 \le \exp\left(-2\frac{\sigma}{R^2}t\right)\norm{y_0}{L^2(\mathcal{B}_R)}^2
\end{equation}
Therefore we have the dissipation estimate with $C_{dissip} = 1$ and $c = 1/R^2$. \newline \medskip 
\noindent $\bullet$ Relation between $B_\mathcal{A}$ and $B_{\omega_\Gamma}$ -- Let $y \in \mc{F}_\sigma$. We have to
{bound $\norm{y}{L^2(\mathcal{A})}^2$ by $ \norm{y}{L^2(\omega_\Gamma)}^2$.}
Let us momentarily distinguish $y$ the function expressed in cartesian coordinates, and $\hat{y}$ the function expressed in polar coordinates $(r,\theta) \in (0,R) \times \sphere$. We have that 
$$\hat{y}(r,\theta) = \sum_{\underset{k \in \llbracket1, \beta(m,d)\rrbracket}{m < \sqrt{\sigma}}}\hat{y}_{mk}(r) Y_{mk}(\theta).$$ 
Notice that for any $r \in (0,R)$, {$\hat{y}(r,.)$} is a finite sum of eigenfunctions of the Laplace-Beltrami operator on $\sphere$. 
{Consequently, we may} apply the spectral inequality \cite[Theorem 14.6]{Jerison}, which gives a constant $C>0$ depending on $\Gamma$ such that
$$ \norm{ \sum_{\underset{k \in \llbracket1, \beta(m,d)\rrbracket}{m < \sqrt{\sigma}}}\hat{y}_{mk}(r) Y_{mk}}{L^2(\sphere)}^2 \le Ce^{C\sqrt{\sqrt{\sigma}(\sqrt{\sigma}+d-2)}}\norm{ \sum_{\underset{k \in \llbracket1, \beta(m,d)\rrbracket}{m < \sqrt{\sigma}}}\hat{y}_{mk}(r) Y_{mk}}{L^2\left(\Gamma\right)}^2.$$
{Here, we use that} the eigenvalue associated to $Y_{mk}$ is $m(m+d-2)$ (see \eqref{laplacienvp}).
{In conclusion}, there exists $C_{rel}>0$ and $a>0$, depending only on $\Gamma$ and $d$, such that for any $r \in (0,R)$,
$$\int_{\sphere} |\hat{y}(r,\theta)|^2d\theta \le C_{rel}e^{2a\sqrt{\sigma}}\int_\Gamma |\hat{y}(r,\theta)|^2d\theta .$$
We multiply by $r^{d-1}$ and integrate over $(R_1,R_2)$ to obtain
$$\int_{R_1}^{R_2}\int_{\sphere} r^{d-1}|\hat{y}(r,\theta)|^2d\theta dr \le C_{rel}e^{2a\sqrt{\sigma}}\int_{R_1}^{R_2}\int_\Gamma r^{d-1}|\hat{y}(r,\theta)|^2d\theta dr$$
Since $\hat{y}(r,\theta) = y(r\theta)$, and using the polar change of variable, we finally obtain
\begin{equation}
\label{relinterne}
\norm{y}{L^2(\mathcal{A})}^2 \le C_{rel}e^{2a\sqrt{\sigma}}\norm{y}{L^2\left(\omega_\Gamma\right)}^2,
\end{equation}
which gives the expected inequality. \newline \medskip 
\noindent $\bullet$ Observability for $B_{\mathcal{A}}$: it is given by \cref{obsharmonicheatwholeannulus}, with for any $0<\epsilon<1$, $C_{obs} = \frac{C}{\epsilon^6}(1+ \mu) e^{\mu (R_1+\epsilon)^2}$.
\bigskip \newline
Now we can apply \cref{LR}: there exists $s>0$ (which does not depend on $\mu$) and $T'>0$ (which depends on $\mu$ as we will see in the expression given below) such that for any $T \in (0,T']$, $(\grush,B_{\omega_\Gamma})$ is observable with
\begin{equation} \label{applithmharmonicheat} \cost(\grush,B_{\omega_\Gamma},T) \le  4C_{rel}\frac{C}{\epsilon^6}(1+ \mu) e^{\mu (R_1+\epsilon)^2}e^\frac{2}{(sT)^\beta}.\end{equation}
If $T \le T'$, then the result is proven. The problem is that $T'$ depends on $\mu$ and goes to $0$ as $\mu$ goes to infinity. Indeed using the notations of \cref{LR} we have
$$T' = \left\{ \begin{array}{cc} \left(\dfrac{C_1}{\ln(C_2)}\right)^\frac{1}{\beta} &\text{ if } C_2 > 1 \\ T_{max} &\text{ if } C_2 \le 1 \end{array} \right.$$
and $$C_2 = 4C_{rel}T_{max}\frac{C}{\epsilon^6}(1+ \mu) e^{\mu (R_1+\epsilon)^2} + 2\exp(-2/s_2^\beta).$$
One can see that $C_2 \to +\infty $ as $\mu \to +\infty$. 
Therefore, we need to be more precise to tackle the case $T>T'$. The strategy is to use the parameter $\kappa$ given in \cref{LR} to get the expected estimate.
Let $0<\gamma <1/2$ and $0<\tilde{\gamma} <1$ such that $\gamma = \tilde{\gamma}^2+ \tilde{\gamma}$ and let $x\in (0,1)$ such that $$\dfrac{x^\beta}{\left(1-x^\frac{\beta}{\beta +1}  \right)^{\beta +1} } = \tilde{\gamma}.$$ 
{Note that, when $\tilde \gamma$ is close to zero, $x \sim \tilde \gamma^{\frac{1}{\beta}}$.}
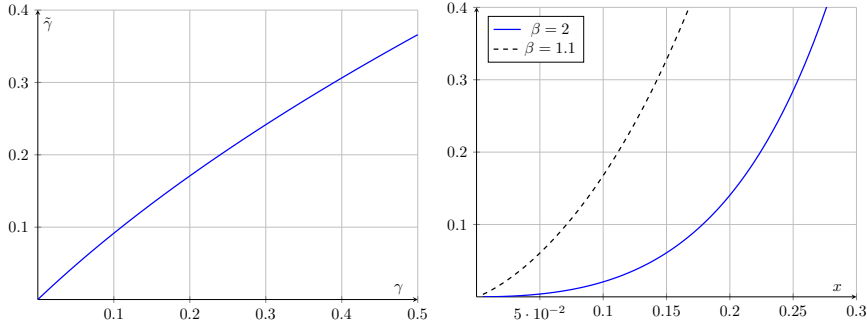
\begin{figure}[h!]
\begin{center}
\begin{tikzpicture}[scale=0.6]
    \begin{axis}[
        axis lines = middle,
        xlabel = $\gamma$,
        xlabel style={at={(rel axis cs:0.95,0)}, anchor=south},
        ylabel = {$\tilde{\gamma}$},
        ylabel style={at={(rel axis cs:0,0.95)}, anchor=west}, 
        xmin = 0,
        xmax = 0.5,
        ymin = 0,
        ymax = 0.4,
        domain = 0:0.5,
        samples = 200,
        smooth,
        legend pos = north west,
        grid = major,
        width=10cm,
        height=8cm,
    ]
    \addplot[blue, thick] {sqrt(x + 1/4) - 1/2};
    \end{axis}
\end{tikzpicture}
\begin{tikzpicture}[scale=0.6]
    \begin{axis}[
        axis lines = middle,
        xlabel = $x$,
        xlabel style={at={(rel axis cs:0.95,0)}, anchor=south}, 
        xmin = 0,
        xmax = 0.3,
        ymin = 0,
        ymax = 0.4,
        domain = 0.005:0.3, 
        samples = 300,
        smooth,
        legend pos = north west,
        grid = major,
        width=10cm,
        height=8cm,
    ]
    \addplot[blue, thick] {x^2 / (1 - x^(2/(2+1)))^(2+1)};
    \addplot[black, thick, dashed] {x^(1.1) / (1 - x^(1.1/(1.1+1)))^(1.1+1)};
    \addlegendentry{$\beta = 2$};
    \addlegendentry{$\beta = 1.1$};
    \end{axis}
\end{tikzpicture}
\caption{Left: $\tilde \gamma$ as a function of $\gamma$. Right: the function $x \mapsto x^\beta \left(1 - x^{\frac{\beta}{\beta+1}} \right)^{-(\beta+1)}$, for two different values of the parameter $\beta$. Clearly, $\gamma$ uniquely determines $\tilde \gamma$, which then uniquely determines $x$.}
\end{center}
\end{figure}

We choose $\kappa$ such that $$\frac{s_1}{s_2} = \left(\frac{\kappa}{1-\kappa}\right)^\frac1\beta \left(\frac{a+b}{c}\right)^\frac1\beta =x.$$
Then we have $$s = s_2(1-x^\frac{\beta}{\beta+1}), \quad C_1 = \frac{2}{s_1^\beta}(1-x^\frac{\beta}{\beta+1}).$$
Recalling that $s_2 = \kappa/(a+c_\beta)^\frac1\beta$, we obtain that 
$$s \underset{\gamma \to 0}{\sim} \frac{\gamma c}{(a+c_\beta)^{1+\frac1\beta}}$$
which gives \cref{comportementdes} since $c_\beta \underset{\beta \to 1}{\longrightarrow} \infty$. \newline
Let us now investigate the dependency of $T'$ on $\mu$. $C_2$ has a long expression, but the important thing is that there exists $M>0$ such that for any $\mu >0$ $$C_2 \le \frac{M}{\epsilon^6}T_{max}e^{(1+\tilde{\gamma})\mu(R_1+\epsilon)^2}.$$ One can compute an estimate of $M$. Since $$\max_{x>0}((1+x)e^{-\tilde{\gamma}(R_1+\epsilon)^2x}) \le \max\left(1,\left(\frac{1}{\tilde{\gamma}(R_1+\epsilon)^2}\right)\right) \le \max\left(1,\left(\frac{1}{\tilde{\gamma}R_1^2}\right)\right),$$ we obtain a constant $C$ independant of $\gamma$ and $\epsilon$ such that \begin{equation} \label{estimedeM} M \le \frac{C}{\tilde{\gamma}}.\end{equation}
Let $y \in L^2(\mc{B}_R)$, we denote $y_\mu(t):= e^{-t\grush}y$ the solution of \eqref{harmonicheat}. 
{Combining \cref{dissipationgrushin} and the estimate \eqref{applithmharmonicheat}}, we obtain that for any $T > T'$,
\begin{align*}
\norm{y_\mu(T)}{L^2(\mc{B}_R)}^2 &\le \norm{y_\mu(T')}{L^2(\mc{B}_R)}^2 \\
& \le \frac{C}{\epsilon^6} (1+\mu) e^{\mu (R_1+\epsilon)^2}e^\frac{2}{(sT')^\beta} \int_0^{T'} \norm{y_\mu(t)}{L^2(\omega_\Gamma)}^2dt.
\end{align*}
{Since} 
$$\exp\left(\frac{2}{(sT')^\beta}\right) = \exp\left(\frac{2\log(C_2)}{s_2^\beta (1-x^{\frac{\beta}{\beta+1}})^\beta C_1} \right) = \exp\left(\frac{2\log(C_2)}{s_2^\beta (1-x^{\frac{\beta}{\beta+1}})^\beta \frac{2}{s_1^\beta}(1-x^{\frac{\beta}{\beta+1}})}\right),$$
{the definition of $\kappa$ leads to}
$$\exp\left(\frac{2}{(sT')^\beta}\right) = \exp\left(\tilde{\gamma}\ln(C_2)\right) = C_2^{\tilde{\gamma}} \le \frac{(MT_{max})^{\tilde{\gamma}}}{\epsilon^{6\tilde{\gamma}}}e^{(\tilde{\gamma}+\tilde{\gamma}^2)\mu (R_1+\epsilon)^2}=  \frac{M^{\tilde{\gamma}}}{\epsilon^{6\tilde{\gamma}}}e^{\gamma\mu (R_1+ \epsilon)^2},$$
{from which we obtain}
\begin{equation}
\label{Tge}
\norm{y_\mu(T)}{L^2(\mc{B}_R)}^2 \le \frac{C}{\epsilon^{6\tilde \gamma}} M^{\tilde{\gamma}}T_{max}^{\tilde \gamma}(1+ \mu) e^{(1+\gamma)\mu (R_1+\epsilon)^2}\int_0^{T'} \norm{y_\mu(t)}{L^2(\omega_\Gamma)}^2dt.
\end{equation}
Combining the estimates for $T\le T'$ \eqref{applithmharmonicheat}, and for $T > T'$ \eqref{Tge}, using that $\tilde \gamma <1$, we get the observability inequality given in \cref{observabilityharmonicheatsectionannulus} which ends the proof.

\end{proof}

\subsubsection{Observability of the Baouendi-Grushin equation from $\omega_\Gamma \times \tilde \Omega$}
\label{conclusioninterne}
{We now follow the strategy briefly described in the introduction to obtain the observability of the Baouendi-Grushin system in large time. First, we combine the precise observability estimate \cref{observabilityharmonicheatsectionannulus} with the dissipation given by \cref{dissipationgrushin}, to obtain the observability of the family of harmonic heat equations \eqref{harmonicheat} from $\omega_\Gamma$, uniformly in the parameter $\mu >0$. } 

{
\begin{prop} \label{prop_uniform_observa_wrt_mu_intern}
For $0<R_1 < R_2 < R$ and $\Gamma$ a nonempty open subset of $\sphere$, set $\omega_\Gamma$ defined by \eqref{defdesectionanneau}. 
Define
$$
T^* = \frac{R_1^2}{2d}.
$$
For  all $\beta > 1$, there exists $c_\beta > 0$, verifying $c_\beta \xrightarrow{\beta \rightarrow 1} \infty$, such that for all $\mu > 0$ and all $T>T^*$, any $y_\mu \in C^0([0,T];L^2(\mc{B}_R)) \cap L^2((0,T);H^1_0(\mc{B}_R))$ solution of \eqref{harmonicheat} satisfies
$$
\int_{\mc{B}_R} \vert y_\mu(T) \vert^2 dx \leq e^{\frac{c_\beta}{(T-T^*)^{2\beta}}} \int_0^T \int_{\omega_\Gamma} \vert y_\mu \vert^2 .
$$
\end{prop}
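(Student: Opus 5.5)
The plan is to combine the observability estimate of \cref{obsharmonicheatsectionannulus} on a short initial interval with the exponential dissipation $\norm{y_\mu(t)}{L^2}^2 \le e^{-2d\mu(t-s)}\norm{y_\mu(s)}{L^2}^2$ from \cref{dissipationgrushin} on the remaining interval, as sketched in \cref{ideeuniforme}. The free parameters $\epsilon$, $\gamma$ and the length $\delta$ of the observation window will be tuned in terms of $T-T^*$, and then optimized. By monotonicity of the right-hand side in $T$ and the decay of the energy, it suffices to treat $T-T^*\le 1$ (for larger $T$ we reuse the estimate obtained at $T^*+1$).

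Fix $T>T^*$ and set $\tau = T-T^*\in(0,1]$. For $\delta\in(0,T)$, apply \cref{obsharmonicheatsectionannulus} with $T_{max}=T^*+1$ on $[0,\delta]$:
$$\norm{y_\mu(\delta)}{}^2 \le \frac{C}{\epsilon^6}(1+\mu)e^{\mu(1+\gamma)(R_1+\epsilon)^2}e^{\frac{2}{(s\delta)^\beta}}\int_0^\delta\int_{\omega_\Gamma}|y_\mu|^2 .$$
Then apply the dissipation on $[\delta,T]$, which gives the factor $e^{-2d\mu(T-\delta)}$. The total $\mu$-exponent is
$$\mu\big[(1+\gamma)(R_1+\epsilon)^2-2d(T-\delta)\big].$$
Writing $2d(T-\delta) = R_1^2+2d(\tau-\delta)$, we choose $\delta=\tau/2$. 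We then choose $\epsilon$ and $\gamma$ proportional to $\tau$ (for instance $\epsilon = c\tau$ and $\gamma = c\tau$ with $c$ small depending on $R_1,R,d$) so that $(1+\gamma)(R_1+\epsilon)^2-R_1^2\le d\tau/2$. The exponent is then at most $-\mu d\tau/2$, which absorbs the factor $(1+\mu)$ at a cost of $\sup_\mu (1+\mu)e^{-\mu d\tau/2}\lesssim 1/\tau$. We also need $\gamma<1/2$, which holds for small $c$.

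It remains to control the $\mu$-independent constants. The key point is the behaviour of $s$ as $\gamma\to0$ in \eqref{comportementdes}: $s\sim \gamma/C_1(\beta)\gtrsim \tau/C_1(\beta)$. Hence
$$e^{\frac{2}{(s\delta)^\beta}}\le \exp\!\Big(\frac{c'_\beta}{\tau^{2\beta}}\Big),$$
with $c'_\beta$ comparable to $C_1(\beta)^\beta$, which blows up as $\beta\to1$. The prefactors $C\epsilon^{-6}$ and $1/\tau$ are only polynomial in $1/\tau$, so they are absorbed into $\exp(c_\beta/\tau^{2\beta})$ after enlarging $c_\beta$. Extending $\int_0^\delta$ to $\int_0^T$ gives the claimed inequality.

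The main obstacle is checking that the constant $C$ in \cref{obsharmonicheatsectionannulus} is genuinely uniform in $\gamma$ (and in $\epsilon$, apart from the explicit $\epsilon^{-6}$), since $\gamma$ now tends to $0$ with $\tau$. From the proof, $C$ involves $M^{\tilde\gamma}\le (C/\tilde\gamma)^{\tilde\gamma}$ by \eqref{estimedeM}, which stays bounded as $\tilde\gamma\to0$. If some hidden dependence on $\gamma$ appears, I expect it to be at most polynomial in $1/\gamma$, which is again harmless against $e^{c_\beta/\tau^{2\beta}}$.
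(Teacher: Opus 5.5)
Your proposal is correct and follows the paper's proof essentially step for step. The paper also uses dissipation at rate $d\mu$ on $[\delta,T]$ and \cref{obsharmonicheatsectionannulus} on $[0,\delta]$, with $\gamma$, $\epsilon$ and $\delta$ all of order $T-T^*$. The asymptotics \eqref{comportementdes} then produce the $(T-T^*)^{-2\beta}$ rate, and the same check via \eqref{estimedeM} shows that $(MT_{max})^{\tilde\gamma}$ stays bounded as $\gamma\to0$. The paper only parametrizes differently: $T_{max}=2T^*$, $\gamma=\frac{T-T^*}{T^*+2T}$, $\delta_\gamma=T\gamma$ and an explicit $\epsilon$.

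One small point needs fixing, and the paper also passes over it in a single line. For $T>T^*+1$, reusing the estimate at $T^*+1$ only gives the constant $e^{c_\beta}$, which is larger than the target $e^{c_\beta/(T-T^*)^{2\beta}}$. To close this, add the $\mu$-uniform decay $\norm{y_\mu(t)}{L^2(\mathcal{B}_R)}^2\le e^{-2\lambda_1(t-s)}\norm{y_\mu(s)}{L^2(\mathcal{B}_R)}^2$, where $\lambda_1>0$ is the first Dirichlet eigenvalue of $-\Delta$ on $\mathcal{B}_R$; this holds because $G_\mu\ge-\Delta$. It makes the cost at most $1$ for large $T$, and you then enlarge $c_\beta$ to cover the intermediate range of $T$.
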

}
 
\begin{proof} {We set $T_{max} = 2 T^*$, and prove the result for $T$ in $(T^*,T_{max}]$. The result follows for $T \geq T_{max}$
from the dissipative properties of the harmonic heat system \eqref{dissipharmonicheatL2}. For $\mu > 0$, and $y \in L^2(\mc{B}_R)$, we denote $y_\mu = e^{-t G_\mu} y$
the solution of \eqref{harmonicheat}.}

Let $T^*<T< T_{max}$, $\epsilon >0$ to be fixed later, and set $0<\gamma < \frac15$ such that $T(1-2\gamma) = T^*(1+\gamma)$. Denote $\delta_\gamma = T\gamma$.
Since the eigenvalues of $G_\mu$ are larger than $d\mu$ (see \cite[section 4.1]{Beauchard2020}) we have by \cref{dissipationgrushin} that
$$\norm{y_\mu(T)}{L^2(\mc{B}_R)}^2 \le \exp\left(-2d\mu(T-\delta_\gamma)\right)\norm{y_\mu(\delta_\gamma)}{L^2(\mc{B}_R)}^2.$$
{We now} estimate $\norm{y_\mu(\delta_\gamma)}{L^2(\mc{B}_R)}^2$. To this end, we apply \cref{obsharmonicheatsectionannulus} and  obtain
\begin{multline*}
\norm{y_\mu(T)}{L^2(\mc{B}_R)}^2\le \frac{C}{\epsilon^6} (1+\mu) \exp(2/(s\delta_\gamma)^\beta)\\
\times \exp\left(\mu \left((1+\gamma )(R_1+\epsilon)^2- 
2d(T-\delta_\gamma)\right)\right)\int_0^T \int_{\omega_\Gamma}\left| y_\mu(t)\right|^2dx dt.
\end{multline*}
We now set
\begin{equation}
\label{defepsilon}
\epsilon = \frac{R_1}{2}\left(\sqrt{\frac{T(1-\gamma)}{T^*(1+\gamma)}}-1\right)=\frac{R_1}{2}\left(\sqrt{\frac{(1-\gamma)}{(1-2\gamma)}}-1\right)
\end{equation}
As $T^* = \frac{R_1^2}{2d}$, and with the choice of $\gamma$, $\delta_\gamma$ and $\epsilon$, we have
$$2d(T - \delta_\gamma) = R_1^2 \frac{T(1-\gamma)}{T^*} > (1+\gamma)(R_1+\epsilon)^2$$
Consequently the observability cost goes to zero as $\mu$ goes to infinity, and we get an estimate uniform in $\mu$. It gives the uniform observability of the harmonic heat equation for $T >T^*$.\newline 
Let us compute precisely the cost of observability when $T \to T^*$. {By definition, we have}
$$\gamma = \frac{T-T^*}{T^* + 2T} \quad \text{and} \quad \delta_\gamma = \frac{T(T-T^*)}{T^* + 2T}.$$
{Using} \eqref{comportementdes}, we obtain the existence of $C_2(\beta)>0$ such that
$$\exp(2/(s\delta_\gamma)^\beta) \le \exp(C_2(\beta)/(T-T^*)^{2\beta}).$$
It remains to check that the constant $(C/\epsilon^6)$, which depends on $\gamma$ and the uniform estimate of \begin{equation} \label{partieenp} (1+\mu) \exp\left(\mu((1+\gamma) R^2 - 2d(T-\delta_\gamma)\right), \end{equation} do not have a blow up rate worse than $\exp(C_2(\beta)/(T-T^*)^{2\beta})$ when $T \to T^*$.
{Estimate \cref{estimedeM} gives} $$C = (T_{max}M)^{\tilde{\gamma}} \le \exp(-\tilde C\gamma \log(\gamma))$$ which is bounded when $\gamma \to 0$. \newline
Moreover, by \eqref{defepsilon} we have 
$$\frac{1}{\epsilon^6} \underset{\gamma \to 0}{\sim} \frac{4^6}{R_1^6 \gamma^6} $$ which {is of order $(T-T^*)^{-6}$}.
Finally using that 
\begin{multline*} \mu((1+\gamma) (R_1+\epsilon)^2 - 2d(T-\delta_\gamma) \\
= -R_1^2\mu\left(-\frac34\frac{(1-\gamma)(1+\gamma)}{1-2\gamma}+ \frac12(1+\gamma)\sqrt{\frac{1-\gamma}{1+\gamma}} + \frac14 (1+\gamma)\right),\\
 \underset{\gamma \to 0}{\sim} -\frac12 \gamma R_1^2\mu, \quad \hspace{8cm}
\end{multline*}
 and that 
$$\max_{x>0}(xe^{-Cx}) \le \frac{1}{C},$$
we obtain a cost not worse than $1/(T-T^*)$ for the part \eqref{partieenp}. 
{The result follows}.
\end{proof}

Using the strategy developed in \cref{section_strategy_HH_Grushin}, we immediately deduce from \cref{prop_uniform_observa_wrt_mu_intern}
the observability of the Baouendi-Grushin system \eqref{grushin} from $\omega_\Gamma \times \tilde \Omega$ for $T$ greater than $T^*$, with 
a cost $K_T$ satisfying
$$
K_T \leq e^{\frac{c_\beta}{(T-T^*)^{2\beta}}}.
$$

\subsubsection{Observability of the Baouendi-Grushin equation from $\omega \times \tilde \Omega$}
\label{generalopensubset}

We now consider $\omega$ to be any open subset of $\mathcal{B}_R$, and define
$$
T^* = \frac{\inf_{x\in \omega} \Vert x \Vert^2}{2d}.
$$
Our goal if to prove that the Baouendi-Grushin equation is observable from $\omega \times \tilde \Omega$ in any time $T>T_*$. 

Let $T>T^*$. There exists $x_* \in \omega$ such that $2 d\, T^*< \Vert x_* \Vert^2 < 2dT$. 
Let $\delta>0$ be such that the ball $\mathcal{B}_\delta(x_*)$ is contained in $\omega$.
For $\alpha >0$, we define
$$
\Gamma_\alpha = \left\lbrace \theta \in \mathbb{S}^{d-1}, \scalar{\theta}{x_*}{\R^d} > r_* (1-\alpha^2) \right\rbrace,$$
and
$$
\omega_{\Gamma_\alpha} = \left\lbrace x = r \theta, \ r_*< r < r_* + \alpha, \ \theta \in \Gamma_\alpha \right\rbrace,
$$
where $r_* =  \Vert x_*\Vert$.

\medskip 

From now on, fix $\alpha>0$ such that $\alpha < \frac{\delta}{2} \min (1, (\sqrt{2} r_*)^{-1} )$. We note that for all $\theta \in \Gamma_\alpha$, 
$$
\Vert r_* \theta - x_*\Vert^2 = 2 r_*^2 - 2 r_* (\theta,x_*) < 2 \alpha^2 r_*^2 < \frac{\delta^2}{4}.
$$
As a result, for any $x = r\theta$ in $\omega_{\Gamma_\alpha}$, $r = \Vert x \Vert$, we have
$$
\Vert x - x_* \Vert \leq \vert r - r_*\vert + \Vert r_*\theta - x_*\Vert < \alpha + \frac{\delta}{2} < \delta.
$$
It follows that $\omega_{\Gamma_\alpha} \subset \mathcal{B}_\delta(x_*) \subset \omega$.
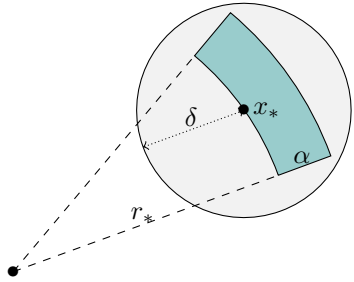
\begin{figure}[h!]
\begin{center}
\begin{tikzpicture}[scale=0.75,rotate=20]

\draw[fill=gray!10] (4.82,1.29) circle (1.9) ;
\draw[fill=teal!40] (5,0) arc (0:30:5) -- (5.19,3) arc (30:0:5.99) -- (5,0);
\draw (4.82,1.29) node[right] {$x_*$} node{$\bullet$};
\draw (0,0) node{$\bullet$};
\draw[dashed] (0,0) -- (5,0);
\draw[dashed] (0,0) -- (4.33,2.5);
\draw[densely dotted,<->] (4.82,1.29) -- (2.92,1.29);
\node at (3.9,1.50) {$\delta$};
\node at (5.5,0.15) {$\alpha$}; 
\node at (2.5,0.15) {$r_*$}; 

\end{tikzpicture}
\caption{In grey, $\mathcal{B}_\delta(x_*)$, and in teal, $\omega_{\Gamma_\alpha}$.}
\end{center}
\end{figure}

\medskip

As $2 d T > r_*^2$, the Baouendi-Grushin equation is observable through $\omega_{\Gamma_\alpha}\times \tilde \Omega$ in time $T$. Therefore, there exits a constant 
$\cc_T$ such that for any function $y$ solution of the Baouendi-Grushin system \eqref{grushin},
$$
\int_{\mathcal{B}_R\times \tilde \Omega} \vert y(T) \vert^2 \leq \cc_T \int_0^T \int_{\omega_{\Gamma_\alpha}\times \tilde \Omega}
\vert y \vert^2 \leq \cc_T \int_0^T \int_{\omega\times \tilde \Omega} \vert y \vert^2 .
$$
The proof is complete.

\subsection{Proof of \cref{mainbord} -- boundary observation}
\subsubsection{Observability  of the harmonic heat equation from a subset of the boundary}

{We now consider the case of boundary observation. To this end, we slightly modify the functional framework: the space $\mathcal{E}$ remains $L^2(\mathcal{B}_R)$, while $\mathcal{F}$ is now taken as $H^1_0(\mathcal{B}_R)$, equipped with the norm
\begin{equation}\label{munorm}
\| u \|_\mu^2 = \|\nabla u\|_{L^2(\mathcal{B}_R)}^2 + \mu^2 \| u \|_{L^2(\mathcal{B}_R)}^2, \quad \forall u \in H^1_0(\mathcal{B}_R).
\end{equation}
This norm is equivalent to the standard $H^1_0$ norm and facilitates the analysis of the observability cost. One can see that observability in this norm implies observability for the standard $H^1_0$ norm.
}The goal of this section is to give a precise estimate of the cost of observability of the harmonic heat equation from $\Gamma_{\!R}$. Our result reads:
\begin{thm} 
\label{obsharmonicheatsubsetboundary}
Let $\Gamma_{\!R} \subset \partial \mc{B}_R$ be a nonempty open subset and $B_{\Gamma_{\!R}} = \indicatrice{\Gamma_{\!R}}\frac{\partial \cdot}{\partial n}$, for any $T_{max}>0$, $\beta >1$ and $0<\gamma<1/2$, there exists $C,s >0$ such that for any $\mu>0$ and $T\in (0,T_{max}]$, $(\grush,B_{\Gamma_{\!R}})$ is observable at time $T$ with
\begin{equation}
\label{observabilityharmonicheatsubsetboundary}
\cost(\grush,B_{\Gamma_{\!R}},T) \le C (1+\mu)^3 e^{\mu (1+\gamma )R^2}e^\frac{2}{(sT)^\beta}.
\end{equation}
Moreover, the behaviour of $s$ is {given by \eqref{comportementdes},} as for the internal case .
\end{thm}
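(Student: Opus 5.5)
We mirror the proof of \cref{obsharmonicheatsectionannulus}: we apply \cref{LR}, with $B_0 = \frac{\partial \cdot}{\partial n}$ (observation on the whole sphere $\partial\mc B_R$) and $B = B_{\Gamma_{\!R}}$. The spaces are $\mc E = L^2(\mc B_R)$, $\mc F = H^1_0(\mc B_R)$ endowed with the norm $\norm{\cdot}{\mu}$ of \eqref{munorm}, and $\mc U = L^2(\partial\mc B_R)$. The family $\mc F_\sigma$ is the spherical-harmonic truncation \eqref{esigma}, now viewed inside $H^1_0$; these subspaces are closed, invariant under $e^{-tG_\mu}$, and their $\mc F$-orthogonal complements are spanned by the modes $m\ge\sqrt\sigma$. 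The last point holds because the decomposition of \cref{lemme H10M} is orthogonal for both $\int|\nabla\cdot|^2$ and $\int|\cdot|^2$.

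We check the four hypotheses in turn.

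\emph{Observability for $B_0$.} It is given by \cref{prop_observability_from_boundary}: since its left-hand side is exactly $\norm{y(T)}{\mu}^2$, we get $C_{obs} = \cc(1+\mu)^3e^{\mu R^2}$ and $b = c_\beta$.

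\emph{Admissibility.} We need $\int_0^T\int_{\partial\mc B_R}|\partial_n y|^2 \le Adm_T\norm{y_0}{\mu}^2$, with $Adm_T$ independent of $\mu$ for $T\le T_{max}$. This is the hidden regularity estimate. We prove it by multiplying the equation by $x\cdot\nabla y$, in the style of Rellich/Pohozaev. The potential term contributes
$$\mu^2\int \|x\|^2 y\, x\cdot\nabla y = -\tfrac{\mu^2}{2}\int (d+2)\|x\|^2y^2 \le 0$$
(no boundary term, since $y=0$ on $\partial\mc B_R$), so it has a good sign and gives no $\mu$-growth. The time-derivative term is controlled using the energy identity in $\norm{\cdot}{\mu}$. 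We expect to get $Adm_T \le C(1+T)$; if the sign bookkeeping fails, we would accept a polynomial loss in $\mu$, which the argument below absorbs anyway.

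\emph{Dissipation.} On $\mc F_\sigma^\perp$ the eigenvalues of $G_\mu$ are at least $\sigma/R^2$, as shown in the internal case. Since $G_\mu$ is self-adjoint and the expansion in its eigenfunctions is orthogonal in the $\mu$-norm as well, we get $\norm{e^{-tG_\mu}y}{\mu}^2\le e^{-2\sigma t/R^2}\norm{y}{\mu}^2$.

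\emph{Relative bound.} For $y\in\mc F_\sigma$, $\partial_n y(R\theta) = \sum_{m<\sqrt\sigma}\hat y_{mk}'(R)Y_{mk}(\theta)$ is a finite sum of spherical harmonics of degree below $\sqrt\sigma$. The spectral inequality of \cite[Theorem 14.6]{Jerison} on $\sphere$, applied with $\Gamma = \Gamma_{\!R}/R$, then gives $\norm{\partial_n y}{L^2(\partial\mc B_R)}^2\le C_{rel}e^{2a\sqrt\sigma}\norm{\partial_n y}{L^2(\Gamma_{\!R})}^2$, so $\delta=\tfrac12$.

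With these hypotheses, \cref{LR} yields the cost $4C_{rel}C_{obs}e^{2/(sT)^\beta}$ for $T\le T'$, with $s$ independent of $\mu$. The main obstacle, exactly as before, is that $T'$ shrinks as $\mu\to\infty$, because $C_2\approx 4Adm_{T_{max}}C_{rel}C_{obs}$ grows like $(1+\mu)^3e^{\mu R^2}$. We handle it by the same device. Choose $\tilde\gamma$ with $\tilde\gamma^2+\tilde\gamma=\gamma$, choose $x$ with $x^\beta(1-x^{\beta/(\beta+1)})^{-(\beta+1)}=\tilde\gamma$, and fix $\kappa$ so that $s_1/s_2=x$. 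Then $e^{2/(sT')^\beta}=C_2^{\tilde\gamma}$. We bound $C_2\le M T_{max}e^{(1+\tilde\gamma)\mu R^2}$, absorbing the polynomial factor $(1+\mu)^3$ into $e^{\tilde\gamma\mu R^2}$ at the price of $M\lesssim \tilde\gamma^{-3}$.

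For $T>T'$ we use monotonicity of $\norm{e^{-tG_\mu}y}{\mu}$, which follows from the dissipation in the $\mu$-norm, to write $\norm{y(T)}{\mu}^2\le\norm{y(T')}{\mu}^2$, and then apply the estimate at time $T'$. This gives the cost $C(1+\mu)^3e^{(1+\gamma)\mu R^2}$ with a constant independent of $\mu$. The asymptotics of $s$ as $\gamma\to0$ follow verbatim from $s=s_2(1-x^{\beta/(\beta+1)})$, as in \eqref{comportementdes}.
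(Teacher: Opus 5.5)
Your proposal is essentially the paper's proof. The shared ingredients are: Theorem~\ref{LR} with $\mathcal F=H^1_0(\mathcal B_R)$ normed by $\|\cdot\|_\mu$; the spherical-harmonic truncations $\mathcal F_\sigma$; the spectral inequality on $\mathbb S^{d-1}$ applied to $\partial_n y$; Proposition~\ref{prop_observability_from_boundary} with $C_{obs}=C(1+\mu)^3e^{\mu R^2}$; and the same choice of $\tilde\gamma$, $x$, $\kappa$ giving $e^{2/(sT')^\beta}=C_2^{\tilde\gamma}$ with $M\lesssim\tilde\gamma^{-3}$. Two steps differ.

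\textbf{Admissibility.} The paper bounds $\|\partial_n y\|_{L^2(\partial\mathcal B_R)}\le C\|\Delta y\|_{L^2(\mathcal B_R)}$ by trace and elliptic regularity. It then multiplies the equation by $-\Delta y$, getting $Adm_T=C(\frac12+Td)$. Your Rellich multiplier $x\cdot\nabla y$ also works, and gives slightly more. Using $x\cdot n=R$ on the boundary, one finds
\[
\frac R2\int_{\partial\mathcal B_R}|\partial_n y|^2=\int_{\mathcal B_R}\partial_t y\,(x\cdot\nabla y)+\frac{2-d}{2}\int_{\mathcal B_R}\|\nabla y\|^2-\frac{(d+2)\mu^2}{2}\int_{\mathcal B_R}\|x\|^2y^2 .
\]
For $d\ge2$ the last two terms are nonpositive. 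Apply Cauchy--Schwarz, $|\int\partial_t y\,(x\cdot\nabla y)|\le R\|\partial_t y\|\,\|\nabla y\|$, together with two energy bounds: $\int_0^T\|\partial_t y\|^2\le\frac12\max(1,R^2)\|y_0\|_\mu^2$ (multiply by $\partial_t y$), and $\int_0^T\|\nabla y\|^2\le\frac12\|y_0\|_{L^2}^2$. This yields an admissibility constant independent of both $\mu$ and $T$, with no quantitative trace estimate.

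\textbf{Dissipation.} Here your justification is wrong. The spherical-harmonic splitting is indeed $\|\cdot\|_\mu$-orthogonal, as you say, so $\mathcal F_\sigma^\perp$ is spanned by the modes $m\ge\sqrt\sigma$. But the eigenfunction expansion of $G_\mu$ is not $\|\cdot\|_\mu$-orthogonal. Eigenfunctions are orthogonal in $L^2$ and for the energy form $(G_\mu u,u)=\|\nabla u\|^2+\mu^2\int\|x\|^2u^2$, which differs from $\|u\|_\mu^2=\|\nabla u\|^2+\mu^2\|u\|^2$. For distinct eigenfunctions $e_j,e_k$, their $\|\cdot\|_\mu$ inner product equals $-\mu^2\int\|x\|^2e_je_k$, which is generally nonzero (already for two radial modes).

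Hence neither $\|e^{-tG_\mu}y\|_\mu^2\le e^{-2\sigma t/R^2}\|y\|_\mu^2$ with constant $1$ follows, nor the monotonicity of $t\mapsto\|e^{-tG_\mu}y\|_\mu$ that you invoke for $T>T'$. The correct statement is Proposition~\ref{dissipationgrushin}: the $L^2$ norm and the energy form each decay, and recombining them with $\|x\|\le R$ gives $C_{dissip}=1+R^2$ and $\|y_\mu(T)\|_\mu^2\le(1+R^2)\|y_\mu(T')\|_\mu^2$. Since $1+R^2$ does not depend on $\mu$, it only changes $C_2$ and the final constant $C$. With this correction the rest of your argument goes through exactly as in the paper.
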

\begin{rem}
The cost $\cost(\grush,B_{\Gamma_{\!R}},T)$ is given for the observability in the $\mu$-norm \eqref{munorm}.
\end{rem}
To prove this, we will apply the abstract restriction result given in \cref{preuvemiller}, to the observability on the whole boundary given by \cref{prop_observability_from_boundary}, which rewrites in the semigroup framework \eqref{notationharmonicheat} as follows.
\begin{cor} 
\label{obsharmonicheatwholeboundary}
Let $B_{\partial \mc{B}_R} = \indicatrice{\partial \mc{B}_R}$.  There exists $C>0$, and for any $\beta >1$, a positive constant $c_\beta$ verifying $c_\beta \xrightarrow{\beta \rightarrow 1} \infty$, such that for any $\mu >0$ and $T>0$, $(\grush,B_{\partial \mc{B}_R})$ is observable at time $T$ with
\begin{equation}
\label{observabilityharmonicheatwholeboundary}
\cost(\grush,B_{\partial \mc{B}_R},T) \le C (1+\mu)^3 e^{\mu R^2}e^\frac{2c_\beta}{T^\beta}.
\end{equation}
\end{cor}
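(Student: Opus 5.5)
This corollary is a rewriting of \cref{prop_observability_from_boundary} in the semigroup framework, with $\mc{E} = L^2(\mc{B}_R)$, $\mc{F} = H^1_0(\mc{B}_R)$ endowed with the $\mu$-norm \eqref{munorm}, $\mc{U} = L^2(\partial \mc{B}_R)$, and $B_{\partial\mc{B}_R}$ the operator $y \mapsto \frac{\partial y}{\partial n}$ on the whole boundary. The plan is to take $y \in \mc{D}(\grush)$, let $y_\mu(t) = e^{-t\grush} y$, check that it has the regularity required by \cref{prop_observability_from_boundary}, and read the resulting inequality as \eqref{generalobs}.

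\textbf{Step 1: regularity.} For $y \in \mc{D}(\grush) = H^2(\mc{B}_R)\cap H^1_0(\mc{B}_R)$, standard semigroup theory for the self-adjoint nonnegative operator $\grush$ (see \cref{appendix_func_setting}) gives
$$y_\mu \in C^0([0,T];\mc{D}(\grush)) \cap C^1([0,T];L^2(\mc{B}_R)).$$
In particular $y_\mu \in C^0([0,T];H^1_0)\cap L^2((0,T);H^2)\cap H^1((0,T);L^2)$, and $\partial_t y_\mu - \Delta y_\mu + \mu^2\Vert x\Vert^2 y_\mu = 0$. Its normal derivative is well defined in $L^2((0,T)\times\partial\mc{B}_R)$ by the trace theorem, so $B_{\partial\mc{B}_R} e^{-t\grush}y$ makes sense.

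\textbf{Step 2: apply the proposition.} \cref{prop_observability_from_boundary} gives, for every $\beta>1$, a constant $c_\beta\to\infty$ as $\beta\to 1$ such that
$$\Vert y_\mu(T)\Vert_\mu^2 = \mu^2\int_{\mc{B}_R}|y_\mu(T)|^2 + \int_{\mc{B}_R}\Vert\nabla y_\mu(T)\Vert^2 \le \cc(1+\mu)^3 e^{\mu R^2}e^{\frac{2c_\beta}{T^\beta}} \int_0^T \Big\Vert \frac{\partial y_\mu}{\partial n}\Big\Vert^2_{L^2(\partial\mc{B}_R)}.$$
The left-hand side is exactly $\Vert e^{-T\grush}y\Vert_{\mc F}^2$ in the $\mu$-norm, and the right-hand integral is $\int_0^T\Vert B_{\partial\mc{B}_R}e^{-t\grush}y\Vert_{\mc U}^2\,dt$. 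This yields \eqref{observabilityharmonicheatwholeboundary} with $C = \cc$, which depends only on $R$ and in particular is independent of $\mu$ and $T$.

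\textbf{Main point of care.} The substance is entirely contained in \cref{prop_observability_from_boundary}. The only thing to watch is the identification of norms: the cost must be stated for the $\mu$-weighted $H^1_0$ norm, which is exactly the quantity the proposition controls. No density argument is needed, since \eqref{generalobs} is only required for $y\in\mc{D}(\grush)$.
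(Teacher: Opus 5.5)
Your proposal is correct and matches the paper. The paper gives no separate proof and simply presents this corollary as \cref{prop_observability_from_boundary} rewritten in the semigroup framework, with $\mc{F} = H^1_0(\mc{B}_R)$ carrying the $\mu$-norm \eqref{munorm} and $B_{\partial\mc{B}_R}$ understood as the normal derivative on the whole boundary; your regularity check for $y \in \mc{D}(\grush)$ and your identification of the left-hand side with $\norm{e^{-T\grush}y}{\mu}^2$ are exactly what justifies that rewriting.
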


\begin{proof}[of \cref{obsharmonicheatsubsetboundary}] We want to apply \cref{LR}, to deduce the observability for the operator $B_{\Gamma_{\!R}}$ from the observability for $B_{\partial \mc{B}_R}$ which is given by \cref{obsharmonicheatwholeboundary}. We start by checking the assumptions, with $\mc{E}$ and $\mc{F}$ given at the begining of this subsection and $U =  L^2(\partial \mc{B}_R)$ for the observation space. We define for any $\sigma > 0$,
\begin{equation} \label{fsigma} \mc{F}_\sigma = \left\{ \sum_{\underset{k \in \llbracket1, \beta(m,d)\rrbracket}{m<\sqrt{\sigma}}}\scalar{y}{Y_{mk}}{L^2(\sphere)}Y_{mk} \quad \Big{|} \quad y \in H^1_0(\mc{B}_R) \right\} \subset \mc{F},\end{equation}
\newline
\noindent $\bullet$ Admissibility: Let $y \in \mathcal{D}(\grush)$ and $T>0$, we denote $y_\mu(t) = e^{-t\grush}y$ the solution of \eqref{harmonicheat}. By the continuity of the trace operator and the elliptic regularity of the Laplacian, we have that 
$$ \int_0^T \norm{B_{\Gamma_{\!R}} y_\mu(t)}{L^2(\partial \mc{B}_R)}^2dt \le C\int_0^T\norm{\Delta y_\mu(t)}{L^2( \mc{B}_R)}^2 dt.$$
Then by multiplying the equation \eqref{harmonicheat} by $-\Delta y_\mu$, integrating on $\mathcal{B}_R$  and doing an integration by part we obtain
$$\frac12 \frac{d}{dt}\left(\norm{\nabla y_\mu}{L^2( \mc{B}_R)}^2\right) + \norm{\Delta y_\mu}{L^2( \mc{B}_R)}^2 -\int_{\mathcal{B}_R}\mu^2\|x\|^2y_\mu\Delta y_\mu dx= 0. $$
Let us compute and estimate the last term at every time $t$:
\begin{align*}
-\int_{\mathcal{B}_R}\mu^2\|x\|^2 y_\mu \Delta y_\mu dx &= \int_{\mathcal{B}_R}\mu^2\|x\|^2|\nabla y_\mu|^2 dx + \int_{\mathcal{B}_R}2\mu^2 y_\mu x\cdot\nabla y_\mu dx \\
&\ge \int_{\mathcal{B}_R}\mu^2x\cdot\nabla(y_\mu^2) dx \\
&= -\mu^2 d \int_{\mathcal{B}_R} y_\mu^2dx,
\end{align*}
then using the dissipation of the $L^2$-norm we obtain $$\frac12 \frac{d}{dt}\left(\norm{\nabla y_\mu}{L^2( \mc{B}_R)}^2\right) + \norm{\Delta y_\mu}{L^2( \mc{B}_R)}^2  \le \mu^2 d \norm{y}{L^2( \mc{B}_R)}^2$$
We integrate over $(0,T)$ and deduce the admissibility inequality with $Adm_T = \frac{C}{2} + CTd$. \newline \medskip 
\noindent $\bullet$ Dissipation: the proof is exactly the same as for the internal observation, using the spaces $\mc{F}_\sigma$ defined in \eqref{fsigma} and the dissipation \cref{dissipationgrushin} for the $\mu$-norm instead of the $L^2$-norm. \newline
We obtain the dissipation estimate with $C_{dissip} = (1+ R^2)$ and $c = 1/R^2$. \newline \medskip 
\noindent $\bullet$ Relation between $B_{\partial \mathcal{B}_R}$ and $B_{\Gamma_{\!R}}$: Let $y \in \mc{F}_\sigma$ we have to bound
$\displaystyle \norm{\frac{\partial y}{\partial n}}{L^2(\partial \mathcal{B}_R)}^2$ by $\displaystyle \norm{\frac{\partial y}{\partial n}}{L^2(\Gamma_{\!R})}^2.$
As for the internal case we momentarily distinguish $y$ the function expressed is cartesian coordinates and $\hat{y}$ the function expressed in polar coordinates $(r,\theta) \in (0,R) \times \sphere$. We have that 
$$\hat{y}(r,\theta) = \sum_{\underset{k \in \llbracket1, \beta(m,d)\rrbracket}{m < \sqrt{\sigma}}}\hat{y}_{mk}(r) Y_{mk}(\theta).$$ We compute
$$\frac{\partial \hat{y}}{\partial n}(R,\theta) = \sum_{\underset{k \in \llbracket1, \beta(m,d)\rrbracket}{m < \sqrt{\sigma}}}\hat{y}_{mk}'(R) Y_{mk}(\theta).$$ Notice that the normal derivative is a sum of eigenfunctions of the Laplace-Beltrami operator on $\sphere$. We denote $\Gamma = \{ \theta \in \sphere ~;~ R\theta \in \Gamma_{\!R} \}$ which is a nonempty open subset of the sphere, and apply the spectral inequality \cite[Theorem 14.6]{Jerison}, which gives a constant $C>0$ depending on $\Gamma$ such that
$$ \norm{ \sum_{\underset{k \in \llbracket1, \beta(m,d)\rrbracket}{m < \sqrt{\sigma}}}\hat{y}_{mk}'(R) Y_{mk}}{L^2(\sphere)}^2 \le Ce^{C\sqrt{\sqrt{\sigma}(\sqrt{\sigma}+d-2)}}\norm{ \sum_{\underset{k \in \llbracket1, \beta(m,d)\rrbracket}{m < \sqrt{\sigma}}}\hat{y}_{mk}'(R) Y_{mk}}{L^2\left(\Gamma\right)}^2.$$
Here we use that the eigenvalue associated to $Y_{mk}$ is $m(m+d-2)$ (see \eqref{laplacienvp}).
In conclusion, we have obtained that there exists $C_{rel}>0$, $a>0$ depending only on $\Gamma$ and $d$ such that
$$\norm{\frac{\partial \hat{y}}{\partial n} }{L^2(\sphere)}^2 \le C_{rel}e^{2a\sqrt{\sigma}}\norm{\frac{\partial \hat{y}}{\partial n} }{L^2\left(\Gamma\right)}^2.$$
Since $\hat{y}(r,\theta) = y(r\theta)$, and $\frac{\partial \hat{y}}{\partial n}(r,\theta) = \nabla y(r\theta) \cdot \theta$, we finally have
$$\norm{\frac{\partial y}{\partial n} }{L^2(\partial \mc{B}_R)}^2 \le C_{rel}e^{2a\sqrt{\sigma}}\norm{\frac{\partial y}{\partial n} }{L^2\left(\Gamma_{\!R} \right)}^2,$$
which gives the expected inequality. \newline \medskip 
\noindent $\bullet$ Observability for $B_{\partial \mathcal{B}_R}$: it is given by \cref{obsharmonicheatwholeboundary}, with $C_{obs} = C (1+\mu)^3 e^{\mu R^2}$.
\bigskip \newline
Now we can apply \cref{LR}, there exists $s>0$ (which does not depend on $\mu$) and $T'>0$ (which depends on $\mu$ as we will see in the expression given below) such that for any $T \in (0,T']$, $(\grush,B_{\Gamma_{\!R}})$ is observable with
\begin{equation} \label{applithmharmonicheatboundary} \cost(\grush,B_{\Gamma_{\!R}},T) \le  4C_{rel} C (1+\mu)^3 e^{\mu R^2}e^\frac{2}{(sT)^\beta}.\end{equation}
If $T \le T'$, then the result is proven. The problem is that $T'$ depends on $\mu$ and goes to $0$ as $\mu$ goes to infinity. Indeed using the notations of \cref{LR} we have
$$T' = \left\{ \begin{array}{cc} \left(\dfrac{C_1}{\ln(C_2)}\right)^\frac{1}{\beta} &\text{ if } C_2 > 1 \\ T_{max} &\text{ if } C_2 \le 1 \end{array} \right.$$
and $$C_2 = 4(1+R^2)C(1/2 + T_{max}d)C_{rel}C(1+\mu)^3 e^{\mu R^2} + (1+R^2)\exp(-2/s_2^\beta).$$
One can see that $C_2 \to +\infty $ as $\mu \to +\infty$. 
Therefore, we need to be more precise to tackle the case $T>T'$. The strategy is the same as in the internal case, we use parameter $\kappa$ given in \cref{LR} to get the expected estimate.
Let $0<\gamma<1/2$, we choose the same $\tilde{\gamma}$ and $\kappa$ as for the internal case, and get the same equivalence for $s$, \eqref{comportementdes}.\newline
Let us now investigate the dependency of $T'$ on $\mu$. $C_2$ has a long expression, but the important thing is that there exists $M>0$ such that for any $\mu >0$ $$C_2 \le MT_{max} e^{(1+\tilde{\gamma})\mu R^2}.$$ As for the internal case we compute an estimate of $M$, we obtain a constant $C$ independant of $\gamma$ such that \begin{equation} \label{estimedeMboundary} M \le \frac{C}{\tilde{\gamma}^3}.\end{equation}
Let $y \in H^1_0(\mc{B}_R)$, we denote $y_\mu(t):= e^{-t\grush}y$ the solution of \eqref{harmonicheat}. Combining \cref{dissipationgrushin} and the estimate \eqref{applithmharmonicheatboundary}, we obtain that for any $T > T'$ ,
\begin{align*}
\norm{y_\mu(T)}{\mu}^2 &\le (1+ R^2)\norm{y_\mu(T')}{\mu}^2 \\
& \le (1+ R^2)C(1+ \mu)^3 e^{\mu R^2}e^\frac{2}{(sT')^\beta} \int_0^{T'} \norm{\frac{\partial y_\mu(t)}{\partial n}}{L^2(\Gamma_{\!R})}^2dt.
\end{align*}
Since $$\exp\left(\frac{2}{(sT')^\beta}\right) = \exp\left(\frac{2\log(C_2)}{s_2^\beta (1-x^{\frac{\beta}{\beta+1}})^\beta C_1} \right) = \exp\left(\frac{2\log(C_2)}{s_2^\beta (1-x^{\frac{\beta}{\beta+1}})^\beta \frac{2}{s_1^\beta}(1-x^{\frac{\beta}{\beta+1}})}\right),$$
the definition of $\kappa$ leads to
$$\exp\left(\frac{2}{(sT')^\beta}\right) = \exp\left(\tilde{\gamma}\ln(C_2)\right) = C_2^{\tilde{\gamma}} \le (MT_{max})^{\tilde{\gamma}}e^{(\tilde{\gamma}+\tilde{\gamma}^2)\mu R^2}=  (MT_{max})^{\tilde{\gamma}}e^{\gamma\mu R^2},$$
from which we obtain
\begin{equation}
\label{Tgeboundary}
\norm{y_\mu(T)}{\mu}^2 \le C (MT_{max})^{\tilde{\gamma}} (1+\mu)^3 e^{(1+\gamma)\mu R^2}\int_0^{T'} \norm{\frac{\partial y_\mu(t)}{\partial n}}{L^2(\Gamma_{\!R})}^2dt.
\end{equation}
Combining the estimates for $T\le T'$ \eqref{applithmharmonicheatboundary}, and for $T > T'$ \eqref{Tgeboundary}, we get the observability inequality given in \cref{observabilityharmonicheatsubsetboundary} which ends the proof.

\end{proof}

\subsubsection{Observability of the Baouendi-Grushin equation from $\Gamma_{\!R} \times \tilde \Omega$}
{Just as in the case of internal observation, the observability of the Baouendi-Grushin equation from  $\Gamma_{\!R} \times \tilde \Omega$ in large time
is a direct consequence of the following uniform observability result for the harmonic heat equations parametrized by $\mu > 0$.}

{
\begin{prop} \label{prop_uniform_observa_wrt_mu_boundary}
Let $\Gamma_{\! R}$ be a nonempty open subset of $\partial \mc B_R$. 
Define
$$
T^* = \frac{R^2}{2d}.
$$
For  all $\beta > 1$, there exists $c_\beta > 0$, verifying $c_\beta \xrightarrow{\beta \rightarrow 1} \infty$, such that for all $\mu > 0$ and all $T>T^*$, any $y_\mu \in C^0([0,T];L^2(\mc{B}_R)) \cap L^2((0,T);H^1_0(\mc{B}_R))$ solution of \eqref{harmonicheat} satisfies
$$
\Vert y_\mu(T) \Vert_\mu^2 \leq e^{\frac{c_\beta}{(T-T^*)^{2\beta}}} \int_0^T \int_{\Gamma_{\!R}} \left\vert \frac{\partial y_\mu}{\partial n} \right\vert^2 .
$$
\end{prop}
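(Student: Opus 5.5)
We will mirror the proof of \cref{prop_uniform_observa_wrt_mu_intern}, replacing the $L^2$ framework by the $\mu$-norm \eqref{munorm} and the internal cost by \cref{obsharmonicheatsubsetboundary}. We set $T_{max}=2T^*$ and first treat $T\in(T^*,T_{max}]$. For $T\ge T_{max}$ the statement follows from the case $T=T_{max}$: the dissipation of the $\mu$-norm (\cref{dissipationgrushin}) gives $\norm{y_\mu(T)}{\mu}^2\le (1+R^2)\norm{y_\mu(T_{max})}{\mu}^2$, and the observation integral over $(0,T_{max})$ is bounded by the one over $(0,T)$. By density we may work with $y_\mu(0)\in\mathcal D(\grush)$.

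For $T^*<T\le T_{max}$ we choose $\gamma\in(0,\tfrac15)$ by $T(1-2\gamma)=T^*(1+\gamma)$, so that $\gamma=(T-T^*)/(T^*+2T)$, and set $\delta_\gamma=\gamma T$. We need the dissipation in the $\mu$-norm at rate $2d\mu$. The eigenvalues of $\grush$ are at least $d\mu$, and the $\mu$-norm is comparable to the form norm $\scalar{\grush u}{u}{}$ up to the factor $(1+R^2)$. Hence
\[
\norm{y_\mu(T)}{\mu}^2\le (1+R^2)\,e^{-2d\mu(T-\delta_\gamma)}\norm{y_\mu(\delta_\gamma)}{\mu}^2 .
\]
At this step we must check that the constant in \cref{dissipationgrushin} does not spoil the exponential rate. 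If it did, we would apply the spectral decomposition directly, since $\norm{u}{\mu}^2\le \scalar{\grush u}{u}{}\le \norm{u}{\mu}^2+\mu^2R^2\norm{u}{}^2$.

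Next we apply \cref{obsharmonicheatsubsetboundary} on $(0,\delta_\gamma)$. This yields
\[
\norm{y_\mu(T)}{\mu}^2\le C(1+R^2)(1+\mu)^3\, e^{2/(s\delta_\gamma)^\beta}\, e^{\mu\left((1+\gamma)R^2-2d(T-\delta_\gamma)\right)}\int_0^T\!\!\int_{\Gamma_{\!R}}\left|\frac{\partial y_\mu}{\partial n}\right|^2 .
\]
Since $2d(T-\delta_\gamma)=R^2\,T(1-\gamma)/T^*$ and $T(1-\gamma)>T(1-2\gamma)=T^*(1+\gamma)$, the exponent equals $-\mu R^2\bigl(T(1-\gamma)/T^*-(1+\gamma)\bigr)$. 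For $T\to T^*$ this behaves like $-\mu R^2\gamma$, up to a constant factor. Unlike the internal case, no $\epsilon$ is needed, which simplifies matters.

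It remains to bound the various factors uniformly in $\mu$ as $T\to T^*$. Using $\max_{x>0}x^3e^{-cx}\le (3/(ce))^3$, the factor $(1+\mu)^3e^{-c\gamma\mu R^2}$ is at most $C\gamma^{-3}$. By \eqref{comportementdes}, $s\sim\gamma/C_1(\beta)$, so that
\[
e^{2/(s\delta_\gamma)^\beta}\le e^{C_2(\beta)/(\gamma^2T)^\beta}\le e^{C_2'(\beta)/(T-T^*)^{2\beta}} .
\]
The constant $C$ of \cref{obsharmonicheatsubsetboundary} depends on $\gamma$ through $(MT_{max})^{\tilde\gamma}$ with $M\le C/\tilde\gamma^3$ by \eqref{estimedeMboundary}. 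This is $e^{O(\tilde\gamma|\log\tilde\gamma|)}$, hence bounded. All polynomial factors $\gamma^{-3}\sim (T-T^*)^{-3}$ are absorbed into the exponential by enlarging $c_\beta$.

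The main obstacle is the $\gamma$-dependence of the constant $C$ in \cref{obsharmonicheatsubsetboundary}. We must make sure it enters only through $(MT_{max})^{\tilde\gamma}$ and the $\gamma$-independent factor $4C_{rel}$, and that $C_2(\beta)$ still tends to infinity as $\beta\to1$, which it does via $c_\beta$.
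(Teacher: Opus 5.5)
Your proposal is correct and follows the paper's proof essentially step for step: $T_{max}=2T^*$, the same $\gamma$ defined by $T(1-2\gamma)=T^*(1+\gamma)$ with $\delta_\gamma=\gamma T$, the $\mu$-norm dissipation of \cref{dissipationgrushin} with $\lambda=d\mu$ on $[\delta_\gamma,T]$, \cref{obsharmonicheatsubsetboundary} on $(0,\delta_\gamma)$, and the same bookkeeping through $s\sim\gamma/C_1(\beta)$, $(MT_{max})^{\tilde\gamma}$ and $\max_{x>0}x^3e^{-cx}$. One side remark is false but unused: $\Vert u\Vert_\mu^2\le \langle \grush u,u\rangle$ fails (for the first eigenfunction $\varpi_\mu$ the left side is at least $\mu^2$, while the right side equals $\nu_\mu\le d\mu+\kappa(\mu)$), so the $\mu$-norm is not uniformly comparable to the form norm, and the dissipation estimate you need is exactly \eqref{dissipharmonicheatH10}, whose proof combines the decay of the form norm with the separate $L^2$ decay.
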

}

\begin{proof}
For $y \in H^1_0(\Omega)$, we denote $y_\mu(t) = e^{-t G_\mu}y$ the solution of \eqref{harmonicheat}.
For the same reasons as for the internal case, we set $T_{max} = 2T^*$.\newline
Let $0<T<T_{max}$, set $0<\gamma<1/2$ such that $T(1-2\gamma) = T^*(1+\gamma)$. Denote $\delta_\gamma = T\gamma$.
Since the eigenvalues of $G_\mu$ are larger than $d\mu$ (see \cite[section 4.1]{Beauchard2020}) we have by \cref{dissipationgrushin} that
$$\norm{y_\mu(T)}{\mu}^2 \le (1+ R^2)\exp\left(-2d\mu(T-\delta_\gamma)\right)\norm{y_\mu(\delta_\gamma)}{\mu}^2.$$
Now we have to estimate $\norm{y_\mu(\delta_\gamma)}{\mu}^2$. To this end, we apply \cref{obsharmonicheatsubsetboundary} and we obtain
\begin{multline*}
\norm{y_\mu(T)}{\mu}^2\le (1+ R^2) C (1+\mu)^3 \exp(2/(s\delta_\gamma)^\beta)\\
\times \exp\left(\mu \left((1+\gamma )R^2- 
2d(T-\delta_\gamma)\right)\right)\int_0^T \int_{\Gamma_{\!R}}\left| \frac{\partial y_\mu(t)}{\partial n}\right|^2d\sigma dt.
\end{multline*}

As $T^* = \frac{R^2}{2d}$ and with the choice of $\gamma$ and $\delta_\gamma$ we have
$$2d(T - \delta_\gamma) = 2dT(1-\gamma) > 2dT^*(1+\gamma) = R^2(1+\gamma).$$
Consequently the observability cost goes to zero as $\mu$ goes to infinity, and we get an estimate uniform in $\mu$. It gives the uniform observability of the harmonic heat equation for $T >T^*$.\newline 
Let us compute precisely the cost of observability when $T \to T^*$. {By definition, we have}
$$\gamma = \frac{T-T^*}{T^* + 2T} \quad \text{and} \quad \delta_\gamma = \frac{T(T-T^*)}{T^* + 2T}.$$
Therefore with the equivalence given in \eqref{comportementdes} we obtain that there exists $C_3(\beta)>0$ such that
$$\exp(2/(s\delta_\gamma)^\beta) \le \exp(C_3(\beta)/(T-T^*)^{2\beta}).$$
Now it remains to check that the constant $C$ which depends on $\gamma$ and the uniform estimate of \begin{equation} \label{partieenpboundary} (1+\mu)^3 \exp\left(\mu((1+\gamma) R^2 - 2d_1(T-\delta_\gamma)\right) \end{equation} do not have a blow up rate worse than $\exp(C_3(\beta)/(T-T^*)^{2\beta})$ when $T \to T^*$.
By the estimate \cref{estimedeMboundary} we have that $$C = (MT_{max})^{\tilde{\gamma}} \le \exp(-\tilde C\gamma \log(\gamma))$$ which is bounded when $\gamma \to 0$.
Finally using that 
Using that 
$$ \mu ((1+\gamma) R^2 - 2d(T-\delta_\gamma) = -R^2\mu\frac{\gamma(1+\gamma)}{1-2\gamma},$$ and that 
$$\max_{x>0}(x^3e^{-Cx}) \le \left(\frac{3}{C}\right)^3,$$
we obtain a cost not worse than $1/(T-T^*)^3$ for the part \eqref{partieenpboundary}. {The result follows}.
\end{proof}

\section{Optimality of the results}
\label{optimality}

\subsection{Non-observability for $T<T^*$}

{In this section, we prove that the Baouendi-Grushin system~\eqref{grushin} is not observable in arbitrary small time from $\omega \times \tilde{\Omega}$, where $\omega$ is an open subset of $\mathcal{B}_R$. Our main result, presented below, gives the necessary conditions stated in \cref{maininterne} and \cref{prop_necessary_condition_dim2and3}. To obtain this result, we follow the strategy developed for the case
$d= \tilde d = 1$ in \cite{Beauchard2013}.
}

\begin{prop} \label{prop_necessary_condition}
Let $\omega$ be an open subset of $\mathcal B_R$, and define 
$$
r = \inf_{x\in \omega} \Vert x \Vert.
$$ 
Suppose $T>0$ is such that the Grushin equation is observable through $\omega \times \tilde \Omega$ in time $T$. Then
$$
T\geq \frac{r^2}{2\,d}.
$$
Furthermore, if $d = 2$ or $3$, $T > \frac{r^2}{2\, d}$.
\end{prop}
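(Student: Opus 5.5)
The plan is to follow the strategy of \cite{Beauchard2013}: build explicit solutions concentrated near the degeneracy set $\{x=0\}$ and test the observability inequality \eqref{obsinternegrushin} on them. First reduce to the harmonic heat equation \eqref{harmonicheat}. Take $y_0(x,\tilde x) = g_\mu(x)\phi_p(\tilde x)$ with $\mu=\mu_p$, where $(\mu_p^2)$ are the Dirichlet eigenvalues on $\tilde \Omega$, so $\mu_p\to\infty$. Then $y = y_\mu(t,x)\phi_p(\tilde x)$, and since $\|\phi_p\|_{L^2(\tilde\Omega)}=1$, observability of \eqref{grushin} with cost $K_T$ gives, for every $p$,
\[
\int_{\mc B_R}|y_{\mu_p}(T)|^2 \le K_T^2 \int_0^T\int_\omega |y_{\mu_p}|^2 .
\]
It therefore suffices to exhibit solutions of \eqref{harmonicheat} violating this uniformly as $\mu\to\infty$.

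\textbf{Construction.} Use the Gaussian ground state $G_\mu(x)=e^{-\mu\|x\|^2/2}$. On $\R^d$ it satisfies $-\Delta G_\mu+\mu^2\|x\|^2G_\mu = d\mu\, G_\mu$. Take $y_{0,\mu} = G_\mu - e^{-\mu R^2/2}\in H^1_0(\mc B_R)$ and write $y_\mu = e^{-d\mu t}G_\mu - z_\mu$. Here $z_\mu$ solves \eqref{harmonicheat} with initial data $e^{-\mu R^2/2}$ and boundary data $e^{-d\mu t}e^{-\mu R^2/2}$. By the maximum principle (the potential is nonnegative), $0\le z_\mu\le e^{-\mu R^2/2}$.

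\textbf{Estimates.} For the left-hand side, $\|e^{-d\mu T}G_\mu\|_{L^2(\mc B_R)}^2 \ge c\,\mu^{-d/2}e^{-2d\mu T}$. The error is at most $C e^{-\mu R^2}$, which is negligible because $2dT\le r^2<R^2$ (note $r<R$ since $\omega$ is open in $\mc B_R$). Hence the left-hand side is $\gtrsim \mu^{-d/2}e^{-2d\mu T}$. For the right-hand side, $\int_0^T e^{-2d\mu t}dt\le (2d\mu)^{-1}$, and since $\omega\subset\{r\le\|x\|<R\}$,
\[
\int_\omega G_\mu^2 \le \int_{r\le \|x\|\le R} e^{-\mu\|x\|^2}dx \le C\,\frac{e^{-\mu r^2}}{\mu}.
\]
The last bound follows from the radial integral $\int_r^\infty \rho^{d-1}e^{-\mu\rho^2}d\rho\lesssim \mu^{-1}e^{-\mu r^2}$ for $\mu$ large. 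The $z_\mu$ contribution is $O(e^{-\mu R^2})$. Altogether, the right-hand side is $\lesssim K_T^2\big(\mu^{-2}e^{-\mu r^2}+e^{-\mu R^2}\big)$.

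\textbf{Conclusion.} Comparing, observability forces
\[
\mu^{-d/2}e^{-2d\mu T}\lesssim K_T^2\,\mu^{-2}e^{-\mu r^2}\quad\text{for all } \mu=\mu_p\to\infty .
\]
If $2dT<r^2$, the exponential factor $e^{\mu(r^2-2dT)}$ makes this impossible, so $T\ge r^2/(2d)$. If $2dT=r^2$, the exponentials cancel and we get $\mu^{2-d/2}\lesssim K_T^2$, which fails as $\mu\to\infty$ exactly when $d<4$, that is $d=2,3$. The main obstacle is the sharpness at $T=T^*$. There the crude bound $\int_\omega G_\mu^2\le|\omega|e^{-\mu r^2}$ only yields a contradiction for $d=1$. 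The gain of the extra factor $\mu^{-1}$ from the radial tail (a Laplace-type estimate of the Gaussian on $\{\|x\|\ge r\}$) is what covers $d=2,3$. The errors coming from the Dirichlet correction $z_\mu$ must also be kept at the exponentially smaller scale $e^{-\mu R^2}$.
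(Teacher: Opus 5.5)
Your argument is correct, but it takes a different route from the paper's. The paper tests observability on the exact separated solutions $e^{-\nu_p t}\varpi_p(x)\phi_p(\tilde x)$, where $\varpi_p$ is the first Dirichlet eigenfunction of $-\Delta+\mu_p^2\Vert x\Vert^2$ on $\mathcal B_R$. This forces it to prove three spectral lemmas:
\begin{itemize}
\item the eigenvalue bound $d\mu\le\nu_\mu\le d\mu+\kappa(\mu)$ with $\kappa(\mu)\to 0$, via min-max and rescaling to the harmonic oscillator on $\R^d$;
\item the gap $\nu_{\mu,2}-d\mu\ge 2\mu$;
\item an $L^2$-closeness estimate between $\varpi_\mu$ and the truncated Gaussian, which gives $\int_{\mathcal A}\varpi_\mu^2\lesssim \mu^{d/2-1}e^{-\mu r^2}+\mu^{2+d/2}e^{-\mu R^2}$.
\end{itemize}
You instead evolve the truncated Gaussian itself, and replace all of this spectral analysis by one comparison argument:
\[
0\le e^{-d\mu t}e^{-\mu\Vert x\Vert^2/2}-y_\mu\le e^{-\mu R^2/2}.
\]
This pins $y_\mu$ to the explicit free evolution, up to an error that is negligible because $2dT\le r^2<R^2$.

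After $L^2$-normalization your bounds coincide with the paper's. Both routes end in the same inequality $\mu^{2-d/2}\lesssim K_T^2\bigl(e^{\mu(2dT-r^2)}+\text{(exponentially small)}\bigr)$. In both, it is the extra factor $\mu^{-1}$ from the radial Gaussian tail that settles the case $d=2,3$ at $T=T^*$.

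Your route is shorter and more elementary, but it relies on the parabolic maximum principle for a scalar equation with nonnegative potential. The paper's route produces exact exponential-in-time test solutions and sharp information on the ground state ($\nu_\mu=d\mu+o(1)$, the spectral gap, localization of $\varpi_\mu$), which is of independent interest.

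A small simplification: since also $y_\mu\ge 0$, you have $0\le y_\mu\le e^{-d\mu t}e^{-\mu\Vert x\Vert^2/2}$. The $z_\mu$ contribution to the right-hand side can therefore be dropped altogether; $z_\mu$ is only needed for the lower bound at time $T$.
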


For \(\mu \geq 0\), recall that the operator \(G_\mu: L^2(\mathcal{B}_R) \to L^2(\mathcal{B}_R)\) is defined by
\[
G_\mu := -\Delta + \mu^2 \|x\|^2, \quad \mathcal{D}(G_\mu) = H^2 \cap H_0^1(\mathcal{B}_R).
\]
This operator admits a countable family of eigenvalues
\[
\nu_{\mu,1} < \nu_{\mu,2} \leq \nu_{\mu,3} \leq \cdots \leq \nu_{\mu,p} \xrightarrow{p \to \infty} \infty.
\]
One can select a corresponding countable family of eigenfunctions \((\varpi_{\mu,p})_{p \in \mathbb{N}}\), normalized in \(L^2(\mathcal{B}_R)\), such that they form a Hilbert basis for \(L^2(\mathcal{B}_R)\).

In the following, we denote $\nu_\mu$ the smallest eigenvalue of $G_\mu$, that is $\nu_\mu = \nu_{\mu,1}$, and $\varpi_\mu = \varpi_{\mu,1}$
its corresponding normalized and non-negative eigenfunction. From the min-max principle, we have
$$
\nu_\mu = \min_{\mathcal{w} \in H^1_0(\mathcal{B}_R)} \frac{\displaystyle \int_{\mathcal{B}_R} \left( \Vert \nabla \mathcal{w} \Vert^2 +\mu^2 \Vert x \Vert^2 \mathcal{w}^2\right)}{\displaystyle \int_{\mathcal{B}_R} \mathcal{w}^2}.
$$

Besides, setting 
$$
\mathcal V = \left\lbrace \mathcal{w} \in H^1(\R^d), \ x\mathcal{w} \in L^2(\R^d) \right\rbrace,
$$
we recall that 
$$
\min_{\mathcal{w}\in \mathcal V} \frac{\displaystyle \int_{\R^d} (\Vert \nabla \mathcal{w} \Vert^2 + \Vert x \Vert^2 \mathcal{w}^2)}{\displaystyle \int_{\R^d} \mathcal{w}^2} = d = \frac{\displaystyle \int_{\R^d} (\Vert \nabla \mathcal{G} \Vert^2 + \Vert x \Vert^2 \mathcal{G}^2)}{\displaystyle \int_{\R^d} \mathcal{G}^2},
$$
with $\mathcal{G}(x) = e^{- \frac{\Vert x \Vert^2}{2}}$. In particular, 
$
- \Delta \mathcal{G} + \Vert x \Vert^2 \mathcal{G} = d \mathcal{G}.
$

\begin{lem} \label{lemma_bound_firsteigen}
There exists a continuous function $\kappa \colon [0, \infty[ \to [0, \infty[$ such that $\kappa(\mu) \to 0$ as $\mu \to +\infty$, and for all $\mu \geq 0$,
\[
d\mu \leq \nu_\mu \leq d\mu + \kappa(\mu).
\]
\end{lem}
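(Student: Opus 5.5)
The plan is to compare $\nu_\mu$ with the first eigenvalue of the harmonic oscillator on the whole space $\R^d$. This is done through the min-max characterization recalled just above, combined with the scaling $x \mapsto \sqrt{\mu}\,x$. For $\mu=0$ the claim reduces to $0\le \nu_0 \le \kappa(0)$. Since $\nu_0$ is the first Dirichlet eigenvalue of $\mathcal B_R$, we can simply impose $\kappa(0)\ge \nu_0$. Continuity of $\kappa$ is easy to arrange at the end, as explained below.

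\textbf{Lower bound.} Let $\mathcal w\in H^1_0(\mathcal B_R)$ and extend it by zero to $\R^d$, so that the extension lies in $\mathcal V$. Set $v(x)=\mathcal w(x/\sqrt{\mu})$. A change of variables gives
$$\int_{\mathcal B_R} \left(\Vert\nabla \mathcal w\Vert^2+\mu^2\Vert x\Vert^2\mathcal w^2\right) = \mu\, \mu^{-d/2}\int_{\R^d}\left(\Vert\nabla v\Vert^2+\Vert x\Vert^2 v^2\right) \quad\text{and}\quad \int \mathcal w^2=\mu^{-d/2}\int v^2.$$
The whole-space minimum equals $d$, so the Rayleigh quotient of $\mathcal w$ is at least $d\mu$. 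Taking the minimum over $\mathcal w$ yields $\nu_\mu\ge d\mu$.

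\textbf{Upper bound.} We use a truncated Gaussian as a test function. Fix a smooth cutoff $\chi$ equal to $1$ on $\mathcal B_{R/2}$ and supported in $\mathcal B_R$, and set $\mathcal w_\mu(x)=\chi(x)\,\mathcal G(\sqrt{\mu}\,x)$. Write $g_\mu=\mathcal G(\sqrt\mu\,\cdot)$; it satisfies $-\Delta g_\mu+\mu^2\Vert x\Vert^2 g_\mu = d\mu\, g_\mu$. The IMS-type identity
$$\int\left(\Vert\nabla(\chi g)\Vert^2+V\chi^2g^2\right)=\int\chi^2 g\,(-\Delta g+Vg)+\int\Vert\nabla\chi\Vert^2g^2$$
then gives
$$\frac{\int \left(\Vert\nabla \mathcal w_\mu\Vert^2+\mu^2\Vert x\Vert^2\mathcal w_\mu^2\right)}{\int\mathcal w_\mu^2} = d\mu+\frac{\int\Vert\nabla\chi\Vert^2 g_\mu^2}{\int \chi^2 g_\mu^2}.$$
Call the last term $\kappa_0(\mu)$. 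The numerator is supported where $\Vert x\Vert\ge R/2$, hence it is bounded by $C\,e^{-\mu R^2/4}$. The denominator is at least $\int_{\mathcal B_{R/2}} e^{-\mu\Vert x\Vert^2}\ge c\,\mu^{-d/2}$ for $\mu\ge1$. Therefore $\kappa_0(\mu)\le C\mu^{d/2}e^{-\mu R^2/4}\to0$, and $\kappa_0$ is continuous in $\mu$ on $[0,\infty)$ by dominated convergence. By the min-max principle, $\nu_\mu\le d\mu+\kappa_0(\mu)$. We take $\kappa=\kappa_0$; at $\mu=0$ this gives $\mathcal w_0=\chi$, and the bound $\nu_0\le \kappa_0(0)$ holds automatically.

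The only real care is needed in the upper bound: checking the IMS identity, which follows from one integration by parts using $\chi\in C_c^\infty(\mathcal B_R)$, and obtaining the uniform lower bound on the Gaussian mass over $\mathcal B_{R/2}$. Neither step is a genuine obstacle.
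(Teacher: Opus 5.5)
Your proof is correct. The lower bound is the paper's argument verbatim: zero extension, the scaling $x\mapsto\sqrt{\mu}\,x$, and the whole-space minimum $d$. The difference lies in the upper bound.

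You localize the Gaussian multiplicatively, taking $\chi\,\mathcal{G}(\sqrt{\mu}\,\cdot)$, and use the IMS identity. This gives an exact, manifestly nonnegative remainder $\int \Vert\nabla\chi\Vert^2 g_\mu^2/\int\chi^2 g_\mu^2$. The paper instead leaves the Gaussian untouched and subtracts a boundary corrector, $\mathcal{w}=\mu^{d/4}\bigl(\mathcal{G}(\sqrt{\mu}\,x)-\theta(\Vert x\Vert/R)\,e^{-\mu R^2/2}\bigr)$. It computes $(-\Delta+\mu^2\Vert x\Vert^2-d\mu)\mathcal{w}=\mu^{d/4}e^{-\mu R^2/2}H$ with $\Vert H\Vert_\infty\le c\mu^2$, and bounds the defect by Cauchy--Schwarz.

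Your version has a small advantage. Your $\kappa_0$ is explicit and visibly continuous on all of $[0,\infty)$, so you avoid the paper's appeal to the continuity of $\mu\mapsto\nu_\mu$ and its reduction to $\mu\ge 1$ (where it simply sets $\kappa=\nu_\mu-d\mu$).

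The paper's version decays faster, roughly $\mu^{2+d/4}e^{-\mu R^2/2}$ against your $\mu^{d/2}e^{-\mu R^2/4}$. That is irrelevant for this statement, but it explains the paper's choice: the same function is reused in Lemma~\ref{lemma_bound_eigenfun}. There the approximation error must be negligible compared with $e^{-\mu r^2}$ for an arbitrary $r<R$. With your construction you would have to place the transition region of $\chi$ beyond radius $r$ for that purpose.
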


\begin{proof}

$\bullet$ Let $\mathcal{w} \in H^1_0(\mathcal{B}_R)$. Define $\phi: x \in \mathcal{B}_{\sqrt{\mu} R} \mapsto \mathcal{w}\left( \frac{x}{\sqrt \mu}\right)$, which belongs to $H^1_0(\mathcal{B}_{\sqrt \mu R})$. We have
\begin{multline*}
\frac{\displaystyle \int_{\mathcal{B}_R} (\Vert \nabla \mathcal{w} \Vert^2 + \mu^2 \Vert x \Vert^2 \mathcal{w}^2)}{\displaystyle \int_{\mathcal{B}_R} \mathcal{w}^2}
 = \mu\, \frac{\displaystyle \int_{\mathcal{B}_{\sqrt \mu R}} (\Vert \nabla \phi \Vert^2 + \Vert x \Vert^2 \phi^2)}{\displaystyle\int_{\mathcal{B}_{\sqrt \mu R}} \phi^2}
\\ \geq \mu \, \min_{\phi \in H^1_0(\mathcal{B}_{\sqrt\mu R})} \frac{\displaystyle \int_{\mathcal{B}_{\sqrt \mu R}} (\Vert \nabla \phi \Vert^2 + \Vert x \Vert^2 \phi^2)}{\displaystyle \int_{\mathcal{B}_{\sqrt \mu R}} \phi^2} \\
 \geq \mu\, \min_{\phi \in \mathcal V} \frac{\displaystyle \int_{\R^d} (\Vert \nabla \phi \Vert^2 + \Vert x \Vert^2 \phi^2)}{\displaystyle \int_{\R^d} \phi^2} = \mu d.
\end{multline*}
As this inequality is true for any $\mathcal{w}$ in $H^1_0(\mathcal{B}_R)$, this establishes the left inequality.

$\bullet$ Given that \(\nu_\mu\) is always positive and continuously dependent on \(\mu\), proving the right inequality for \(\mu \geq 1\) is sufficient.
We set \(\kappa \colon \mu \geq 1 \mapsto \nu_\mu - \mu d\). The result will follow by showing that the function \(\kappa\) vanishes at infinity.
We define
$$
\mathcal{w} : x\in \mathcal{B}_R \mapsto \mu^{\frac{d}{4}} \left(\mathcal{G}(\sqrt{\mu} x) - \theta\left( \frac{\Vert x \Vert}{R}\right) e^{-\frac{\mu R^2}{2}}\right),
$$
where $\theta : \R \mapsto [0,1]$ is a smooth function verifying $\theta(1) = 1$ and $\theta(s) = 0$ for $ s  \leq \frac{1}{2}$.
By construction, $\mathcal{w}$ belongs to $H^1_0(\mathcal{B}_R)\cap C^\infty(\R^{d})$, and satisfies
$$
\int_{\mathcal{B}_R} \left(\Vert \nabla \mathcal{w} \Vert^2 + \mu^2 \Vert x \Vert^2 \mathcal{w}^2 \right) 
= \mu d \int_{\mathcal{B}_R} \mathcal{w}^2 + \mu^{\frac{d}{4}} e^{-\frac{\mu R^2}{2}} \int_{\mathcal{B}_R} H \mathcal{w},
$$
where 
$$
H = \frac{1}{R^2} \theta''\left(\frac{\Vert x\Vert}{R} \right) + \frac{1}{R \Vert x \Vert}(d - 1) \theta'\left(\frac{\Vert x \Vert}{R}\right) + \left( \mu d - \mu^2 \Vert x \Vert^2 \right) \theta \left( \frac{\Vert x \Vert}{R} \right).
$$
Note that there exists a constant $\cc$ such that $\Vert H \Vert_{L^\infty(\mathcal{B}_R)} \leq \cc \mu^2$, for all $\mu \geq 1$.

We deduce that 
$$
\kappa(\mu) \leq \mu^{\frac{d}{4}} e^{-\frac{\mu R^2}{2}} \frac{\int_{\mathcal{B}_R} H \mathcal{w}}{\int_{\mathcal{B}_R} \mathcal{w}^2}
\leq \mu^{\frac{d}{4}} e^{-\frac{\mu R^2}{2}} \sqrt{\frac{\int_{\mathcal{B}_R} H^2}{\int_{\mathcal{B}_R} \mathcal{w}^2}}.
$$
As on one side
$$
\int_{\mathcal{B}_R}  H^2 \leq \cc \mu^4,
$$
and on the other side
$$
\int_{\mathcal{B}_R} \mathcal{w}^2 \geq \int_{\mathcal{B}_{\frac{R}{2}}} \mathcal{w}^2
 = \mu^{\frac{d}{2}} \int_{\mathcal{B}_{\frac{R}{2}}} e^{-\mu \Vert x \Vert^2}
  = \int_{\mathcal{B}_{\sqrt \mu \frac{R}{2}}} \mathcal{G}^2 \geq \int_{\mathcal{B}_{\frac{R}{2}}} \mathcal{G}^2,
$$
the result follows.
\end{proof}

\begin{lem} \label{lemma_gap_firstandothereigen}
Let \(\nu\) be any eigenvalue of \(G_\mu\) distinct from \(\nu_\mu\). Then,
\[
\nu - d \mu \geq 2\mu.
\]
\end{lem}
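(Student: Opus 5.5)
The plan is to decompose $L^2(\mathcal{B}_R)$ into spherical harmonic sectors and to compare, sector by sector, the Dirichlet problem on the ball with the harmonic oscillator on the whole space. The comparison rests on the min-max principle and on extension by zero.

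\textbf{Step 1: reduction to single sectors.} Let $\nu \neq \nu_\mu$ be an eigenvalue, with eigenfunction $\mathcal{w}$. Expand $\mathcal{w} = \sum_{m,k} \mathcal{w}_{mk}(r) Y_{mk}$ as in \cref{lemme H10M}. Since $G_\mu$ is rotation invariant, it commutes with the projections onto each sector $\mathcal{H}_m = \{ f(r) Y_{mk}(\theta) \}$. Hence every nonzero component $\mathcal{w}_{mk} Y_{mk}$ is itself an eigenfunction of $G_\mu$ with eigenvalue $\nu$, lying in $\mathcal{H}_m$. This is the same radial ODE computation as in the dissipation part of the proof of \cref{obsharmonicheatsectionannulus}, with potential $m(m+d-2)/r^2 + \mu^2 r^2$. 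It therefore suffices to bound from below the eigenvalues of $G_\mu$ restricted to each $\mathcal{H}_m \cap H^1_0(\mathcal{B}_R)$.

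\textbf{Step 2: comparison with $\R^d$.} Extending by zero maps $H^1_0(\mathcal{B}_R)\cap \mathcal{H}_m$ isometrically into $\mathcal{V}\cap \mathcal{H}_m(\R^d)$ and preserves the quadratic form $\int (\Vert\nabla \mathcal{w}\Vert^2 + \mu^2\Vert x\Vert^2 \mathcal{w}^2)$. By min-max, the $j$-th eigenvalue of the Dirichlet problem in sector $m$ is therefore at least the $j$-th eigenvalue of $-\Delta + \mu^2\Vert x\Vert^2$ on $\R^d$ in sector $m$. After the scaling $x\mapsto \sqrt{\mu}x$ used in \cref{lemma_bound_firsteigen}, the latter eigenvalues are the classical ones $\mu(d+2m+4k)$, $k\in\N$. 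These are obtained from the ansatz $r^m L_k^{(m+d/2-1)}(\mu r^2) e^{-\mu r^2/2} Y_{mk}$, which gives a complete family in each sector.

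\textbf{Step 3: conclusion.} If $m\ge 1$, every eigenvalue in sector $m$ is at least $\mu(d+2m) \ge \mu(d+2)$. If $m=0$, the lowest radial Dirichlet eigenvalue equals $\nu_\mu$. Indeed, the ground state $\varpi_\mu$ is simple and nonnegative, and rotation invariance with simplicity forces it to be radial. Since $\nu\neq\nu_\mu$, the eigenvalue $\nu$ is at least the second radial eigenvalue, which by Step 2 is at least $\mu(d+4)\ge\mu(d+2)$. In all cases $\nu - d\mu \ge 2\mu$.

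The main obstacle is making Step 2 rigorous, namely the min-max inequality within a fixed sector together with the exact whole-space spectrum in that sector. I would cite the explicit Laguerre eigenbasis of the isotropic oscillator for the latter. An alternative for $m\ge1$ is a direct ground state substitution: writing $\mathcal{w} = e^{-\mu r^2/2} r^m u$ and integrating by parts gives the quadratic form $\mu(d+2m)\Vert \mathcal{w}\Vert^2 + \int |\nabla u|^2 r^{2m} e^{-\mu r^2}$. This yields the bound directly on the ball, and the boundary term vanishes because $\mathcal{w}=0$ on $\partial\mathcal{B}_R$.
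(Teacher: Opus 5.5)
Your proof is correct, but it takes a substantially heavier route than the paper. The paper never decomposes into spherical harmonics. Because the eigenvalues are ordered, any $\nu\neq\nu_\mu$ satisfies $\nu\ge\nu_{\mu,2}$, and $\nu_{\mu,2}$ is the maximum of the Rayleigh quotient over $\mathrm{Span}(\varpi_\mu,\varpi_{\mu,2})$. The rescaling $w\mapsto w(\cdot/\sqrt{\mu})$ multiplies the quotient by $\mu$, and extension by zero turns this span into a two-dimensional subspace of $\mathcal V$. The whole-space min-max value $d+2$ then gives $\nu_{\mu,2}\ge\mu(d+2)$ directly.

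Your $m=0$ case is exactly this two-dimensional min-max comparison, only carried out inside the radial sector. So the sector decomposition is an extra layer around the same mechanism, not a different one.

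What the extra layer buys you is finer spectral information, none of which the lemma needs:
\begin{itemize}
\item eigenvalues in sector $m\ge 1$ are at least $\mu(d+2m)$;
\item excited radial states are at least $\mu(d+4)$;
\item the ground state is radial.
\end{itemize}

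What it costs is a list of facts that each need justification:
\begin{itemize}
\item the sector projections preserve $\mathcal D(G_\mu)$ and commute with $G_\mu$;
\item $\varpi_\mu$ is simple, and hence radial;
\item min-max applies on each invariant sector;
\item the Laguerre family is complete, giving the sector spectrum $\mu(d+2m+4k)$.
\end{itemize}
The paper's argument uses none of these, not even simplicity of $\nu_\mu$ or rotation invariance. It would hold verbatim with $\mathcal B_R$ replaced by any bounded open set.

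A minor point on your ground-state substitution for $m\ge1$: it is fine in spirit, but the weight in the remainder term should be $\Phi^2$ with $\Phi=r^m e^{-\mu r^2/2}Y_{mk}$, writing $\mathcal w=\Phi v$. Also, it does nothing for $m=0$, where the min-max step is still required.
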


\begin{proof}
It suffices to show that \(\nu_{\mu,2} -d \mu \geq 2\mu \). 
We recall that 
$$
\min_{\substack{V \subset \mathcal{V}\\ \mathsf{dim}(V) = 2}} \max_{w \in V} \frac{\displaystyle \left(\int_{\R^d} \Vert \nabla w\Vert^2 + \Vert x \Vert^2 w^2\right)}{\int_{\R^d} w^2} = d + 2,
$$
with 
$$
\mathcal{V} = \left\lbrace v\in H^1(\R^d), \ x v \in L^2(\R^d) \right\rbrace.
$$
Additionnaly, setting $V_2 = \text{Span}(\varpi_\mu,\varpi_{\mu,2})$, we know that
$$
\nu_{\mu,2} = \frac{\displaystyle \int_{\mathcal{B}_R} \left( \Vert \nabla \varpi_{\mu,2} \Vert^2 + \mu^2 \Vert x \Vert^2 \varpi_{\mu,2}^2\right)}{\displaystyle \int_{\mathcal{B}_R} \varpi_{\mu,2}^2}
= \max_{w \in V_2} \frac{\displaystyle \int_{\mathcal{B}_R} \left( \Vert \nabla w \Vert^2 + \mu^2 \Vert x \Vert^2 w^2\right)}{\displaystyle \int_{\mathcal{B}_R} w^2}.
$$
As the operator $T: H^1_0(\mathcal{B}_R) \mapsto H^1_0(\mathcal{B_{\sqrt \mu R}})$, defined by 
$$
T(w) = w\left(\frac{.}{\sqrt \mu}\right), \ \forall w \in H^1_0(\mathcal{B}_R),  
$$
is bijective and satisfies 
$$
\frac{\displaystyle \int_{\mathcal{B}_R} \left(\Vert \nabla w \Vert^2 + \mu^2 \Vert x \Vert^2 w^2\right)}{\displaystyle\int_{\mathcal{B}_R} w^2}
= \mu \frac{\displaystyle \int_{\mathcal{B}_{\sqrt \mu R}} \left( \Vert \nabla (T w) \Vert^2 + \Vert x \Vert^2 (Tw)^2\right)}{\displaystyle\int_{\mathcal{B}_{\sqrt \mu R}} (Tw)^2},
$$
we deduce that
$$
\nu_{\mu,2} = \mu \max_{\tilde w \in \tilde V_2} \frac{\displaystyle\int_{\mathcal{B}_{\sqrt \mu R}} \left( \Vert \nabla \tilde w \Vert^2 +  \Vert x \Vert^2 \tilde w^2\right)}{\int_{\mathcal{B}_{\sqrt \mu R}} \tilde w^2},
$$
with $\tilde V_2 = \text{Span}(T \varpi_\mu, T \varpi_{\mu,2})$. It follows immediately that \(\nu_{\mu,2} \geq \mu (d + 2)\).
\end{proof}

\begin{lem} \label{lemma_bound_eigenfun}
For $r>0$ such that $r< R$, set $\mathcal A = \left\lbrace x \in \mathcal{B}_R, \Vert x \Vert > r \right\rbrace.$
There exists $\cc > 0$ such that, for all $\mu \geq 1$, 
$$
\int_{\mathcal A} \varpi_\mu^2 \leq c \left( \mu^{\frac{d}{2}-1} e^{-\mu r^2} + \mu^{2+\frac{d}{2}} e^{-\mu R^2} \right).
$$
\end{lem}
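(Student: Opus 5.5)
We compare $\varpi_\mu$ with the explicit Gaussian quasimode $\mathcal{w}$ built in the proof of \cref{lemma_bound_firsteigen}, and use the spectral gap of \cref{lemma_gap_firstandothereigen} to show that the two are close in $L^2(\mathcal{B}_R)$. Recall
$$
\mathcal{w}(x) = \mu^{\frac d4}\Big(\mathcal{G}(\sqrt\mu x) - \theta\big(\tfrac{\Vert x\Vert}{R}\big)e^{-\frac{\mu R^2}{2}}\Big),
\qquad
(G_\mu - d\mu)\mathcal{w} = -\mu^{\frac d4} e^{-\frac{\mu R^2}{2}} H,
$$
with $\Vert H\Vert_{L^\infty}\le \cc\mu^2$. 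Hence $\Vert (G_\mu - d\mu)\mathcal{w}\Vert_{L^2}\le \cc\,\mu^{2+\frac d4}e^{-\frac{\mu R^2}{2}}$. We also have $\Vert \mathcal{w}\Vert_{L^2}^2 \ge \int_{\mathcal{B}_{R/2}}\mathcal{G}^2 \ge c_0>0$ and $\Vert \mathcal{w}\Vert_{L^2}^2\le \cc$, uniformly in $\mu\ge 1$.

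\textbf{Step 1: spectral approximation.} Decompose $\mathcal{w} = \alpha\varpi_\mu + \mathcal{w}^\perp$ with $\mathcal{w}^\perp\perp\varpi_\mu$. By \cref{lemma_gap_firstandothereigen}, every other eigenvalue $\nu$ satisfies $\nu - d\mu\ge 2\mu$, so $\Vert (G_\mu - d\mu)\mathcal{w}^\perp\Vert \ge 2\mu\Vert \mathcal{w}^\perp\Vert$. Since $G_\mu$ is self-adjoint, $(G_\mu - d\mu)\mathcal{w}^\perp$ is the component of $(G_\mu - d\mu)\mathcal{w}$ orthogonal to $\varpi_\mu$. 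Therefore
$$
\Vert \mathcal{w}^\perp\Vert \le \frac{1}{2\mu}\Vert (G_\mu-d\mu)\mathcal{w}\Vert \le \cc\,\mu^{1+\frac d4}e^{-\frac{\mu R^2}{2}}.
$$
Consequently $\alpha^2 = \Vert\mathcal{w}\Vert^2 - \Vert\mathcal{w}^\perp\Vert^2 \ge c_0/2$ for $\mu$ large. For $\mu$ in a compact range the estimate is trivial, since its right-hand side is bounded below there.

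\textbf{Step 2: localisation on $\mathcal A$.} Since $\varpi_\mu = \alpha^{-1}(\mathcal{w} - \mathcal{w}^\perp)$,
$$
\int_{\mathcal A}\varpi_\mu^2 \le \frac{2}{\alpha^2}\Big(\int_{\mathcal A}\mathcal{w}^2 + \Vert \mathcal{w}^\perp\Vert^2\Big).
$$
The second term is at most $\cc\,\mu^{2+\frac d2}e^{-\mu R^2}$, which matches the second term of the lemma. For the first term, use $(a-b)^2\le 2a^2+2b^2$:
$$
\int_{\mathcal A}\mathcal{w}^2 \le 2\mu^{\frac d2}\int_{\Vert x\Vert>r} e^{-\mu\Vert x\Vert^2}\,dx + \cc\,\mu^{\frac d2}e^{-\mu R^2}.
$$
The last piece is absorbed into the $e^{-\mu R^2}$ term. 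In polar coordinates, $\mu^{\frac d2}\int_r^\infty \rho^{d-1}e^{-\mu\rho^2}\,d\rho$ is the main term. Restricting to $\rho<R$ (we integrate over $\mathcal A\subset\mathcal{B}_R$) and bounding $\rho^{d-1}\le R^{d-2}\rho$ gives $\int_r^R \rho\, e^{-\mu\rho^2}\,d\rho\le e^{-\mu r^2}/(2\mu)$, so
$$
\mu^{\frac d2}\int_r^R \rho^{d-1}e^{-\mu\rho^2}\,d\rho \le \cc\,\mu^{\frac d2-1}e^{-\mu r^2}.
$$
This is the first term of the lemma.

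\textbf{Main obstacle.} The delicate point is Step 1, that is, making the projection argument clean. One has to check two things: that $(G_\mu - d\mu)\mathcal{w}$ is exactly the explicit function $-\mu^{\frac d4}e^{-\frac{\mu R^2}{2}}H$ (this was already computed in \cref{lemma_bound_firsteigen}), and that the gap bound indeed applies to $\mathcal{w}^\perp$, which requires $\mathcal{w}\in\mathcal{D}(G_\mu)$. The latter holds because $\mathcal{w}$ is smooth and vanishes on $\partial\mathcal{B}_R$. Everything else is elementary Gaussian integration.
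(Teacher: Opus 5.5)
Your proof is correct and is essentially the paper's argument. It uses the same Gaussian quasimode from \cref{lemma_bound_firsteigen}, the gap of \cref{lemma_gap_firstandothereigen} to show that $\varpi_\mu$ lies within $\cc\,\mu^{1+\frac d4}e^{-\frac{\mu R^2}{2}}$ of the (rescaled) quasimode in $L^2(\mathcal{B}_R)$, and a Gaussian tail bound on $\mathcal A$. The differences are minor: you control the overlap through $\alpha^2\ge c_0/2$, plus a compactness remark for bounded $\mu$, rather than using nonnegativity of $\varpi_\mu$ and $g_\mu$ to fix the sign of $(g_\mu,\varpi_\mu)$; and your bound $\rho^{d-1}\le R^{d-2}\rho$ gives the tail estimate directly instead of via incomplete Gamma asymptotics (also, $(G_\mu-d\mu)\mathcal{w}=+\mu^{\frac d4}e^{-\frac{\mu R^2}{2}}H$, a harmless sign slip).
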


\begin{proof} In what follows, \( c \) denotes a constant independent of \( \nu \), whose value may vary from line to line.

We set 
$$
g_\mu : x \in \mathcal{B}_R \mapsto C_\mu \mu^{\frac{d}{4}} \left( \mathcal{G}(\sqrt \mu x) - \theta\left(\frac{\Vert x \Vert}{R}\right)  e^{-\frac{\mu R^2}{2}}\right), 
$$
with $\theta : \R \mapsto [0,1]$ smooth verifying $\theta(1) = 1$ and $\theta(s) = 0$ for all $s \leq \frac{1}{2}$, and
$C_\mu$ fixed such that $\Vert g_\mu \Vert_{L^2(\mathcal{B}_R)} = 1$. It has already been shown in the Proof of \cref{lemma_bound_firsteigen} that, for all $\mu \geq 1$, 
$C_\mu  \leq \cc$, and that 
$$
- \Delta g_\mu + \mu^2 \Vert x \Vert^2 g_\mu = d \mu g_\mu + C_\mu \mu^{\frac{d}{4}} e^{- \frac{\mu R^2}{2}}H,
$$
where $\Vert H \Vert_{L^\infty(\mathcal{B}_R)} \leq \cc \mu^2$. Note that, since \(\theta\) takes values in \([0, 1]\), the function \(g_\mu\) is non-negative on \(\mathcal{B}_R\).

\medskip

We note that 
$$\Vert g_\mu \Vert_{L^2(\mathcal{A})} \leq \cc  \mu^{\frac{d}{4}} \left( \sqrt{ \int_{\mathcal{A}} \mathcal{G}(\sqrt \mu x)^2 } +
e^{-\frac{\mu R^2}{2}}
 \right).$$
As 
\begin{multline*}
\mu^{\frac{d}{2}} \int_{\mathcal{A}} \mathcal{G}(\sqrt \mu x)^2 = \mu^{\frac{d}{2}} \int_{\mathcal{A}}
e^{-\mu \Vert x \Vert^2} = \cc \mu^{\frac{d}{2}} \int_{r}^R e^{-\mu r^2} r^{d - 1} dr
\\ \leq \cc \int_{\sqrt \mu r}^\infty e^{-s^2} s^{d - 1} ds 
= \cc \Gamma\left( \frac{d}{2},\mu r^2 \right) \sim_{\mu \rightarrow \infty} \cc \mu^{\frac{d}{2}-1} e^{-\mu r^2},
\end{multline*}
we deduce that
$$
\int_\mathcal{A} g_\mu^2 \leq \cc \left( \mu^{\frac{d}{2}-1} e^{-\mu r^2} + \mu^{\frac{d}{4}} e^{-\mu R^2} \right).
$$
\medskip

A direct computation shows that for all indices $k$,  
$$
\nu_{\mu,k}(\varpi_{\mu,k},g_\mu)_{L^2(\mathcal{B}_R)} = d \mu (g_\mu,\varpi_{\mu,k})_{L^2(\mathcal{B}_R)} + (H,\varpi_{\mu,k})_{L^2(\mathcal{B}_R)}. 
$$
Using \cref{lemma_gap_firstandothereigen}, we deduce that
$$
\sum_{k\geq 2} (g_\mu,\varpi_{\mu,k})_{L^2(\mathcal{B}_R)}^2 = \sum_{k\geq 2}  \frac{(H,\varpi_{\mu,k})_{L^2(\mathcal{B}_R)}^2}{(\nu_{\mu,k}-d\mu)^{2}} \leq \cc \mu^{-2} \Vert H \Vert_{L^2(\mathcal{B}_R)}^2 \leq \cc \mu^{2 +\frac{d}{2}} e^{-\mu R^2}.
$$
As both $\varpi_\mu$ and $g_\mu$ are non-negative on $\mathcal{B}_R$, and $\Vert g_\mu\Vert_{L^2(\mathcal{B}_R)}= 1$, we have
$$
(g_\mu,\varpi_\mu)_{L^2(\mathcal{B}_R)} = \sqrt{1 - \sum_{k\geq 2} (g_\mu,\varpi_{\mu,k})_{L^2(\mathcal{B}_R)}^2}\geq
1 -  \sum_{k\geq 2} (g_\mu,\varpi_{\mu,k})_{L^2(\mathcal{B}_R)}^2.
$$
As a consequence,
\begin{multline*}
\Vert \varpi_{\mu} - g_\mu\Vert^2_{L^2(\mathcal{B}_R)} = 2 - 2 (\varpi_\mu,g_\mu)_{L^2(\mathcal{B}_R)}
= 2 - 2\left( 1 -  \sum_{k\geq 2} (g_\mu,\varpi_{\mu,k})_{L^2(\mathcal{B}_R)}^2 \right)^{\frac{1}{2}} \\
\leq 2 \sum_{k\geq 2} (g_\mu,\varpi_{\mu,k})_{L^2(\mathcal{B}_R)}^2 \leq \cc \mu^{2+\frac{d}{2}} e^{-\mu R^2}.
\end{multline*}
The result follows.
\end{proof}

\begin{proof}[\cref{prop_necessary_condition}]
First, we note that if the Baoouendi-Grushin system is observable through $\omega \times \tilde \Omega$ in time $T$, it is necessarily observable throught
$\mathcal A \times \tilde \Omega$, with
$$
\mathcal{A} = \left\lbrace x \in \R^d, \ r < \Vert x \Vert < R \right\rbrace.
$$
Consequently, there exists a constant $\cc$ such that, for every sufficiently smooth solution \( y \) of the Baouendi-Grushin system \eqref{grushin},
\begin{equation} \label{eq_observ_proof_necessary}
\int_{\mathcal{B}_R \times \tilde \Omega} y(T)^2 \leq \cc \int_0^T \int_{\mathcal{A}\times \tilde \Omega} y^2.
\end{equation}
Let $(\mu_p^2, \phi_p)_{p\in \N}$ be the eigenvalues and eigenfunctions, normalized in $L^2(\tilde \Omega)$, of the Dirichlet-Laplacian on $\tilde \Omega$. We set
$\nu_p$ to be the smallest eigenvalue of $G_{\mu_p}$, and $\varpi_p$ the corresponding non-negative eigenfunction, normalized in $L^2(\mathcal{B}_R)$. Then it is readily seen that the function $y_p$, defined by
$$
y_p : (t,x_1,x_2) \in \R \times \mathcal{B}_R \times \tilde \Omega \mapsto  e^{-\nu_p t} \varpi_p(x_1) \phi_p(x_2),
$$
is a smooth solution of the Baouendi-Grushin system \eqref{grushin}. Applying the observability inequality \eqref{eq_observ_proof_necessary} to $y_p$ immediately yields
$$
e^{-2\nu_p T} \leq \cc \frac{1-e^{-2\nu_p T}}{2 \nu_p} \int_{\mathcal{A}} \varpi_p^2 \leq \cc \frac{1}{2\nu_p} \int_{\mathcal{A}} \varpi_p^2.
$$
\cref{lemma_bound_firsteigen} and \cref{lemma_bound_eigenfun} yields
$$
\mu_p^{2 - \frac{d}{2}} \leq \cc e^{2\kappa(\mu_p)T} \left( e^{\mu_p(2 d T - r^2)} + \mu_p^3 e^{\mu_p(2 d T - R^2)} \right),
$$
where $\cc$ is a constant independant of $p$, and $\kappa : [0,\infty[ \mapsto [0,\infty[$ verifies $\kappa(\mu) \xrightarrow{\mu \rightarrow \infty} 0$. Since $\mu_p$ goes to infinity as $p$ goes to infinity, the result follows. 
\end{proof}

\subsection{Blow-up in the observability cost as $T\rightarrow T^*$}

\cref{maininterne} gives an estimate of the blow-up in the observability cost of the Baouendi-Grushin equation \eqref{grushin}, as $T$ goes to $T^*$, when 
the observation set $\omega \times \tilde \Omega$ satisfies
$$
\omega = \left\lbrace x = r \theta \in \mathbb{R}^d,\ \theta \in \Gamma, \ R_1 < \Vert x \Vert < R_2 \right\rbrace, 
$$
with $\Gamma$ an non-empty open subset of $\mathbb{S}^{d-1}$, and $0<R_1<R_2 < R$. Indeed, we then have that for all $\beta >1$, there exists a positive constant $c_\beta$ such that, for all $T > T^* = \frac{R_1^2}{2d}$, for all $y$ solution of \eqref{grushin},
$$
\int_{\mathcal{B}_R \times \tilde \Omega} \vert y(T) \vert^2 \leq e^{\frac{c_\beta}{(T-T^*)^{2\beta}}} \int_0^T \int_{\omega\times \tilde \Omega} \vert y \vert^2.
$$ 
We claim that the blow-up rate of order $2\beta$ is likely suboptimal. Indeed, when $\omega$ is the complete annulus, we obtain a better explosion rate.

\begin{prop}
For $R_1$, $R_2$ such that $0<R_1 < R_2 < R$, set 
$$
\omega = \left\lbrace x \in \mathbb{R}^d, \ R_1 < \Vert x \Vert< R_2 \right\rbrace.
$$
Then, for all $\beta>1$, there exists a positive constant $c_\beta$ such that, for all $T> T^* = \frac{R_1^2}{2d}$, for all $y$ solution of \eqref{grushin}, 
$$
\int_{\mathcal{B}_R \times \tilde \Omega} \vert y(T) \vert^2 \leq e^{\frac{c_\beta}{(T-T^*)^{\beta}}} \int_0^T \int_{\omega\times \tilde \Omega} \vert y \vert^2.
$$ 
\end{prop}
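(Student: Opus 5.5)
Since $\omega$ is the whole annulus, the restriction step of \cref{LR} is not needed. We can feed the cost of \cref{prop_observability_from_annulus} straight into the dissipation argument of \cref{ideeuniforme}. Following \cref{section_strategy_HH_Grushin}, it suffices to prove a uniform-in-$\mu$ estimate for the harmonic heat equation \eqref{harmonicheat}: for all $\mu>0$ and $T>T^*$,
$$\int_{\mc B_R}|y_\mu(T)|^2\le e^{\frac{c_\beta}{(T-T^*)^{\beta}}}\int_0^T\int_{\omega}|y_\mu|^2 .$$
Summing over the eigenmodes $\phi_p$ of $\tilde\Omega$ with Parseval then gives the claim. The constant $\mu$-factor in front of $\mu(R_1+\varepsilon)^2$ is $1$ rather than $1+\gamma$, so we only need one small parameter; this is what removes the square in the rate. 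For $T\ge T_{max}:=2T^*$ we conclude by the dissipation \eqref{dissipharmonicheatL2}, so we take $T\in(T^*,2T^*]$.

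Set $\gamma:=(T-T^*)/(4T)$ and $\delta:=\gamma T=(T-T^*)/4$. We apply \cref{prop_observability_from_annulus} on $[0,\delta]$ and then the dissipation estimate $\|y_\mu(T)\|^2\le e^{-2d\mu(T-\delta)}\|y_\mu(\delta)\|^2$ from \cref{dissipationgrushin}, using that the spectrum of $G_\mu$ lies above $d\mu$. This gives
$$\int_{\mc B_R}|y_\mu(T)|^2\le \frac{\cc}{\varepsilon^6}(1+\mu)\,e^{\mu\left((R_1+\varepsilon)^2-2d(T-\delta)\right)}e^{\frac{2c_\beta}{\delta^\beta}}\int_0^\delta\int_{\mathcal A}|y_\mu|^2 .$$
We choose $\varepsilon$ by $(R_1+\varepsilon)^2=R_1^2+d(T-T^*)$, which is admissible for $T$ close to $T^*$. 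Note that $\varepsilon\sim d(T-T^*)/(2R_1)$ and that $\varepsilon\le1$, $R_1+\varepsilon<R_2$ hold near $T^*$; away from $T^*$ we can cap $\varepsilon$ and use monotonicity in $T$. Since $2dT^*=R_1^2$,
$$2d(T-\delta)-(R_1+\varepsilon)^2 = 2d(T-T^*)-\tfrac{d}{2}(T-T^*)-d(T-T^*)=\tfrac d2 (T-T^*)>0 .$$

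The remaining task is to bound each factor by $e^{C/(T-T^*)^\beta}$. The $\mu$-dependent part satisfies $\sup_{\mu>0}(1+\mu)e^{-\mu \frac d2(T-T^*)}\le 1+\frac{2}{d(T-T^*)}$, which is only polynomial. The factor $\varepsilon^{-6}$ is of order $(T-T^*)^{-6}$. Finally $e^{2c_\beta/\delta^\beta}=e^{2\cdot4^\beta c_\beta/(T-T^*)^\beta}$. Polynomial factors are absorbed by increasing $c_\beta$, which gives the rate $(T-T^*)^{-\beta}$.

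The main point to check is the requirement $\varepsilon\in(0,1]$ with $R_1+\varepsilon<R_2$ in \cref{prop_observability_from_annulus}. We also need that its constant $\cc$ is independent of $T$ and $\varepsilon$, so that shrinking the time window to $\delta\sim T-T^*$ costs exactly $e^{c/\delta^\beta}$. Both hold by the statement of that proposition.
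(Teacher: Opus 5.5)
Your proposal is correct and is essentially the paper's proof. You observe on $[0,\delta]$ with the whole-annulus estimate, dissipate at rate $2d\mu$ on $[\delta,T]$, and take $\delta$ and $\varepsilon$ of order $T-T^*$ so that a margin of order $(T-T^*)\mu$ remains in the exponent, which turns $(1+\mu)$ into a factor polynomial in $(T-T^*)^{-1}$. The only difference is the parametrization: the paper takes $\delta=\gamma T$ with $\gamma=\frac{T-T^*}{2T+T^*}$ and $\varepsilon=R_1(\sqrt{1+\gamma}-1)$, leaving the margin $2dT\gamma\mu$, while you take $\delta=\frac{T-T^*}{4}$ and $(R_1+\varepsilon)^2=R_1^2+d(T-T^*)$, leaving $\frac d2 (T-T^*)\mu$, which is immaterial.
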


\begin{proof}
We prove that the harmonic heat equations \eqref{harmonicheat}, parametrized by $\mu> 0$, satisfy the following uniform observability inequality: for all $T> T^*$, for all $y_\mu \in C^0([0,T];L^2(\mathcal{B}_R))$ solution of \eqref{harmonicheat},  
$$
\int_{\mathcal{B}_R} \vert y_\mu(T) \vert^2 \leq e^{\frac{c_\beta}{(T-T^*)^{\beta}}} \int_0^T \int_{\omega} \vert y_\mu \vert^2.
$$
The result is then obtained following the strategy proposed in \cref{section_strategy_HH_Grushin}.

 From \cref{prop_observability_from_annulus}, there exists a positive constant $C$, and, for all $\beta>1$, a constant $c_\beta$ such that for all $\delta > 0$, 
for all $\varepsilon \in (0,1]$,  all $\mu >0$ and all $y_\mu$ solution of \eqref{harmonicheat}, 
$$
\int_{\mathcal{B}_R} \vert y_\mu(\delta) \vert^2 \leq \frac{C}{\varepsilon^6} (1+\mu)e^{\mu (R_1+\varepsilon)^2} e^{\frac{c_\beta}{\delta^\beta}} \int_0^\delta \int_\omega \vert y_\mu \vert^2.
$$
For $T> 0$, and $0< \delta < T$ to be specified later, the known dissipation properties of the harmonic-heat equation \cite[Section 4.1]{Beauchard2020}  combined with the previous estimate yield to 
$$
\int_{\mathcal{B}_R} \vert y_\mu(T) \vert^2
\leq \frac{C}{\varepsilon^6} (1+\mu) e^{-2d \mu \left( T-\delta - \frac{(R_1+\varepsilon)^2}{2d} \right)} e^{\frac{c_\beta}{\delta^\beta}}
\int_0^T \int_\omega \vert y_\mu \vert^2.
$$
We now assume $T > T^*$, and set $\delta = \gamma T$, with $\gamma>0$ defined by
$$
(1-2\gamma) T = (1+\gamma)T^* \Leftrightarrow \gamma = \frac{T-T^*}{2T + T^*}. 
$$
We then set
$$
\varepsilon = R_1 \left(\sqrt{\gamma +1} - 1 \right) = \frac{R_1}{2} \gamma + \underset{\gamma \rightarrow 0}{o}(\gamma). 
$$
For $T - T^*$  small enough, $\delta \in (0,T)$ and $\varepsilon\in (0,1]$. A direct computation shows that
$$
(1+\mu) e^{-2d \mu \left( T-\delta - \frac{(R_1+\varepsilon)^2}{2d} \right)} = (1+\mu) e^{-2d T \gamma \mu} \leq \frac{c}{T-T^*},
$$
for some positive constant $c$. As a consequence, there exists a positive constant $K$ such that  
$$
\frac{C}{\varepsilon^6} (1+\mu) e^{-2d \mu \left( T-\delta - \frac{(R_1+\varepsilon)^2}{2d} \right)} e^{\frac{c_\beta}{\delta^\beta}}
\leq \frac{K}{(T-T^*)^7} e^{\frac{K c_\beta}{(T-T^*)^\beta}}.
$$
The result follows easily.
\end{proof}

Note that  the choice $\beta = 1$ is not admissible in the previous result, since the constant $c_\beta$ tends to infinity as $\beta \rightarrow 1$.
Nevertheless, in view of this result, it is natural to conjecture the following statement: for any non-empty open set $\omega\subset \mathcal{B}_R$, 
there exists a positive constant $c$ such that, 
defining 
$$
T^* = \frac{\inf_{x\in \omega} \Vert x \Vert^2}{2d},
$$
for all $T>T^*$ and all $y$ solution of the Baouendi-Grushin equation \eqref{grushin}, 
$$
\int_{\mathcal{B}_R\times \tilde \Omega} \vert y(T) \vert^2 \leq e^{\frac{c}{(T-T^*)}} \int_0^T \int_{\omega \times \tilde \Omega} \vert y \vert^2.
$$ 

To conclude, let us make a final observation: the loss in the blow-up rate going from the complete annulus to a portion of the annulus is due to the parameter $s$ in
\cref{obsharmonicheatsectionannulus}, which  scales as $T-T^*$ in the proof of \cref{prop_uniform_observa_wrt_mu_intern}. 

We emphasize that, in the proof of \cref{obsharmonicheatsectionannulus}, the parameters are optimized to preserve the behavior with respect to $\mu$ in the observability cost, rather than the behavior with respect to the time horizon $T$. This choice may account for the loss characterized by $s$. Whether it is possible to preserve both behaviors simultaneously remains an open question, which we leave for further study.

\label{subsection_blowup}

\appendix

\section{Functional setting}
\label{appendix_func_setting}

\subsection{Well-posedness of the Baouendi-Grushin equation}
\label{solutiongrushin}

Let $\Omega_k$ be a bounded connected smooth open domain of $\R^{d_k}$, $k = 1$, $2$, with $0 \in \Omega_1$, and $T>0$. 
In the following, we set $\Omega = \Omega_1 \times \Omega_2$, and $Q = (0,T) \times \Omega$. 

\subsubsection{Weak solutions}

For $f$, $g$ in $C^\infty_c(\Omega)$, we set 
$$
[f,g] = \int_\Omega \left( \nabla_{x_1} f \cdot \nabla_{x_1} g  + \Vert x_1 \Vert^2 \nabla_{x_2} f \cdot \nabla_{x_2} g\right),
$$
and $\left\vert[ f ]\right\vert = \sqrt{[f,f]}$. It is readily seen that $\vert[. ]\vert$ is a norm on $C^\infty_c(\Omega)$.
We define $V$ as the closure of $C^\infty_c$ for the $\vert[. ]\vert$. It is readily seen that 
$$
V = \left\lbrace f \in L^2(\Omega), \nabla_{x_1} f \in L^2(\Omega), \Vert x_1\Vert \nabla_{x_2} f \in L^2(\Omega),
f = 0 \text{ on } \partial \Omega\right\rbrace.
$$

\begin{prop}[Weak solution]
Let $y_0 \in L^2(\Omega)$ and $F \in L^2(Q)$. There exists a unique $y \in C^0([0,T];L^2(\Omega)) \cap L^2(0,T; V)$, solution to 
\begin{equation} \label{eq_Grushin_appendix}
\begin{cases}
\partial_t y - \Delta_{x_1} y - \Vert x_1 \Vert^2 \Delta_{x_2} y = F \text{ in } Q, \\
y = 0 \text{ on } (0,T) \times \partial \Omega, \\
y(0,.) = y_0 \text{ in } \Omega.
\end{cases}
\end{equation}
It satisfies 
$
\Vert y(t) \Vert_{L^2(\Omega)} \leq \Vert y_0 \Vert_{L^2(\Omega)} + \sqrt{T} \Vert F \Vert_{L^2(Q)}.
$
\end{prop}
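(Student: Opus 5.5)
We will obtain the weak solution by a Galerkin (or, equivalently, J.-L. Lions) argument in the Gelfand triple $V \subset L^2(\Omega) \subset V'$. The underlying bilinear form is $a(f,g) = [f,g]$, which is continuous on $V$ and coercive, since $a(f,f) = \vert[ f ]\vert^2$. A weak solution is a function $y \in L^2(0,T;V)$ with $\partial_t y \in L^2(0,T;V')$ such that
$$\frac{d}{dt}(y,v)_{L^2(\Omega)} + [y,v] = (F,v)_{L^2(\Omega)} \quad \text{for all } v \in V,$$
in the sense of distributions on $(0,T)$, with $y(0) = y_0$.

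The first point to check is that $V$ is a Hilbert space densely and continuously embedded in $L^2(\Omega)$. Here the $L^2$ control does not come for free from $\vert[ \cdot ]\vert$: we use the Poincaré inequality in the $x_1$ variable on the bounded set $\Omega_1$, slice by slice in $x_2$. This gives $\Vert f\Vert_{L^2(\Omega)} \le C_{\Omega_1}\Vert \nabla_{x_1} f\Vert_{L^2(\Omega)}$ for $f \in C_c^\infty(\Omega)$, and the inequality passes to the closure. Consequently $V \hookrightarrow L^2(\Omega)$ continuously. This embedding is injective because limits in $\vert[\cdot]\vert$ are also $L^2$ limits, and it is dense because $C_c^\infty(\Omega) \subset V$.

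\textbf{Existence.} With this structure in place, Lions' theorem gives existence and uniqueness of $y \in L^2(0,T;V)\cap H^1(0,T;V')$. The Lions–Magenes lemma then gives $y \in C^0([0,T];L^2(\Omega))$ together with the energy identity
$$\tfrac12\tfrac{d}{dt}\Vert y\Vert^2_{L^2(\Omega)} + \vert[ y ]\vert^2 = (F,y)_{L^2(\Omega)}.$$
If one prefers a constructive route, Galerkin approximation does the same job: project onto a Hilbert basis of $V$ (for instance, the eigenfunctions of the associated self-adjoint operator, which exist by Lax–Milgram and Riesz), solve the resulting linear ODE system, derive uniform bounds from the energy identity, and pass to the weak limit.

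\textbf{The estimate.} Dropping the nonnegative term $\vert[ y ]\vert^2$ in the energy identity gives $\frac{d}{dt}\Vert y\Vert_{L^2(\Omega)} \le \Vert F(t)\Vert_{L^2(\Omega)}$, wherever $\Vert y\Vert_{L^2(\Omega)} \neq 0$. To make this rigorous at zeros of $\Vert y\Vert_{L^2(\Omega)}$, we apply it to $\sqrt{\Vert y\Vert^2_{L^2(\Omega)} + \eta}$ and let $\eta \to 0$. Integrating in time and using Cauchy–Schwarz, $\int_0^t \Vert F\Vert_{L^2(\Omega)} \le \sqrt{T}\,\Vert F\Vert_{L^2(Q)}$, which gives the stated bound.

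\textbf{Uniqueness.} It follows from the same identity applied to the difference of two solutions, which has $F=0$ and $y_0=0$.

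\textbf{Main obstacle.} The only delicate point is the identification of $V$ and the Poincaré/density argument that makes $V \hookrightarrow L^2(\Omega)$ a genuine injective embedding despite the degenerate weight $\Vert x_1\Vert^2$. Note that coercivity on $V$ holds only for $\vert[\cdot]\vert$, not for the $H^1$ norm. This is handled by the $x_1$-Poincaré inequality, which uses only $\nabla_{x_1}$ and is therefore insensitive to the degeneracy at $x_1 = 0$.
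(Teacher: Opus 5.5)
Your argument is correct, and it is the standard variational (Lions / Lions--Magenes) proof; the paper gives no proof of its own, only citing \cite{Beauchard2013} for $d_1=d_2=1$ and asserting that the extension is easy, and your slice-wise Poincar\'e inequality in $x_1$ is exactly what makes that extension to arbitrary $d_1,d_2$ routine. One correction to the point you flag as delicate: injectivity of $V\to L^2(\Omega)$ does not follow from ``$\vert[\cdot]\vert$-limits are $L^2$-limits'' (that gives continuity), but from closability of $\nabla_{x_1}$ and $\Vert x_1\Vert\nabla_{x_2}$: if $f_n\in C^\infty_c(\Omega)$ with $f_n\to 0$, $\nabla_{x_1}f_n\to g$ and $\Vert x_1\Vert\nabla_{x_2}f_n\to h$ in $L^2$, then testing against $C^\infty_c$ functions gives $g=h=0$ (for $h$, use test functions supported in $\{x_1\neq 0\}$, whose complement is null). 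Also, in your Galerkin aside, an eigenbasis requires compactness of $V\hookrightarrow L^2(\Omega)$, which you have not shown, but any Hilbert basis of the separable space $V$ works.
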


The proof of this result is given in \cite{Beauchard2013} for $d_{1} = d_{2} = 1$. The result is easily generalised to $d_{1}$ and $d_{2}$ greater.

\subsubsection{Solutions by transposition}
\label{solfaible}
We now focus on a weaker concept of solutions by transposition, which allows to consider $L^2$-boundary controls.
Let $y_0$ be in $H^{-1}(\Omega)$, and $v \in L^2((0,T) \times \partial \Omega_1 \times \Omega_2)$. We say that $u \in C^0([0,T]; H^{-1}(\Omega)) \cap L^2(Q)$
is a solution by transposition to 
\begin{equation} \label{eq_pbl_transposition_solution}
\begin{cases}
\partial_t y - \Delta_{x_1} y - \Vert x_1 \Vert^2 \Delta_{x_2} y = 0 \text{ in } Q, \\
y = v \text{ on } (0,T) \times \partial \Omega_{1} \times \Omega_{2}, \\
y = 0 \text{ on } (0,T) \times \Omega_{1} \times \partial \Omega_{2} \\
y(0,.) = y_0 \text{ in } \Omega.
\end{cases}
\end{equation}
if for all $\phi \in C^0([0,T]; H^1_0(\Omega))\cap L^2((0,T);H^2(\Omega))\cap H^1((0,T);L^2(\Omega))$, and all $t \in (0,T]$,
$$
\langle y(t), \phi(t) \rangle - \langle y_0, \phi(0)\rangle
= \int_0^t \int_\Omega y (\partial_t \phi + \Delta_{x_1} \phi + \Vert x_1\Vert^2 \Delta_{x_2} \phi) -
\int_0^t \int_{\partial \Omega_{1} \times \Omega_{2}} v \frac{\partial \phi}{\partial n}.
$$  

\begin{prop}[Solution by transposition]
Problem \eqref{eq_pbl_transposition_solution} admits a unique solution by transposition.
\end{prop}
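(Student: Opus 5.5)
The statement to prove is well-posedness by transposition for \eqref{eq_pbl_transposition_solution}. I will use the classical duality method of Lions--Magenes. Everything rests on the adjoint (backward) problem. For $g \in L^2(Q)$ and $\phi_T \in H^1_0(\Omega)$, consider
\[
-\partial_t \phi - \Delta_{x_1}\phi - \Vert x_1\Vert^2 \Delta_{x_2}\phi = g \text{ in } (0,t)\times\Omega, \qquad \phi = 0 \text{ on } \partial\Omega, \qquad \phi(t) = \phi_T .
\]
The goal is to show that its solution has the regularity $C^0([0,t];H^1_0(\Omega))\cap L^2((0,t);H^2(\Omega))\cap H^1((0,t);L^2(\Omega))$. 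I will also need two bounds:
\[
\Vert \phi(0)\Vert_{H^1_0} + \Big\Vert \tfrac{\partial\phi}{\partial n}\Big\Vert_{L^2((0,t)\times\partial\Omega_1\times\Omega_2)} \le C\big(\Vert g\Vert_{L^2} + \Vert \phi_T\Vert_{H^1_0}\big).
\]

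To get this, I would decompose in the eigenbasis $(\phi_p)$ of the Dirichlet Laplacian on $\Omega_2$, as in \cref{section_strategy_HH_Grushin}. Each mode solves a harmonic heat equation with parameter $\mu_p$, for which \cref{prop_wellposedness_harmonicheatequation} and the energy estimates of \cref{proofdissipationgrushin} apply. The key step is an $H^1$ energy estimate that is uniform in $\mu_p$, obtained by multiplying by $-\Delta_{x_1}\phi_p$, exactly as in the admissibility computation of \cref{obsharmonicheatsubsetboundary}. Combined with the trace theorem and elliptic regularity in $x_1$, this controls $\Vert\partial_n\phi_p\Vert$ and $\Vert\Delta_{x_1}\phi_p\Vert$ uniformly in $p$.

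Summing the modes with Parseval then gives the required bounds. It should also give the $H^2$ regularity, since $\Vert x_1\Vert^2\Delta_{x_2}\phi$ is controlled through the equation and $\Omega_2$ is smooth. I expect this summation to be the main obstacle. The terms $\mu_p^2\Vert x_1\Vert^2$ must be absorbed with the correct signs, and the $\mu$-norm \eqref{munorm} is the natural tool for doing so. The full $H^2(\Omega)$ regularity in the $x_2$ variable near $x_1 = 0$ is delicate. If it fails, I would weaken the test class and only require the regularity needed to make sense of the identity.

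\emph{Existence.} Given $y_0\in H^{-1}(\Omega)$ and $v$, define for each $t$ the linear functional
\[
\Lambda_t(g,\phi_T) = \langle y_0,\phi(0)\rangle - \int_0^t\int_{\partial\Omega_1\times\Omega_2} v\,\tfrac{\partial\phi}{\partial n}.
\]
By the adjoint bounds it is continuous on $L^2(Q)\times H^1_0(\Omega)$. The Riesz theorem then produces $y\in L^2(Q)$ and $y(t)\in H^{-1}(\Omega)$ representing it, which is exactly the transposition identity, since $g$ plays the role of $-(\partial_t\phi+\Delta_{x_1}\phi+\Vert x_1\Vert^2\Delta_{x_2}\phi)$. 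Time continuity with values in $H^{-1}$ follows by density: for smooth $v$ and $y_0$ the classical solution coincides with this $y$, and the estimate is uniform in the data.

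\emph{Uniqueness.} If two solutions exist, their difference $z$ satisfies $\langle z(t),\phi(t)\rangle = \int_0^t\int_\Omega z\,(\partial_t\phi+\dots)$ with zero data. Choosing $\phi$ as the adjoint solution with arbitrary $g$ and $\phi_T$ gives $z=0$ in $L^2(Q)$ and $z(t)=0$.
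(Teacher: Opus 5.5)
The paper states this proposition without proof, so your argument has to stand on its own. It breaks at the claimed bound $\Vert\phi(0)\Vert_{H^1_0(\Omega)}\le C(\Vert g\Vert_{L^2}+\Vert\phi_T\Vert_{H^1_0})$, and this is not a repairable slip: the existence claim, as stated, is false.

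Notation: write $\zeta_p$ for the normalized Dirichlet eigenfunctions of $\Omega_2$ (eigenvalues $\mu_p^2$), and $\phi_p, w_p$ for the $x_2$-coefficients of $\phi, w$. Let $(\nu_p,e_p)$ be the first normalized eigenpair of $G_{\mu_p}$ on $\Omega_1$. Then $d_1\mu_p\le\nu_p\le C\mu_p$, by \cref{lemma_bound_firsteigen} together with domain monotonicity when $\Omega_1\ni 0$ is general.

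Why the bound fails: since $\Vert\phi\Vert^2_{H^1_0(\Omega)}=\sum_p\Vert\phi_p\Vert^2_{\mu_p}$ with the norm \eqref{munorm}, you need to control $\mu_p^2\Vert\phi_p(0)\Vert^2$ uniformly in $p$. The uniform-in-$\mu$ energy estimates only control $\langle G_{\mu_p}\phi_p(0),\phi_p(0)\rangle$, i.e.\ $\phi(0)$ in the form domain $V$ of the appendix. That space is strictly larger than $H^1_0(\Omega)$, because the weight $\Vert x_1\Vert^2$ vanishes at $0\in\Omega_1$. Concretely, take $Q_t=(0,t)\times\Omega$, $\phi_T=0$ and $g=e^{-\nu_p s}e_p\otimes\zeta_p$. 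Then $\phi(0)=\frac{1-e^{-2\nu_pt}}{2\nu_p}e_p\otimes\zeta_p$, and
$\Vert\nabla_{x_2}\phi(0)\Vert^2/\Vert g\Vert^2_{L^2(Q_t)}=\mu_p^2\frac{1-e^{-2\nu_pt}}{2\nu_p}\sim\frac{\mu_p}{2d_1}\to\infty$.
Your bound on $\partial_n\phi$ is correct, since it only needs $\mu_p^2\Vert\phi_p\Vert^2_{L^2(0,t;L^2)}$, which is uniformly controlled. The map $\phi_T\mapsto\phi(0)$ is also fine.

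Why existence itself fails: already with $v=0$, some $y_0\in H^{-1}(\Omega)$ admits no solution in $C^0([0,T];H^{-1}(\Omega))\cap L^2(Q)$.
The function $\phi(s,x)=e^{\nu_ps}e_p(x_1)\zeta_p(x_2)$ lies in the paper's test class and satisfies $\partial_s\phi+\Delta_{x_1}\phi+\Vert x_1\Vert^2\Delta_{x_2}\phi=0$. Hence any transposition solution has $\langle y(s),e_p\otimes\zeta_p\rangle=c_pe^{-\nu_ps}$, where $c_p=\langle y_0,e_p\otimes\zeta_p\rangle$.
Now take $y_0=\sum_pc_p\,e_p\otimes\zeta_p$ with $\sum_p|c_p|^2\mu_p^{-2}<\infty=\sum_p|c_p|^2\mu_p^{-1}$. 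For example, $\Omega_2=(0,\pi)$, where $\mu_p=p$, and $c_p=1$; in general $|c_p|^2=\mu_p/p$ works by Weyl's law.
This $y_0$ is in $H^{-1}(\Omega)$, since $|\langle y_0,w\rangle|\le\sum_p|c_p|\Vert w_p\Vert_{L^2(\Omega_1)}\le(\sum_p|c_p|^2\mu_p^{-2})^{1/2}\Vert\nabla_{x_2}w\Vert_{L^2(\Omega)}$. But Bessel's inequality gives $\Vert y\Vert^2_{L^2(Q)}\ge\sum_p|c_p|^2\frac{1-e^{-2\nu_pT}}{2\nu_p}=\infty$.

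So the statement must be modified, in one of two ways:
\begin{enumerate}
\item Require $y_0\in V'$. Your Riesz argument then goes through with $\phi(0)$ measured in $V$, since the multiplier $G_{\mu_p}\phi_p$ gives $\langle G_{\mu_p}\phi_p(0),\phi_p(0)\rangle\le\langle G_{\mu_p}\phi_{T,p},\phi_{T,p}\rangle+\Vert g_p\Vert^2_{L^2}$.
\item Drop $L^2(Q)$ from the solution space and define $y(t)$ by duality against $g=0$ only. This is all the boundary null-controllability corollary uses.
\end{enumerate}

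Your uniqueness step has a separate gap. It plugs arbitrary adjoint solutions into the identity, but these are not in $L^2(0,T;H^2(\Omega))$. In the same example, $\Vert\Delta_{x_2}\phi\Vert^2_{L^2(Q_t)}/\Vert g\Vert^2_{L^2(Q_t)}$ grows like $\mu_p^2$: the estimates control $\Vert x_1\Vert^2\Delta_{x_2}\phi$ but not $\Delta_{x_2}\phi$ near $x_1=0$.

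Weakening the test class, as you suggest, only changes the notion of solution. A larger test class makes uniqueness easier, so it says nothing about the definition as stated.

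What works is to restrict to data with finitely many $x_2$-modes. The corresponding adjoint solutions $\sum_{p\le N}\phi_p\otimes\zeta_p$ do lie in the paper's test class, because each $\phi_p\in L^2(0,t;H^2(\Omega_1))\cap H^1(0,t;L^2(\Omega_1))$ and $\zeta_p$ is smooth. Such data are dense in $L^2(Q_t)\times H^1_0(\Omega)$. Testing first with $\phi_T=0$ and then with $g=0$ gives $z=0$ in $L^2(Q_t)$ and then $z(t)=0$.
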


\subsection{Well-posedness of the harmonic-heat equation}

In this section, we consider a smooth function \( V: \mathbb{R}^d \to [0, \infty) \) and let \( \Omega \) be a bounded, smooth, open domain in \( \mathbb{R}^d \). Let \( T > 0 \).

\begin{prop} \label{prop_wellposedness_harmonicheatequation}
For any initial data \( y_0 \in L^2(\Omega) \), and any source term $F\in L^2((0,T)\times \Omega)$, there exists a unique solution
\[
y \in C^0([0,T]; L^2(\Omega)) \cap L^2((0,T); H^1_0(\Omega))
\]
to the problem:
\[
\begin{cases}
\partial_t y - \Delta y + V y = F & \text{in } (0,T) \times \Omega, \\
y = 0 & \text{on } (0,T) \times \partial \Omega, \\
y(0, \cdot) = y_0 & \text{in } \Omega.
\end{cases}
\]
Moreover, the exits a constant $c$ depending on $V$ such that, for all \( t \in [0,T] \),
\[
\| y(t) \|_{L^2(\Omega)} \leq c( \| y_0 \|_{L^2(\Omega)} + \Vert F \Vert_{L^2((0,t)\times \Omega)}).
\]
Finally, if \( y_0 \in H^1_0(\Omega) \), then the solution satisfies
\[
y \in C^0([0,T]; H^1_0(\Omega)) \cap L^2((0,T); H^2(\Omega)) \cap H^1((0,T); L^2(\Omega)).
\]
\end{prop}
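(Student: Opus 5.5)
We would prove the proposition with the classical variational (Lions) approach, justifying all computations on Galerkin approximations. Set $A = -\Delta + V$ with domain $\mathcal{D}(A) = H^2(\Omega)\cap H^1_0(\Omega)$. Since $V$ is smooth and nonnegative on $\overline\Omega$, the bilinear form $a(u,v) = \int_\Omega (\nabla u\cdot\nabla v + V u v)$ on $H^1_0(\Omega)$ is continuous and coercive, by Poincaré's inequality. Hence $A$ is self-adjoint and nonnegative, and it has a Hilbert basis of eigenfunctions $(e_k)$ that lie in $H^2\cap H^1_0$ by elliptic regularity. We will look for approximations $y_n(t) = \sum_{k\le n} c_k(t) e_k$ solving the projected ODE system with data $P_n y_0$ and $P_n F$. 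Existence of these is immediate, and all the energy identities below are legitimate for them.

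\textbf{Step 1: $L^2$ estimate and weak solution.} Multiplying the equation by $y_n$ gives
$$\tfrac12\tfrac{d}{dt}\|y_n\|^2 + \|\nabla y_n\|^2 + \int_\Omega V y_n^2 = \int_\Omega F y_n .$$
Since $V\ge0$, this yields $\frac{d}{dt}\|y_n\|\le \|F(t)\|$ wherever $\|y_n\|\neq0$, so that
$$\|y_n(t)\| \le \|y_0\| + \sqrt{t}\,\|F\|_{L^2((0,t)\times\Omega)}.$$
Integrating the identity in time also gives a uniform bound on $\|y_n\|_{L^2(0,T;H^1_0)}$. The equation then bounds $\partial_t y_n$ in $L^2(0,T;H^{-1})$. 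We would extract weak limits and pass to the limit in the variational formulation. The Lions--Magenes lemma ($y\in L^2H^1_0$, $\partial_t y\in L^2H^{-1}$) gives $y\in C^0([0,T];L^2(\Omega))$ and justifies the energy identity for the limit $y$ itself. The announced estimate then holds with $c=\max(1,\sqrt T)$. For uniqueness, we apply the same energy identity to the difference of two solutions with $y_0=0$ and $F=0$.

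\textbf{Step 2: regularity for $y_0\in H^1_0$.} We multiply the Galerkin system by $-\Delta y_n$; this is legitimate because the $e_k$ lie in $H^2\cap H^1_0$. Writing $\int (Vy_n - F)\Delta y_n \le \tfrac12\|\Delta y_n\|^2 + \|V\|_\infty^2\|y_n\|^2 + \|F\|^2$, we obtain
$$\tfrac12\tfrac{d}{dt}\|\nabla y_n\|^2 + \tfrac12\|\Delta y_n\|^2 \le \|V\|_{L^\infty(\Omega)}^2\|y_n\|^2 + \|F\|^2 .$$
Combined with Step 1, this bounds $y_n$ in $L^\infty(0,T;H^1_0)$ and $\Delta y_n$ in $L^2(0,T;L^2)$. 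Elliptic regularity on the smooth domain $\Omega$ converts the latter into a bound in $L^2(0,T;H^2)$. The equation then gives $\partial_t y_n = \Delta y_n - V y_n + F$ bounded in $L^2(0,T;L^2)$. Passing to the limit places $y$ in $L^2H^2\cap H^1L^2$. Continuity with values in $H^1_0$ follows from the interpolation result $L^2(0,T;H^2\cap H^1_0)\cap H^1(0,T;L^2)\hookrightarrow C^0([0,T];H^1_0)$.

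The only delicate point is the bookkeeping in Step 2. The choice of the eigenbasis of $A$, or of the Dirichlet Laplacian, is what makes the multiplier $-\Delta y_n$ admissible in the Galerkin scheme without boundary terms, since the approximations vanish on $\partial\Omega$. Everything else is standard parabolic theory.
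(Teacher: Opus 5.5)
The paper gives no proof of this proposition to compare with: it is stated in the appendix as classical well-posedness and then used, through the semigroup $e^{-tG_\mu}$, without argument. Your Galerkin/Lions--Magenes argument is correct and is the standard way to supply it.

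Because you use the eigenbasis $(e_k,\lambda_k)$ of $A=-\Delta+V$, your projected system is diagonal. So $y_n$ is simply the truncation of the spectral Duhamel formula
\[
y(t)=\sum_{k}\Big(e^{-\lambda_k t}(y_0,e_k)+\int_0^t e^{-\lambda_k (t-s)}(F(s),e_k)\,ds\Big)e_k ,
\]
which is exactly the semigroup viewpoint the paper adopts elsewhere. Your Step 2 multiplier $-\Delta y_n$ is also the computation the paper itself performs, both in its admissibility estimate for boundary observation and in the lemma controlling $\nabla y(T)$.

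A few points should be made explicit:
\begin{itemize}
\item The Hilbert basis of eigenfunctions relies on the compactness of $H^1_0(\Omega)\hookrightarrow L^2(\Omega)$.
\item The source in the projected system is $P_nF$, so you are using $(P_nF,y_n)=(F,y_n)$ and $\|P_nF\|\le\|F\|$.
\item In Step 2 you need $\|\nabla P_n y_0\|\le C\|y_0\|_{H^1_0}$. This holds because $P_n$ is also the orthogonal projection for the energy product $a(\cdot,\cdot)$, which is equivalent to the $H^1_0$ product.
\item The regular limit from Step 2 is identified with the Step 1 solution by uniqueness.
\end{itemize}

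Finally, your constant $c=\max(1,\sqrt T)$ uses only $V\ge 0$. It is therefore independent of $V$, which is stronger than what the statement claims.
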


\section{Energy-type estimates}
\label{proofdissipationgrushin}
Recall that $\grush$ is the operator defined in \eqref{notationharmonicheat}. We prove an energy estimate for the solutions of \eqref{harmonicheat}, where the initial data lies potentially in a subset of $H^1_0(\mc{B}_R)$.
\begin{prop}
\label{dissipationgrushin}
Let $\lambda >0$, $\mu > 0$ and $\tilde{F} \subset H^1_0(\mc{B}_R)$ stable by $\grush$ such that the eigenvalues of $G_{\mu_{|\tilde{F}}}$ are greater than $\lambda$. For any $(t, s)$ such that $0 \le s\le t$, for any $y_0 \in \tilde{F}$ the solution $y_\mu$ of \eqref{harmonicheat} satisfies
\begin{equation}\label{dissipharmonicheatL2} \norm{y_\mu(t)}{L^2(\mathcal{B}_R)}^2 \le \exp(-2\lambda(t-s))\norm{y_\mu(s)}{L^2(\mathcal{B}_R)}^2,\end{equation}
\begin{equation}\label{dissipharmonicheatH10}\norm{y_\mu(t)}{\mu}^2 \le (1+R^2)\exp(-2\lambda(t-s))\norm{y_\mu(s)}{\mu}^2,\end{equation}
with $\norm{\cdot}{\mu}$ defined in \eqref{munorm}.
\end{prop}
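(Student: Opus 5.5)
The key reduction is to spectral calculus. Since $\tilde F$ is stable by $\grush$ and $\grush$ is self-adjoint with compact resolvent, I would use an orthonormal basis of $\tilde F$ (closed in $L^2$, or its $L^2$-closure, which is invariant as well) made of eigenfunctions $(e_j)$ of $\grush$, with eigenvalues $\lambda_j \ge \lambda$. Writing $y_\mu(s) = \sum_j a_j e_j$, we get $y_\mu(t) = \sum_j a_j e^{-\lambda_j (t-s)} e_j$ for $y_0\in\tilde F$, by uniqueness (\cref{prop_wellposedness_harmonicheatequation}) and semigroup invariance. Parseval then gives
\[
\norm{y_\mu(t)}{L^2(\mc B_R)}^2=\sum_j |a_j|^2 e^{-2\lambda_j(t-s)}\le e^{-2\lambda(t-s)}\norm{y_\mu(s)}{L^2(\mc B_R)}^2,
\]
which is \eqref{dissipharmonicheatL2}. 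Alternatively, the same bound follows from an energy argument: $\frac12\frac{d}{dt}\norm{y_\mu}{L^2}^2=-\scalar{\grush y_\mu}{y_\mu}{}\le -\lambda\norm{y_\mu}{L^2}^2$ by the min-max principle on the invariant subspace, followed by Gronwall.

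For \eqref{dissipharmonicheatH10}, I would compare $\norm{\cdot}{\mu}^2$ with the quadratic form $q(u)=\scalar{\grush u}{u}{}=\norm{\nabla u}{}^2+\mu^2\int\|x\|^2u^2$. The same eigen-expansion gives $q(y_\mu(t))=\sum_j\lambda_j|a_j|^2e^{-2\lambda_j(t-s)}\le e^{-2\lambda(t-s)}q(y_\mu(s))$; equivalently, $\frac{d}{dt}q=-2\norm{\grush y_\mu}{}^2\le -2\lambda q$ on the invariant space.

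The only remaining step is the norm equivalence, which is where the constant $1+R^2$ comes from. One has $q(u)\le \norm{\nabla u}{}^2+\mu^2R^2\norm{u}{}^2\le(1+R^2)\norm{u}{\mu}^2$. The reverse inequality $\norm{u}{\mu}^2\le q(u)$ is the delicate one, since $\mu^2\norm{u}{}^2$ is not directly controlled by $\mu^2\int\|x\|^2u^2$ near the origin. I would handle it with the Hardy-type identity already used in the admissibility computation, $\int u^2 = -\frac{2}{d}\int u\,x\cdot\nabla u$, hence $\mu^2\norm{u}{}^2\le \frac{2\mu}{d}\norm{\mu x u}{}\norm{\nabla u}{}\le \frac{1}{d}q(u)$. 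This gives $\norm{u}{\mu}^2\le(1+\frac1d)q(u)$.

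Combining the two directions yields a constant of the form $(1+\tfrac1d)(1+R^2)$. Getting exactly $1+R^2$ would need a slightly sharper bookkeeping: one could use the $L^2$ decay \eqref{dissipharmonicheatL2} for the $\mu^2\norm{\cdot}{}^2$ part separately and the decay of $q$ for the gradient part, noting $\norm{\nabla u}{}^2\le q(u)$. This norm comparison is the main (though minor) obstacle. The regularity needed to differentiate is supplied by \cref{prop_wellposedness_harmonicheatequation} for $y_0\in H^1_0$, together with a density argument.
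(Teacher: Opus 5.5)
The argument you relegate to a closing remark is correct, and it is exactly the paper's proof. You show that $\Vert y_\mu\Vert_{L^2}^2$ and $q(u)=\Vert\nabla u\Vert^2+\mu^2\Vert xu\Vert^2$ both decay at rate $e^{-2\lambda(t-s)}$. The paper gets the decay of $q$ from $\frac{d}{dt}q=-2\Vert G_\mu y_\mu\Vert^2$ and $q\le\frac{1}{\lambda}\Vert G_\mu y_\mu\Vert^2$ on $\tilde F$; your eigen-expansion is equivalent. Then
\[
\Vert y_\mu(t)\Vert_\mu^2\le q(y_\mu(t))+\mu^2\Vert y_\mu(t)\Vert^2\le e^{-2\lambda(t-s)}\bigl(\Vert\nabla y_\mu(s)\Vert^2+(1+R^2)\mu^2\Vert y_\mu(s)\Vert^2\bigr),
\]
which gives the constant $1+R^2$.

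This step is not an optional sharpening, though. The route you present as primary rests on a false inequality.

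From $d\int u^2=-2\int u\,x\cdot\nabla u$ you correctly get $\mu^2\Vert u\Vert^2\le\frac{2\mu}{d}\Vert\mu xu\Vert\,\Vert\nabla u\Vert$. AM--GM then gives $\mu^2\Vert u\Vert^2\le\frac{\mu}{d}\,q(u)$, not $\frac{1}{d}q(u)$: a factor $\mu$ was dropped.

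No $\mu$-uniform comparison $\Vert u\Vert_\mu^2\le C\,q(u)$ can hold. For the first eigenfunction $u$ of $G_\mu$ one has $q(u)=\nu_\mu\Vert u\Vert^2$ with $\nu_\mu=d\mu+o(1)$ as $\mu\to\infty$ by \cref{lemma_bound_firsteigen}. Hence $\Vert u\Vert_\mu^2/q(u)\ge\mu^2/\nu_\mu\sim\mu/d$.

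This is not an artificial case. In \cref{prop_uniform_observa_wrt_mu_boundary} the proposition is applied with $\tilde F$ the whole space and $\lambda=d\mu$, so the ground state is included.

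Your two-sided norm comparison therefore only yields the $\mu$-dependent constant $(1+\frac{\mu}{d})(1+R^2)$, which is not the statement. The fix is to discard the Hardy step entirely. You only need the one-sided bounds $\Vert\nabla u\Vert^2\le q(u)\le\Vert\nabla u\Vert^2+R^2\mu^2\Vert u\Vert^2$, with the $\mu^2\Vert\cdot\Vert^2$ part of the $\mu$-norm handled by the $L^2$ decay.
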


\begin{proof}
Let $\lambda >0$, $\mu >0$ and $\tilde{F} \subset H^1_0(\mc{B}_R)$ stable by $\grush$ such that the eigenvalues of $G_{\mu_{|\tilde F}}$ are greater than $\lambda$. Let $y_{0,\mu} \in \tilde F$, we denote $y_\mu$ the solution of \eqref{harmonicheat}. By stability of $\tilde F$ by the semigroup we have $y_\mu(t) \in \tilde F$ for any $t \ge 0$.
We first prove the dissipation of the $L^2$-norm. We multiply \eqref{harmonicheat} by $y_\mu$ and integrate by parts to obtain
$$\frac12 \frac{d}{dt}\left(\norm{y_\mu(t)}{L^2(\mathcal{B}_R)}^2\right) = -\scalar{\grush y_\mu(t)}{y_\mu(t)}{L^2(\mathcal{B}_R)}.$$
Since the eigenvalues are greater than $\lambda$ we get
$$\frac12 \frac{d}{dt}\left(\norm{y_\mu(t)}{L^2(\mathcal{B}_R)}^2\right) \le -\lambda\norm{y_\mu(t)}{L^2(\mathcal{B}_R)}^2.$$
We obtain
$$\norm{y_\mu(t)}{L^2(\mathcal{B}_R)}^2 \le e^{-2\lambda(t-s)}\norm{y_\mu(s)}{L^2(\mathcal{B}_R)}^2, \quad \forall 0 \le s\le t.$$
Then we prove the dissipation of the $\mu$-norm. We multiply \eqref{harmonicheat} by $\grush y_\mu$ and integrate by parts to obtain
$$\frac12 \frac{d}{dt}\left(\norm{\nabla y_\mu(t)}{L^2(\mathcal{B}_R)}^2 + \norm{\mu |x| y_\mu(t)}{L^2(\mathcal{B}_R)}^2\right) = -\norm{\grush y_\mu(t)}{L^2(\mathcal{B}_R)}^2.$$
Since the eigenvalues are greater than $\lambda$ we have
\begin{align*}
\norm{\nabla y_\mu(t)}{L^2(\mathcal{B}_R)}^2 &= \scalar{-\Delta y_\mu(t)}{y_\mu(t)}{L^2(\mathcal{B}_R)} \\
&= \scalar{\grush y_\mu(t)}{y_\mu(t)}{L^2(\mathcal{B}_R)} - \norm{\mu |x| y_\mu(t)}{L^2(\mathcal{B}_R)}^2 \\
&\le \frac{1}{\lambda}\norm{\grush y_\mu(t)}{L^2(\mathcal{B}_R)}^2 - \norm{\mu |x| y_\mu(t)}{L^2(\mathcal{B}_R)}^2,
\end{align*}
therefore we get 
$$\frac12 \frac{d}{dt}\!\left(\norm{\nabla y_\mu(t)}{L^2(\mathcal{B}_R)}^2 \!\!+ \norm{\mu |x| y_\mu(t)}{L^2(\mathcal{B}_R)}^2\!\right) \le \!-\lambda \left(\norm{\nabla y_\mu(t)}{L^2(\mathcal{B}_R)}^2\!\! + \norm{\mu |x| y_\mu(t)}{L^2(\mathcal{B}_R)}^2\!\right)\!.$$
We obtain for any $0\le s \le t$,
$$\norm{\nabla y_\mu(t)}{L^2(\mathcal{B}_R)}^2 \!\!+ \norm{\mu |x| y_\mu(t)}{L^2(\mathcal{B}_R)}^2 \le e^{-2\lambda(t-s)} \left(\norm{\nabla y_\mu(s)}{L^2(\mathcal{B}_R)}^2 \!\!+ \norm{\mu |x| y_\mu(s)}{L^2(\mathcal{B}_R)}^2\!\right)\!.$$
Using the fact that $|x| \le R$ we have
 $$\norm{\nabla y_\mu(t)}{L^2(\mathcal{B}_R)}^2 \le e^{-2\lambda(t-s)} \left(\norm{\nabla y_\mu(s)}{L^2(\mathcal{B}_R)}^2 + R^2 \mu^2 \norm{ y_\mu(s)}{L^2(\mathcal{B}_R)}^2\right).$$
therefore combining this estimate with the dissipation of the $L^2$-norm above we obtain the announced estimate
$$\norm{y_\mu(t)}{\mu}^2 \le (1+R^2)\exp\left(-2\lambda(t-s)\right)\norm{y_\mu(s)}{\mu}^2.$$ 
\end{proof}

\begin{lem}[Energy estimate] \label{lemma_standard_energ_est_source}
Let $\Omega$ be a smooth bounded domain of $\R^d$, and $C_\Omega$ the square of the Poincar\'e constant on $\Omega$, defined by
$$
\frac{1}{C_\Omega} = \inf_{\mathcal w \in H^1_0(\Omega)} \frac{\int_\Omega \Vert \nabla \mathcal w \Vert^2}{\int_\Omega \vert \mathcal w \vert^2}.
$$
Let $\omega$ be a smooth  domain such that $\omega\subset \Omega$. 
Let $T>0$, $\Theta : [0,T] \mapsto \R$ smooth such that $\Theta =  1$ on $\left[\frac{T}{3}, 1\right]$. Let $V$
in $L^\infty((0,T) \times \omega)$  be  nonnegative.  Any function $y$ in $C^0([0,T];L^2(\Omega)) \cap L^2((0,T);H^1_0(\Omega))$ such that $\partial_t y - \Delta y + Vy $ belongs to $L^2((0,T) \times \omega)$
satisfies
$$
\int_\omega \vert y(T) \vert^2 \leq \frac{3}{T} \int_{\frac{T}{3}}^{\frac{2T}{3}} \int_\omega \vert y \vert^2 + C_\Omega \int_0^T \int_\omega
\Theta \left\vert \partial_t y - \Delta y + V y \right\vert^2 .
$$
\end{lem}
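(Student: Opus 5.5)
The plan is a standard energy (dissipation) estimate for the forced equation, followed by averaging in time over $[\frac{T}{3},\frac{2T}{3}]$. Throughout, set $f := \partial_t y - \Delta y + V y$. I read the lemma in the way it is used in \cref{prop_observability_from_annulus_interior_subcase} and \cref{prop_observability_from_annulus_exterior_subcase}. There it is applied to cut-off functions ($y_1=\chi_1 y$, $y_2=\chi_2 y$), and the integration domain coincides with the domain on which the function satisfies homogeneous Dirichlet conditions, so that $\Omega$ and $\omega$ can be identified. I also read the hypothesis on $\Theta$ as $\Theta\ge 0$ with $\Theta = 1$ on $[\frac{T}{3},T]$; the ``$1$'' in the statement looks like a typo for $T$. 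By a density argument (\cref{prop_wellposedness_harmonicheatequation}, with $H^1_0$ initial data and smooth sources) I may assume $y \in C^0([0,T];H^1_0)\cap L^2((0,T);H^2)\cap H^1((0,T);L^2)$, so that all integrations by parts are legitimate. Both sides of the inequality are continuous in the natural topologies, so the estimate then passes to the limit.

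\textbf{Step 1 (differential inequality).} Multiply the equation by $y$ and integrate over the domain. Because $y$ vanishes on the boundary, this gives
\[
\frac12\frac{d}{dt}\int |y|^2 + \int \Vert\nabla y\Vert^2 + \int V|y|^2 = \int f\,y .
\]
The potential term is nonnegative since $V\ge 0$, so I drop it. For the right-hand side, Cauchy--Schwarz, the Poincaré inequality $\Vert y\Vert_{L^2}\le \sqrt{C_\Omega}\Vert\nabla y\Vert_{L^2}$ and Young's inequality give
\[
\Big|\int f y\Big| \le \sqrt{C_\Omega}\,\Vert f\Vert_{L^2}\Vert\nabla y\Vert_{L^2}\le \Vert \nabla y\Vert_{L^2}^2+\frac{C_\Omega}{4}\Vert f\Vert_{L^2}^2 .
\]
The gradient term is absorbed, and I obtain $\frac{d}{dt}\Vert y(t)\Vert^2 \le \frac{C_\Omega}{2}\Vert f(t)\Vert^2$.

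\textbf{Step 2 (integration and averaging).} For any $t\in[\frac{T}{3},\frac{2T}{3}]$, integrate the differential inequality from $t$ to $T$. Since $\Theta = 1$ on $[t,T]\subset[\frac{T}{3},T]$ and $\Theta\ge0$, this gives
\[
\Vert y(T)\Vert^2 \le \Vert y(t)\Vert^2 + \frac{C_\Omega}{2}\int_t^T\Vert f\Vert^2 \le \Vert y(t)\Vert^2 + C_\Omega\int_0^T\Theta\,\Vert f\Vert^2 .
\]
Averaging this inequality in $t$ over $[\frac{T}{3},\frac{2T}{3}]$, an interval of length $\frac{T}{3}$, produces exactly the factor $\frac{3}{T}$ in front of $\int_{T/3}^{2T/3}\int|y|^2$, which is the claimed estimate.

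The only delicate point is bookkeeping rather than analysis. The integration by parts in Step 1 requires $y$ to vanish on the boundary of the domain of integration, and the source-term bound requires $\Theta$ to be nonnegative and equal to $1$ on the whole of $[\frac{T}{3},T]$. Both conditions are satisfied in the applications ($\chi_1 y$ vanishes on $\partial\mathcal{B}_{R_1+\varepsilon}$, $\chi_2 y$ vanishes on $\partial\Omega_\varepsilon$, and $\Theta(t)=\theta(t/T)$ with $\theta=1$ on $[\frac13,1]$). I would therefore state these conventions explicitly at the start of the proof.
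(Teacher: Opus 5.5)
Your proof is correct and follows essentially the same route as the paper: multiply by $y$, absorb $\int f y$ into the gradient term via Poincar\'e and Young, integrate the resulting differential inequality over $[t,T]$, and average over $t\in[\frac{T}{3},\frac{2T}{3}]$. The paper runs this argument directly on $\omega$ and concludes with $C_\omega\le C_\Omega$. That step tacitly uses $y(t)\in H^1_0(\omega)$ together with $\Theta\ge 0$ and $\Theta=1$ on $[\frac{T}{3},T]$; without the first condition the statement fails, e.g.\ for heat entering $\omega$ from a source in $\Omega\setminus\overline{\omega}$ switched on after time $\frac{2T}{3}$. So the conventions you make explicit are exactly the right ones, and your Young step even gives the slightly better constant $\frac{C_\Omega}{2}$.
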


\begin{proof} We denote by \( C_\omega \) the Poincar\'e constant on \( \omega \), defined analogously to \( C_\Omega \).

Let \( F = \partial_t y - \Delta y + V y \). By density arguments, it suffices to prove the inequality for any \( y \) in
\[
C^0([0,T]; H^1_0(\Omega)) \cap L^2((0,T); H^2(\Omega)) \cap H^1((0,T); L^2(\Omega)),
\]
satisfying \( \partial_t y - \Delta y + V y = F \).

We compute:
$$
\int_\omega F y = \int_\omega \left( \partial_t y - \Delta y + V y\right) y
= \frac{1}{2} \frac{d}{dt} \int_\omega \vert y \vert^2 + \int_\omega \left( \Vert \nabla y \Vert^2 + V \vert y \vert^2\right).
$$
We have
$$
\left\lvert \int_\omega Fy \right\vert \leq \frac{C_\omega}{2} \int_\omega \vert F\vert^2 + \frac{1}{2 C_\omega} \int_\omega \vert y \vert^2
\leq \frac{C_\omega}{2} \int_\Omega \vert F\vert^2 + \frac{1}{2} \int_\omega \Vert \nabla y \Vert^2.
$$
We deduce that
$$
\frac{d}{dt} \int_\omega \vert y \vert^2 \leq C_\omega \int_\omega \vert F \vert^2,
$$
which immediately implies, for all $0\leq t \leq T$, 
$$
\int_\omega \vert y(T) \vert^2 \leq \int_\omega \vert y(t)\vert^2 + C_\omega \int_t^T  \int_\omega \vert F \vert^2 .
$$
Integrating this inequality for $t$ in $\left[ \frac{T}{3}, \frac{2T}{3} \right]$ gives
$$
\frac{T}{3} \int_\omega \vert y(T) \vert^2 \leq \int_{\frac{T}{3}}^{\frac{2T}{3}} \int_\omega \vert y \vert^2 
+ C_\omega\int_{\frac{T}{3}}^{\frac{2T}{3}} \int_t^T \int_\omega \vert F \vert^2
\leq \int_{\frac{T}{3}}^{\frac{2T}{3}} \int_\omega \vert y \vert^2  + C_\omega \frac{T}{3} \int_{\frac{T}{3}}^T  \int_\omega \vert F \vert^2.
$$
The result follows from the definition of $\Theta$, as $C_\omega \leq C_\Omega$.
\end{proof}

\begin{lem} \label{lemma_estimate_nabla_T}
Let $\omega$ be a smooth bounded open domain of $\R^d$, and $V : \overline \omega \mapsto \R$ a smooth positive function.
Then for all $T>0$, for all $y$ in $C^0([0,T]; H^1_0(\Omega)) \cap L^2((0,T); H^2(\Omega)) \cap H^1((0,T); L^2(\Omega)),
$ such that $\partial_t y - \Delta y + V y = 0$ in $(0,T) \times \omega$,
the following estimates holds:
$$
\int_\omega \Vert \nabla y(T) \Vert^2 \leq \frac{3}{T} \int_{\frac{T}{3}}^{\frac{2T}{3}} \int_\omega \Vert \nabla y \Vert^2 +
2 \Vert \Delta V \Vert_\infty \int_{\frac{T}{3}}^{\frac{2T}{3}} \int_\omega \vert y \vert^2.
$$
\end{lem}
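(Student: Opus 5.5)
The idea is to show that $\int_\omega \Vert \nabla y(t)\Vert^2$ can grow only at a rate controlled by $\Vert \Delta V\Vert_\infty \int_\omega |y|^2$, that the $L^2$ norm is nonincreasing, and then to average over $t\in[\frac T3,\frac{2T}3]$, as in the proof of \cref{lemma_standard_energ_est_source}. I read the statement with $y$ vanishing on $\partial\omega$ (i.e.\ $\omega=\Omega$, or $y\in H^1_0(\omega)$ for a.e.\ $t$), as in its applications. This is needed because the integrations by parts below produce boundary terms on $\partial\omega$. Those terms vanish only if $y=0$ and $\partial_t y=0$ there.

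\textbf{Step 1 (gradient energy identity).} For $y$ in the stated regularity class, write $\partial_t y=\Delta y - Vy$ in $\omega$. Since $\partial_t y$ vanishes on the boundary, one integration by parts gives
$$
\frac{d}{dt}\int_\omega \Vert\nabla y\Vert^2 = -2\int_\omega \Delta y\,\partial_t y = -2\int_\omega |\Delta y|^2 + 2\int_\omega V y\,\Delta y .
$$
For the last term, integrate by parts again using $y=0$ on the boundary:
$$
2\int_\omega V y\Delta y = -2\int_\omega V\Vert\nabla y\Vert^2 - \int_\omega \nabla V\cdot\nabla (y^2) = -2\int_\omega V\Vert\nabla y\Vert^2 + \int_\omega \Delta V\, y^2 .
$$
Since $V\ge 0$, dropping the nonpositive terms yields
$$
\frac{d}{dt}\int_\omega\Vert\nabla y\Vert^2 \le \Vert\Delta V\Vert_\infty\int_\omega |y|^2 .
$$

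\textbf{Step 2 ($L^2$ monotonicity).} Multiplying the equation by $y$ gives $\frac{d}{dt}\int_\omega |y|^2 = -2\int_\omega(\Vert\nabla y\Vert^2+V|y|^2)\le 0$. Hence $s\mapsto\Vert y(s)\Vert_{L^2(\omega)}$ is nonincreasing.

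\textbf{Step 3 (averaging).} Fix $t\in[\frac T3,\frac{2T}3]$ and integrate Step 1 on $[t,T]$. Step 2 bounds each $\Vert y(s)\Vert^2_{L^2(\omega)}$ with $s\ge t$ by $\Vert y(t)\Vert^2_{L^2(\omega)}$, and $T-t\le \frac{2T}{3}$, so
$$
\int_\omega \Vert\nabla y(T)\Vert^2\le \int_\omega\Vert\nabla y(t)\Vert^2 + \tfrac{2T}{3}\Vert\Delta V\Vert_\infty\int_\omega |y(t)|^2 .
$$
Averaging in $t$ over $[\frac T3,\frac{2T}3]$, i.e.\ multiplying by $\frac3T$ and integrating, gives
$$
\int_\omega \Vert\nabla y(T)\Vert^2 \le \frac3T\int_{T/3}^{2T/3}\int_\omega\Vert\nabla y\Vert^2 + 2\Vert\Delta V\Vert_\infty\int_{T/3}^{2T/3}\int_\omega|y|^2,
$$
which is exactly the claimed estimate.

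The only delicate point is justifying the boundary terms in Step 1. For $y\in C^0([0,T];H^1_0)\cap L^2(H^2)\cap H^1(L^2)$ these computations hold for a.e.\ $t$, and the time derivative of $\int\Vert\nabla y\Vert^2$ is meaningful in $L^1(0,T)$. If needed, I would argue by density with smooth solutions, as in the proof of \cref{lemma_standard_energ_est_source}.
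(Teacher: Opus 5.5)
Your proof is correct and follows the paper's argument: the same $-\Delta y$ multiplier identity giving $\frac{d}{dt}\int_\omega \Vert\nabla y\Vert^2 \le \Vert \Delta V\Vert_\infty \int_\omega |y|^2$, the same $L^2$ monotonicity, and averaging over $[\frac{T}{3},\frac{2T}{3}]$. The only difference is that the paper bounds $\int_{T/3}^{T}\int_\omega |y|^2 \le 2\int_{T/3}^{2T/3}\int_\omega |y|^2$ rather than using your pointwise bound $(T-t)\Vert y(t)\Vert^2$, which gives the same constant; your caveat about boundary terms on $\partial\omega$ is well taken, since the paper also integrates by parts over $\omega$ without boundary terms and only applies the lemma with $\omega=\mathcal{B}_R=\Omega$.
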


\begin{proof}
First, performing the same computation as in the previous proof --- this time with \( F = 0 \)  --- we find that the function
$\displaystyle  t \in (0, T) \mapsto \int_\omega |y|^2 $
is nonincreasing. The following estimate is then easily derived:
$$
\int_{\frac{T}{3}}^T \int_\omega \vert y\vert^2 \leq 2 \int_{\frac{T}{3}}^{\frac{2T}{3}} \int_\omega \vert y \vert^2.
$$

Now, we observe that
\begin{multline*}
0 = - \int_\omega \left( \partial_t y - \Delta y + V y \right) \Delta y
 \\= \frac{1}{2} \frac{d}{dt} \int_\omega \Vert \nabla y \Vert^2 +  \int_\omega \vert \Delta y \vert^2
 +  \int_\omega V \Vert \nabla y \Vert^2 + \frac{1}{2} \int_\omega \nabla V\cdot  \nabla \vert y\vert^2
 \\ = \frac{1}{2} \frac{d}{dt} \int_\omega \Vert \nabla y \Vert^2 +  \int_\omega \vert \Delta y \vert^2
 + \int_\omega V \Vert \nabla y \Vert^2 - \frac{1}{2}\int_\omega \Delta V \vert y \vert^2.
\end{multline*}
It implies that 
$$
\frac{d}{dt} \int_\omega \Vert \nabla y \Vert^2 \leq \int_\omega \vert \Delta V  \vert \vert y \vert^2,
$$
which immediately implies that for all $0\leq \frac{T}{3} \leq t \leq T$,
\begin{multline*}
\int_{\omega} \Vert \nabla y(T) \Vert^2 \leq \int_\omega \Vert\nabla y(t) \Vert^2 + \int_t^T \int_\omega \vert \Delta V \vert \vert y \vert^2
\\ \leq \int_\omega \Vert\nabla y(t) \Vert^2 + \Vert \Delta V \Vert_\infty \int_{\frac{T}{3}}^{T} \int_\omega \vert y \vert^2
\\ \leq \int_\omega \Vert\nabla y(t) \Vert^2  + 2 \Vert \Delta V \Vert_\infty \int_{\frac{T}{3}}^{\frac{2T}{3}} \vert y \vert^2.
\end{multline*}
\end{proof}

\begin{lem}[Caccioppoli-type inequality] \label{lemma_obsnablau->u}
Let \( T > 0 \), and let \( \vartheta : [0, T] \to [0, \infty) \) be a smooth function such that \( \vartheta(0) = 0 \).
Let \( \mathcal{O} \) and \( \omega \) be two bounded open subsets of \( \mathbb{R}^d \) with \( \overline{\mathcal{O}} \subset \omega \), and let \( \chi \in C^\infty_c(\omega) \) be a nonnegative function such that \( \chi \equiv 1 \) in \( \mathcal{O} \).

Then, for all \( \mu \geq 0 \) and for all smooth function \( y \) satisfying
\[
\partial_t y - \Delta y + \mu^2 \|x\|^2 y = 0 \quad \text{in } (0, T) \times \omega,
\]
the following inequality holds:
\[
\int_0^T \int_{\mathcal{O}} \vartheta \| \nabla y \|^2 \, dx \, dt \leq \|\vartheta\|_{W^{1,\infty}} \|\chi\|_{W^{2,\infty}} \int_0^T \int_{\omega} |y|^2 \, dx \, dt.
\]

\end{lem}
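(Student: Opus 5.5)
This is a Caccioppoli argument in space, weighted in time. We multiply the equation by $\vartheta \chi^2 y$ (or by $\vartheta\chi y$, which gives the stated norms more directly) and integrate over $(0,T)\times\omega$. Since $\chi$ has compact support in $\omega$, no boundary terms in space appear. Since $\vartheta(0)=0$, the time-boundary term at $t=0$ vanishes, and the term at $t=T$ has a favorable sign. The potential term $\mu^2\Vert x\Vert^2\vartheta\chi y^2$ is nonnegative, so we discard it. This is why the estimate is uniform in $\mu$.

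Concretely, we use the test function $\vartheta\chi y$:
\begin{equation*}
0=\int_0^T\!\!\int_\omega (\partial_t y-\Delta y+\mu^2\Vert x\Vert^2 y)\,\vartheta\chi y .
\end{equation*}
The time term equals $\frac12\int_\omega \vartheta(T)\chi|y(T)|^2-\frac12\int_0^T\!\int_\omega \vartheta'\chi|y|^2$. Integrating the Laplacian term by parts gives $\int\vartheta\chi\Vert\nabla y\Vert^2+\int\vartheta y\nabla\chi\cdot\nabla y$, and we rewrite the second piece as $\frac12\int\vartheta\nabla\chi\cdot\nabla(y^2)=-\frac12\int\vartheta\Delta\chi\,|y|^2$. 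Dropping the nonnegative terms ($\vartheta(T)\ge0$, $\chi\ge0$, potential), we obtain
\begin{equation*}
\int_0^T\!\!\int_\omega\vartheta\chi\Vert\nabla y\Vert^2\le\frac12\int_0^T\!\!\int_\omega\left(|\vartheta'|\chi+\vartheta|\Delta\chi|\right)|y|^2 .
\end{equation*}
Choosing the test function $\vartheta\chi y$ rather than $\vartheta\chi^2 y$ avoids any absorption step: the cross term becomes exactly a zeroth-order term via $\nabla(y^2)$. Only $\Delta\chi$ appears, not $|\nabla\chi|^2/\chi$.

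To conclude, we use $\chi\equiv1$ on $\mathcal O$ and $\chi\ge0$ to bound the left side from below by $\int_0^T\!\int_{\mathcal O}\vartheta\Vert\nabla y\Vert^2$. On the right we bound $\frac12(|\vartheta'|\chi+\vartheta|\Delta\chi|)\le\Vert\vartheta\Vert_{W^{1,\infty}}\Vert\chi\Vert_{W^{2,\infty}}$, which is a crude but valid estimate (with $\Vert\cdot\Vert_{W^{k,\infty}}$ taken as the sum or max of derivative sup-norms, so each factor dominates). This gives the claim.

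The only delicate point is justification. The lemma is stated for smooth $y$, so the integrations by parts are legitimate, and the $t=T$ boundary term is harmless because of its sign. If $\vartheta$ were not zero at $0$, the term $-\frac12\int\vartheta(0)\chi|y(0)|^2$ would spoil the bound. The hypothesis $\vartheta(0)=0$ is exactly what removes it, and I expect no real obstacle beyond tracking these signs.
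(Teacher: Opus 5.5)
Your proposal is correct and matches the paper's proof step for step. Both test the equation with $\vartheta\chi y$, use $\vartheta(0)=0$ to remove the $t=0$ term, convert the cross term $\vartheta y\,\nabla\chi\cdot\nabla y$ into $-\frac12\vartheta\Delta\chi|y|^2$, discard the nonnegative $\vartheta(T)$ and potential terms, and use $\chi\equiv1$ on $\mathcal O$. One small caveat on your final constant: under a max-type convention for $\Vert\chi\Vert_{W^{2,\infty}}$, $\Vert\Delta\chi\Vert_\infty$ is only bounded by $d$ times that norm, so the clean constant needs the sum convention. This is harmless, and the paper does not address it either.
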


\begin{proof}
We have
$$
\int_0^T \int_{\mathcal{O}} \vartheta \Vert \nabla y \Vert^2 dx = \int_0^T \int_{\mathcal{O}} \vartheta \chi\Vert \nabla y \Vert^2 dx
\leq \int_0^T \int_\omega\vartheta \chi\Vert \nabla y \Vert^2 dx.
$$
We obviously have
\begin{multline*}
0 = \int_0^T \int_\omega (\partial_t y - \Delta y + \mu^2 \Vert x \Vert^2 y) \vartheta \chi y \\
= \int_0^T \int_\omega\vartheta \chi \partial_t y y - \int_0^T \int_\omega \vartheta \chi \Delta y y +\int_0^T \int_\omega \vartheta \chi \mu^2 \Vert x \Vert^2\vert y \vert^2.
\end{multline*}
First, we have
$$
\int_0^T \int_\omega\vartheta \chi \partial_t y y = \frac{1}{2} \int_0^T\int_\omega \vartheta \chi \partial_t \vert y \vert^2
= \frac{\vartheta(T)}{2} \int_\omega \chi \vert y(T) \vert^2 - \frac{1}{2} \int_0^T \int_\omega \vartheta ' \chi \vert y \vert^2.
$$
Second, we have
\begin{multline*}
-\int_0^T \int_\omega \vartheta \chi \Delta y y = \int_0^T \int_\omega \vartheta \chi \Vert \nabla y \Vert^2 + \frac{1}{2} \int_0^T \int_\Omega
\vartheta \nabla \chi \cdot \nabla \vert y \vert^2 \\ 
= \int_0^T \int_\omega \vartheta \chi \Vert \nabla y \Vert^2
 - \frac{1}{2} \int_0^T \int_\omega \vartheta \Delta \chi \, \vert y \vert^2.
\end{multline*}
Consequently, we obtain
\begin{multline*}
\int_0^T \int_\omega \vartheta \chi \Vert \nabla y \Vert^2 + \frac{\vartheta(T)}{2} \int_\omega \chi \vert y(T) \vert^2  +\int_0^T \int_\omega \vartheta \chi \mu^2 \Vert x \Vert^2 \vert y \vert^2 \\
=  \frac{1}{2} \int_0^T \int_\omega \left( \vartheta ' \chi + \vartheta \Delta \chi\right) \vert y \vert^2.
\end{multline*}
The result follows.
\end{proof}

\section{Decomposition in a basis of eigenfuctions of the Laplace-Beltrami operator on the sphere}
\label{prooflemH10M}
The eigenvalues of the Laplace-Beltrami operator $-\Delta_{\sphere}$ on the sphere in dimension $d \ge 2$ are $(m(m+d-2))_{m \in \N}$.\newline
For any $m \in \N$, the eigenvalue is of multiplicity
\begin{equation} \label{defbeta} \beta(m,d) := \begin{pmatrix}
        m+d-1 \\
        d-1
    \end{pmatrix} - \begin{pmatrix}
        m+d-3 \\
        d-1
    \end{pmatrix} ~\text{if}~m \ge 1 \text{ and } \beta(0,d) = 1.\end{equation} 
We denote \begin{equation} \label{defymk} \{Y_{mk}~;~m\in \N \quad k \in \llbracket 1, \beta(m,d) \rrbracket \} \end{equation} the associated eigenfunctions which form an orthonormal basis of $L^2(\sphere)$. With these notations we have the identity
    \begin{equation} \label{laplacienvp} -\Delta_{\sphere}Y_{mk} = m(m+d-2)Y_{mk}, \quad \forall m \in \N,~\forall k \in \llbracket 1, \beta(m,d) \rrbracket.\end{equation}
These results can be found in \cite[Chapter III, § 22]{Shubin2001}.
\newline 
We prove that any function in $L^2(\mc{B}_R)$ or $H^1_0(\mc{B}_R)$ can be decomposed as a sum of functions of the radial variable times $Y_{mk}$, as stated in the following lemma.
\begin{lem}
\label{lemme H10M}
    For any $y \in L^2(\mc{B}_R)$ (resp. $y \in H^1_0(\mc{B}_R)$), 
    $$y = \sum_{\underset{k \in \llbracket1, \beta(m,d)\rrbracket}{m =0}}^{\infty}\scalar{y}{Y_{mk}}{L^2(\sphere)}Y_{mk}.$$
\end{lem}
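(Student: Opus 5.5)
We prove \cref{lemme H10M} by reading the identity as an expansion in the angular variable: in polar coordinates $x=r\theta$, the coefficient $y_{mk}(r):=\scalar{y(r\cdot)}{Y_{mk}}{L^2(\sphere)}$ is a function of $r$. The claim is that $\sum y_{mk}(r)Y_{mk}(\theta)$ converges to $y$ in $L^2(\mc B_R)$, and in $H^1_0(\mc B_R)$ when $y\in H^1_0(\mc B_R)$.

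\textbf{The $L^2$ case.} The polar change of variables gives the isometry
\[
L^2(\mc B_R)\simeq L^2\bigl((0,R),r^{d-1}dr;L^2(\sphere)\bigr).
\]
By Fubini, for a.e. $r$ the slice $\theta\mapsto y(r\theta)$ lies in $L^2(\sphere)$. Since $(Y_{mk})$ is a Hilbert basis of $L^2(\sphere)$, Parseval gives for a.e. $r$
\[
\Bigl\|y(r\cdot)-\sum_{m\le N,k}y_{mk}(r)Y_{mk}\Bigr\|_{L^2(\sphere)}^2=\sum_{m>N,k}|y_{mk}(r)|^2 .
\]
This quantity is bounded by $\|y(r\cdot)\|^2_{L^2(\sphere)}$, which is integrable against $r^{d-1}dr$, and it tends to $0$ pointwise in $r$. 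Dominated convergence in $r$ then gives convergence in $L^2(\mc B_R)$. The same computation shows that the functions $x\mapsto y_{mk}(\|x\|)Y_{mk}(x/\|x\|)$ are mutually orthogonal in $L^2(\mc B_R)$.

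\textbf{The $H^1_0$ case.} Let $P_N$ be the truncation operator $y\mapsto\sum_{m\le N,k}y_{mk}Y_{mk}$. The plan is to show that $P_N$ commutes with $\nabla$ in a suitable sense, or more simply that it is an orthogonal projection for the Dirichlet form. First take $y\in C_c^\infty(\mc B_R)$. In polar coordinates
\[
\int_{\mc B_R}\Vert\nabla y\Vert^2=\int_0^R\!\!\int_{\sphere}\Bigl(|\partial_r y|^2+r^{-2}|\nabla_{\sphere}y|^2\Bigr)r^{d-1}\,d\theta\,dr .
\]
Differentiating under the integral gives $\partial_r y_{mk}=\scalar{\partial_r y}{Y_{mk}}{}$, and Green's formula on $\sphere$ together with \eqref{laplacienvp} gives
\[
\int_{\sphere}|\nabla_{\sphere}y|^2=\sum m(m+d-2)|y_{mk}|^2 .
\]
Hence
\[
\|\nabla y\|^2=\sum_{m,k}\int_0^R\Bigl(|y_{mk}'|^2+\tfrac{m(m+d-2)}{r^2}|y_{mk}|^2\Bigr)r^{d-1}dr,
\]
and the same formula holds with the sum restricted to $m\le N$ for $P_Ny$. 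Applying it to $y-P_Ny$ yields $\|\nabla(y-P_Ny)\|\to0$ as a tail of a convergent series, and also $\|\nabla P_Ny\|\le\|\nabla y\|$.

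\textbf{Density, and the expected obstacle.} By this contraction property, $P_N$ extends continuously to $H^1_0(\mc B_R)$ with norm at most $1$, uniformly in $N$. An $\varepsilon/3$ argument with a smooth approximant of $y$ then extends the convergence $P_Ny\to y$ to all of $H^1_0(\mc B_R)$.

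The delicate step is justifying the regularity of each term near the origin. We need $y_{mk}(\|x\|)Y_{mk}(x/\|x\|)$ to lie in $H^1_0$ even though $Y_{mk}(x/\|x\|)$ is singular at $0$. This is handled by the $L^2$ convergence already established together with the uniform gradient bound: the partial sums are bounded in $H^1$, so their $L^2$ limit lies in $H^1_0$ by weak compactness. Alternatively, the terms can be written as $y_{mk}(r)r^{-m}\cdot r^mY_{mk}$, where $r^mY_{mk}$ is a harmonic polynomial, but we plan to rely on the weak compactness argument.
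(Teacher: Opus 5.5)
Your $L^2$ step is the same as the paper's: Parseval on each sphere, then dominated convergence in $r$. For $H^1_0$ you take a different route.

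The paper works directly with an arbitrary $y\in H^1_0(\mathcal{B}_R)$, with no density step. For a.e.\ $r$ the slice $y(r,\cdot)$ lies in $H^1(\mathbb{S}^{d-1})$, where $Y_{mk}/\sqrt{m(m+d-2)+1}$ is an orthonormal basis. The paper then applies dominated convergence separately to the angular term $r^{d-3}|\nabla_{\partial}(\cdot)|^2$ and the radial term $r^{d-1}|\partial_r(\cdot)|^2$ of the polar energy.

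You instead prove an exact Parseval identity for the Dirichlet energy on $C^\infty_c(\mathcal{B}_R)$. This gives at once the tail convergence and the uniform bound $\|\nabla P_N y\|\le\|\nabla y\|$, and you then pass to $H^1_0$ by density. The gain is that, for smooth $y$, the commutation $\partial_r y_{mk}=\langle \partial_r y, Y_{mk}\rangle$ is just differentiation under the integral; the paper uses it for general $H^1_0$ functions without comment. The cost is the extra $\varepsilon/3$ step.

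\textbf{Gap: the treatment of the origin.} As written, this step does not work. For fixed $N$ and smooth $y$, your polar computation only controls the pointwise gradient of $P_N y$ on $\mathcal{B}_R\setminus\{0\}$. You need to read $\|\nabla P_N y\|\le \|\nabla y\|$ as an $H^1_0(\mathcal{B}_R)$ bound, both to extend $P_N$ to $H^1_0$ and to run the $\varepsilon/3$ argument. For that you must know that $P_N y\in H^1_0(\mathcal{B}_R)$ and that its distributional gradient on the whole ball equals that pointwise gradient.

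Your weak compactness argument does not give this, for three reasons:
\begin{itemize}
\item it is applied to the partial sums as $N\to\infty$;
\item it presupposes that they are already bounded in $H^1(\mathcal{B}_R)$;
\item its conclusion, that the $L^2$ limit $y$ lies in $H^1_0$, is already known.
\end{itemize}

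\textbf{Fix.} Use compactness at fixed $N$ on a cut-off near $0$. The function $P_N y$ is smooth on $\overline{\mathcal{B}_R}\setminus\{0\}$ and vanishes near $\partial\mathcal{B}_R$. It is bounded, since $|y_{mk}(r)|\le \|y\|_\infty\|Y_{mk}\|_{L^1(\mathbb{S}^{d-1})}$. Its gradient on the punctured ball is in $L^2$ by your identity restricted to $m\le N$.

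Take $\eta_\varepsilon=\eta(\cdot/\varepsilon)$ with $\eta=0$ on $\mathcal{B}_1$ and $\eta=1$ outside $\mathcal{B}_2$. Then $\eta_\varepsilon P_N y\in C^\infty_c(\mathcal{B}_R)$ and $\|P_N y\,\nabla\eta_\varepsilon\|_{L^2}\le \|P_N y\|_\infty\,\varepsilon^{(d-2)/2}\|\nabla\eta\|_{L^2}$. This stays bounded precisely because $d\ge 2$.

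Letting $\varepsilon\to0$ gives $P_N y\in H^1_0(\mathcal{B}_R)$ with the expected gradient. Indeed, $\eta_\varepsilon\nabla P_N y\to\nabla P_N y$ strongly in $L^2$. The term $P_N y\,\nabla\eta_\varepsilon$ tends weakly to $0$, being bounded in $L^2$ and supported in $\mathcal{B}_{2\varepsilon}$.

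Your alternative through the harmonic polynomials $r^mY_{mk}$ can also be made to work. It requires controlling both $y_{mk}$ and $y_{mk}'$ near $r=0$, which the cut-off argument avoids.

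The paper's proof also assumes this membership without comment. Once you repair this step, your argument is complete and, on this point, more careful than the original.
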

where the notation $$\sum_{\underset{k \in \llbracket1, \beta(m,d)\rrbracket}{m=0}}^\infty\scalar{y}{Y_{mk}}{L^2(\sphere)}Y_{mk}$$ means 
$$ (r,\theta) \in (0,R) \times \sphere \mapsto \sum_{\underset{k \in \llbracket1, \beta(m,d)\rrbracket}{m=0}}^\infty \scalar{y(r,\cdot)}{Y_{mk}}{L^2(\sphere)}Y_{mk}(\theta). $$

\begin{proof}
Since the proof for $L^2(\mc B_R)$ is easier than the proof for $H_0^1(\mathcal{B}_R)$, and uses the same argument (Lebesgue's dominated convergence theorem), we give the details only for the latter. 
Let $y \in H_0^1(\mathcal{B}_R)$, we make the identification between $y$, and the corresponding function in polar coordinates $\hat{y} : (r,\theta) \in (0,R) \times \sphere \mapsto \hat{y}(r, \theta)$ and from now on we use the notation $y$ for both.
First we recall that to compute the $H^1_0$-norm in polar coordinates we have the formula $$\int_{\mathcal{B}_R} |\nabla y|^2 dx = \int_0^R \int_{\sphere} r^{d-1}(\partial_r y)^2 + r^{d-3} |\nabla_{\partial} y|^2 d\sigma dr,$$ where $\nabla_{\partial}$ denotes the gradient with respect to the variable $\theta \in \sphere$.
We use it to prove the convergence of the series in $H^1_0(\mathcal{B}_R)$. First we observe that for a.e $r \in (0,R)$, $y(r, \cdot) \in H^1(\sphere)$. Therefore, as $\left(\dfrac{Y_{mk}}{\sqrt{m(m+d-2) + 1}}\right)_{\underset{k \in \llbracket1, \beta(m,d)\rrbracket}{m \ge 0}}$ is an orthonormal basis of $ H^1(\sphere)$ for the usual $H^1 $-scalar product we have for a.e $r \in (0,R)$,
    $$ y(r,\cdot) = \sum_{\underset{k \in \llbracket1, \beta(m,d)\rrbracket}{m = 0}}^{+ \infty}\scalar{y(r,\cdot)}{\dfrac{Y_{mk}}{\sqrt{m(m+d-2) + 1}}}{H^1(\sphere)}\dfrac{Y_{mk}}{\sqrt{m(m+d-2) + 1}}.$$
    By Green's formula, and the fact that $-\Delta_{\sphere} Y_{mk} = m(m+d-2) Y_{mk}$ we obtain for a.e $r \in (0,R)$
    $$ y(r,\cdot) = \sum_{\underset{k \in \llbracket1, \beta(m,d)\rrbracket}{m = 0}}^{+ \infty}\scalar{y(r,\cdot)}{Y_{mk}}{L^2(\sphere)}Y_{mk}. $$
    Therefore, by Lebegue's dominated convergence theorem we are able to show that
    $$ \int_0^R \int_{\sphere} r^{d-3} \left|\nabla_{\partial} \left( \sum_{\underset{k \in \llbracket1, \beta(m,d)\rrbracket}{m = 0}}^M \scalar{y}{Y_{mk}}{L^2(\sphere)}Y_{mk} -y\right) \right|^2 d\sigma dr \longrightarrow 0 \text{ as } M \rightarrow \infty.$$
 Indeed we have for a.e $r \in (0,R)$ $$ r^{d-3}\int_{\sphere}\left|\nabla_{\partial} \left( \sum_{\underset{k \in \llbracket1, \beta(m,d)\rrbracket}{m = 0}}^M \scalar{y(r,\cdot)}{Y_{mk}}{L^2(\sphere)}Y_{mk} -y(r,\cdot)\right) \right|^2 d\sigma \longrightarrow 0  \text{ as } M \rightarrow \infty,$$ and the domination
 \begin{multline*} r^{d-3}\int_{\sphere}\left|\nabla_{\partial} \left( \sum_{\underset{k \in \llbracket1, \beta(m,d)\rrbracket}{m = 0}}^M \scalar{y(r,\cdot)}{Y_{mk}}{L^2(\sphere)}Y_{mk} -y(r,\cdot)\right) \right|^2 d\sigma \\
 \le \underset{\text{integrable on }(0,R)}{\underbrace{r^{d-3} \int_{\sphere}|\nabla_{\partial} y(r,\cdot)|^2 d\sigma}}.\end{multline*}
 Moreover $$\displaystyle \int_0^R \int_{\sphere} r^{d-1}\left| \partial_{r} \left(\sum_{\underset{k \in \llbracket1, \beta(m,d)\rrbracket}{m = 0}}^M \scalar{y}{Y_{mk}}{L^2(\sphere)}Y_{mk} -y\right)\right| ^2 d\sigma dr \longrightarrow 0 \text{ as } M \rightarrow \infty.$$
 Indeed since $\partial_{r}y(r,\cdot) \in L^2(\sphere)$ we have for a.e $r \in (0,R)$,
 $$ r^{d-1}\int_{\sphere}\left|\sum_{\underset{k \in \llbracket1, \beta(m,d)\rrbracket}{m = 0}}^M \scalar{\partial_{r}y(r,\cdot)}{Y_{mk}}{L^2(\sphere)}Y_{mk} -\partial_{r}y(r,\cdot)\right|^2 d\sigma \longrightarrow 0  \text{ as } M \rightarrow \infty, $$
and the domination 
 \begin{multline*}r^{d-1}\int_{\sphere}\left|\sum_{\underset{k \in \llbracket1, \beta(m,d)\rrbracket}{m = 0}}^M \scalar{\partial_{r}y(r,\cdot)}{Y_{mk}}{L^2(\sphere)}Y_{mk} -\partial_{r}y(r,\cdot)\right|^2 d\sigma \\
  \le \underset{\text{integrable on }(0,R)}{\underbrace{r^{d-1}\int_{\sphere} |\partial_{r}y(r,\cdot)|^2 d\sigma}}.\end{multline*}
\end{proof}

\section{Technical results}

\label{appendix_technical_results}

\begin{lem} \label{lemma_existence_a}
Let \(\alpha \in (0,1)\) and \(\mathcal{c} > 0\). There exists a constant \(a > 0\) such that, for all \(x \geq 0\),
\[
\sinh(2 a x^\alpha) \geq \mathcal{c} x.
\]

Moreover, the infimum of all such \(a\) satisfies
\[
\inf \left\{ a > 0 \mid \sinh(2 a x^\alpha) \geq \mathcal{c} x, \ \forall x > 0 \right\} \to \infty \quad \text{as} \quad \alpha \to 0.
\]
\end{lem}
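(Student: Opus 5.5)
We prove the two claims separately: the existence of $a$ by elementary growth comparisons, and the blow-up of the infimum by testing the inequality at a point where $\sinh$ is nearly linear.

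\textbf{Existence of $a$.} Since $\sinh(u)\ge u$ for $u\ge 0$, for $x\le 1$ it suffices to have $2a x^\alpha \ge \mathcal{c} x$. Because $x^{\alpha}\ge x$ on $[0,1]$, this holds as soon as $a\ge \mathcal{c}/2$. For $x\ge 1$, we use $\sinh(u)\ge u^k/k!$ for every integer $k\ge 1$ and $u\ge 0$. Pick $k$ with $k\alpha\ge 1$; then $\sinh(2ax^\alpha)\ge (2a)^k x^{k\alpha}/k! \ge (2a)^k x/k!$ for $x\ge1$. This is at least $\mathcal{c}x$ once $(2a)^k\ge k!\,\mathcal{c}$. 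Taking $a$ as the larger of the two thresholds gives the inequality for all $x\ge 0$.

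\textbf{Blow-up as $\alpha\to 0$.} Let $a_\alpha$ denote the infimum. The set of admissible $a$ is closed, since the inequality is preserved under limits, so $a_\alpha$ is itself admissible. The idea is to evaluate the inequality at a point where $2a x^\alpha$ stays bounded while $x$ is huge. Suppose, for contradiction, that $a_{\alpha_n}\le A$ along a sequence $\alpha_n\to0$. Choose $x_n=e^{1/\alpha_n}$, so that $x_n^{\alpha_n}=e$. Then
$$\mathcal{c}\,e^{1/\alpha_n}\le \sinh(2a_{\alpha_n} e)\le \sinh(2Ae).$$
The right-hand side is a fixed constant, while the left-hand side tends to infinity, which is a contradiction. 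Hence $a_\alpha\to\infty$.

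We can make this quantitative: the relation $\sinh(2a_\alpha e)\ge \mathcal{c}e^{1/\alpha}$ yields $a_\alpha \gtrsim 1/(2e\alpha)$. This is consistent with how $c_\beta=\mathcal{c}a^\beta$ is used earlier, where $\alpha=1-1/\beta\to0$ as $\beta\to1$.

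Neither step presents a real obstacle. The only point needing care is choosing the test point $x=e^{1/\alpha}$ so that $x^\alpha$ stays bounded while $x$ diverges.
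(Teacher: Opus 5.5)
Your proof is correct, but it takes a different route from the paper. The paper computes the infimum exactly. For $g(x)=\sinh(2ax^\alpha)/x-\mathcal{c}$ one has $g'(x)=0$ iff $\tanh(2ax^\alpha)=2a\alpha x^\alpha$. So $g$ has a unique minimiser, given by $2ax_c^\alpha=t_\alpha$, where $t_\alpha$ is the positive root of $\tanh t=\alpha t$. This yields the closed form $a_c=\frac{\mathcal{c}^\alpha}{2}\,t_\alpha\sinh(t_\alpha)^{-\alpha}$, and $\alpha t_\alpha=\tanh t_\alpha\to1$ then gives $a_c\sim\frac{1}{2e\alpha}$.

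You avoid this monotonicity analysis entirely. Existence comes from the crude bounds $\sinh u\ge u$ on $[0,1]$ and $\sinh u\ge u^k/k!$ with $k\alpha\ge1$ on $[1,\infty)$. The blow-up comes from the single test point $x=e^{1/\alpha}$. The paper's route buys an exact formula for the infimum. Yours is more elementary and in fact loses nothing at leading order. The test point gives $a\ge\frac{1}{2e}\sinh^{-1}(\mathcal{c}e^{1/\alpha})=\frac{1}{2e\alpha}+O(1)$ for every admissible $a$. With $k=\lceil1/\alpha\rceil$, your existence threshold $\frac12(k!\,\mathcal{c})^{1/k}$ is also $\sim\frac{1}{2e\alpha}$ by Stirling, so you recover the paper's asymptotics without computing $t_\alpha$.

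Two small points of polish:
\begin{enumerate}
\item The bound $\sinh u\ge u^k/k!$ is immediate only for odd $k$. For even $k$ it follows from $\frac{u^{k-1}}{(k-1)!}+\frac{u^{k+1}}{(k+1)!}\ge\frac{u^k}{k!}$ by AM--GM, but it is simpler to choose $k$ odd.
\item The closedness remark is unnecessary. It would also require the infimum to be positive, which holds because $x=1$ forces $\sinh(2a)\ge\mathcal{c}$. Instead, note that $\sinh(2ae)\ge\mathcal{c}e^{1/\alpha}$ holds for every admissible $a$, so the resulting lower bound passes directly to the infimum.
\end{enumerate}
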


\begin{proof} Let \( h \colon t > 0 \mapsto \tanh(t) - \alpha t \). Then, the derivative is given by
\( h'(t) = 1 - \alpha - \tanh(t)^2 \). Consequently, there exists a unique \( t_1 >0\) such that
\( h'(t_1) = 0 \), which implies \( \tanh(t_1) = \sqrt{1 - \alpha} \).

From this, we deduce that \( h \) is increasing on \([0, t_1]\) and decreasing on \([t_1, \infty)\). This behavior ensures the existence of a unique \( t_\alpha \geq t_1 \) such that \( h(t_\alpha) = 0 \).

For \( a > 0 \), consider the function
\[
g : x > 0 \mapsto \frac{\sinh(2a x^\alpha)}{x} - \mathcal{c}.
\]
A direct computation shows that \( g'(x) = 0 \) if and only if \( \alpha 2 a x^\alpha = \tanh(2 a x^\alpha) \). Consequently, there exists a unique \( x_c > 0 \) such that \( g'(x_c) = 0 \), which is given by \( 2 a x_c^\alpha = t_\alpha \). As a result, we have
\[
g(x_c) = 2^{\frac{1}{\alpha}} a^{\frac{1}{\alpha}} \frac{\sinh(t_\alpha)}{t_\alpha^{\frac{1}{\alpha}}} - \mathcal{c} \xrightarrow{a \to \infty} +\infty.
\]
This immediately gives $a>0$ such that $g(x) >0$ for all $x>0$. 

A second consequence of the value of \( g(x_c) \) is that
\[
\inf \left\{ a > 0 \mid \sinh(2 a x^\alpha) \geq \mathcal{c} x, \ \forall x > 0 \right\} = a_c,
\]
where
\[
a_c = \frac{\mathcal{c}^\alpha}{2} t_\alpha \sinh(t_\alpha)^{-\alpha}.
\]

We know that \( t_\alpha \geq t_1 = \tanh^{-1}(\sqrt{1-\alpha}) \), which implies that \( t_\alpha \to \infty \) as \( \alpha \to 0 \). Since \( \tanh(t_\alpha) = \alpha t_\alpha \), it is easily deduced that
\[
t_\alpha = \alpha^{-1} + o_{\alpha \to 0}(\alpha^{-1}).
\]

This, in turn, yields \( \sinh(t_\alpha)^{-\alpha} \to e^{-1} \) as \( \alpha \to 0 \). The result follows.
\end{proof}

\begin{lem} \label{lemma_boundonhyperbfunc} For all $x\geq 0$, 
$$
\coth(x) \leq 1 +\frac{1}{x}, \quad 
 \frac{\sinh(4x)}{\sinh(2x)^2} \leq 2+ \frac{1}{x}.
$$

\begin{tikzpicture}[scale=0.75]
\begin{axis}[
    axis lines = middle,
    xlabel = \(x\),
    xmin = 0.1,
    xmax = 5,
    ymin = 0,
    ymax = 5,
    domain = 0.1:5,
    samples = 200,
    legend pos = north east,
    legend style = {cells={align=left}},
    restrict y to domain=-10:10,
    xtick = {1, 2, 3, 4, 5},
    ytick = {1, 2, 3, 4, 5},
    grid = both,
    major grid style = {line width=.2pt,draw=gray!50},
]

\addplot[teal, thick, smooth] {1/tanh(x)};
\addlegendentry{\(\coth(x)\)}

\addplot[red, thick, dashed, smooth] {1 + 1/x};
\addlegendentry{\(1 + \frac{1}{x}\)}

\end{axis}
\end{tikzpicture}
\begin{tikzpicture}[scale=0.75]
\begin{axis}[
    axis lines = middle,
    xlabel = \(x\),
    xmin = 0.1,
    xmax = 5,
    ymin = 1,
    ymax = 6,
    domain = 0.1:5,
    samples = 200,
    legend pos = north east,
    legend style = {cells={align=left}},
    restrict y to domain=-10:20,
    xtick = {1, 2, 3, 4, 5},
    ytick = {2, 3, 4, 5, 6},
    grid = both,
    grid style = {line width=.1pt, draw=gray!10},
    major grid style = {line width=.2pt,draw=gray!50},
]

\addplot[teal, thick, smooth] {sinh(4*x)/((sinh(2*x))^2)};
\addlegendentry{\(\frac{\sinh(4x)}{\sinh(2x)^2}\)}

\addplot[red, thick, dashed, smooth] {2 + 1/x};
\addlegendentry{\(2 + \frac{1}{x}\)}

\end{axis}
\end{tikzpicture}
\end{lem}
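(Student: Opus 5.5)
Both inequalities follow from one elementary bound on $\coth$. The second reduces to the first through the double-angle formula. Both sides are infinite at $x=0$, so we work with $x>0$.

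\textbf{First inequality.} For $x>0$ we write
$$
\coth(x)=\frac{e^{x}+e^{-x}}{e^{x}-e^{-x}}=\frac{e^{2x}+1}{e^{2x}-1}=1+\frac{2}{e^{2x}-1}.
$$
By convexity of the exponential, $e^{2x}-1\geq 2x$. Hence $\frac{2}{e^{2x}-1}\leq \frac{1}{x}$, which gives $\coth(x)\leq 1+\frac1x$.

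\textbf{Second inequality.} We use $\sinh(4x)=2\sinh(2x)\cosh(2x)$, which gives
$$
\frac{\sinh(4x)}{\sinh(2x)^2}=2\coth(2x).
$$
Applying the first inequality at the point $2x$ yields
$$
2\coth(2x)\leq 2\left(1+\frac{1}{2x}\right)=2+\frac1x,
$$
which is the claim.

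There is no real obstacle here. The only step needing care is the rewriting $\coth(x)=1+2/(e^{2x}-1)$ together with the tangent-line bound $e^{2x}\geq 1+2x$.
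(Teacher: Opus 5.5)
Your proof is correct, and for the second inequality it is exactly the paper's argument: the identity $\frac{\sinh(4x)}{\sinh(2x)^2} = 2\coth(2x)$, then the first bound applied at $2x$. For the first inequality your route differs.

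The paper introduces the auxiliary function $\eta(x) = 1 + \frac{1}{x} - \coth(x)$ and argues by monotonicity. Made explicit, this means computing $\eta'(x) = \frac{1}{\sinh(x)^2} - \frac{1}{x^2}$ and using $\sinh(x) \geq x$ to get $\eta' \leq 0$. One then needs the limit at the right endpoint: it is $\eta(x) \to 0$ as $x \to \infty$ that gives $\eta \geq 0$. The limit $\eta(x) \to 1$ as $x \to 0$, which the paper quotes, only gives $\eta \leq 1$, i.e. the reverse-type bound $\coth(x) \geq \frac{1}{x}$.

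You instead rewrite $\coth(x) = 1 + \frac{2}{e^{2x}-1}$ and use the tangent-line bound $e^{2x} \geq 1 + 2x$. This is a direct pointwise estimate with no differentiation and no limit argument, so it is shorter and sidesteps the endpoint issue. The monotonicity approach, once completed, gives slightly more, namely the two-sided bound $\frac{1}{x} \leq \coth(x) \leq 1 + \frac{1}{x}$. Only the upper bound is used in the paper, so your argument fully suffices. You are also right to restrict to $x > 0$, since both sides are undefined at $x = 0$.
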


\begin{proof}
The first inequality is quite standard and follows from a direct analysis of the function
$\eta : x > 0 \mapsto 1 + \frac{1}{x} - \coth(x)$, which is decreasing and satisfies $\eta(x) \xrightarrow{x \rightarrow 0} 1$.

The second inequality is a direct consequence of the first one, as 
$$
\frac{\sinh(4x)}{\sinh(2x)^2} = 2 \coth(2x).
$$
\end{proof}

\bibliographystyle{plain}
\bibliography{biblio}
\end{document}